\documentclass[reqno]{amsart}
\usepackage{amsmath}
\usepackage{amsthm}
\usepackage{amsfonts}
\usepackage{amssymb}
\usepackage{url}

\usepackage{verbatim}
\usepackage{subfigure}

\newtheorem{theorem}{Theorem}[section]
\newtheorem{proposition}[theorem]{Proposition}
\newtheorem{lemma}[theorem]{Lemma}
\newtheorem{corollary}[theorem]{Corollary}

\newtheorem{definition}[theorem]{Definition}
\newtheorem{remark}[theorem]{Remark}

\theoremstyle{definition}
\numberwithin{equation}{section}  

\usepackage{graphicx}

\usepackage[all]{xy}
\textwidth 5.5in


%

\numberwithin{equation}{section}
\setcounter{page}{1}

\def\p{\partial}
\def\n{\nonumber}
\def\d{\displaystyle}
\def\t{\tilde}
\def\h{\dot H^1_0(\Omega)}

\def\N{\mathbb N}

\def\n{\nonumber}
\def\d{\displaystyle}

\def\bG{ \bar G }

\def\rhoi{ \frac{1}{\rho_0} }


\def\bee{\bar\eta}
\def\bje{\bar J}
\def\bve{\bar v}
\def\bAe{\bar A}
\def\be{\bar\eta}

\def\bxi{\bar\Xi}
\def\ve{v^\epsilon}
\def\pbv{ T\, P( \|\bar v\|^2_{\boldsymbol{Z}_T})}
\def\pbvs{ \sqrt{T}\, P( \|\bar v\|^2_{\boldsymbol{Z}_T})}
\def\pcurl{ P( \| \operatorname{curl} \bar v\|^2_{\boldsymbol{Y}_T})}

\def\XT{{\boldsymbol{X}_T}}
\def\YT{{\boldsymbol{Y}_T}}
\def\ZT{{\boldsymbol{Z}_T}}

\DeclareMathOperator{\spt}{spt}

\title[Free-boundary 3-D compressible Euler equations  in physical vacuum]
{Well-posedness in smooth function spaces for the moving-boundary 
3-D compressible Euler equations in physical vacuum}

\author[D. Coutand]{Daniel Coutand}
\author[S. Shkoller]{Steve Shkoller}
\address{canpde, Maxwell Institute for Mathematical Sciences and department of Mathematics, Heriot-Watt University, Edinburgh, EH14 4AS, UK}
\address{Department of Mathematics, University of California, Davis, CA 95616, USA}

\subjclass{35L65, 35L70, 35L80, 35Q35, 35R35, 76B03}
\keywords{compressible Euler equations,  gas dynamics, free boundary problems, physical vacuum, characteristic hyperbolic systems,
degenerate hyperbolic systems, systems of conservation laws}

\date{March 29, 2010}

\email{d.coutand@hw.ac.uk}
\email{shkoller@math.ucdavis.edu}

\setcounter{secnumdepth}{3}
\setcounter{tocdepth}{1}

\begin{document}

\begin{abstract}
We prove well-posedness  for the 3-D compressible Euler equations with moving {\it physical} vacuum boundary,
with an equation of state given by $p(\rho) = C_\gamma \rho^\gamma $ for $\gamma >1$.
The physical vacuum singularity requires the sound speed $c$ to go to zero as the square-root of the distance to the moving
boundary, and thus creates a degenerate and characteristic hyperbolic {\it free-boundary} system wherein the density vanishes
on the free-boundary, the uniform Kreiss--Lopatinskii condition is violated, and  manifest derivative loss ensues.  
Nevertheless, we are able to  establish the existence of unique solutions to this system on a short time-interval, which are smooth (in Sobolev
spaces) all the way to the moving boundary, and our estimates have no derivative loss with respect to initial data.  Our proof is founded on an
 approximation of the Euler equations
by a  degenerate parabolic regularization  obtained from a specific choice of  a degenerate artificial
viscosity term, chosen to preserve as much of the geometric structure of the Euler equations as possible.    We first construct solutions to this degenerate parabolic
regularization using a new higher-order Hardy-type inequality; we then establish estimates for solutions to this degenerate parabolic
system which are independent of the artificial viscosity parameter.  Solutions to the compressible Euler equations are found in the
limit as the artificial viscosity tends to zero.  Our regular solutions can be viewed as {\it degenerate viscosity solutions}.   Out methodology
can be applied to many other systems of degenerate and characteristic hyperbolic systems of conservation laws.
\end{abstract}

\maketitle
{\small 
\tableofcontents}

\section{Introduction}
\label{sec_introduction}

\subsection{The compressible Euler equations in Eulerian variables}
For $0 \le t \le T$,
the evolution of a three-dimensional  compressible gas  moving inside of a dynamic vacuum  boundary is modeled by the one-phase compressible Euler equations:
\begin{subequations}
  \label{ceuler}
\begin{alignat}{2}
\rho[u_t+ u\cdot Du] + D p(\rho)&=0  &&\text{in} \ \ \Omega(t) \,, \label{ceuler.a}\\
 \rho_t+ {\operatorname{div}}  (\rho u) &=0 
&&\text{in} \ \ \Omega(t) \,, \label{ceuler.b}\\
p &= 0 \ \ &&\text{on} \ \ \Gamma(t) \,, \label{ceuler.c}\\
\mathcal{V} (\Gamma(t))& = u \cdot n(t) &&\ \ \label{ceuler.d}\\
(\rho,u)   &= (\rho_0,u_0 ) \ \  &&\text{on} \ \ \Omega(0) \,, \label{ceuler.e}\\
   \Omega(0) &= \Omega\,.  && \label{ceuler.f}
\end{alignat}
\end{subequations}
The open, bounded subset
 $\Omega(t) \subset \mathbb{R}^3  $ denotes the changing volume occupied by the gas,  $\Gamma(t):= \partial\Omega(t)$ denotes
 the moving vacuum boundary, $ \mathcal{V} (\Gamma(t))$ denotes the normal
 velocity of $\Gamma(t)$, and $n(t)$ denotes the exterior unit normal vector to $\Gamma(t)$.
  The vector-field $u = (u_1,u_2,u_3)$ denotes the Eulerian velocity
field, $p$ denotes the pressure function, and $\rho$ denotes the density of the gas.
The equation of state $p(\rho)$ is given by
\begin{equation}\label{eos}
p(x,t)= C_\gamma\, \rho(x,t)^\gamma\ \  \text{ for } \ \  \gamma> 1 ,
\end{equation} 
where $C_\gamma $ is the adiabatic constant which we set to unity, and
$$
\rho>0 \ \text{ in } \ \Omega(t) \ \ \ \text{ and } \ \ \ \rho=0 \ \text{ on } \Gamma(t) \,.
$$
Equation (\ref{ceuler.a}) is the conservation of momentum; (\ref{ceuler.b}) is the conservation of mass; the boundary
condition (\ref{ceuler.c}) states that the pressure (and hence the density function)  vanish along the moving vacuum boundary
$\Gamma(t)$; (\ref{ceuler.d}) states that the 
vacuum boundary $\Gamma(t)$ is moving with speed equal to the normal component of the fluid velocity, and (\ref{ceuler.e})-(\ref{ceuler.f}) are the 
initial conditions for the density, velocity, and domain.  Using the equation of state (\ref{eos}),
 (\ref{ceuler.a}) is written as
\begin{alignat}{2}
\rho [u_t+ u\cdot Du]+  D\rho^{\gamma} &=0 \ \ \   &&\text{in} \ \ \Omega(t) \,. \tag{ \ref{ceuler.a}'} 
\end{alignat}

\subsection{Physical vacuum}  \label{subsec_physicalvacuum}
With the sound speed given by $c := \sqrt{\p p/ \p \rho}$ and $N$ denoting the {\it outward} unit normal to the initial surface $\Gamma$, satisfaction of the condition
\begin{equation}\label{phys_vac}
\frac{ \partial c_0^2}{ \partial N} < 0 \text{ on } \Gamma
\end{equation} 
defines a {\it physical vacuum} boundary (see \cite{Lin1987}, \cite{Liu1996}, \cite{LiYa1997}, \cite{LiYa2000},
\cite{LiSm1980}, \cite{XuYa2005}), where $c_0 = c|_{t=0}$ denotes the initial sound speed of the gas.

 The physical vacuum condition (\ref{phys_vac}) is equivalent to the
requirement that
\begin{equation}\label{stab1}
\frac{ \p \rho_0^{\gamma-1}} { \partial N} < 0 \text{ on } \Gamma \,,
\end{equation}
a condition necessary for the gas particles on the boundary to accelerate.
Since $ \rho_0 >0$ in $\Omega$, (\ref{stab1}) implies that for some positive constant $C$ and $x\in \Omega$ near the vacuum boundary $\Gamma$,
\begin{equation}\label{degen}
\rho_0^{\gamma-1}(x) \ge  C \text{dist}(x, \Gamma) \,.
\end{equation} 

Because of condition (\ref{degen}), the compressible Euler system (\ref{ceuler}) is a {\it degenerate} and  {\it characteristic} hyperbolic
system  which violates the uniform Kreiss--Lopatinskii condition \cite{Kr1970}, and to which standard methods of symmetric hyperbolic conservation laws are apparently ineffective.   The moving boundary is characteristic because of the evolution law (\ref{ceuler.d}), and the system of conservation laws is degenerate because of the appearance of the density function as a coefficient in the nonlinear wave equation which governs the dynamics of the divergence of the velocity of the gas;  in turn, weighted estimates show that this wave equation indeed loses
derivatives with respect to the uniformly hyperbolic non-degenerate case of a compressible liquid, wherein the density takes
the value of a strictly positive constant on the moving boundary \cite{CoFr48}.     We provide a brief history of results in this area in Section
\ref{history} below.

We note that with 
 a faster rate of degeneracy of the density function, such as, for example,  $ \operatorname{dist}(x, \Gamma(t))^b$ 
for $b=2,3,....$, the analysis becomes significantly easier; for instance, if  $b=2$,
then $\frac{D\rho_0^{\gamma-1}(x,t)}{\sqrt{ \rho_0^{\gamma-1}(x,t)}}$ is bounded for all $x \in \Omega$. This bound makes it possible to
readily  control error terms in energy estimates, and in effect removes the singular behavior associated with the physical vacuum condition
(\ref{degen}).   On the other hand, if $\rho^{\gamma -1}$ tends to zero like  $ \operatorname{dist}(x, \Gamma(t))^b$ 
for $b=2,3,....$, then the gas cannot accelerate into vacuum.

\subsection{Fixing the domain and the Lagrangian variables on $\Omega$}   We transform the system (\ref{ceuler}) into
Lagrangian variables.
We let $\eta(x,t)$ denote the ``position'' of the gas particle $x$ at time $t$.  Thus,
\begin{equation}
\nonumber
\begin{array}{c}
\partial_t \eta = u \circ \eta $ for $ t>0 $ and $
\eta(x,0)=x
\end{array}
\end{equation}
where $\circ $ denotes composition so that
$[u \circ \eta] (x,t):= u(\eta(x,t),t)\,.$  
We set 
\begin{align*}
v &= u \circ \eta   \text{ (Lagrangian velocity)},  \\
f&= \rho \circ \eta   \text{ (Lagrangian density)}, \\
A &= [D \eta]^{-1}  \text{ (inverse of deformation tensor)}, \\
J &= \det D \eta  \text{ (Jacobian determinant)}, \\
a &= J\, A  \text{ (transpose of cofactor matrix)}. 
\end{align*}
Using  Einstein's summation convention defined in Section \ref{subsec_einstein} below, and using
the notation $F,_k$ to denote $\frac{\p F}{ \p x_k}$, the k$th$-partial derivative of $F$ for $k=1,2,3$,
the Lagrangian version of equations (\ref{ceuler.a})-(\ref{ceuler.b}) can be written on
the fixed reference domain $\Omega$ as
\begin{subequations}
\label{ceuler00}
\begin{alignat}{2}
f  v^i_t + A^k_i f^ \gamma ,_k &=0 \ \ && \text{ in } \Omega \times (0,T] \,, \label{ceuler00.a} \\
f _t + f A^j_i v^i,_j  &=0 \ \ && \text{ in } \Omega \times (0,T] \,,\label{ceuler00.b}  \\
f  &=0 \ \ && \text{ in } \Omega \times (0,T] \,,\label{ceuler00.c}  \\
(f,v,\eta)  &=(\rho_0, u_0, e) \ \  \ \ && \text{ in } \Omega \times \{t=0\} \,, \label{ceuler00.d} 
\end{alignat}
\end{subequations}
where $e(x)=x$ denotes the identity map on $\Omega$.

Since $J_t = J A^j_i v^i,_j$ and since $J(0)=1$ (since we have taken $\eta(x,0)=x$), it follows that
\begin{equation}\label{J}
f = \rho_0 J^{-1},
\end{equation}
so that the initial density function $\rho_0$ can be viewed as a parameter in the Euler equations.   Let $\Gamma:= \partial \Omega$ denote
the initial vacuum boundary;  using, that $A^k_i = J ^{-1} \, a^k_i$, we write the compressible Euler equations (\ref{ceuler00}) as
\begin{subequations}
\label{ceuler0}
\begin{alignat}{2}
\rho_0 v^i_t + a^k_i (\rho_0^ \gamma J^{-\gamma}),_k &=0 \ \ && \text{ in } \Omega \times (0,T] \,, \label{ceuler0.a} \\
(\eta, v)  &=( e, u_0) \ \  \ \ && \text{ in } \Omega \times \{t=0\} \,, \label{ceuler0.b} \\
\rho_0^{\gamma-1}& = 0 \ \ &&\text{ on }  \Gamma \,, \label{ceuler0.c}  
\end{alignat}
\end{subequations}
with $ \rho _0^{ \gamma -1}(x) \ge C \operatorname{dist}( x, \Gamma ) $ for $x \in\Omega$ near $\Gamma$.

\subsection{Setting $\gamma=2$} We will begin our analysis for the case that $\gamma=2$, and in Section \ref{sec::generalgamma},
we will explain the modifications required for the case of general $\gamma >1$.

With $\gamma$ set to $2$, we thus seek solutions $\eta(t)$ to the following system:
\begin{subequations}
  \label{ce0}
\begin{alignat}{2}
\rho_0  v_t^i + a^k_i (\rho_0^2 J^{-2}),_k&=0  &&\text{in} \ \ \Omega \times (0,T] 
\,, \label{ce0.a}\\
(\eta,v)&= (e,u_0 ) \ \ \ &&\text{on} \ \ \Omega \times \{t=0\} \,, \label{ce0.b}\\
\rho_0& = 0 \ \ &&\text{ on }  \Gamma \,, \label{ce0.c}
\end{alignat}
\end{subequations}
with $ \rho _0(x) \ge C \operatorname{dist}( x, \Gamma ) $ for $x \in\Omega$ near $\Gamma$.

The equation
(\ref{ce0.a}) is equivalent to 
\begin{equation}
v_t^i + 2A^k_i (\rho_0 J^{-1} ),_k =0 \label{ce_vor} \,,
\end{equation} 
and (\ref{ce_vor}) can be written as
 
\begin{equation}
v_t^i +\rho_0 a^k_i J^{-2},_k +2 \rho_0,_k a^k_i J^{-2}  =0 \label{ce_elliptic} \,.
\end{equation} 
Because of the degeneracy caused by $\rho_0 =0$ on $\Gamma$, all three equivalent forms of the compressible
Euler equations are crucially used in our analysis.  The equation (\ref{ce0.a}) is used for energy estimates, while (\ref{ce_vor}) is
used for estimates of the vorticity, and (\ref{ce_elliptic}) is used for additional elliptic-type estimates used to recover the bounds
for normal derivatives.

\subsection{The reference domain $\Omega$}\label{subsec_domain}
 To avoid the use of local coordinate charts necessary for arbitrary geometries,
and to simplify our exposition, we will assume  that the initial domain $\Omega \subset \mathbb{R}  ^3$ at time $t=0$ is given by
$$
\Omega = \{ (x_1,x_2,x_3) \in \mathbb{R}^3  \ | \ (x_1, x_2) \in  \mathbb{T}  ^2 , \ x_3\in (0,1) \} \,,
$$
where $\mathbb{T}  ^2$ denotes the $2$-torus, which can be thought of as the unit square with periodic
boundary conditions.  This permits the use of {\it one} global Cartesian coordinate system. At $t=0$, the reference {\it vacuum} boundary  is
comprised of the {\it bottom} and
{\it top} of the domain $\Omega$ so that
$$\Gamma =\{ x_3=0\} \cup \{ x_3=1\}\,.$$

Then, according to the evolution law for the moving vacuum boundary $\Gamma(t)$ given by (\ref{ceuler.d}),  we have that
$$
\Gamma(t) = \eta(t)(\Gamma) \,.
$$
(We will sometimes write $\eta(t,\Gamma)$ to denote $\eta(t)(\Gamma)$.)
Hence, solving (\ref{ce0}) for $\eta(t)$ (and $v(t)=\eta_t(t)$) completely determines the motion and regularity of the moving
vacuum boundary $\Gamma(t)$.

 \subsection{The higher-order energy function for the case $\gamma=2$} The physical energy $  \int_\Omega \bigl[{\frac{1}{2}}  \rho_0 |v|^2 +  \rho_0^2 J^{-1} \bigl]dx$ is a conserved quantity, but is far too weak for the
purposes of constructing solutions; instead, we consider the higher-order energy function
\begin{align} 
E(t) & = \sum_{a=0}^4\Bigl[ \| \p_t^{2a}  \eta(t)\|^2_{4-a} + \|\rho_0 \p_t^{2a} D \eta(t)\|^2_{4-a}
+\| \sqrt{\rho_0} \bar\p^{4-a}\p_t^{2a}  v(t)\|^2_0 \Bigr] \nonumber\\
& \qquad\qquad  
 + \| \operatorname{curl} _\eta v(t)\|^2_ 3 
+ \| \rho_0 \bar \p^4 \operatorname{curl} _\eta v(t)\|^2_0 \,, \label{formal}
\end{align} 
where $\bar \p = \left(\frac{\p}{\p x_1}, \frac{\p}{\p x_2}\right)$ and $ \operatorname{curl} _ \eta v = [\operatorname{curl} u] \circ \eta $.  Section \ref{notation} explains the notation.

We also define $M_0 = P( E(0))$, where $P$ denotes a polynomial function of its argument.

While the higher-order energy function $E(t)$  is 
not conserved,  we will construct solutions to (\ref{ce0})  for which $ \sup_{t \in [0,T]} E(t)$ remains bounded whenever $T>0$ is taken sufficiently small;   the bound depends only on $E(0)$.

 \subsection{Main Result}   
\begin{theorem} [Existence and uniqueness for the case $\gamma=2$] \label{theorem_main}
Suppose that $\rho_0 \in H^4(\Omega)$, $\rho_0(x) >0$ for $x \in \Omega$, and  $\rho_0$ satisfies (\ref{degen}).  Furthermore,
suppose that $u_0$ is given such that  $M_0< \infty $.  Then there exists a  solution to (\ref{ce0}) (and hence (\ref{ceuler})) on $[0,T]$ for $T>0$ taken
 sufficiently small, such that
 $$
 \sup_{t \in [0,T]} E(t) \le 2M_0 \,.
 $$
 In particular, the flow map $\eta \in L^ \infty (0,T; H^4(\Omega))$ and the moving vacuum boundary $\Gamma(t)$ is of Sobolev class $H^{3.5}$.
 
 Moreover if the initial data satisfies
 \begin{align}
 &\sum_{a=0}^5\Bigl[ \| \p_t^{2a}  \eta(0)\|^2_{5-a} + \|\rho_0 \p_t^{2a}  D\eta(0)\|^2_{5-a}
+\| \sqrt{\rho_0} \bar\p^{5-a}\p_t^{2a}  v(0)\|^2_0 \Bigr] \nonumber\\
& \qquad\qquad  
 + \| \operatorname{curl} _\eta v(0)\|^2_ 4 
+ \| \rho_0 \bar \p^5 \operatorname{curl} _\eta v(0)\|^2_0   < \infty \,,  \label{uniquedata}
 \end{align} 
 then the solution is unique.
\end{theorem}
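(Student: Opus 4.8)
The plan is to realize a solution of \eqref{ce0} as the limit, as an artificial viscosity parameter $\epsilon\to 0$, of solutions to a degenerate parabolic regularization of the Euler system. The regularization is chosen so that: (i) the viscous term carries a factor $\epsilon$ and vanishes at $\Gamma$ at the same rate as $\rho_0$, so that no boundary layer forms and no additional boundary conditions are needed; (ii) it is built from the deformation tensor, cofactor matrix and Jacobian, so that the cofactor/divergence structure survives and the vorticity identity \eqref{ce_vor} and the elliptic-type identity \eqref{ce_elliptic} persist in perturbed form; and (iii) the linearized equation is a coercive degenerate parabolic problem. For the construction I would linearize (freeze the geometry at an iterate $\e$), solve each linear degenerate parabolic equation for $v$ by a Galerkin approximation --- using its degenerate coercivity together with a higher-order Hardy inequality to bound the weighted spatial norms appearing in $E(t)$ --- and then close a contraction-mapping argument on $\ZT$ for $T$ small, producing a solution $(\eta^\epsilon,v^\epsilon)$ of the $\epsilon$-regularized system.

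The core of the proof is a family of a priori estimates for $(\eta^\epsilon,v^\epsilon)$ whose constants are independent of $\epsilon$ and which lose no derivatives relative to $E(0)$; these are organized around the three equivalent forms of the momentum equation. For the time- and tangentially-differentiated energy I would apply $\bar\p^{4-a}\p_t^{2a}$ to \eqref{ce0.a}, test against $\bar\p^{4-a}\p_t^{2a} v^i$, and integrate by parts once in $x_k$: the coefficient $\rho_0$ in front of $v_t$ produces $\|\sqrt{\rho_0}\,\bar\p^{4-a}\p_t^{2a}v\|_0^2$, moving the derivative onto $\rho_0^2 J^{-2}$ and using \eqref{J} produces $\|\rho_0\,\p_t^{2a}D\eta\|_{4-a}^2$, and the viscous term contributes a nonnegative quantity plus commutators absorbed by Young's inequality. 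For the vorticity, applying $\operatorname{curl}_\eta$ to \eqref{ce_vor} annihilates the pressure gradient and yields a transport-type identity controlling $\|\operatorname{curl}_\eta v\|_3^2$ and $\|\rho_0\bar\p^4\operatorname{curl}_\eta v\|_0^2$. To recover the unweighted normal derivatives $\|\p_t^{2a}\eta\|_{4-a}^2$ I would read $\rho_0 a^k_i J^{-2},_k$ in \eqref{ce_elliptic} as a degenerate elliptic operator on $\eta$ and combine a div--curl decomposition with the tangential energy just obtained and the higher-order Hardy inequality, which trades one power of $\dist(\cdot,\Gamma)\sim\rho_0$ against one derivative. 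Summing over $a$, using $J(0)=1$, polynomial bounds, and $T$ small then gives $\sup_{[0,T]}E^\epsilon(t)\le 2M_0$ uniformly in $\epsilon$.

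With these bounds in hand, one extracts $\eta^\epsilon\rightharpoonup\eta$ weak-$*$ in $L^\infty(0,T;H^4(\Omega))$ together with the corresponding convergence of the time derivatives; Aubin--Lions-type compactness, using the control of $\p_t^{2a}$, gives strong convergence in lower norms, which suffices to pass the nonlinear coefficients $A^k_i$, $J^{-2}$, $\operatorname{curl}_\eta$ to the limit, while the viscous term, being $\epsilon$ times a bounded quantity, vanishes. The limit solves \eqref{ce0}, hence \eqref{ceuler} after reverting to Eulerian variables, weak lower-semicontinuity gives $\sup_{[0,T]}E(t)\le 2M_0$, and the claimed regularity of $\eta$ and of $\Gamma(t)=\eta(t)(\Gamma)$ follows by trace. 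For uniqueness, given two solutions $\eta$ and $\e$ with data satisfying \eqref{uniquedata}, set $w:=\eta-\e$ and subtract the equations; $w$ solves a linear system whose coefficients are built from $D\eta,D\e$, their cofactors and Jacobians, and are finite precisely because of the extra derivative in \eqref{uniquedata}. Running the energy/vorticity/elliptic scheme above one level of regularity lower and invoking Gronwall forces $w\equiv 0$ on $[0,T]$.

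The main obstacle is the second step: closing the a priori estimate uniformly in $\epsilon$ and without derivative loss. Since $\rho_0$ vanishes exactly like $\dist(\cdot,\Gamma)$, the pressure operator is degenerate elliptic at the boundary and the uniform Kreiss--Lopatinskii condition fails, so neither classical elliptic regularity nor the standard energy theory for characteristic hyperbolic boundary-value problems applies; the only route to the normal derivatives is the precise interplay of the weighted tangential energy, the vorticity transport, and the higher-order Hardy inequality, in which every commutator and boundary error term must be absorbed by the degeneracy at exactly the right weight. A secondary difficulty, already present in the first step, is to engineer the artificial viscosity so that it is simultaneously coercive enough to solve the linear parabolic problem and structurally benign enough to leave \eqref{ce_vor} and \eqref{ce_elliptic} usable after the perturbation.
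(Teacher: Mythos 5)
Your overall scheme---a structure-preserving degenerate parabolic regularization, tangential/time-differentiated energy estimates, Lagrangian curl transport, elliptic-type estimates combined with the higher-order Hardy inequality to recover normal derivatives, viscosity-independent bounds, compactness, and uniqueness under one extra derivative on the data---is the same as the paper's. The genuine gap is in your first step, the \emph{construction} of solutions to the regularized problem, which you dispose of with ``linearize, Galerkin for $v$ using degenerate coercivity, contract on $\boldsymbol{Z}_T$.'' A viscosity that preserves the Euler structure in the way your later steps require (the paper's choice is $\kappa\,\partial_t\bigl[a^k_i(\rho_0^2J^{-2}),_k\bigr]$, i.e.\ exactly $\kappa\partial_t$ of the pressure term, so that Lemma \ref{kelliptic} applied to $\kappa\partial_t\bigl[A^k_i(\rho_0J^{-1}),_k\bigr]+A^k_i(\rho_0J^{-1}),_k=-\tfrac12 v^i_t$ transfers the elliptic structure to the $\kappa$-problem) is \emph{not} coercive on $Dv$: it smooths only $\operatorname{div}_\eta v$, so the frozen-coefficient problem is not a coercive parabolic system for $v$ and a direct Galerkin scheme for $v$ does not close. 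This is exactly why Section \ref{sec_kproblem} of the paper decomposes the linearization into (i) a degenerate parabolic equation (\ref{ssX}) for $X=\rho_0J^{-2}\operatorname{div}_\eta v$, solved by Galerkin in $\rho_0$-weighted spaces with a delicate regularity bootstrap, (ii) an ODE-in-time recovery of the curl through $\bar\Xi$ in (\ref{ODEXi}), whose closed form (\ref{Xi}) provides the gain of regularity that makes the fixed point possible, and (iii) a parabolic evolution equation (\ref{defvc}) on $\Gamma$ for the normal trace $v^3$, which needs the horizontal convolutions $\Lambda_\epsilon$ and a separate $\epsilon$-limit; $v$ is then rebuilt from its divergence, curl and normal trace via Proposition \ref{prop1}, and a further nontrivial argument (showing $c(t)=0$ and $\Xi=D\rho$) is required to verify that the fixed point actually solves (\ref{approx}). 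None of this is supplied or replaced by your construction step, and the a priori estimates alone do not give existence.

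If instead you intend a genuinely coercive viscosity (say $\epsilon\operatorname{div}(\rho_0^2 Dv_t)$, which is what ``coercive degenerate parabolic'' suggests), then the Galerkin step becomes plausible, but the second half of your plan is no longer justified: the $\kappa$-uniform recovery of $\|\partial_t^{2a}\eta\|_{4-a}$ in the paper (Propositions \ref{pt5v_2d}--\ref{eta_2d}) hinges on the viscosity being precisely the time derivative of the pressure gradient so that Lemma \ref{kelliptic} applies, and the curl equation then acquires viscous source terms involving inverse powers of $\rho_0$ whose uniform control is not addressed. So either reading of your regularization leaves a real hole: with the structure-preserving choice the existence argument is missing; with a coercive choice the $\epsilon$-independent elliptic and curl estimates are unproven.
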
 
\begin{remark} The case of arbitrary $\gamma >1$ is treated in Theorem \ref{thm_main2} below.
\end{remark}

Theorem \ref{theorem_main}  also covers the 2-D case that $\Omega \subset  \mathbb{R}   ^2$.  We established the analogous result
in 1-D in \cite{CoSh2009}.   We note that by using a collection of local
coordinate charts, we could modify our proof to allow for arbitrary initial domains $\Omega$, as long as the initial boundary is of Sobolev class $H^{3.5}$.   

The multi-D physical vacuum problem is  not only  a characteristic hyperbolic system, but it is also  degenerate because the density function vanishes on 
 the boundary $\Gamma$.
 In 1-D, the two characteristic curves  of the isentropic system intersect with the moving vacuum boundary $\Gamma(t)$ tangentially;
 this triple point of intersection is suggestive of singular behavior.
  While the degeneracy produces ``honest'' derivative loss
 with respect to uniformly hyperbolic systems, we develop a methodology based on {\it nonlinear estimates} which provides us with
 a priori control of smooth solutions which do not suffer from the derivative loss phenomenon (see \cite{CoLiSh2009} for the a priori estimates to this problem).
As we will outline below, our method for constructing smooth solutions does not rely on linearization, Kreiss--Lopatinskii theory, or
the Nash-Moser iteration scheme, but rather on a carefully chosen   nonlinear approximation to the  characteristic and degenerate Euler equations, which preserves a great deal of the nonlinear structure of the original system.

\subsection{History of prior results on the analysis of multi-D free-boundary Euler problems}
\subsubsection{The incompressible setting}
There has been 
a recent explosion of interest in the analysis of the free-boundary {\it incompressible} Euler equations, particularly in irrotational form, that has produced a number of different methodologies for obtaining a priori estimates, and the accompanying existence theories have mostly relied on the Nash-Moser  iteration to deal with derivative loss in linearized equations when arbitrary domains are considered, or complex analysis tools for  the irrotational problem with infinite depth.  We refer the reader to \cite{AmMa2009}, \cite{CoSh2007}, \cite{La2005}, \cite{Li2005}, \cite{Na1974}, \cite{ShZe2008}, \cite{Wu1997}, \cite{Wu1999}, \cite{Yo1982}, and \cite{ZhZh2009} for a partial list of papers on this topic.  

\subsubsection{The compressible setting} The mathematical analysis of moving hypersurfaces in the multi-D compressible Euler equations
is essential for the understanding of shock waves, vortex sheets or contact discontinuities, as well as phase transitions such as the motion
of gas into the vacuum state
considered herein.   

The stability and regularity of the multi-D shock solution was initiated in \cite{Ma1984} and extensively studied by
 \cite{FrMe2000}, \cite{GlMa1991}, \cite{GuMeWiZu2005}, and \cite{Me2001} (see the references in these articles for a more
 extensive bibliography).   The shock wave problem is non-characteristic on the boundary,
 and in fact, produces  the so-called  dissipative boundary conditions, and satisfies the uniform  Kreiss--Lopatinskii condition. Even so
 the methodologies employed produce derivative loss with respect to initial data.
 
   More delicate
 than the non-characteristic case, is the characteristic boundary case, encountered in  the study of vortex sheet or current vortex sheet problems.  This
 class of problems has been studied by \cite{ChWa2007},  \cite{CoSe2008}, \cite{CoSe2009}, \cite{Tr2005}, \cite{Tr2009}
 and others, and has the relative disadvantage of violating the uniform Kreiss--Lopatinskii condition, which produces derivative loss in the linearization,
 similar to that experienced by many authors in the incompressible flow setting (both irrotational flows and flows with vorticity).

\subsection{History of prior results for the compressible Euler equations with vacuum boundary}\label{history}
The physical vacuum free-boundary problem, also described as the physical vacuum singularity, has a rich history, as well as a great
deal of renewed interest (see \cite{IMA2009}).   

Some of the early developments in the theory of vacuum states for  compressible gas dynamics can be found in \cite{LiSm1980} and
\cite{Lin1987}.
We are aware of only a handful of previous theorems pertaining to the existence of solutions to the compressible and {\it undamped} Euler equations\footnote{The parabolic free-boundary viscous Navier-Stokes equations do not experience the same sort of analytical difficulties
as the compressible Euler equations, so we do not focus on the viscous regime in this paper.   We refer the reader to
\cite{Ja2010}, \cite{LuXiYa2000}, \cite{MaOkMa1997}, and \cite{OkMa1993} for the analysis of the corresponding
viscous system.}
 with a moving  vacuum boundary.   In
 \cite{M1986},  compactly supported initial data was considered, and the compressible  Euler equations were treated as a PDE set on  $\mathbb{R}^3  \times (0,T]$.  Unfortunately,  with the methodology of \cite{M1986},
it is not possible to track the location of the vacuum boundary (nor is it necessary); nevertheless, an existence theory was developed in
 this context, by a variable change that permitted the standard theory of symmetric hyperbolic systems to be employed,  but, the constraints on the data were too severe to allow for the evolution of the physical vacuum boundary.
 
Existence and uniqueness for the 3-D compressible Euler equations modeling a {\it liquid} rather than a gas was established in \cite{Li2005b}.
 As discussed in \cite{CoFr48}, 
 for a compressible liquid, the density $\rho\ge \lambda>0$ is assumed to be a strictly positive constant on the moving vacuum boundary $\Gamma(t)$ and $\rho$ is
 thus uniformly bounded from below  by a positive constant.  As such, the compressible liquid provides a uniformly hyperbolic, but characteristic, system.   Lagrangian variables combined with Nash-Moser iteration was used in \cite{Li2005b}  to construct solutions.   More recently,   \cite{Tr2008}
provided an alternative proof for the existence of a compressible liquid, employing a solution strategy based on symmetric hyperbolic systems
combined with Nash-Moser iteration, but as stated in Remark 2.2 of that paper, the $\gamma$-gas law equation-of-state $p=\rho^ \gamma$
cannot be used.

In the presence of damping, and with mild singularity, some existence results of smooth
solutions are available, based on the adaptation of the theory of symmetric hyperbolic systems.
In \cite{LiYa1997}, a local existence theory was developed for the case that $c^ \alpha $ (with $0< \alpha \le 1$) is smooth across $\Gamma$,
using methods that are not applicable to the local existence theory
for the physical vacuum boundary. An existence theory for the small perturbation of a
planar wave was developed in \cite{XuYa2005}.  See also \cite{LiYa2000} and \cite{Yang2006}, for other features of the vacuum state problem.

In the 1-D setting, recently,  \cite{JaMa2009} have established existence and uniqueness using weighted Sobolev norms  for their energy
estimates.  From these weighted norms, the regularity  of the solutions cannot be directly determined.  Letting $d$ denote the distance
function to the boundary $\partial I$, and letting $\| \cdot \|_0 $ denote
the $L^2(\Omega)$-norm, an example of the type
of bound that is proved for their rescaled velocity field $u$ in \cite{JaMa2009} is the following:
\begin{align}
&\| d\, u\|_0^2 + \|d\, u_x\|_0^2 + \|d\, u_{xx} + 2 u_x\|_0^2  + \|d\, u_{xxx} + 2 u_{xx} - 2 d^{-1}\,  u_x\|_0^2 \nonumber \\
&\qquad\qquad \qquad \qquad \qquad\qquad \qquad + \|d\, u_{xxxx} + 4 u_{xxx} - 4 d^{-1}\,  u_{xx}\|_0^2  < \infty  \,.
\label{jama}
\end{align} 
This bound is obtained from their paper by considering the case $ \gamma=3$, $k=1$, and making the assumption that $\phi=\xi$ (using
the variable terminology of their paper) which
is certainly true near the boundary.
The problem with inferring the regularity of $u$ from this bound can already be seen at the level of an $H^1(\Omega)$ estimate.   In particular,
the bound on the norm $\|d\, u_{xx} + 2 u_x\|_0^2$ only implies a bound on $\|d\, u_{xx}\|_0^2$ and $\|u_x\|_0^2$ if the integration by
parts on the cross-term,
$$
4\int_I  d\, u_{xx} \, u_x \, dx =  -2\int_I d_x \, |u_x|^2 \, dx \,,
$$
can be justified, which in turn requires having better regularity for $u_x$ than the a priori bounds provide.   Any methodology  which
seeks regularity in (unweighted) Sobolev spaces for solutions must contend with this type of issue.  

We overcame this difficulty in 1-D \cite{CoSh2009} by 
constructing (sufficiently) smooth solutions to a degenerate parabolic regularization and consequently avoiding this sort of integration-by-parts difficulty.  Our solution strategy in \cite{CoSh2009} was based on a 1-D version of our  higher-order Hardy-type inequality.
In this paper, we extend our ideas to the multi-D setting.

\subsection{Outline of the paper and our methodology}  Section \ref{notation} defines the notation used throughout the paper.  In
Section \ref{sec::Hardy}, we introduce our higher-order Hardy inequality for functions on $\Omega$ that vanish on $\Gamma$; this
inequality is of fundamental importance to our strategy for constructing solutions.  In this section, we also
state a lemma on $ \kappa $-independent estimates for equations $ \kappa f_t + f=g$, which will be of great use to us in the elliptic-type
estimates that we shall employ for bounding normal derivatives.  We end this section with a standard weighted embedding into
standard Sobolev spaces.   Section \ref{sec::lagcurldiv} defines the Lagrangian curl and divergence operators. In Section \ref{sec::properties},
we provide basic differentiation rules for the Jacobian determinant $J$ and cofactor matrix $a$, and state the basic geometric and relevant
analytical properties of the cofactor matrix.  Section \ref{sec::Hodge} provides some well-known elliptic estimates based on the
Hodge decomposition of vector fields, as well as some basic trace estimates for the normal and tangential components of vectors fields
in $L^2(\Omega)$.

In Section \ref{sec::approx}, we introduce the degenerate parabolic approximation (\ref{approx}) to the compressible Euler equations
(\ref{ce0}), which takes the form $ \rho_0 v_t^i + a^k_i ( \rho_0 ^2   J^{-2} ),_k + \kappa \p_t\bigl[a^k_i ( \rho_0^2   J^{-2} ),_k \bigr]=0$,
where $ \kappa >0$ denotes the artificial viscosity parameter, and  with the  special choice of  the degenerate parabolic operator $ \kappa \p_t\bigl[a^k_i ( \rho_0^2   J^{-2} ),_k \bigr]$ which
preserves a majority of the geometric structure of the Euler equations.   In particular,  the structure of the energy estimates for the horizontal space derivatives as well as time derivatives are  essentially preserved, the elliptic-type estimates for vertical (or normal) derivatives are
kept intact, while the estimates for vorticity are not exactly preserved, but can still be obtained with some additional structural observations
employed.

Section \ref{sec_kproblem} is devoted to the construction of solutions to the degenerate parabolic $ \kappa $-problem (\ref{approx}) on
a time interval $[0,T_ \kappa ]$, where $T_ \kappa $ may a priori approach zero as $ \kappa \to 0$.   The 1-D version of the parabolic
$ \kappa$-problem has been studied by us in \cite{CoSh2009} and also in \cite{EpMa2009} in the context of  the Wright-Fisher diffusion
arising in mathematical biology.

The construction of solutions in the 3-D setting is significantly more challenging.  Our approach is to (1) compute the Lagrangian divergence
of the $ \kappa $-problem to find a nonlinear degenerate parabolic equation for   $ \rho_0 \operatorname{div} _\eta v$, (2) compute
the Lagrangian curl of the $ \kappa $-problem to  find the evolution equation for $ \operatorname{curl} _\eta v$, and (3) to consider the
vertical (or normal) component  of the trace of the  $ \kappa $-problem on the boundary $\Gamma$, and find an evolution 
equation for $v^3$.   We then linearize these three
evolution equations, and obtain a solution to the linearized problem via an additional approximation scheme, which requires us to
horizontally smooth the linearized boundary evolution PDE for $v^3$, using convolution operators on $\Gamma$.   We find a fixed-point
to this horizontally smoothed problem using the contraction mapping principle, and then perform energy estimates to find a solution
on a time-interval which is independent of the horizontal convolution parameter.   An additional contraction mapping argument is then
made to find a solution of the nonlinear $ \kappa $-problem.  One of the serious subtleties of our analysis involves the solution and
regularity of the degenerate parabolic equation for the $\rho_0 \operatorname{div}_\eta v$.

In Section \ref{section_mainestimates}, we establish $ \kappa $-independent estimates for the solutions that we have constructed to the $ \kappa $-problem
(\ref{approx}).   This is done by a combination of energy estimates for the horizontal and time-derivatives of $\eta(t)$, which rely
on the determinant structure of the Euler equations in Lagrangian variables, followed by elliptic-type estimates that give bounds
on the vertical derivatives of $\eta(t)$ and its time-derivatives.

Section \ref{sec10} uses these $ \kappa $-independent estimates to construct a solution to the compressible Euler equations as 
a limit of the sequence of parabolic solutions as $ \kappa \to 0$.  Uniqueness is proven as well.

Finally,  in Section \ref{sec::generalgamma}, we describe the modifications which are necessary for the case of general $ \gamma >1$.

The methodology developed for the multi-D compressible Euler equations with physical vacuum singularity is somewhat general,
and can be applied to a host of other degenerate and characteristic hyperbolic systems of conservation laws such as the equations
of magneto-hydrodynamics.   

\subsection{Generalization of the isentropic gas assumption}  The  general form of the compressible Euler equations in three
space dimensions are the $5 \times 5$ system of conservation laws
\begin{subequations}
  \label{claw}
\begin{alignat}{2}
\rho[u_t+ u\cdot  D u] + Dp(\rho)&=0 \,,   \label{claw.a}\\
 \rho_t+ {\operatorname{div}}  (\rho u) &=0 \,, \label{claw.b} \\
  (\rho {\mathfrak E})_t+ {\operatorname{div}}  (\rho u\mathfrak{E} + p u) &=0 \,,  \label{claw.c}
\end{alignat}
\end{subequations}
where (\ref{claw.a}), (\ref{claw.b}) and (\ref{claw.c}) represent the respective conservation of  momentum, mass, and 
total energy.  Here, the quantity $\mathfrak{E}$ is the sum of contributions from the kinetic energy $ {\frac{1}{2}} |u|^2$, and
the internal energy $e$, i.e.,$\mathfrak{E}= {\frac{1}{2}} |u|^2 + e$.  For a single phase of compressible liquid or gas, $e$ becomes a
well-defined function of $\rho$ and $p$ through the theory of thermodynamics, $e = e(\rho, p)$.  Other interesting and useful
physical quantities, the temperature $T(\rho,p)$ and the entropy $S(\rho,p)$ are defined through the following consequence of the
second law of thermodynamics
$$
T\ dS = de  - \frac{p}{ \rho^2} \ d\rho  \,.
$$
For {\it ideal gases}, the quantities $e,T,S$ have the explicit formulae:
\begin{align*} 
e(\rho, p) & = \frac{p}{\rho(\gamma -1)} = \frac{T}{\gamma -1} \\
T(\rho,p) &= \frac{p}{\rho} \\
p &= e^S \rho ^ {\gamma}, \ \ \ \gamma > 1,  \ \text{ constant} \,.
\end{align*} 
In regions of smoothness, one often uses velocity and a convenient choice of two additional variables among the five
quantities $S,T,p,\rho,e$ as independent variables.  For the Lagrangian formulation, the entropy $S$ plays an important role, as it satisfies the
transport equation
\begin{equation}\nonumber
S_t + (u \cdot D )S =0 \,,
\end{equation} 
 and as such, $S \circ \eta = S_0$, where $S_0(x) = S(x,0)$ is the initial entropy function.   Thus, by replacing $f$ with $e^{S \circ \eta} \rho_0^{\gamma} J ^{-\gamma} $, our analysis for the isentropic case naturally generalizes to the $5 \times 5$ system of conservation laws.

\section{Notation and Weighted Spaces}\label{notation}

\subsection{The gradient and the horizontal derivative} The reference domain $\Omega$ is defined in Section 
\ref{subsec_domain}.
 Throughout the paper the symbol $D$  will be used to 
 denote the three-dimensional gradient vector
 $$D=\left(\frac{\p}{\p x_1}, \frac{\p}{\p x_2},\frac{\p}{\p x_3}\right) \,,$$ 
 and let $\bar \p$ denote the horizontal derivative
 $$
 \bar \p = \left(\frac{\p}{\p x_1}, \frac{\p}{\p x_2}\right) \,.
 $$
 
 \subsection{Notation for partial differentiation}
 The $k$th partial derivative of $F$ will be denoted by $F,_k = \frac{ \p F}{ \p x_k}$.

\subsection{The divergence and curl operators}
We use the notation $ \operatorname{div} V$ for the divergence of a vector field $V$ on $ \Omega  $:
$$
\operatorname{div} V = V^1,_1 + V^2,_2 + V^3,_3 \,,
$$
and we use $ \operatorname{curl} V$ to denote the curl
of a vector $V$ on $ \Omega $:
$$
\operatorname{curl} V = \left( V^3,_2 - V^2,_3\,, V^1,_3 - V^3,_1 \,, V^2,_1 - V^1,_2\right) \,.
$$

Throughout the paper, we will make use of the permutation symbol 
\begin{equation}\label{permutation}
\varepsilon_{ijk} = \left\{\begin{array}{rl}
1, & \text{even permutation of } \{ 1, 2, 3\}, \\
-1, & \text{odd permutation of } \{ 1, 2, 3\}, \\
0, & \text{otherwise}\,,
\end{array}\right.
\end{equation} 
This allows us to write the $i$th component of the curl of a vector-field $V$ as
$$
[ \operatorname{curl}  V]_i = \varepsilon_{ijk} V^k,_j  \text{ or equivalently } \operatorname{curl} V = \varepsilon_{\cdot jk} V^k,_j
$$
which agrees with our definition above, but is notationally convenient.

We will also define the Lagrangian divergence and curl operators as follows:
\begin{equation}\label{lagrangian_div}
\operatorname{div} _\eta W = A^j_i W^i,_j \,,
\end{equation} 
and 
\begin{equation}\label{lagrangian_curl}
\operatorname{curl} _\eta V = \varepsilon_{\cdot jk} A^r_j W^k,_r \,.
\end{equation} 
In the sequel we shall also use the notation $ \operatorname{div} _{\bar \eta}$ and $\operatorname{curl} _{\bar \eta}$ to
mean the operations defined by (\ref{lagrangian_div}) and (\ref{lagrangian_curl}), respectively, with $\bar A = [ D\bar \eta] ^{-1} $
replacing $A$,

Finally, we will make use of the 2-D divergence operator $\operatorname{div} _\Gamma$ for vector-fields $F$ on  the 2-D boundary $\Gamma$:
\begin{equation}\label{divGamma}
\operatorname{div} _\Gamma F = F^1,_1 + F^2,_2 \,.
\end{equation} 

\subsection{Sobolev spaces on $\Omega$}

For integers $k\ge 0$ and a smooth, open domain $\Omega$ of $\mathbb{R}  ^3$, 
we define the Sobolev space $H^k(\Omega)$ ($H^k(\Omega; {\mathbb R}^3 )$) to
be the completion of $C^\infty(\Omega)$ ($C^\infty(\Omega; {\mathbb R}^3)$) 
in the norm
$$\|u\|_k := \left( \sum_{|a|\le k}\int_\Omega \left|   D^ a u(x)
\right|^2 dx\right)^{1/2},$$
for a multi-index $a \in {\mathbb Z} ^3_+$, with the standard convention that  $|a|=a_1 +a_2+ a _3$. 
For real numbers $s\ge 0$, the Sobolev spaces $H^s(\Omega)$ and the norms $\| \cdot \|_s$ are defined by interpolation.
We will  write $H^s(\Omega)$ instead of $H^s(\Omega;{\mathbb R} ^3)$ 
for vector-valued functions.  In the case that $s\ge 3$, the above definition also holds for domains
 $\Omega$  of class $H^s$.
 
Our analysis will often make use of the following subspace of $H^1(\Omega)$:
$$
\h = \{ u \in H^1(\Omega) \ : \  u=0 \text{ on } \Gamma \,, ( x_1,x_2) \mapsto u(x_1,x_2) \text{ is periodic } \} \,,
$$
where, as usual, the vanishing of $u$ on $\Gamma$ is understood in the sense of trace.

We will on occasion also refer to  the Banach space $W^{1, \infty }(\Omega)$ consisting of $L^ \infty(\Omega)$ functions whose
 weak derivatives are also in $L^ \infty (\Omega)$.


\subsection{Einstein's summation convention} \label{subsec_einstein} Repeated Latin indices $i,j,k,$, etc., are summed
from $1$ to $3$, and repeated Greek indices $ \alpha , \beta , \gamma $, etc., are summed from $1$ to 
$2$.   For example, $F,_{ii} := \sum_{i=1,3} \frac{\p^2}{\p x_i \p x_i}$, and $F^i,_ \alpha I^{\alpha
\beta}  G^i,_\beta :=
\sum_{i=1}^3 \sum_{\alpha=1}^2 \sum_{\beta=1}^2  \frac{ \partial F^i}{ \partial x_ \alpha } 
I^{\alpha \beta } \frac{ \partial G^i}{ \partial x_\beta} $.

\subsection{Sobolev spaces on $\Gamma$}   For  functions $u\in H^k(\Gamma)$, $k \ge 0$,  we set
$$|u|_k := \left( \sum_{|\alpha |\le k}\int_\Omega \left|  \bar \p^ \alpha  u(x)
\right|^2 dx\right)^{1/2},$$
for a multi-index $\alpha  \in {\mathbb Z} ^2_+$.
   For real $s \ge 0$, the Hilbert space $H^s(\Gamma)$ and the boundary norm $| \cdot |_s$ is defined by interpolation.  The negative-order
Sobolev spaces $H^{-s}(\Gamma)$ are defined via duality: for  real $s \ge 0$,
$$
H^{-s}(\Gamma) := [ H^s(\Gamma)]' \,.
$$

\subsection{Notation for derivatives and norms}
Throughout the paper, we will use the following notation:
\begin{align*}
D & = \text{ three-dimensional gradient vector } = \left( \frac{ \partial }{ \partial x_1}  , \frac{ \partial }{ \partial x_2},
\frac{ \partial }{ \partial x_3} \right) \,, \\
\bar \p & = \text{ two-dimensional gradient vector or {\it horizontal derivative} } = \left( \frac{ \partial }{ \partial x_1}  , \frac{ \partial }{ \partial x_2} \right) \,, \\
\operatorname{div} & =  \text{ three-dimensional divergence operator} \,, \\
\operatorname{div}_\eta & =  \text{ three-dimensional Lagrangian divergence operator} \,, \\
\operatorname{curl} & =  \text{ three-dimensional curl operator} \,, \\
\operatorname{curl}_\eta & =  \text{ three-dimensional Lagrangian curl operator} \,, \\
\operatorname{div}_\Gamma & =  \text{ two-dimensional  divergence operator} \,, \\
\| \cdot \|_s & = \text{  $H^s(\Omega)$ interior norm}   \,, \\
| \cdot |_s & = \text{  $H^s(\Gamma)$ boundary norm}   \,.
\end{align*}

\subsection{The outward unit normal to $\Gamma$}  We
set $N=(0,0,1)$ on $\{x_3=1\}$ and $N=(0,0,-1)$ on $\{ x_3= 0\}$.  We use the standard basis on $ \mathbb{R}^3  $:
$e_1=(1,0,0)$, $e_2=(0,1,0)$ and $e_3=(0,0,1)$.

\section{A higher-order Hardy-type inequality and some useful Lemmas}\label{sec::Hardy}

We will make fundamental use of the following generalization of the well-known Hardy inequality to higher-order derivatives:

\begin{lemma}[Higher-order Hardy-type inequality]\label{Hardy}   Let $s\ge 1$ be a given integer, and suppose that
\begin{equation}\nonumber
u\in H^s(  \Omega )\cap \h\,. 
\end{equation} 
If $d(x) >0$ for $x \in \Omega$,  $d\in H^{r}( \Omega )$, $r= \max(s-1,3)$, and $d$ is the distance function to $\partial \Omega $ near $\partial \Omega $,  then $\d\frac{u}{d}\in H^{s-1}( \Omega )$ and 
\begin{equation}
\label{Hardys}
\d\left\|\frac{u}{d}\right\|_{s-1}\le C \|u\|_s.
\end{equation}
\end{lemma}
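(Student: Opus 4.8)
The plan is to reduce the higher-order estimate to the classical (first-order) Hardy inequality by induction on $s$. For the base case $s=1$, the statement $\|u/d\|_0 \le C\|u\|_1$ is exactly the standard Hardy inequality for functions in $\h$; near $\partial\Omega$ the distance function $d$ is comparable (with comparable derivatives, since $d \in H^r$ with $r \ge 3 > 3/2$, so $Dd \in L^\infty$ near the boundary via Sobolev embedding) to $x_3$ or $1-x_3$, and away from $\partial\Omega$ one simply has $d$ bounded below, so $u/d$ is trivially controlled there. The only delicate point in the base case is the behavior in a collar neighborhood of $\Gamma$, where one uses the fundamental theorem of calculus in the normal variable together with $u=0$ on $\Gamma$ and a one-dimensional Hardy inequality, integrated over the tangential variables.

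For the inductive step, assume the result holds for $s-1$ and let $u \in H^s(\Omega) \cap \h$. The idea is to apply a first-order derivative $\partial_\ell$ (for $\ell = 1,2,3$) to $u/d$ and write
\begin{equation}\nonumber
\partial_\ell\!\left(\frac{u}{d}\right) = \frac{\partial_\ell u}{d} - \frac{u}{d}\cdot\frac{\partial_\ell d}{d}.
\end{equation}
For the first term: if $\ell \in \{1,2\}$ is a horizontal derivative, then $\partial_\ell u$ still vanishes on $\Gamma$ (since $u$ does, and $\Gamma$ is a union of coordinate slices $\{x_3 = \text{const}\}$), so $\partial_\ell u \in H^{s-1}(\Omega)\cap \h$ and the inductive hypothesis gives $\|\partial_\ell u / d\|_{s-2} \le C\|\partial_\ell u\|_{s-1} \le C\|u\|_s$. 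If $\ell = 3$, then $\partial_3 u$ need not vanish on $\Gamma$, so the inductive hypothesis does not directly apply; here instead one uses the plain first-order Hardy inequality iteratively, or better, one decomposes $\partial_3 u = (\partial_3 u - (\partial_3 u)|_\Gamma \text{-extension})$ — but the cleaner route is to note that for the purposes of the $H^{s-1}$ norm of $u/d$ it suffices to bound $\partial^\alpha(u/d)$ only for multi-indices $\alpha$ with $|\alpha| = s-1$, and for the term $\frac{u}{d}\cdot\frac{\partial_\ell d}{d}$ we use that $\partial_\ell d / d$ behaves like $1/d$ near $\Gamma$, so this is morally $u/d^2$ and needs a \emph{second}-order Hardy bound — which is precisely why the induction must be organized carefully.

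Given that subtlety, I would instead set the induction up so as to prove the stronger package: for $u \in H^s \cap \h$, \emph{all} derivatives $D^\alpha u$ with the property that $\alpha$ involves at most the number of normal differentiations for which $D^\alpha u$ still vanishes on $\Gamma$ feed into the inductive hypothesis, while the genuinely problematic terms (those with a normal derivative hitting $u$, or a factor $1/d$ from differentiating $d$) are handled by peeling: write $\frac{u}{d} = \frac{u}{d}$ and observe $u/d^2$-type terms arise paired with one fewer derivative to spare, so one applies the $(s-2)$-level hypothesis to $u$ viewed with the weight $d^2$, which in turn reduces to the $(s-1)$-level statement applied to $u/d$ (now legitimately in $H^{s-1}\cap\h$ since $u/d$ vanishes on $\Gamma$ by the base case and the quotient structure). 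Concretely: $\|u/d\|_{s-1} \lesssim \|u/d\|_{s-2} + \|D(u/d)\|_{s-2}$, the first term is handled by the inductive hypothesis at level $s-1$ applied to $u \in H^{s-1}\cap\h$, and in the second term each summand is either $(\text{horizontal }\partial u)/d$ — inductive hypothesis again — or $(u/d)\cdot(\partial d/d)$, which equals (something bounded) $\times (u/d)/d$, i.e. we must bound $\|w/d\|_{s-2}$ with $w = u/d \in H^{s-1}\cap\h$; applying the inductive hypothesis at level $s-1$ to $w$ gives $\|w/d\|_{s-2} \le C\|w\|_{s-1} = C\|u/d\|_{s-1}$ — which is circular. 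The fix is to gain the factor genuinely: split $\Omega$ into a collar $\Omega_\delta$ near $\Gamma$ and its complement. On the complement $d \ge c_\delta > 0$, so all weighted terms are controlled by $\|u\|_s$ directly. On the collar, flatten the boundary so $d \sim x_3$, and prove the one-dimensional weighted estimate $\int_0^\delta |x_3^{-(s-1)} u^{(s-1)}|^2\,dx_3 \lesssim \sum_{j\le s}\int_0^\delta |x_3^{-(s-1-j)} u^{(j)}\cdot x_3^{j - ?}|$ ... i.e. the genuine engine is the \textbf{one-dimensional Hardy inequality applied $s-1$ times in the normal variable}, using $u(0) = u'(0) = \cdots = 0$ only for the number of vanishing orders that are actually available (namely: $u(0)=0$), and absorbing the rest into $\|u\|_s$.

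So the real structure of the proof is: (1) localize to a boundary collar via a partition of unity and straighten the boundary; (2) in the straightened collar, use Fubini and apply the elementary one-dimensional weighted Hardy inequality $\int_0^\delta t^{-2}|g(t)|^2\,dt \le 4\int_0^\delta |g'(t)|^2\,dt$ (valid for $g(0)=0$) repeatedly, each application trading one power of $1/d$ for one derivative, keeping careful track of the mixed tangential derivatives which are harmless since they commute with the weight; (3) bound all the error terms generated by differentiating the cutoff functions and the boundary-straightening diffeomorphism using $d \in H^r$ with $r = \max(s-1,3)$, which ensures the Jacobian of the straightening map and its relevant derivatives lie in $L^\infty$ or in high-enough Sobolev spaces to multiply $H^{s-1}$ functions; (4) patch the collar estimate with the trivial interior estimate. \textbf{The main obstacle} I anticipate is precisely step (2)–(3) bookkeeping: ensuring that when one iterates the one-dimensional Hardy inequality, the commutator terms between $\partial_{x_3}$ and the inverse powers of the (variable, only $H^r$-regular) distance function do not require more regularity of $d$ than is assumed, and that tangential derivatives can be freely interchanged with the normal-direction Hardy argument — this is where the hypothesis $r = \max(s-1,3)$ is exactly calibrated, and getting that calibration to work out is the crux.
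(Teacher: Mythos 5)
You correctly diagnose the circularity in the naive Leibniz route, but the fix you then propose does not contain the idea that actually makes the lemma true, and as written it would fail. The obstruction is not bookkeeping: once $D^{s-1}(u/d)$ is expanded term by term, the individual pieces are genuinely infinite, because $u$ vanishes only to first order on $\Gamma$. For instance, with $u=x_3(1-x_3)$ near $\{x_3=0\}$, both $\partial_3 u/d$ and $u\,\partial_3 d/d^2$ behave like $1/x_3\notin L^2(\Omega)$, while their difference is bounded. Hence no scheme that estimates pieces separately can succeed, and in particular your proposed engine --- applying the one-dimensional Hardy inequality repeatedly, ``each application trading one power of $1/d$ for one derivative'' --- does not close: iterating Hardy $s-1$ times in the normal variable would require $u=\partial_3u=\cdots=0$ on $\Gamma$, you have only $u=0$ there, and ``absorbing the rest into $\|u\|_s$'' is not a mechanism, since the terms you would need to absorb are not even finite. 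You yourself flag this calibration as the unresolved crux; the missing ingredient is an exact cancellation that must be exploited \emph{before} any Hardy inequality is invoked.

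The paper's induction step supplies that cancellation in closed form. Set $f:=d^{s+1}D^s(u/d)$, whose Leibniz expansion is $f=\sum_{k=0}^s C_s^k\,D^{s-k}u\,k!(-1)^k d^{s-k}$; differentiating once more, the sum telescopes and $Df$ reduces to a constant multiple of $d^{s}D^{s+1}u$, all singular intermediate terms cancelling identically. Since $f\in\h$ and in this geometry $d=x_3$ near $\{x_3=0\}$ (so no partition of unity, boundary straightening, or commutators with a variable weight arise in the collar; the hypothesis $d\in H^{r}$, $r=\max(s-1,3)$, is only needed for the interior region, where $d$ is merely positive), the fundamental theorem of calculus gives $f(x)=c_s\int_0^{x_3}y_3^{s}\,D^{s+1}u(x_1,x_2,y_3)\,dy_3$, hence $|D^s(u/d)|\le C_s\,g/d$ with $g:=\psi_1\int_0^{x_3}|D^{s+1}u|\,dy_3+\psi_2\int_{x_3}^1|D^{s+1}u|\,dy_3\in\h$ and $\|g\|_1\le C\|D^{s+1}u\|_0$. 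A single application of the classical Hardy inequality to $g$ then yields $\|D^s(u/d)\|_0\le C\|D^{s+1}u\|_0$, and the inductive hypothesis controls $\|u/d\|_{s-1}$. (A correct variant in the spirit of your collar reduction is the averaged representation $u(x)/x_3=\int_0^1\partial_3u(x_1,x_2,\tau x_3)\,d\tau$ combined with Minkowski's integral inequality, but that, too, is an identity for the whole quotient rather than a term-by-term estimate, and it is not what your proposal sets up.)
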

\begin{proof}
Given the assumptions on $d(x)$, it is clear that (\ref{Hardys}) holds on all interior regions, and so on all open subsets $ \omega \subset \Omega $.  We thus prove
that this inequality holds near the boundary $\Gamma$.  All of our computations below will hold  near the boundary  $\Gamma$. 

We use an induction argument.
The case $s=1$ is of course the classical Hardy inequality. Let us now assume that  the inequality (\ref{Hardys}) holds for a given $s\ge 1$, and suppose that
$$u\in H^{s+1}(\Omega )\cap \h\,.$$ 
With $ \alpha$ denoting the multi-index $( \alpha _1, \alpha _2, \alpha _3)$ and using the notation $| \alpha | = \alpha _1+ \alpha _2+ \alpha _3$, we set
$$D^ \alpha = \frac{\partial ^{| \alpha |}}{\p_{x_1}^{ \alpha _1} \p_{x_2}^{ \alpha _2} \p_{x_3}^{ \alpha _3} }
$$
and for $m \in \mathbb{N}  $,
$$
D^m = \sum_{ | \alpha| =m} D^ \alpha  \,.
$$

A simple computation shows that for $m \in \mathbb{N}  $,
\begin{equation}
\label{Hardy1}
D^m(\frac{u}{d})=\frac{f}{d^{m+1}},
\end{equation}
with $$f=\sum_{k=0}^m C_m^k D^{m-k}u\ k! (-1)^k d^{m-k}$$
for a constant $C_m^k$ depending on $k$ and $m$.
From the regularity of $u$, we see that $f\in \h$. Next, with $D=D^1 $, we obtain the identity
\begin{align}
Df&=\sum_{k=0}^s C_s^k D^{s+1-k}u\ k! (-1)^k d^{s-k}+\sum_{k=0}^{s-1} C_s^k D^{s-k}u\ k! (-1)^k d^{s-k-1}(s-k)\n\\
&=D^{s+1}u\ s! (-1)^s d^s +\sum_{k=1}^s C_s^k D^{s+1-k}u\ k! (-1)^k d^{s-k}\n\\
& \qquad\qquad\qquad +\sum_{k=0}^{s-1} C_s^{k+1} D^{s-k}u\ (k+1)! (-1)^k d^{s-k-1}\n\\
&=D^{s+1}u\ s! (-1)^s d^s \,. \label{acs1}
\end{align}
Since $f\in \h$, we deduce from (\ref{acs1}) that for any $x \in \Omega$ with $x_3\in (0,\frac{1}{2}]$, 
\begin{equation*}
f(x_1,x_2,x_3)=(-1)^s s!\ \int_0^{x_3} D^{s+1}u(x_1,x_2,y_3)\ y_3^s dy_3,
\end{equation*}
which by substitution in (\ref{Hardy1}) yields the identity
\begin{equation*}
\d D^s(\frac{u}{d})(x_1,x_2, y_3)=\frac{(-1)^s s!\ \int_0^{x_3} D^{s+1}u(x_1,x_2,y_3)\ y_3^s dy_3}{x_3^{s+1}} \,.
\end{equation*}
Hence, a simple upper-bound provides the inequality
\begin{equation*}
\d \bigl|D^s(\frac{u}{d})(x)\bigr|\le s!\ \frac{\psi_1(x_3)\int_0^{x_3} |D^{s+1}u(x_1,x_2,y_3)|\  dy_3}{d(x)},
\end{equation*}
where $\psi_1$ is the piecewise affine function equal to $1$ on $[0,\frac{1}{2}]$ and to $0$ on $[\frac{3}{4},1]$. Next, for any $x \in \Omega$ with  $x_3\in [\frac{1}{2},1)$, we obtain in a similar fashion that
\begin{equation*}
\d \bigl|D^s(\frac{u}{d})(x)\bigr|\le s!\ \frac{\psi_2(x_3)\int_{x_3}^1 |D^{s+1}u(x_1,x_2,y_3)|\  dy_3}{d(x)},
\end{equation*}
where $\psi_2$ is the piecewise affine function equal to $0$ on $[0,\frac{1}{4}]$ and to $1$ on $[\frac{1}{2},1]$.
It follows  that for any $x\in \Omega $,
\begin{equation}
\label{Hardy2}
\d \bigl|D^s(\frac{u}{d})(x)\bigr|\le s!\ \frac{\psi_1(x_3)\int_0^{x_3} |D^{s+1}u(x_1,x_2,y_3)|\  dy_3+\psi_2(x_3)\int_{x_3}^1 |D^{s+1}u(x_1,x_2,y_3)|\  dy_3}{d(x)}.
\end{equation}
Setting
 $$
 g=\psi_1(x_3)\int_0^{x_3} |D^{s+1}u(x_1,x_2,y_3)|\  dy+\psi_2(x_3)\int_{x_3}^1 |D^{s+1}u(x_1,x_2,y_3)|\  dy_3  \,, 
 $$
we notice that $g\in \h$, with $$\|g\|_1\le C\|D^{s+1}u\|_0.$$ Therefore, by the classical Hardy inequality, we infer from (\ref{Hardy2}) that
\begin{equation}
\label{Hardy3}
\bigl\|D^s(\frac{u}{d})\bigr\|_0\le C \|g\|_1\le C \|D^{s+1}u\|_0.
\end{equation}
Since we assumed in our induction process that our generalized Hardy inequality is true at  order $s$, we then have that
 $$\bigl\|\frac{u}{d}\bigr\|_{s-1}\le C \|u\|_s,$$ 
which, together with (\ref{Hardy3}), implies that
$$\bigl\|\frac{u}{d}\bigr\|_{s}\le  C \|u\|_{s+1},$$
and thus establishes the property at order $s+1$, and concludes the proof.
\end{proof}

\subsection{$ \kappa $-independent elliptic estimates}
 In order to obtain estimates for solutions of our approximate $ \kappa $-problem (\ref{approx}) defined below in Section \ref{sec::approx}, which are independent of the regularization parameter $\kappa$ ,  we will  need the following Lemma, whose proof can be found in Lemma 1, Section 6 of \cite{CoSh2006}:
\begin{lemma}\label{kelliptic}
 Let $\kappa>0$ and $g\in L^\infty(0,T;H^s(\Omega )))$ be given, and let $f\in H^1(0,T;H^s(\Omega ))$ be such that $$f+\kappa f_t=g\ \ \ \text{in}\ (0,T)\times I.$$ Then, $$\|f\|_{L^\infty(0,T;H^s(\Omega ))}\le C\, \max\{\|f(0)\|_s,\|g\|_{L^\infty(0,T;H^s(\Omega ))}\}.$$
\end{lemma}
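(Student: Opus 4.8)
The plan is to treat the relation $f + \kappa f_t = g$ as a linear ODE in the Banach space $H^s(\Omega)$ and to exploit the explicit Duhamel representation of its solution. Since $f \in H^1(0,T;H^s(\Omega))$, the map $t \mapsto f(t)$ is absolutely continuous with values in $H^s(\Omega)$; in particular $f(0)$ is well defined as an element of $H^s(\Omega)$, and the product rule applies. Multiplying the equation by the integrating factor $e^{t/\kappa}$ gives
\[
\frac{d}{dt}\bigl(e^{t/\kappa} f(t)\bigr) = \tfrac{1}{\kappa}\, e^{t/\kappa} g(t) \quad \text{for a.e. } t \in (0,T),
\]
an identity in $H^s(\Omega)$. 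Integrating from $0$ to $t$ and solving for $f(t)$ yields
\[
f(t) = e^{-t/\kappa} f(0) + \frac{1}{\kappa}\int_0^t e^{-(t-\tau)/\kappa}\, g(\tau)\, d\tau \,.
\]

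Next I would take the $H^s(\Omega)$-norm of this identity and use the triangle inequality together with Minkowski's inequality for the Bochner integral, obtaining
\[
\|f(t)\|_s \le e^{-t/\kappa}\|f(0)\|_s + \frac{1}{\kappa}\int_0^t e^{-(t-\tau)/\kappa}\, \|g(\tau)\|_s\, d\tau \,.
\]
Bounding $\|g(\tau)\|_s \le \|g\|_{L^\infty(0,T;H^s(\Omega))}$ and computing $\frac{1}{\kappa}\int_0^t e^{-(t-\tau)/\kappa}\, d\tau = 1 - e^{-t/\kappa} \le 1$, the right-hand side is at most $e^{-t/\kappa}\|f(0)\|_s + (1-e^{-t/\kappa})\|g\|_{L^\infty(0,T;H^s(\Omega))}$, a convex combination of $\|f(0)\|_s$ and $\|g\|_{L^\infty(0,T;H^s(\Omega))}$, hence bounded by their maximum. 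Taking the supremum over $t \in (0,T)$ gives the claim, in fact with $C=1$.

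The only point requiring a little care — and the only candidate for a genuine obstacle — is the justification of the integrating-factor manipulation in the Bochner setting for real (non-integer) $s$, where $H^s(\Omega)$ is defined by interpolation rather than by a sum of $L^2$-norms of derivatives. This is handled by observing that the argument above uses only the Banach-space structure of $H^s(\Omega)$ and the absolute continuity of $f$ as an $H^s(\Omega)$-valued function, both of which hold without modification; alternatively, for integer $s$ one may apply the scalar estimate derivative-by-derivative to $D^\alpha f$, $|\alpha| \le s$, and sum (producing an explicit constant $C$), and then recover the general case by interpolation. Since the statement is quoted from \cite{CoSh2006}, I would simply record the Duhamel computation above for completeness.
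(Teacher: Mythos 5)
Your proof is correct, and it is essentially the argument behind the cited result (the paper itself only refers to Lemma 1, Section 6 of \cite{CoSh2006}, where the estimate is obtained from the same integrating-factor/Duhamel representation of the ODE $f+\kappa f_t=g$ in $H^s(\Omega)$). The norm computation giving the convex combination of $\|f(0)\|_s$ and $\|g\|_{L^\infty(0,T;H^s(\Omega))}$, hence the bound with $C=1$, is exactly right.
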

In practice, $f$ will usually denote $L(V)$, where $L$ is some nonlinear (possibly degenerate) elliptic-type  operator and $V$
is some combination of space and time derivatives of $\eta(t)$.


\subsection{The embedding of a weighted Sobolev space}
The derivative loss inherent to this degenerate problem is a consequence of the weighted embedding
we now describe.

Using $d$ to denote the distance function to
the boundary $\Gamma$,  and letting $p=1$ or $2$,
the weighted Sobolev space  $H^1_{d^p}(\Omega)$, with norm given by 
$\int_\Omega d(x)^p (|F(x)|^2+| DF (x)|^2 )\, dx$ for any $F \in H^1_{d^p}(\Omega)$, 
satisfies the following embedding:
$$H^1_{d^p}(\Omega) \hookrightarrow   H^{1 - \frac{p}{2}}(\Omega)\,;$$
therefore, there is a constant $C>0$ depending only on $\Omega$ and $p$, such that
\begin{equation}\label{w-embed}
\|F\|_{1-p/2} ^2 \le C \int_\Omega d(x)^p \bigl( |F(x)|^2 + \left| DF(x) \right|^2\bigr) \, dx\,.
\end{equation} 
See, for example,  Section 8.8 in Kufner \cite{K1985}.

\section{The  Lagrangian vorticity and divergence} \label{sec::lagcurldiv}

We use  the permutation symbol  (\ref{permutation}) to 
write  the basic identity regarding the $i$th component of the curl of a vector
field $u$:
$$
(\operatorname{curl}  u)_i = \varepsilon_{ijk} u^k,_j \,.
$$
The chain rule shows that
$$
(\operatorname{curl}  u(\eta))_i  = \varepsilon _{ijk} A^s_j v^k,_s \,,
$$
Using our definition (\ref{lagrangian_curl}) of the Lagrangian curl operator $ \operatorname{curl} _\eta$, we
write
\begin{equation}\label{lagcurl}
[\operatorname{curl} _\eta v]_i: = \varepsilon _{ijk} A^s_j v^k,_s \,.
\end{equation} 
Taking the Lagrangian curl of (\ref{ce_vor}) yields the Lagrangian
vorticity equation
\begin{equation} \label{vorticity}
\varepsilon _{kji} A^s_j  v_t^i,_s
=0\,, \ \ \text{ or } \ \ \operatorname{curl} _ \eta v_t =0 \,.
\end{equation}

Similarly, the chain-rule shows that $ \operatorname{div} u (\eta) = A^j_i v^i,_j$, and according to (\ref{lagrangian_div}),
\begin{equation} \label{lagdiv}
\operatorname{div} _\eta v = A^j_i v^i,_j \,.
\end{equation}

\section{Properties of the determinant $J$, cofactor matrix $a$, unit normal $n$, and a polynomial-type inequality}\label{sec::properties}
\subsection{Differentiating the Jacobian determinant}  The following identities will be
useful to us:
\begin{align}
{\bar\partial}J&=  a^s_r{\bar\partial} \frac{\partial \eta^r}{\partial x^s} {\text{ (horizontal differentiation )}}\,, \label{J1}\\
\partial_t  J&= a^s_r \frac{\partial v^r}{\partial x^s}  \ \ \text{ (time differentiation using $v=\eta_t$)} \,. \label{J2}
\end{align}

\subsection{Differentiating the cofactor matrix}  Using (\ref{J1}) and (\ref{J2}) and the fact that $a= J\, A$,
we find that
\begin{align}
{\bar\partial} a^k_i &=  {\bar\partial} \frac{\partial \eta^r}{\partial x^s} J^{-1}
[a^s_r a^k_i - a^s_i a^k_r]  
{\text{ (horizontal differentiation)} }\,, \label{a1}\\
\partial_t  a^k_i &=  \frac{\partial v^r}{\partial x^s}  J^{-1} 
[a^s_r a^k_i - a^s_i a^k_r]  
\ \ \text{ (time differentiation using $v=\eta_t$)} \,. \label{a2}
\end{align}

\subsection{The Piola identity}  It is a fact that the columns of every cofactor matrix
are divergence-free and satisfy
\begin{equation}\label{a3}
a^k_i, _k =0\,.
\end{equation}
The identity (\ref{a3}) will play a vital role in our energy estimates. (Note that we use
the notation cofactor for what is commonly termed the {\it adjugate matrix}, or the transpose
of the cofactor.)

\subsection{A geometric identity involving the curl operator}
\begin{lemma} \label{lem_curlcurl}
\begin{align*} 
\p_t a^k_i,_j a^j_i & = [\operatorname{curl} \operatorname{curl} v]^k + v^r,_{sj} \Bigl( J^{-1} [a^s_ra^k_i - a^s_i a^k_r]a^j_i - [\delta^s_r \delta ^k_i - \delta ^s_i \delta ^k_r] \delta ^j_i \Bigr)  \\
& \qquad\qquad\qquad\qquad +  v^r,_{s} \Bigl( J^{-1} [a^s_ra^k_i - a^s_i a^k_r] \Bigr),_j a^j_i\,.
\end{align*} 
\end{lemma}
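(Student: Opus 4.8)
The plan is to obtain the identity by a direct computation, starting from the known differentiation rule for the cofactor matrix, and then recognizing the Euclidean ``frozen-coefficient'' piece as a $\operatorname{curl}\operatorname{curl}$. First I would apply the formula (\ref{a2}),
\[
\p_t a^k_i = v^r,_s\, J^{-1}\bigl[a^s_r a^k_i - a^s_i a^k_r\bigr],
\]
and differentiate it with respect to $x_j$ using the product rule; this produces exactly the two terms
\[
\p_t a^k_i,_j = v^r,_{sj}\, J^{-1}\bigl[a^s_r a^k_i - a^s_i a^k_r\bigr] + v^r,_s\,\Bigl( J^{-1}\bigl[a^s_r a^k_i - a^s_i a^k_r\bigr]\Bigr),_j.
\]
Contracting with $a^j_i$ and then adding and subtracting the corresponding ``Euclidean'' coefficient $\delta^j_i[\delta^s_r\delta^k_i - \delta^s_i\delta^k_r]$ in the first term yields the stated right-hand side, provided one verifies the single algebraic identity
\[
v^r,_{sj}\,\delta^j_i\bigl[\delta^s_r\delta^k_i - \delta^s_i\delta^k_r\bigr] = [\operatorname{curl}\operatorname{curl} v]^k.
\]

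The bulk of the work is this last identity, which is purely a statement about the flat Laplacian and curl. I would expand the left side: $\delta^j_i\bigl[\delta^s_r\delta^k_i - \delta^s_i\delta^k_r\bigr]v^r,_{sj} = v^k,_{ss} - v^i,_{ik} = \Delta v^k - (\operatorname{div} v)^k,_k$, i.e. $\Delta v - D\operatorname{div} v$. Then I would invoke the standard vector-calculus identity $\operatorname{curl}\operatorname{curl} v = D\operatorname{div} v - \Delta v$, so that $\Delta v - D\operatorname{div} v = -\operatorname{curl}\operatorname{curl} v$; a sign needs to be tracked carefully here, since the claimed identity in the Lemma has $+[\operatorname{curl}\operatorname{curl} v]^k$. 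The resolution is that the frozen-coefficient term appears inside the subtracted bracket $\bigl(\cdots - [\delta^s_r\delta^k_i-\delta^s_i\delta^k_r]\delta^j_i\bigr)$, so the overall contribution of the Euclidean part to the right-hand side is $-v^r,_{sj}[\delta^s_r\delta^k_i-\delta^s_i\delta^k_r]\delta^j_i = -(\Delta v^k - D^k\operatorname{div} v) = +[\operatorname{curl}\operatorname{curl} v]^k$, which matches. I would double-check this by computing $[\operatorname{curl}\operatorname{curl} v]^k = \varepsilon_{k\ell m}\varepsilon_{mpq}v^q,_{p\ell}$ and using $\varepsilon_{k\ell m}\varepsilon_{mpq} = \delta_{kp}\delta_{\ell q} - \delta_{kq}\delta_{\ell p}$ to confirm $[\operatorname{curl}\operatorname{curl} v]^k = v^\ell,_{k\ell} - v^k,_{\ell\ell} = D^k\operatorname{div} v - \Delta v^k$.

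The main obstacle, then, is not any deep difficulty but bookkeeping: getting the index placements and the sign in the $\varepsilon$-$\varepsilon$ contraction exactly right, and making sure the splitting $J^{-1}[a^s_r a^k_i - a^s_i a^k_r] = [\delta^s_r\delta^k_i - \delta^s_i\delta^k_r] + \bigl(J^{-1}[a^s_r a^k_i - a^s_i a^k_r] - [\delta^s_r\delta^k_i - \delta^s_i\delta^k_r]\bigr)$ is inserted only in the second-derivative term $v^r,_{sj}$ and not in the first-derivative term $v^r,_s$, so that the latter is left untouched as the last summand of the Lemma. Once these two points are handled, the identity follows by collecting terms, and no further estimates or regularity considerations are needed since everything is an algebraic identity valid pointwise wherever $\eta$ is $C^2$ and $J \neq 0$.
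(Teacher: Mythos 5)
Your route is exactly the paper's: apply (\ref{a2}), differentiate in $x_j$, contract with $a^j_i$, add and subtract the frozen coefficient only in the second-derivative term, and identify the Euclidean piece through $\operatorname{curl}\operatorname{curl} = D\operatorname{div} - \Delta$. However, your verification of the flat identity contains a computational error that you then patch with an invalid step, and the two happen to cancel. Computing the contraction directly: summing over $i$ in $\delta^j_i\bigl[\delta^s_r\delta^k_i - \delta^s_i\delta^k_r\bigr]$ gives $\delta^s_r\delta^{kj} - \delta^{sj}\delta^k_r$, so
\begin{equation*}
v^r,_{sj}\,\delta^j_i\bigl[\delta^s_r\delta^k_i - \delta^s_i\delta^k_r\bigr] = v^r,_{rk} - v^k,_{jj} = D^k\operatorname{div} v - \Delta v^k = +[\operatorname{curl}\operatorname{curl} v]^k\,,
\end{equation*}
not $\Delta v^k - D^k\operatorname{div} v$ as you wrote (you swapped the two terms). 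Since the added-back copy already equals $+[\operatorname{curl}\operatorname{curl} v]^k$, no further sign flip is needed.

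Your proposed ``resolution'' --- that the Euclidean term contributes to the right-hand side with an overall minus sign because it sits inside the subtracted bracket --- is not correct reasoning: in the add-and-subtract, the subtracted copy is absorbed into the bracket $\bigl(J^{-1}[a^s_ra^k_i - a^s_ia^k_r]a^j_i - [\delta^s_r\delta^k_i - \delta^s_i\delta^k_r]\delta^j_i\bigr)$, while the added-back copy appears standalone with a plus sign, and it is this standalone copy that must be identified with $[\operatorname{curl}\operatorname{curl} v]^k$. As written, your argument lands on the correct statement only because the index error and the spurious sign flip compensate each other. With the contraction computed correctly, the proof closes exactly as in the paper, and your final cross-check $\varepsilon_{k\ell m}\varepsilon_{mpq} = \delta_{kp}\delta_{\ell q} - \delta_{kq}\delta_{\ell p}$, giving $[\operatorname{curl}\operatorname{curl} v]^k = D^k\operatorname{div} v - \Delta v^k$, is the correct and sufficient verification.
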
 
The structure of the right-hand side will be very important to us:  the curl
structure of the first term will be crucially used in  order to  construct solutions;
the second term can be made small by virtue of the fact that  $J^{-1} [a^s_ra^k_i - a^s_i a^k_r]a^j_i - [\delta^s_r \delta ^k_i - \delta ^s_i \delta ^k_r] \delta ^j_i$ can be made small for short time; the third term is lower-order with respect to the derivative count on $v$ and can
be made small using the fundamental theorem of calculus.

\begin{proof}[Proof of Lemma \ref{lem_curlcurl}] Using the identity (\ref{a2}), we see that
\begin{align*} 
\p_t a^k_i,_j a^j_i & = v^r,_{sj}  J^{-1} [a^s_ra^k_i - a^s_i a^k_r]a^j_i   
 +  v^r,_{s} \Bigl( J^{-1} [a^s_ra^k_i - a^s_i a^k_r] \Bigr),_j a^j_i\,.
\end{align*} 
Adding and subtracting $ [\delta^s_r \delta ^k_i - \delta ^s_i \delta ^k_r] \delta ^j_i$, and using the identity 
$ \operatorname{curl} \operatorname{curl} = D \operatorname{div} - \Delta $ yields the result.
\end{proof} 

\subsection{Geometric identities for the surface $\eta(t)(\Gamma)$}
The vectors 
$\eta,_ \alpha $ for $ \alpha =1,2$ span the tangent plane to the surface $\Gamma(t)= \eta(t)(\Gamma) $ in $\mathbb{R}  ^3$, and
$$
{\tau}_ 1 : =\frac{\eta,_ 1}{|\eta,_1 |} \,, \ \  {\tau}_ 2 : =\frac{\eta,_ 2}{|\eta,_2 |} \,, \ \ \text{ and }  n:= \frac{\eta,_1 \times \eta,_2}{ | \eta,_1 \times \eta,_2|}
$$
are the unit tangent and normal vectors, respectively, to $\Gamma$.    

Let ${g}_{ \alpha \beta }= \eta,_ \alpha  \cdot \eta,_ \beta $ denote the induced metric on the surface $\Gamma$; then
$\det g = | \eta,_1 \times \eta,_2|^2$  so that
$$
\sqrt{g}\, n:= \eta,_1 \times \eta,_2\,,
$$
where we will use the notation $\sqrt{g}$ to mean $\sqrt{ \det g}$.

By definition of the cofactor matrix,  the row vector
\begin{equation}\label{a3i}
{a}^3_i = \left[
\begin{array}{c}
\eta^2,_1 \eta^3,_2 - \eta^3,_1 \eta^2,_2 \\
\eta^3,_1 \eta^1,_2 - \eta^1,_1 \eta^3,_2 \\
\eta^1,_1 \eta^2,_2 - \eta^1,_2 \eta^2,_1 
\end{array}\right] \,, \text{ and } \sqrt{g} = | {a}^3_i| \,.
\end{equation}
It follows that
\begin{equation}\label{nisa3i}
n = {a}^3_i/ \sqrt{g}\,.
\end{equation} 


\subsection{A polynomial-type inequality} \label{subsec_poly} For a constant $M_0\ge 0$,  suppose that $f(t)\ge 0$, 
$t \mapsto f(t)$ is continuous,  and
\begin{equation}\label{f}
f(t) \le M_0 + C\,t\, P(f(t))\,,
\end{equation}
where $P$ denotes a polynomial function,  and  $C$ is a generic constant.
Then for $t$ taken sufficiently small, we have the bound
$$
f(t) \le 2M_0\,.
$$
This type of inequality, which we introduced in  \cite{CoSh2006}, can be viewed
as  a generalization of standard nonlinear Gronwall inequalities.  We will make use of this inequality often
in our subsequent analysis, and also in the form wherein $\sqrt{t}$ replaces $t$.


\section{Trace estimates and the Hodge decomposition elliptic estimates}\label{sec::Hodge}
The normal trace theorem  provides  the existence of the
normal trace $w \cdot N$ of a velocity field $w\in L^2(\Omega)$ with
${\operatorname{div}}  w \in L^2(\Omega) $ (see, for example, \cite{Temam1984}). For our purposes, the following
form is most useful:  if $\bar \p w \in L^2(\Omega) $ with  ${\operatorname{div}}  w\in
L^2(\Omega) $, then $\bar \p w\cdot N$ exists in
$H^{-0.5}(\Gamma)$ and
\begin{align}
\|\bar \p w\cdot N\|^2_{H^{-0.5}(\Gamma)} \le C \Big[\|\bar \p w\|^2_{L^2(\Omega)}
+ \|{\operatorname{div}}   w\|^2_{L^2(\Omega) }\Big] \label{normaltrace}
\end{align}
for some constant $C$ independent of $w$. In addition to the normal
trace theorem, we have the following
\begin{lemma}\label{tangential_trace}
Let $\bar \p w\in L^2(\Omega)$ so that ${\operatorname{curl}}  w\in L^2(\Omega) $, and let
$T_1$, $T_2$ denote the unit tangent vectors on $\Gamma$,
so that any vector field $u$ on $\Gamma$ can be uniquely written as $u^\alpha
T_\alpha$. Then
\begin{align}
\|\bar \p w\cdot T_\alpha\|^2_{H^{-0.5}(\Gamma)} \le C
\Big[\|\bar \p w\|^2_{L^2(\Omega)} + \| {\operatorname{curl}} 
w\|^2_{ L^2(\Omega) }\Big]\,,\qquad\alpha=1,2 \label{tangentialtrace}
\end{align}
for some constant $C$ independent of $w$.
\end{lemma}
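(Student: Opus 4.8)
The plan is to mimic the standard proof of the normal trace estimate (\ref{normaltrace}) but with the roles of $N$ and $T_\alpha$ swapped via a rotation-type argument, exploiting that $\operatorname{curl}$ controls the tangential-derivative-in-the-normal-direction combinations just as $\operatorname{div}$ controls the normal one. Concretely, since $\Gamma = \{x_3 = 0\} \cup \{x_3 = 1\}$ is flat in our reference configuration, the tangent vectors may be taken to be $T_1 = e_1$, $T_2 = e_2$, and $N = \pm e_3$, so the assertion reduces to bounding $|\bar\p w^1 \cdot N|_{H^{-0.5}(\Gamma)}$ and $|\bar\p w^2 \cdot N|_{H^{-0.5}(\Gamma)}$, i.e. the boundary traces of $\bar\p w^\alpha$ contracted with $e_3$. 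First I would fix a test function $\varphi \in H^{0.5}(\Gamma)$, extend it to $\Phi \in H^1(\Omega)$ with $\|\Phi\|_1 \le C|\varphi|_{0.5}$ and $\Phi$ periodic in $(x_1,x_2)$, and write the desired boundary pairing as an integral over $\Gamma$ of $\bar\p w^\alpha \, N^3 \, \varphi$.

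The key step is to convert this boundary integral into an interior integral using the divergence theorem, choosing the integrand so that the resulting volume terms involve only $\bar\p w$ and $\operatorname{curl} w$. For the component $\alpha = 1$, observe that $w^1,_3 - w^3,_1 = [\operatorname{curl} w]_2$, so $\bar\p(w^1,_3) = \bar\p([\operatorname{curl} w]_2) + \bar\p(w^3,_1)$; integrating $\bar\p w^1 \cdot e_3$ against $\varphi$ over $\Gamma$ and integrating by parts in $x_3$ (against $\Phi$) produces, on the one hand, $\int_\Omega \bar\p w^1,_3 \, \Phi + \int_\Omega \bar\p w^1 \, \Phi,_3$, and one then substitutes the curl identity for the first factor and moves the horizontal derivative $\bar\p$ off of $w^1,_1$ onto $\Phi$ in the remaining term. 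Doing this carefully, every volume term that appears is of the form $\int_\Omega (\text{curl } w \text{ or } \bar\p w) \cdot (\Phi \text{ or } D\Phi)$, which is bounded by $C(\|\bar\p w\|_{L^2(\Omega)} + \|\operatorname{curl} w\|_{L^2(\Omega)})\|\Phi\|_1$; the analogous manipulation with $w^2,_3 - w^3,_2 = -[\operatorname{curl} w]_1$ handles $\alpha = 2$. Taking the supremum over $\varphi$ with $|\varphi|_{0.5} \le 1$ yields (\ref{tangentialtrace}).

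The main obstacle I anticipate is the bookkeeping of which horizontal and normal derivatives can legitimately be integrated by parts: one must be sure, after moving derivatives around, that no term carries a derivative of $w$ that is not part of $\bar\p w$ or $\operatorname{curl} w$ — in particular the genuinely normal derivative $w^3,_3$ must never appear bare, and it is precisely the curl identities that let one trade $w^1,_3$ and $w^2,_3$ for $w^3,_1$, $w^3,_2$ plus curl terms, after which only horizontal derivatives of $w$ remain outside the curl. The periodicity in $(x_1,x_2)$ is what makes the horizontal integrations by parts boundary-term-free, and the flatness of $\Gamma$ is what makes $T_\alpha$ and $N$ literally constant coordinate vectors; for a general $H^{3.5}$ boundary one would localize and flatten first, which is consistent with the remark in the paper that local charts suffice. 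A cleaner alternative, which I would mention, is to invoke the general principle (as in Temam \cite{Temam1984}) that for a vector field $\psi$ with $\operatorname{div}\psi \in L^2$ the normal trace is controlled, applied to $\psi = \operatorname{curl} w$ appropriately paired, but the direct integration-by-parts argument above is self-contained and transparent enough that it is the route I would write out.
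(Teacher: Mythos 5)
Your argument is correct and is essentially the standard proof: the paper itself gives no internal proof of Lemma \ref{tangential_trace} (it defers to \cite{ChCoSh2007}), and the argument there is the same duality computation you describe --- pair the trace with an $H^1(\Omega)$ extension $\Phi$ of $\varphi$, apply the divergence theorem in $x_3$, and use $w^1,_3-w^3,_1=[\operatorname{curl} w]_2$, $w^2,_3-w^3,_2=-[\operatorname{curl} w]_1$ together with horizontal integrations by parts (boundary-free by periodicity) so that only $\bar \p w$ and $\operatorname{curl} w$ ever appear against $\Phi$ or $D\Phi$, which is the flat-boundary, componentwise form of the identity $\int_\Gamma (u\times N)\cdot \Phi\, dS=\int_\Omega \operatorname{curl} u\cdot \Phi\, dx-\int_\Omega u\cdot \operatorname{curl}\Phi\, dx$ with $u=\bar\p w$. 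The only blemishes are notational: ``$\bar\p w^\alpha\cdot N$'' should just be the trace of the scalar $\bar\p w^\alpha$ weighted by $N^3$ in your flux form, and the horizontal derivative you move onto $\Phi$ in the leftover term sits on $w^3,_1$ (not $w^1,_1$); neither affects the proof.
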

See \cite{ChCoSh2007} for the proof.
Combining (\ref{normaltrace}) and (\ref{tangentialtrace}), 
\begin{align}
\|\bar \p w\|_{H^{-0.5}(\Gamma)} \le C\Big[\|\bar \p w\|_{L^2(\Omega)} + \|{\operatorname{div}} 
w\|_{L^2(\Omega) } + \|{\operatorname{curl}}  w\|_{L^2(\Omega) }\Big]
\label{tracetemp}
\end{align}
for some constant $C$ independent of $w$.

The construction of our higher-order energy function is based on the following Hodge-type elliptic estimate:
\begin{proposition}\label{prop1}
For an $H^r$ domain $\Omega$, $r \ge 3$,
if $F \in L^2(\Omega;{\mathbb R} ^3)$ with $\operatorname{curl}F \in H^{s-1}(\Omega;{\mathbb R} ^3)$,
${\operatorname{div}}F\in H^{s-1}(\Omega)$, and $F \cdot N|_{\Gamma} \in
H^{s -{\frac{1}{2}}}(\Gamma)$ for $1 \le s \le r$, then there exists a
constant $\bar C>0$ depending only on $\Omega$ such that
\begin{equation}
\begin{array}{l}
\|F\|_s \le \bar C\left( \|F\|_0 + \|\operatorname{curl} F\|_{s-1}
+ \|\operatorname{div} F\|_{s-1} + |\bar \p F \cdot N|_{s-{\frac{3}{2}}}\right)\,, \\
\|F\|_s \le \bar C\left( \|F\|_0 + \|\operatorname{curl} F\|_{s-1}
+ \|\operatorname{div} F\|_{s-1} + \sum_{ \alpha =1}^2 |\bar \p F \cdot T_ \alpha |_{s-{\frac{3}{2}}}\right)\,,
\end{array}
\label{hodge}
\end{equation}
where $N$ denotes the outward unit-normal to $\Gamma$, and $T_ \alpha$ are tangent vectors for $ \alpha =1,2$.
\end{proposition}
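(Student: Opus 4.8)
The plan is to establish the Hodge-type estimate \eqref{hodge} by reducing the full three-dimensional vector identity
$$-\Delta F = \operatorname{curl}\operatorname{curl} F - D\operatorname{div} F$$
to a scalar elliptic problem for each component, and then absorbing the boundary contributions through the prescribed normal (resp. tangential) trace. First I would recall the classical Hodge/div-curl decomposition: for $F \in L^2(\Omega;\mathbb{R}^3)$ with $\operatorname{curl} F \in H^{s-1}$, $\operatorname{div} F \in H^{s-1}$, and $F\cdot N|_\Gamma \in H^{s-1/2}(\Gamma)$, one writes $F$ as the sum of a gradient and a curl, or more directly one uses the well-known a priori estimate for the elliptic system consisting of $\operatorname{curl}$, $\operatorname{div}$, and the normal boundary trace, which is elliptic in the sense of Agmon-Douglis-Nirenberg and satisfies the complementing (Lopatinskii-Shapiro) condition on the half-space; this is precisely where the domain $\Omega = \mathbb{T}^2 \times (0,1)$ being of class $H^r$ with $r \ge 3$ is used, so that the coefficients in flattening charts (here trivial, since we already have a global Cartesian system) are in $H^r$ and elliptic regularity applies up to the boundary for $1 \le s \le r$.

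The key steps, in order: (1) State the scalar Hodge estimate — if $u$ is a scalar function, control of $\bar\partial u$, $\partial_3 u$, and so on is standard; but for the vector case the honest route is to invoke the first Gaffney-type inequality $\|F\|_1 \le C(\|F\|_0 + \|\operatorname{curl} F\|_0 + \|\operatorname{div} F\|_0 + |F\cdot N|_{1/2})$ valid on a smooth bounded domain, which is classical. (2) Bootstrap from $s$ to $s+1$ by differentiating: apply a tangential (horizontal) derivative $\bar\partial$ to $F$, note that $\operatorname{curl}(\bar\partial F) = \bar\partial \operatorname{curl} F$ and $\operatorname{div}(\bar\partial F) = \bar\partial \operatorname{div} F$ since $\bar\partial$ commutes with constant-coefficient differential operators, and that $\bar\partial F \cdot N = \bar\partial(F\cdot N)$ on the flat pieces of $\Gamma$ where $N = \pm e_3$ is constant; this handles all derivatives except the purely normal ($x_3$) ones. (3) Recover the normal derivatives $\partial_3 F$ algebraically from the div and curl: writing out $\operatorname{div} F = \bar\partial_\alpha F^\alpha + \partial_3 F^3$ and two components of $\operatorname{curl} F = (\ldots, \partial_3 F^1 - \partial_1 F^3, \ldots)$ etc., one solves for $\partial_3 F^1, \partial_3 F^2, \partial_3 F^3$ in terms of $\operatorname{div} F$, $\operatorname{curl} F$, and horizontal derivatives of $F$, each of which is already controlled in $H^{s-1}$ by the previous induction step plus (2). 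Iterating this trading of one normal derivative for div/curl plus horizontal derivatives completes the induction on $s$. (4) For the second inequality in \eqref{hodge}, replace the normal-trace input by the tangential-trace input, using Lemma \ref{tangential_trace} — more precisely, the Gaffney inequality has a dual form in which $|F\cdot T_\alpha|_{s-1/2}$ replaces $|F\cdot N|_{s-1/2}$, reflecting the fact that one may prescribe either the normal or the full tangential trace in a well-posed div-curl boundary value problem. (5) Finally, observe that $|F\cdot N|_{s-1/2}$ is controlled by $\|F\|_0$, $\|\operatorname{curl} F\|_{s-1}$, $\|\operatorname{div} F\|_{s-1}$, and $|\bar\partial F \cdot N|_{s-3/2}$ via the normal trace theorem \eqref{normaltrace} applied to tangential derivatives of $F$ together with interpolation on $\Gamma$, which is exactly what converts the standard Gaffney right-hand side into the sharper one stated here with the half-derivative-lower boundary term $|\bar\partial F\cdot N|_{s-3/2}$.

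The main obstacle I anticipate is step (5) — the bookkeeping that upgrades the naive boundary term $|F\cdot N|_{s-1/2}$ to the stated $|\bar\partial F\cdot N|_{s-3/2}$, which is the form actually needed later (since in the Euler application one only has $H^{-1/2}(\Gamma)$ control of $\bar\partial(\text{something})\cdot N$ via the normal trace theorem, not $H^{1/2}$ control of the trace itself). This requires carefully splitting $F\cdot N$ on $\Gamma$ into its horizontal-derivative part, estimated by \eqref{normaltrace}, and a lower-order part estimated by the trace inequality $|F\cdot N|_{-1/2} \le C\|F\|_0^{1/2}\|F\|_1^{1/2}$ or similar, and then interpolating on the boundary; the commutation of $\bar\partial$ with the normal $N$ is clean here only because $\Gamma$ consists of the two flat pieces $\{x_3=0\}$ and $\{x_3=1\}$, which is one of the reasons the paper restricts to this model domain. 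The rest — the Gaffney inequality and the algebraic recovery of normal derivatives — is classical and can be cited from \cite{ChCoSh2007} and the references therein, so I would keep that part brief.
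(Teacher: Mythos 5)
Your outline is correct, but it is worth noting that the paper does not actually prove Proposition \ref{prop1}: it simply records that the estimates ``are well-known and follow from the identity $-\Delta F = \operatorname{curl}\operatorname{curl} F - D\operatorname{div} F$,'' citing Taylor \cite{Taylor1996}, i.e.\ it treats $F$ as solving a Poisson problem componentwise and invokes standard elliptic regularity with either normal or tangential boundary data. Your route is the more self-contained classical alternative: a Gaffney-type $H^1$ base case, induction in $s$ by applying $\bar\p$ (which commutes with $\operatorname{div}$, $\operatorname{curl}$, and with $\cdot\,N$ on the flat boundary pieces), algebraic recovery of $\p_3 F$ from $\operatorname{div} F$ and two components of $\operatorname{curl} F$, and the boundary bookkeeping that trades $|F\cdot N|_{s-1/2}$ for $|\bar\p F\cdot N|_{s-3/2}$. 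What the paper's citation buys is brevity and validity on a general $H^r$ domain in one stroke; what your argument buys is transparency on the model domain $\mathbb T^2\times(0,1)$, where $N$ is constant on each component of $\Gamma$ and no flattening charts or commutator errors arise — for a genuinely curved $H^r$ boundary your steps (2)--(3) would need the usual localization and flattening, which you only gesture at via ADN theory, but that is exactly the level of detail the paper itself omits. Two small simplifications to your step (5): the classical normal trace theorem applied to $F$ itself (not only to $\bar\p F$ as in \eqref{normaltrace}) already gives $|F\cdot N|_{-1/2}\le C(\|F\|_0+\|\operatorname{div} F\|_0)$, so you do not need the interpolation bound involving $\|F\|_1$ and the attendant absorption; and on the flat torus pieces the estimate $|u|_{s-1/2}\le C\,(|u|_{-1/2}+|\bar\p u|_{s-3/2})$ is immediate by Fourier series, which closes the conversion from the Gaffney boundary term to the one stated in \eqref{hodge}. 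With those remarks, your proposal is a sound proof of the proposition as used in the paper.
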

These estimates are well-known and follows from the identity $-\Delta F= {\operatorname{curl}}\, 
{\operatorname{curl}}F - D {\operatorname{div}}F$; a convenient reference is Taylor
\cite{Taylor1996}.

\section{An asymptotically consistent degenerate parabolic $ \kappa $-approximation of the compressible Euler equations in vacuum}
\label{sec::approx}
In order to construct solutions to (\ref{ce0}), we will add a specific artificial viscosity term to the Euler equations that preserves much
of the geometric structure of the Euler equations which is so important for our estimates,  and which produces a degenerate parabolic approximation, which we term the approximate $ \kappa $-problem.

\subsection{Smoothing the initial data}\label{subsec::initdata}
For the purpose of constructing solutions, we will smooth the initial velocity field $u_0$.  We will also smooth the initial density field
$\rho_0$ while preserving the conditions that $\rho(x) >0$ for $ x \in \Omega$, and that $\rho_0$ satisfies (\ref{degen}) near $\Gamma$.

For $\vartheta>0$, 
let $0 \le \varrho_ \vartheta \in C^ \infty _0( \mathbb{R}  ^3)$ denote the standard family of mollifiers with $\spt (\varrho_ \vartheta )
\subset \overline{ B(0, \vartheta)}$,  and   let $\mathcal{E}_ \Omega  $ denote a
Sobolev extension operator mapping $ H^s(\Omega)$ to $H^s( \mathbb{R}^3  )$ for $ s\ge 0$. 

We set $u_0^ \vartheta = \varrho_ \vartheta*\mathcal{E}_ \Omega (u_0)$, so that for $\vartheta >0$, $u_0^ \vartheta \in C^ \infty (\overline 
\Omega )$.   The smoothed initial density function $\rho_0 ^ \vartheta$ is defined as the solution of the fourth-order elliptic equation
\begin{subequations}
  \label{rhozero}
\begin{alignat}{2}
\Delta ^2 \rho_0^ \vartheta &= \varrho_ \vartheta*\mathcal{E}_ \Omega (\Delta ^2 \rho_0)  \ \ \ &&\text{in} \ \ \Omega
\,, \label{rhozero.a}\\
\rho_0^ \vartheta &= 0 \ \ \ &&\text{on} \ \ \Gamma  \,, \label{rhozero.b}\\
\frac{ \p \rho_0 ^ \vartheta}{\p N} &=  \Lambda _\vartheta  \frac{ \p \rho_0 }{\p N}   &&\text{on } \Gamma \,, \label{rhozero.c} \\
(x_1,x_2) & \mapsto \rho_0(x_1,x_2,x_3) \text{ is $1$-periodic  }  \,. \label{rhozero.d}
\end{alignat}
\end{subequations}
$ \Lambda _ \vartheta$ is the boundary convolution operator defined in Section \ref{subsec_hor}.
By elliptic regularity,  $\rho_0^\vartheta \in C^ \infty (\overline \Omega)$, and by choosing $\vartheta>0$ sufficiently small, we see
that $\rho_0^\vartheta(x) >0$ for $x \in \Omega $, and that the physical vacuum condition (\ref{degen}) is satisfied near $\Gamma$.
This follows from the  fact that $ \frac{ \p \rho_0 ^ \vartheta}{\p N} < 0$ on $\Gamma$ for $\vartheta >0$ taken sufficiently small, which
implies that $\rho_0^\vartheta(x)>0$  for $ x \in \Omega $ very close to $\Gamma$.  On the other hand, $\rho_0^\vartheta(x) >0$ for
all $x \in \omega $ for any open subset $ \omega \subset \Omega $ by taking $ \vartheta$ sufficiently small.

Until Section \ref{subsec_optimal}, for notational convenience, we will denote $u_0^\vartheta$ by $u_0$ and $\rho_0^\vartheta$ by 
$\rho_0$.    In Section \ref{subsec_optimal},  we will show that Theorem \ref{theorem_main} holds with the
optimal regularity stated therein.

\subsection{The degenerate parabolic approximation to the compressible Euler equations: the  $ \kappa $-problem}

\begin{definition}[The approximate $ \kappa $-problem]
   For $ \kappa > 0$,
we consider the following sequence of  degenerate parabolic approximate  $ \kappa $-problems:
\begin{subequations}
  \label{approx}
\begin{alignat}{2}
 \rho_0 v_t^i + a^k_i ( \rho_0 ^2   J^{-2} ),_k + \kappa \p_t\bigl[a^k_i ( \rho_0^2   J^{-2} ),_k \bigr]&=0  &&\text{in} \ \ \Omega \times (0,T_ \kappa ] 
\,, \label{approx.a}\\
(\eta,v)&= (e,u _0) \ \ \ &&\text{on} \ \ \Omega \times \{t=0\} \,, \label{approx.b}\\
\rho_0  &= 0   &&\text{on } \Gamma \,. \label{approx.c}
\end{alignat}
\end{subequations}
\end{definition}

 Solutions to (\ref{ce0}) will be found in the limit as $ \kappa \to 0$.
 
Note that (\ref{approx.a}) can be equivalently written in a form that is  essential for the curl estimates 
 that we shall present below:
\begin{equation} 
v_t^i + 2 A^k_i (\rho_0 J^{-1} ),_k +2\kappa  \p_t\bigl[A^k_i (\rho_0 J^{-1} ),_k\bigr] =0 \,. \tag{\ref{approx.a}'}
\end{equation}

\begin{remark} There appear to be few other possible choices for the artificial viscosity term given in (\ref{approx.a}).  Our
choice, $ \kappa \p_t\bigl[a^k_i ( {\rho_0 }^2   J^{-2} ),_k \bigr]$, preserves the structure of the energy estimates and also, thanks
to Lemma \ref{kelliptic}, the structure of the elliptic-type estimates that we use to bound normal derivatives.   On the other hand,
the addition of this artificial parabolic term does not exactly preserve the transport structure of vorticity, but instead produces
error terms that we can nevertheless control.  
\end{remark} 

\begin{remark} Note that we do not require any compatibility conditions on the initial data in order to solve the Euler
equations (\ref{ce0}) (or in Eulerian form (\ref{ceuler})), and the same remains true for our approximate $ \kappa $-problem
(\ref{approx}).  The lack of compatibility conditions stems from the degeneracy condition (\ref{degen}) which allows us to
solve for $\eta$ and $v$ without prescribing any boundary conditions on displacements or velocities.
\end{remark} 

\subsection{Time-differentiated velocity fields at $t=0$}  
Given $u_0$ and $\rho_0$,  and using the fact that $\eta(x,0)=x$, the quantity $v_t|_{t=0}$ for the degenerate parabolic $ \kappa $-problem is computed using (\ref{approx.a}'):
$$
v^i_t|_{{t=0}} =- \left.\left(  2 \kappa \p_t[ A^k_i (\rho_0 J^{-1} ),_k] + 2 A^k_i (\rho_0 J^{-1} ),_k  \right)\right|_{t=0} = 
 (2 \kappa \rho_0 \operatorname{div} u_0  - 2 \rho_0 ),_i + 2 \kappa u_0^k,_i \rho_0,_k \,.
$$
Similarly, for all $k \in \mathbb{N}  $,
$$
\partial_t^k v^i |_{{t=0}} = \left. \frac{\partial^{k-1}}{\partial t^{k-1}}\left(   -2\kappa \p_t[A^k_i (\rho_0 J),_k] -  2 A^k_i (\rho_0 J^{-1} ),_k      \right)\right|_{t=0} \,.
$$
These formulae make it clear that  each $\partial_t^k v|_{t=0}$ is a function of space-derivatives of $u_0$ and $\rho_0$.



\subsection{Introduction of the $X$ variable and the $ \kappa $-problem as a function of $X$}  \label{subsec_X} We 
consider a heat-type equation which arises by letting  $a^j_i \partial_{x_j}$ act upon equation (\ref{approx.a}), and using the 
Piola identity (\ref{a3}):
\begin{align} 
&a^j_iv_t^i,_j + \kappa  \Bigl[   a^j_i a^k_i \rhoi(\rho_0^2 \p_t J^{-2} ),_k\Bigr],_j = - \kappa [ a^j_i \p_t a^k_i  \rhoi (\rho_0 ^2 J^{-2} ),_k],_j
    - 2 [a^j_i A^k_i (\rho_0 J^{-1} ),_k],_j \,. \label{heatJ1}
\end{align} 
Since $\p_t J^{-2} = -2J^{-3} J_t $, we write (\ref{heatJ1}) as 
\begin{align} 
&a^j_iv_t^i,_j - 2 \kappa  \Bigl[   a^j_i a^k_i \rhoi(\rho_0^2 J^{-3}  J_t ),_k\Bigr],_j = - \kappa [ a^j_i \p_t a^k_i  \rhoi (\rho_0 ^2 J^{-2} ),_k],_j
    - 2 [a^j_i A^k_i (\rho_0 J^{-1} ),_k],_j \,. \label{heatJ}
\end{align} 
\begin{definition}[The $X$ variable] \label{defn_X}
We set
\begin{equation}\label{X}
X= \rho_0 J^{-3}  J_t = \rho_0  J^{-3} a ^s_r v^r,_s = \rho_0 J^{-2} \operatorname{div} _\eta v \,.
\end{equation} 
\end{definition}

Using (\ref{X}), we see that
\begin{align*} 
a^j_i v_t^i,_j  &= J_{tt} - \p_t a^j_i \, v^i,_j 
 = \frac{J^3 X_t}{ \rho_0} + 3 J^{-1} (J_t)^2 - \p_t a^j_i \, v^i,_j \,,
\end{align*} 
so that we can rewrite (\ref{heatJ}) as the following nonlinear heat-type equation for $X$:
\begin{align} 
\frac{J^3 X_t}{\rho_0}- 2 \kappa  \Bigl[   a^j_i a^k_i \rhoi(\rho_0 X),_k\Bigr],_j &= - \kappa [ a^j_i \p_t a^k_i  \rhoi (\rho_0 ^2 J^{-2} ),_k],_j
    - 2 [a^j_i A^k_i (\rho_0 J^{-1} ),_k],_j  \nonumber \\
&\qquad\qquad  - 3 J^{-1} (J_t)^2 + \p_t a^j_i \, v^i,_j\,.  \label{heatX}
\end{align} 

It follows from (\ref{X}) that
\begin{equation}\label{ldivv}
\operatorname{div}_\eta v = \frac{ (X J^2)}{\rho_0} \,,
\end{equation} 
so that time-differentiating (\ref{ldivv}), we see that
\begin{equation}\label{ldivvt}
\operatorname{div}_ \eta v_t = \frac{ (X J^2)_t}{\rho_0} - \p_t A^j_i v^i,_j \,.
\end{equation} 

\subsection{The nonlinear Lagrangian  vorticity equation} The analogue of (\ref{lagcurl}) for our approximate $ \kappa $-problem
takes the form, with $f = \rho_0 J^{-1} $,
\begin{align} 
\operatorname{curl} _ \eta v_t &  = 2 \kappa \varepsilon_{\cdot ji}  v^r,_s   A^s_i \bigl[ f,_l A^l_r\bigr],_m A^m_j   \n \\
& = 2 \kappa \varepsilon_{\cdot ji}  v^r,_s   A^s_i \bigl[ \rho,_{rj}(\eta)\bigr]\,,
\label{lcurlvt}
\end{align} 
where $f=\rho(\eta)$.

We now explain how the formula (\ref{lcurlvt}) is obtained.    We have that the $k$th component of the Lagrangian curl is
\begin{align*} 
[\operatorname{curl} _ \eta v_t]^k &  = -2 \kappa \varepsilon_{k ji}  [   \p_t A^l_i f,_l  + A^l_i \p_t f,_l],_r A^r_j \\
&  = -2 \kappa \varepsilon_{k ji}  [   \p_t A^l_i f,_l ],_r A^r_j  \,,
\end{align*} 
where we have used the Lagrangian version of the fact that the curl operator annihilates the gradient operator; namely
$\varepsilon_{k ji}  (A^r_i F,_r),_j =0$ for all differentiable $F$.

It is now convenient to switch back to Eulerian variables.  We  expand $\p_t A^l_i$, and write
\begin{align*} 
 \p_t A^l_i f,_l = -  v^r,_s A^s_i  f,_l A^l_r = - [u^r,_i \rho,_r] \circ \eta \,.
\end{align*} 
Now we can compute the standard curl
operator of this quantity to find that
\begin{align*} 
\varepsilon_{ kji} [u^r,_i \rho,_r] ,_j & = \varepsilon_{ kji} u^r,_{ij} \rho,_r + \varepsilon_{ kji} u^r,_i \rho,_{rj} \\
& =  \varepsilon_{ kji} u^r,_i \rho,_{rj}  \,.
\end{align*} 
Reverting back to Lagrangian variables yields the identity (\ref{lcurlvt}).

\subsection{A boundary identity for the approximate $ \kappa $-problem} For the purposes of constructing solutions to (\ref{approx}) we
will need the formula for the normal (or vertical) component of $v_t$ on $\Gamma$:
\begin{align} 
v_t ^3 & = -2J^{-2} a^3_3  \rho_0,_3 - 2 \kappa \p_t[J^{-2} a^3_3]  \rho_0,_3   \n\\
            & =   -2J^{-2} a^3_3  \rho_0,_3 - 2 \kappa J^{-2} \p_t a^3_3 \rho_0,_3  - 2 \kappa \p_t J^{-2} \, a^3_3 \rho_0,_3 \,,
            \label{bidentity}
\end{align} 
where
\begin{align} 
a^3_3 & =  ( \eta,_1 \times \eta,_2) \cdot e_3 \label{a33} \,, \\
\p_t a^3_3 &  = ( v,_1 \times \eta,_2 + \eta,_1 \times v,_2) \cdot e_3
\label{at33} \,.
\end{align} 
We note for later use that linearizing (\ref{at33}) about $\eta =e$ produces $ \operatorname{div} _\Gamma v$ as the linearized
analogue of $\p_ta^3_3$.

\section{Solving the parabolic $\kappa$-problem (\ref{approx}) by a fixed-point method}\label{sec_kproblem}


\subsection{Functional framework for the fixed-point scheme and some notational conventions}

For $T>0$, we shall denote by $\boldsymbol{X}_T$ and $\boldsymbol{Y}_T$ the following Hilbert spaces:
\begin{align*}
\boldsymbol{X}_T &=\Bigl\{v\in L^2(0,T;H^4(\Omega))|\ 
 \p_t^a v \in L^2(0,T;H^{4-a}(\Omega))\,, \ \ a=1,2,3
 \Bigr\} \,, \\
 \boldsymbol{Y}_T& =\Bigl\{y\in L^2(0,T;H^3(\Omega))|\ 
 \p_t^a y \in L^2(0,T;H^{3-a}(\Omega))\,, \ \ a=1,2,3  \Bigr\}\,, \\
  \boldsymbol{Z}_T& =\Bigl\{v \in \boldsymbol{X}_T  \  | \  \rho_0 Dv \in \boldsymbol{Y}_T
 \Bigr\}\,, 
\end{align*}
endowed with their natural Hilbert norms:
\begin{align}
&\|v\|_{\boldsymbol{X}_T}^2  = \sum_{a=0}^3 \|\p_t^a v \|^2_{L^2(0,T;H^{4-a}(\Omega))}\,, \ \ \ 
\|y\|_{\boldsymbol{Y}_T}^2  = \sum_{a=0}^3 \|\p_t^a v \|^2_{L^2(0,T;H^{3-a}(\Omega))}\,,\n  \\
&\qquad \qquad \qquad \text{ and } \|v\|_{\boldsymbol{Z}_T}^2 = \| v\|_\XT^2 + \| \rho_0 Dv \|_\YT^2
\,. \label{XYnorms}
\end{align}  

For $M>0$, we define the following closed, bounded, convex subset of $\boldsymbol{X}_T$:
\begin{align}\label{ctm}
\mathcal{C}_T(M)=&\{ v\in \boldsymbol{Z}_T \  : 
 \  \|v\|_{\boldsymbol{Z}_T}^2\le M \}\,,  
 \end{align} 
where we  define the polynomial function $ \mathcal{N} _0$  of norms of the  initial data as follows:
\begin{equation}\label{N0}
\mathcal{N} _0 = P( \|  u_0\|_{100}, \|\rho_0\|_{100}) \,.
\end{equation} 
Since we have smoothed the initial data $u_0$ and $\rho_0$, we can  use the artificially high $H^{100}(\Omega)$-norm  in $ \mathcal{N} _0$.
  Later, in Section \ref{subsec_optimal}, we produce the optimal regularity for this initial data.

Henceforth, we assume that $T>0$ is given such that independently
of the choice of $v \in \mathcal{C} _T(M)$,  
$$
\eta(x,t) =x + \int_0^t v(x,s)ds
$$
is injective for $t \in [0,T]$, and that 
$${\frac{1}{2}} \le J(x,t) \le {\frac{3}{2}} \text{ for } \in [0,T] \text{ and } x \in \overline \Omega \,.$$
This can
be achieved by taking $T>0$ sufficiently small:  with $e(x)=x$, notice that
$$
\| J( \cdot ,t) -1\|_{L^ \infty ( \Omega )} \le C\| J( \cdot ,t) -1\|_2    = \| \int_0^t a^s_r(\cdot ,s)  v^r,_s ( \cdot ,s)ds\|_2 \le C\sqrt{T}M \,.
$$

In the same fashion, we can take $T>0$  small enough to ensure that on $[0,T]$ and for some $\lambda >0$,
\begin{equation}\label{pos_def}
2\lambda | \xi |^2 \le  a ^j_i(x,t) a^k_i(x,t) \xi _j \xi _k  \ \ \forall \ \xi \in \mathbb{R}^3  \,, x \in \Omega \,.
\end{equation} 
The space $ \ZT$  will be appropriate for our fixed-point methodology to prove existence of a solution to our degenerate parabolic
 $  \kappa$ problem (\ref{approx}).

\begin{theorem}[Solutions to the $\kappa$-problem]\label{thm_ksoln4}
Given  smooth initial data with $\rho_0$ satisfying $\rho_0(x)>0$ for $x\in \Omega $ and verifying the physical vacuum condition
(\ref{degen}) near $\Gamma$,  for $T_ \kappa>0 $ sufficiently small,  there exists a unique solution $v \in \boldsymbol{Z}_{T_{ \kappa} }$ to the  degenerate
parabolic $ \kappa $-problem (\ref{approx}).
\end{theorem}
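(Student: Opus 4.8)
The plan is to construct the solution to the degenerate parabolic $\kappa$-problem (\ref{approx}) by a two-level approximation-and-fixed-point scheme, as outlined in the paper's introduction. First I would reformulate (\ref{approx.a}') as a coupled system for the three scalar/vector quantities that control $v$: the degenerate divergence variable $X = \rho_0 J^{-2}\operatorname{div}_\eta v$ (equivalently $\rho_0 J^{-3}J_t$) which satisfies the nonlinear heat-type equation (\ref{heatX}); the Lagrangian vorticity $\operatorname{curl}_\eta v$, which satisfies the transport-type relation (\ref{lcurlvt}) obtained by taking $\operatorname{curl}_\eta$ of (\ref{approx.a}'); and the normal trace $v^3|_\Gamma$, which satisfies the boundary identity (\ref{bidentity}). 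Given bounds on these three objects together with the physical-vacuum weight $\rho_0$, the Hodge-type elliptic estimate of Proposition \ref{prop1} recovers full $H^s$ control of $v$, so it suffices to produce a solution to this coupled system.

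Next I would \emph{linearize} the coupled system: freeze the geometry by choosing $\bar v \in \mathcal{C}_T(M)$, form $\bar\eta(x,t)=x+\int_0^t\bar v\,ds$, $\bar A=[D\bar\eta]^{-1}$, $\bar J=\det D\bar\eta$, $\bar a=\bar J\bar A$, and solve the linear equations obtained by replacing the nonlinear coefficients in (\ref{heatX}), (\ref{lcurlvt}), (\ref{bidentity}) by their barred versions. The curl equation and the $\bar v_t^3$ boundary equation are solved by integration in time (they are essentially ODEs in $t$ with coefficients from $\bar v$), while the $X$-equation is a genuine linear degenerate parabolic equation with weight $\rho_0$ and principal part $\frac{\bar J^3}{\rho_0}X_t - 2\kappa[\bar a^j_i\bar a^k_i\rhoi(\rho_0 X),_k],_j$; its solvability and regularity — which the paper flags as ``one of the serious subtleties'' — I would handle via a Galerkin scheme in the weighted space $H^1_{\rho_0}(\Omega)$, using the coercivity (\ref{pos_def}) of $\bar a^j_i\bar a^k_i$ and, crucially, the higher-order Hardy inequality (Lemma \ref{Hardy}) to absorb the singular lower-order terms that arise when one differentiates $(\rho_0 X),_k$ and divides by $\rho_0$. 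Because the boundary PDE for $v^3$ is not parabolically smoothing, I would insert an \emph{additional} regularization: convolve the $\bar v_t^3$ equation horizontally on $\Gamma$ with the operator $\Lambda_\vartheta$, obtain a solution of the horizontally smoothed linear problem, prove energy estimates uniform in the smoothing parameter $\vartheta$, and pass $\vartheta\to 0$. Having solved the linearized ($\kappa$-fixed) problem, I would use the contraction mapping principle on $\mathcal{C}_{T_\kappa}(M)$: choosing $M$ in terms of $\mathcal{N}_0$ (see (\ref{N0})) so the map $\bar v\mapsto v$ sends $\mathcal{C}_{T_\kappa}(M)$ into itself for $T_\kappa$ small, and verifying strict contraction in the lower-norm topology (say $\boldsymbol{X}_{T_\kappa}$ with one fewer derivative), which forces $T_\kappa$ to possibly shrink as $\kappa\to 0$ — acceptable here, since $\kappa$-uniform time intervals are deferred to Section \ref{section_mainestimates}. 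Uniqueness in $\boldsymbol{Z}_{T_\kappa}$ follows from the same difference estimate applied to two solutions.

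The energy estimates underpinning both the self-mapping and the contraction require differentiating the linearized equations by $\bar\p^{4-a}\p_t^{2a}$ (and analogously for the curl and for $X$ at the corresponding orders), testing against the appropriate multiplier, and integrating by parts; here the Piola identity (\ref{a3}) keeps commutator terms manageable, Lemma \ref{kelliptic} converts the $\kappa\p_t$-structure into $\kappa$-stable elliptic-type bounds on normal derivatives, and the trace estimates (\ref{normaltrace})--(\ref{tracetemp}) control the boundary contributions of $\bar\p v\cdot N$. The polynomial-type inequality of Section \ref{subsec_poly} then closes the estimates on a short interval.

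The main obstacle, as the authors themselves emphasize, is the \emph{degenerate parabolic equation for $X$}: one must simultaneously (i) prove existence/uniqueness for $\frac{\bar J^3}{\rho_0}X_t - 2\kappa[\,\cdot\,]=\text{RHS}$ in a weighted space adapted to $\rho_0$ vanishing linearly at $\Gamma$, (ii) obtain enough regularity of $X$ (hence of $\operatorname{div}_\eta v$ via (\ref{ldivv})) to feed the Hodge estimate and close the fixed point, and (iii) do so while the right-hand side itself contains $\kappa$-weighted second-derivative terms of $\bar v$ and factors like $\rho_0,_k a^k_i J^{-2}$ that are singular relative to the natural weight. Controlling these requires the higher-order Hardy inequality of Lemma \ref{Hardy} in an essential way — it is precisely the tool that turns $\|\rho_0^{-1}(\text{something vanishing on }\Gamma)\|$ into a bounded quantity — and getting the derivative bookkeeping to match between the weighted $X$-estimates and the unweighted Hodge-decomposition estimates for $v$ is where the bulk of the technical work lies.
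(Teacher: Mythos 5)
Your overall architecture does match the paper's: the div--curl--normal-trace reformulation, freezing of coefficients over $\mathcal{C}_T(M)$, a Galerkin scheme plus the higher-order Hardy inequality (Lemma \ref{Hardy}) for the degenerate $X$-equation, horizontal convolution on $\Gamma$ for the boundary evolution, and a contraction argument. But there is a genuine gap in the curl component. If you literally freeze coefficients in (\ref{lcurlvt}), the right-hand side becomes $2\kappa\,\varepsilon_{\cdot ji}\,\bar v,^r_s\,\bar A^s_i\,[\bar\rho,_{rj}(\bar\eta)]$ with $\bar\rho(\bar\eta)=\rho_0\bar J^{-1}$, and to bound $\operatorname{curl} v_t$ at the level needed for $v\in\boldsymbol{Z}_{T}$ you must control two derivatives of $D^2(\rho_0\bar J^{-1})$, i.e.\ more derivatives of $\bar\eta$ than membership of $\bar v$ in $\mathcal{C}_T(M)$ provides; the map $\bar v\mapsto v$ then does not send the ball into itself and the fixed point cannot close. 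The paper's device, which your proposal omits entirely, is to replace $D\rho(\bar\eta)$ in the linearized curl equation (\ref{defvb}) by the auxiliary field $\bar\Xi(\bar\eta)$ defined through the ODE (\ref{ODEXi}); the closed-form solution (\ref{Xi}) shows $\bar\Xi$ has the regularity of $\bar v$ itself, and this gain is precisely what makes the linearization bounded at the right order. That substitution in turn forces the compatibility corrector $\bar{\mathfrak C}$ of (\ref{may11.1})--(\ref{may11.2}), the constant $\bar c(t)$ in (\ref{defvc}), and the mean conditions (\ref{defvd}) imposed by periodicity, none of which appear in your plan but which are needed for the div--curl--trace system to be solvable at all.

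The second gap is the consistency verification at the end: once a fixed point $v=\bar v$ of the linearized div/curl/trace system is obtained, it is not automatic that $v$ solves the vector equation (\ref{approx.a}); one must prove that the auxiliary objects coincide with the true ones, namely $c(t)=0$ and $\Xi=D\rho$ with $\rho(\eta)=\rho_0J^{-1}$. In the paper this is a substantive argument: one shows $\operatorname{curl}\Xi=-D\psi_e$ with $\psi_e=0$ (hence $\mathfrak C=0$ and $\Xi=DY$ for a potential $Y$), extracts from the boundary equation the Neumann-type condition $(Y-\rho),_3=0$ on $\eta(t)(\Gamma)$ together with $c(t)=0$, and then runs elliptic estimates for $q=\rho-Y$ on the moving domain $\eta(t)(\Omega)$ (using the Poincar\'e inequalities coming from (\ref{whyb})--(\ref{whyc}) and a careful treatment of the boundary terms, since $\eta(t)(\Gamma)$ is no longer flat) combined with a Gronwall argument for the ODE (\ref{grlap2}) to conclude $Dq=0$. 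Your proposal stops at the existence of the fixed point and its uniqueness, so as written it produces a solution of the derived system (\ref{defv}) but not yet of the $\kappa$-problem (\ref{approx}); this closing step must be added.
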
 
The remainder of Section \ref{sec_kproblem} will be devoted to the proof of Theorem \ref{thm_ksoln4}.

\subsection{Implementation of the fixed-point scheme for the $ \kappa $-problem (\ref{approx})}
Given $\bar v \in \mathcal{C}_T(M)$,  we define	 $\bar \eta (t) =e + \int_0^t \bar v (t')dt'$,   and set
$$\bar A = [D \bar \eta] ^{-1}\,, \ \  \bar J = \det D\bar \eta\,, \ \  \text{ and }  \bar a = \bar J \bar A\,. $$
Next, we set
$$
\bar B^{jk} =  \bar a^j_i \bar a^k_i  \ \text{ the positive definite, symmetric coefficient matrix} \,.
$$
Linearizing  (\ref{heatX}), we define $\bar X$ to be the solution of the following linear and degenerate
parabolic problem:
\begin{subequations}
\label{ssX}
\begin{alignat}{2}
\frac{\bar J^3 \bar X _t}{\rho_0} - 2\kappa  \Bigl[\bar B^{jk} \rhoi ( \rho_0 \bar X)  ,_k\Bigr],_j 
&= \bar G
 &&  \text{ in }   \Omega \times (0,T_{ \kappa } ] \,, \label{ssX.a}\\
\bar X & =0 &&  \text{ on }   \Gamma \times (0,T_{ \kappa } ] \,, \label{ssX.b}\\
(x_1,x_2) & \mapsto \bar X(x_1,x_2,x_3,t) \text{ is $1$-periodic}\,,  \label{ssX.c} \\
\bar X & = X_0:=\rho_0 \operatorname{div} u_0\ \ \ &&  \text{ on }   \Omega  \times \{t=0\} \,, \label{ssX.d}
\end{alignat} 
\end{subequations}
where the forcing function $ \bar G $ is defined as
\begin{align} 
\bar G & = - \kappa [ \bar a^j_i \p_t \bar a^k_i  \rhoi (\rho_0 ^2 \bar J^{-2} ),_k],_j
    - 2 [\bar a^j_i \bar A^k_i (\rho_0 \bar J^{-1} ),_k],_j  
 - 3 \bar J^{-1} (\bar J_t)^2 + \p_t \bar a^j_i \, \bar v^i,_j  \,. \label{Gforce}
\end{align} 
We shall establish the following
\begin{proposition} \label{prop_X} For $T>0$ taken sufficiently small, 
there exists a unique solution to (\ref{ssX}) satisfying
$$
\|\bar X \|^2_\XT \le \mathcal{N} _0 +  \pbv  + \pcurl \,,
$$
with the norms $\XT$, $\YT$, and $\ZT$ defined in (\ref{XYnorms}), and once again $P$ denotes a generic polynomial function
of its arguments.  (Generic constants are absorbed by the constants in our generic polynomial function $P$.)
\end{proposition}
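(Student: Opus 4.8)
The plan is to solve the linear degenerate parabolic problem \eqref{ssX} by a Galerkin approximation adapted to the degeneracy of the weight $\rho_0$ on $\Gamma$, and then to obtain the stated $\XT$-estimate by energy methods using the self-adjoint, positive structure of the principal operator $-2\kappa[\bar B^{jk}\rhoi(\rho_0\,\cdot\,),_k],_j$ together with the lower bound \eqref{pos_def} on $\bar B^{jk}$ and the bounds ${\frac{1}{2}}\le\bar J\le{\frac{3}{2}}$ valid for $\bar v\in\mathcal{C}_T(M)$. First I would rewrite \eqref{ssX.a} in the unknown $Y:=\rho_0\bar X$, so that the equation becomes $\rhoi\bar J^3(\rhoi Y)_t-2\kappa[\bar B^{jk}Y,_k],_j=\bar G$ with homogeneous Dirichlet data $Y=0$ on $\Gamma$; this is a genuinely uniformly parabolic (non-degenerate) problem for $Y$ in $\dot H^1_0(\Omega)$ once we observe that $\rho_0$ is bounded above and below on any compact subdomain and vanishes only on $\Gamma$. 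Existence and uniqueness of a weak solution $Y\in L^2(0,T;\dot H^1_0(\Omega))\cap H^1(0,T;H^{-1}(\Omega))$ then follows from the standard Lions--Lax--Milgram parabolic theory, the coercivity coming from \eqref{pos_def}; translating back, $\bar X=Y/\rho_0$ makes sense as an $H^{s-1}$ object by the higher-order Hardy inequality (Lemma \ref{Hardy}), provided we have first proved enough spatial regularity on $Y$.

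The core of the argument is the a priori estimate, carried out on the Galerkin level and then passed to the limit. For the base $L^2$-in-$\bar X$ bound I would test \eqref{ssX.a} with $\bar X$ (equivalently $Y$ with $Y/\rho_0^2$, handled via Hardy) to get control of $\|\sqrt{\bar J^3}\,\bar X\|_{L^\infty(0,T;L^2)}$ and $\|\sqrt{\kappa}\,\sqrt{\rho_0}\,D\bar X\|_{L^2(0,T;L^2)}$ in terms of $\|\bar X_0\|_0$ and $\|\bar G\|_{L^2(0,T;L^2)}$; note that for the fixed-point Proposition we do \emph{not} need $\kappa$-uniformity, so the $\kappa$-weighted coercive term is genuinely available. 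To climb to $H^4$ I would differentiate the equation: tangential derivatives $\bar\partial^j$, $j\le 4$, commute cleanly with the boundary condition and with $\rho_0$ up to commutators that are lower order and absorbed by Young's inequality and $\sqrt{T}$-smallness; time derivatives $\p_t^a$, $a\le 3$, are estimated by differentiating \eqref{ssX.a} in $t$ (using that $\bar B^{jk}$, $\bar J$ depend on $\bar v\in\mathcal{C}_T(M)$ with controlled norms, and that $\p_t^a\bar X_0$ at $t=0$ are determined by $u_0,\rho_0$ as in the time-differentiated formulae of Section \ref{subsec_X}, hence bounded by $\mathcal{N}_0$); the normal (vertical) derivatives are then recovered from the equation itself read as an elliptic relation $-2\kappa\bar B^{33}(\rho_0\bar X),_{33}=(\text{lower-order in }\p_3)+(\text{controlled})$, i.e. by an elliptic-type estimate exactly of the flavor invoked via Lemma \ref{kelliptic} and Proposition \ref{prop1}, again with Hardy \eqref{Hardys} converting weighted bounds on $\rho_0\bar X$ into unweighted bounds on $\bar X$. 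Collecting these gives
$$
\|\bar X\|_\XT^2 \le C\Bigl(\|\bar X_0\|_{(\text{data norms})}^2 + \|\bar G\|_{L^2(0,T;\cdot)}^2 + (\text{curl data})\Bigr),
$$
and then estimating $\bar G$ from \eqref{Gforce}: each term there is a product of $\bar a,\bar A,\bar J$-factors (polynomially bounded on $\mathcal{C}_T(M)$, hence absorbed into $P(\|\bar v\|_{\ZT}^2)$) with at most two derivatives of $\rho_0$ or $\bar v$; the genuinely $\bar v$-dependent pieces contribute $\pbv$, while the residual curl-type contributions in the passage from divergence to full $X$-equation contribute $\pcurl$, matching the claimed right-hand side. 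Uniqueness follows by linearity: the difference of two solutions solves \eqref{ssX} with $\bar G=0$ and zero data, and the base energy estimate forces it to vanish.

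The main obstacle I anticipate is the interaction between the spatial regularity gain and the degeneracy: near $\Gamma$ the operator $-2\kappa[\bar B^{jk}(\rho_0\,\cdot\,),_k],_j$ is \emph{not} uniformly elliptic in $\bar X$ itself, only in $Y=\rho_0\bar X$, so every normal-derivative estimate must be phrased in the $Y$-variable and then transferred back via the higher-order Hardy inequality, which costs exactly one derivative of $\rho_0$ (that is why the Hardy lemma requires $d\in H^{\max(s-1,3)}$ and why $\rho_0$ was pre-smoothed to be $C^\infty(\overline\Omega)$). Keeping the bookkeeping consistent — making sure that at each differentiation order the commutators $[\bar\partial^j,\rho_0]$ and $[\p_t^a,\bar J^3\rhoi]$ really are lower order after one application of Hardy, and that the time-derivative data $\p_t^a\bar X_0$ are well-defined and bounded by $\mathcal{N}_0$ — is the delicate part; everything else is the standard Galerkin/energy machinery combined with the $\sqrt{T}$-smallness that lets small-time absorb all the error terms into the left-hand side.
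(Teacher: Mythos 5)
There are genuine gaps. First, your reduction to $Y:=\rho_0\bar X$ does not do what you claim. Substituting gives $\bar J^3 Y_t/\rho_0^2-2\kappa\bigl[\bar B^{jk}\rhoi Y,_k\bigr],_j=\bar G$: the time term carries the singular weight $\rho_0^{-2}$ and the elliptic coefficient is $\bar B^{jk}/\rho_0$, so the problem is \emph{not} uniformly parabolic up to $\Gamma$, and standard Lions theory does not apply; moreover the Dirichlet condition $Y=0$ on $\Gamma$ is automatically satisfied by $\rho_0\bar X$ and so carries none of the information in (\ref{ssX.b}), and recovering $\bar X=Y/\rho_0$ through Lemma \ref{Hardy} costs one derivative, so $\bar X\in H^4$ would require $Y\in H^5$ --- one more derivative than the parabolic gain over $\bar G\in L^2(0,T;H^2)$ provides. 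The paper instead works directly with $\bar X\in\h$: in the Galerkin/energy step (Lemma \ref{X_weak}) the singular first-order term $2\kappa\int\bar B^{jk}\rho_0^{-1}\rho_0,_k\bar X\,\bar X,_j$ is integrated by parts to produce the \emph{positive} term $\kappa\int\rho_0^{-2}\rho_0,_j\rho_0,_k\bar B^{jk}|\bar X|^2$ plus terms absorbed by a weighted Gronwall argument, and the vertical regularity is not obtained from a naive elliptic reading of the equation (your appeal to Lemma \ref{kelliptic} and Proposition \ref{prop1}, which concern $f+\kappa f_t=g$ and Hodge estimates for vector fields, is not the relevant mechanism): the equation only controls the combination $\bar X,_{33}+\rho_0,_3(\bar X/\rho_0),_3$ in $L^2$, and since $\rho_0,_3\neq0$ on $\Gamma$ the two pieces cannot be separated directly; the paper introduces the antiderivative variable $Y(x)=\int_0^{x_3}\bar X/\rho_0\,dy_3$, rewrites the combination as $(\rho_0Y),_{333}$ modulo lower order, and uses the one-dimensional higher-order Hardy inequality to recover $\bar X,_{33}$. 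Your sketch contains no substitute for this step, and without it the $H^2$ (hence $H^3$, $H^4$) bounds do not follow.

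Second, you do not account for where $\pcurl$ actually comes from, and the estimate of $\bar G$ is not as routine as you assert. In the $\nu$-independent energy estimate for the thrice time-differentiated problem, $\bar G_{ttt}$ contains the term $\kappa\,\p_{ttt}\bigl[\bar a^j_i\p_t\bar a^k_i\,\rho_0,_k\bar J^{-2}\bigr],_j$, whose leading part involves two space derivatives of $\bar v_{ttt}$; since $\bar v_{ttt}$ is only in $L^2(0,T;H^1)$ for $\bar v\in\mathcal{C}_T(M)$, this cannot be bounded by $\pbv$, and no factor of $T$ is available to absorb it. The paper resolves this with the curl-curl decomposition of Lemma \ref{lem_curlcurl}: the dangerous piece is $[\operatorname{curl}\operatorname{curl}\bar v_{ttt}]^k\rho_0,_k\bar J^{-2}$, which after integrating the curl by parts against $\bar X_{ttt}$ is controlled by $\|\operatorname{curl}\bar v_{ttt}\|_0$, and this is precisely why the term $\pcurl$ appears on the right-hand side of the Proposition. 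Your proposal treats $\bar G$ as a product of controlled factors and attributes the curl contribution to vague ``residual curl-type contributions,'' so the hardest estimate of the proof is missing. (A smaller omission: the paper first mollifies $\bar v$ in a parameter $\nu$ to justify the difference-quotient and bootstrap computations, and only then proves the $\nu$-independent bounds; some such regularization is needed to make your formal differentiations legitimate.)
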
 
The proof of Proposition \ref{prop_X} will be given in Sections \ref{sec832}--\ref{sec836}.

\subsection{The definition of the velocity field $v$}  We will define a linear  elliptic system  of equations for $v$ which should be
viewed as the linear analogue of equations (\ref{ldivvt}), (\ref{lcurlvt}), and (\ref{bidentity}).

\begin{definition}[The linear system for the velocity-field $v(t)$]
With $\bar v \in \mathcal{C} _T(M)$ given,  and $\bar X$  obtained by solving the linear problem  (\ref{ssX}), we are
now in a position to define $v(t)$ on  $ [0,T_ \kappa ]$ by   specifying its divergence and curl in $\Omega$, as well as
the trace of its normal component on the boundary $\Gamma$:

\begin{subequations}\label{defv}
\begin{alignat}{2}
\operatorname{div} v_t & = \operatorname{div} \bar v_t  - \operatorname{div} _{ \bar \eta} \bar v_t + \frac{[\bar X \bar J^2]_t}{\rho_0} -
\p_t \bar A^j_i \bar v^i,_j
\  && \text{in}\ \Omega \,,        \label{defva}\\
\operatorname{curl} v_t  &=  \operatorname{curl} \bar v_t - \operatorname{curl} _{\bar \eta} \bar v_t
+2\kappa \varepsilon_{ \cdot ji}
\bar v,_s^r   \bar A^s_i\  \bar\Xi,_r^j(\bee) + \bar {\mathfrak{C}} \ && \text{in}\ \Omega \,,\label{defvb}\\
v_t^3 + 2 \kappa \rho_0,_3 \operatorname{div} _\Gamma v&=2 \kappa \rho_0,_3 \operatorname{div} _\Gamma \bar v   - 2 \rho_0,_3 \bar J^{-2} \bar a^3_3 \n \\
 & \qquad  \qquad  -2 \kappa  \rho_0,_3 \bar J^{-2} \p_t \bar a^3_3 -2 \kappa \rho_0,_3 \bar a^3_3 \p_t \bar J^{-2} + \bar c(t) N^3
\qquad  &&   \text{on}\ \Gamma  \,, \label{defvc} \\
\int_ \Omega v_t^ \alpha dx& = -2 \int_ \Omega \bar A^k_ \alpha (\rho_0 \bar J^{-1} ),_k dx 
-2 \kappa  \int_ \Omega \p_t[ \bar A^k_ \alpha (\rho_0 \bar J^{-1} ),_k] dx  \,,  \label{defvd} \\
(x_1,x_2)& \mapsto v_t(x_1,x_2,x_3,t)  \text{ is $1$-periodic}   \,,   \label{defv.e} 
\end{alignat}
\end{subequations}
where the presence of
$ \operatorname{div} _\Gamma \bar v$  in (\ref{defvc})  represents the linearization of $\p_t \bar a^3_3$ about $\bar \eta =e$, 
and where
\begin{align} 
\bar a^3_3 &  = e_3 \cdot ( \bar \eta,_1 \times \bar \eta,_2) \,, \n \\
\p_t \bar a^3_3 &  = e_3 \cdot ( \bar v,_1 \times \bar \eta,_2 + \bar \eta,_1 \times \bar v,_2) \,, \label{pta33}
\end{align} 
 the function $\bar c(t)$  (a constant in $x$) on the right-hand side of (\ref{defvc}) is defined by
\begin{align}
\bar c(t)=& {\frac{1}{2}} \int_ \Omega ( \operatorname{div} \bar v_t - \operatorname{div} _{ \bar \eta} \bar v_t) dx  + {\frac{1}{2}} 
\int_ \Omega  \frac{[\bar X \bar J^2]_t}{\rho_0} dx - {\frac{1}{2}} \int_ \Omega \p_t \bar A^j_i \bar v^i,_j dx  + \int_ \Gamma \bar J^{-2} \bar a^3_3 \rho_0,_3 N^3 dS \n \\
& \ + \kappa \int_\Gamma  \bar J^{-2} \p_t \bar a^3_3 \rho_0,_3 N^3 dS + \kappa \int_\Gamma \p_t \bar J^{-2}  \bar a^3_3 \rho_0,_3 N^3 dS
+ \kappa \int_\Gamma \operatorname{div}_\Gamma  (v-\bar v) \rho_0,_3 N^3 dS \,,
\end{align}
and where
the vector field $\bar\Xi(\bee)$ on the right-hand side of (\ref{defvb}) is defined on
$[0,T]\times\Omega$ as the solution of the ODE
\begin{subequations}\label{ODEXi}
\begin{align}
\bar v_t+2\bar\Xi(\bee)+2\kappa [\bar\Xi(\bee)]_t&=0\,,\\
\bar\Xi(0)=D\rho_0\,.
\end{align}
\end{subequations}
 The vector field $\bar{\mathfrak C}$ on the right-hand side of (\ref{defvb}) is then defined on $[0,T] \times \Omega$  by
\begin{equation}
\label{may11.1}
\bar{\mathfrak C}^i=2\bar A_i^j\psi,_j+2\kappa [2\bar A_i^j\psi,_j]_t\,,
\end{equation}
where $\psi$ is solution of the following time-dependent elliptic-type problem for $t\in [0,T]$:
\begin{subequations}
\begin{align}
2[\bar A_i^j\psi,_j],_i+2\kappa \partial_t[\bar A_i^j\psi,_j]_i&=\operatorname{div}\bigl(\operatorname{curl} _{\bar \eta} \bar v_t
-2\kappa \varepsilon_{ \cdot ji}
\bar v,_s^r   \bar A^s_i\  \bar\Xi,_r^j(\bee)\bigr) \ \text{ in }\ \Omega\,,\\
\psi&=0\ \text{ on }\ \Gamma\,,\\
(x_1,x_2) \mapsto &\psi(x_1,x_2,x_3,t) \ \text{ is $1$-periodic}\,,\\
\psi|_{t=0}&=0\ \text{ in }\ \Omega\,,
\end{align}
\label{may11.2}
\end{subequations}
so that we have the compatibility condition for (\ref{defvb})
\begin{equation}
\label{may11.3}
\operatorname{div}\bigl(-\operatorname{curl} _{\bar \eta} \bar v_t
+2\kappa \varepsilon_{ \cdot ji}
\bar v,_s^r   \bar A^s_i\  \bar\Xi,_r^j(\bee)\bigr)+\bar{\mathfrak{C}}\bigr)=0\  \text{ in }\ \Omega \times [0,T]\,.
\end{equation}
\end{definition}

An integrating factor provides us with a closed-form solution to the ODE (\ref{ODEXi}), and by employing integration-by-parts
in the time integral, we find that
\begin{align}
\bar\Xi(\be)(t,\cdot)&=e^{-\frac{t}{\kappa}}D\rho_0(\cdot)-\int_0^t
\frac{e^{\frac{t'-t}{\kappa}}}{2\kappa} \bar v_t(t',\cdot) dt'\ ,\n\\
&=e^{-\frac{t}{2\kappa}}D\rho_0(\cdot)+\int_0^t
\frac{e^{\frac{t'-t}{\kappa}}}{2\kappa^2} \bar v(t',\cdot)
dt'-\frac{1}{2\kappa}\bar
v(t,\cdot)+\frac{e^{-\frac{t}{\kappa}}}{2\kappa} u_0(\cdot)\,.  \label{Xi}
\end{align}
The formula (\ref{Xi}) shows that $\bar\Xi(\bee)$ has the same regularity as $\bar v$.   This gain in regularity is remarkable and should
be viewed as one of the key reasons that permit us to construct solutions to (\ref{approx}) using the linearization (\ref{defv}) with
a fixed-point argument.  

Similarly, we notice that
\begin{align} 
2[\bar A_i^j\psi,_j],_i(t,\cdot)
&=
\int_0^t \frac{e^{\frac{t'-t}{\kappa}}}{2\kappa} \operatorname{div}\bigl(\operatorname{curl} _{\bar \eta} \bar v_t -2\kappa \varepsilon_{ \cdot ji}
\bar v,_s^r   \bar A^s_i\  \bar\Xi,_r^j(\bee)\bigr)(t',\cdot)\ dt' \n \\
&=
\int_0^t   \frac{e^{\frac{t'-t}{\kappa}}}{2\kappa}   \varepsilon_{ k ji}
 \bigl[ \bar v^i,_r  \bar A^r_ s \bar v^s,_l \bar A^l_j      - \frac{1}{\kappa} \bar v^i,_r  \bar A^r_j 
  -2\kappa 
\bar v,_s^r   \bar A^s_i\  \bar\Xi,_r^j(\bee)\bigr],_k (t',\cdot)\ dt'  \n \\
& \qquad\qquad +  \frac{e^{\frac{t'-t}{\kappa}}}{2\kappa} \operatorname{div} \operatorname{curl} _{\bar \eta} \bar v
\label{may11.7}
\end{align} 

Since we can rewrite  the left-hand side of (\ref{may11.7}) as 
$2\Delta \psi+2[(\bar A_i^j-\delta_i^j)\psi,_j],_i$, and 
 with $\bar v\in \mathcal{C}_T(M)$, the elliptic problem (\ref{may11.7}) is 
well-defined and together with the boundary condition (\ref{may11.2}.b) provides  the following estimates for any $t\in [0,T]$:
\begin{subequations}\label{may11.4}
\begin{align} 
\|\psi(t)\|_4& \le \mathcal{N} _0 +   C\Bigl( T \|\bar v(t) \|_4  + \int_0^t \| \bar v\|_4\Bigr) \,, \\
\|\psi_t(t)\|_3 &\le \mathcal{N} _0 +   C\Bigl(T \|\bar v_t(t) \|_3 +\|\bar v(t)\|_3 + \int_0^t \| \bar v\|_3\Bigr) \,.
\end{align} 
\end{subequations}
Using (\ref{may11.1}), the estimates (\ref{may11.4}) lead to
\begin{subequations}
\label{may11.5}
\begin{align} 
\|\bar{\mathfrak C}\|_{2} & \le  \mathcal{N} _0 +   C\Bigl(T \|\bar v_t(t) \|_3 +\|\bar v(t)\|_3 + \int_0^t \| \bar v\|_3\Bigr)\,, \\
\bigl\| \int_0^t \bar{\mathfrak C}\bigr\|_{3} & \le  \mathcal{N} _0 +   C\Bigl( T \|\bar v(t) \|_4  + \int_0^t \| \bar v\|_4\Bigr) \,.
\end{align} 
\end{subequations}

\begin{remark} 
The function $\bar c(t)$
 is added to the right-hand side of (\ref{defvc}) to ensure that the solvability condition for the elliptic system (\ref{defv}) is satisfied;
in particular, the solvability condition is obtained from an application of the divergence theorem to equation (\ref{defva}).
\end{remark} 

\begin{remark} 
Condition (\ref{defvd}) is only necessary because of the periodicity of our domain in the directions $e_1$ and $e_2$.  In particular,
our elliptic system is defined modulo a constant vector, and the addition of $\bar c(t)N^3$ to the right-hand side of (\ref{defvc}) fixes
the constant in the vertical direction, while the condition (\ref{defvd}) fixes the two constants in the tangential directions.  The particular
choice for the average of $v_t^ \alpha $, $ \alpha =1,2$,  permits us to close the fixed-point argument, and obtain the unique solution of (\ref{approx}).
\end{remark}


\subsection{Construction of solutions  and regularity theory for $\bar X$ and its time derivatives}
This section will be devoted to the proof of Proposition \ref{prop_X}.
\subsection{Smoothing $\bar v$}
We will proceed with a two stage process.  First, we smooth $\bar v$ and obtain strong solutions to the linear equation (\ref{ssX})
in the case that the forcing function $\bar G$ and the coefficient matrix $\bar B^{jk}$ are $C^ \infty(\overline \Omega)$-functions.
Second, having strong solutions to (\ref{ssX}), with bounds that depend on the smoothing parameter of $\bar v$, we  use nonlinear
estimates to conclude the proof of Proposition \ref{prop_X}.

Using the notation of  Section \ref{subsec::initdata}, for each $t \in[0,T_ \kappa ]$ and for $\nu > 0$, we define
$$
\bar v^\nu( \cdot, t) = \varrho_\nu * \mathcal{E} _ \Omega ( \bar v ( \cdot , t)) \,,
$$
so that for each $ \nu >0$, $\bar v^ \nu ( \cdot , t) \in C^ \infty (\overline \Omega )$.   We define $ \bar G^ \nu$ by replacing
$\bar A$,   $\bar a$, $\bar J$, and $\bar v$ in (\ref{Gforce}) with $\bar A^ \nu$, $\bar a^\nu$, $\bar J^ \nu$, and $\bar v^\nu$, respectively.
The quantities $\bar A^ \nu$, $\bar a^\nu$, $\bar J^ \nu$ are defined just as their unsmoothed analogues from the map $\bar \eta^\nu = e +
\int_0^t \bar v^\nu$.  We also define $ [\bar B^\nu]^{jk}  =( \bar a^\nu)^j_i ( \bar a^\nu)^k_i  $; according to (\ref{pos_def}), we can choose
$\nu>0$ sufficiently small so that for $t \in [0,T_ \kappa ]$,
\begin{equation}\label{pos_def2}
 \lambda | \xi |^2 \le  [\bar B^\nu]^{jk}(x,t) \xi _j \xi _k  \ \ \forall \ \xi \in \mathbb{R}^3  \,, x \in \Omega \,.
\end{equation} 

Until Section \ref{sec836}, we will use $\bar B^\nu$ and $\bar G^\nu$ as the coefficient matrix and forcing function, respectively,
but for notational convenience we will not explicitly write the superscript $\nu$.

\subsubsection{$L^2(0,T;\h)$  regularity for $\bar X_{ttt}$}\label{sec832}
We will use the definition of the constant $ \lambda >0$ given in (\ref{pos_def}).

\begin{definition}[Weak Solutions of (\ref{ssX})]  A function $ \bar X  \in L^2(0,T; \h)$ with
$\frac{ \bar X _t}{\rho_0} \in H ^{-1} ( \Omega )$  is a weak solution of (\ref{ssX}) if
\begin{itemize}
\item[(i)] for all $ \mathcal{W} \in \h$, 
\begin{equation}\label{weakX}
\langle \frac{\bar J^3 \bar X _t}{\rho_0} \,, \mathcal{W} \rangle  + 2 \kappa \int_ \Omega \frac{ \bar B^{jk}}{\rho_0} (\rho_0 \bar X)  ,_k \mathcal{W} ,_j dx
= \langle \bar G, \mathcal{W} \rangle  \ \ \text{ a.e. } [0,T]\,,
\end{equation} 
\item[(ii)]$\bar X (0) = X _0 \,.$
\end{itemize}
The duality pairing between $\h$ and $ H ^{-1} ( \Omega )$ is denoted by $\langle \cdot , \cdot \rangle $, and $\bG\in L^2(0,T;H^{-1}(\Omega)) $.
\end{definition} 

Recall that if $ \bG \in H^{-1}(\Omega) $, then 
$\| \bG \|_{ H^{-1}(\Omega) } = 
\sup\{ \langle \bG, \mathcal{W}  \rangle \ | \ \mathcal{W} \in  \h\,, \| \mathcal{W} \|_{ \h} =1 \}$.
Furthermore, there exist functions $\bG_0,\bG_1,\bG_2,\bG_3$ in $ L^2(\Omega) $ such that $ \langle \bG, \mathcal{W} \rangle= \int_ \Omega 
\bG_0 \mathcal{W}  +  \bG_i \mathcal{W} ,_i\, dx $, so that $\| \bG \|_{ H^{-1}(\Omega) }^2 = \inf \sum_{a=0}^3\|  \bG_a\|^2_0$, the
infimum being taken over all such functions $\bG_a$.

\begin{lemma}\label{X_weak}
If  $\bG \in L^2(0,T;H^{-1}(\Omega))   $ and $\frac{X _0}{\sqrt{\rho_0}} \in L^2(\Omega) $, then for $T>0$ taken sufficiently small so that (\ref{pos_def}) holds, there exists a unique weak
solution to (\ref{ssX}) such that for constants $C_p>0$ and $C_{ \kappa \lambda } >0$, 
\begin{align} 
 &\left\| \frac{\bar X_t}{\rho_0}\right\|^2_{L^2(0,T; H^{-1}(\Omega) )} +
\sup_{t \in [0,T]} C \left\| \frac{\bar X(t)}{\sqrt{\rho_0}} \right\|^2_0  +C_p  \left\| \bar X\right\|^2_ {L^2(0,T; \h)} \n  \\
& \qquad \qquad  \qquad \qquad 
\le  \left\| \frac{X_0}{\sqrt{\rho_0}} \right\|^2_0 +  C_{ \kappa \lambda }  \left\| \bG\right\|^2_{ L^2(0,T;H^{-1}(\Omega)) }  \,.\n
\end{align} 
\end{lemma}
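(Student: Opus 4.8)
The plan is to construct the weak solution by a Galerkin approximation in a countable basis of $\h$, to derive the stated estimate uniformly in the Galerkin dimension, to pass to the limit, and to obtain uniqueness by the same energy identity applied to the difference of two solutions. The one genuinely delicate ingredient is the weighted energy identity, in which the degeneracy of the weight $\rho_0$ on $\Gamma$ has to be handled by an algebraic cancellation rather than by brute force; everything else is routine.

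First I would fix a countable basis $\{w_m\}_{m\ge 1}$ of $\h$ — for instance the Dirichlet eigenfunctions of $-\Delta$ on $\Omega$ that are periodic in $(x_1,x_2)$ — and note that each $w_m$ vanishes like $d$ near $\Gamma$, so that $w_m/\sqrt{\rho_0}\in L^\infty(\Omega)$ by the physical vacuum condition (\ref{degen}); consequently the ``mass'' matrix $M_{lm}(t)=\int_\Omega \bar J^3\rho_0^{-1}w_lw_m\,dx$ is well defined, symmetric, and positive definite for each $t\in[0,T]$ (since $\bar J>0$). Seeking $\bar X_N(t)=\sum_{m=1}^N c^N_m(t)w_m$ that solves the projection of the weak formulation (\ref{weakX}) onto $\mathrm{span}\{w_1,\dots,w_N\}$, with $\bar X_N(0)$ taken to be the $L^2(\Omega;\rho_0^{-1}dx)$-orthogonal projection of $X_0$ so that $\|\bar X_N(0)/\sqrt{\rho_0}\|_0\le\|X_0/\sqrt{\rho_0}\|_0$, gives a linear system $M(t)\dot c^N=F(t,c^N)$ with coefficients continuous (respectively $L^2$) in $t$; the Carath\'eodory existence theorem then produces a unique absolutely continuous $c^N$ on all of $[0,T]$.

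The heart of the matter is the a priori estimate obtained by testing the Galerkin equations against $\bar X_N$ itself. The time-derivative term produces $\tfrac12\frac{d}{dt}\int_\Omega \bar J^3\rho_0^{-1}|\bar X_N|^2\,dx$ plus a lower-order contribution from $\partial_t\bar J^3$ bounded by $\tfrac32\|\bar J^2\bar J_t\|_{L^\infty}\int_\Omega\rho_0^{-1}|\bar X_N|^2\,dx$. For the elliptic term I would expand $(\rho_0\bar X_N),_k=\rho_0,_k\bar X_N+\rho_0\bar X_N,_k$: the diagonal piece $2\kappa\int_\Omega \bar B^{jk}\bar X_N,_k\bar X_N,_j\,dx\ge 2\kappa\lambda\|D\bar X_N\|_0^2$ by (\ref{pos_def2}), while the singular cross term $2\kappa\int_\Omega \bar B^{jk}\rho_0^{-1}\rho_0,_k\,\bar X_N\bar X_N,_j\,dx$ is rewritten as $\kappa\int_\Omega \bar B^{jk}\rho_0^{-1}\rho_0,_k(|\bar X_N|^2),_j\,dx$ and integrated by parts in $x_j$ (no boundary contribution, since $\bar X_N=0$ on $\Gamma$ and the domain is periodic). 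The resulting expression contains the favorably signed term $\kappa\int_\Omega \bar B^{jk}\rho_0^{-2}\rho_0,_k\rho_0,_j|\bar X_N|^2\,dx\ge 0$, again by coercivity of $\bar B$, which may simply be discarded; the remaining terms have the schematic form $-\kappa\int_\Omega(\bar B^{jk},_j\rho_0,_k+\bar B^{jk}\rho_0,_{kj})\rho_0^{-1}|\bar X_N|^2\,dx$ and are bounded in absolute value by $\kappa C\int_\Omega\rho_0^{-1}|\bar X_N|^2\,dx$ using only the $W^{1,\infty}$-bound on $\bar B$ and the bound $\|\rho_0\|_{W^{2,\infty}}\le C\|\rho_0\|_4$. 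Estimating $\langle\bar G,\bar X_N\rangle\le \kappa\lambda\|D\bar X_N\|_0^2+\frac{C}{\kappa\lambda}\|\bar G\|_{H^{-1}}^2$ and invoking Poincar\'e's inequality to bound $\|\bar X_N\|_1^2\le C\|D\bar X_N\|_0^2$ for $\bar X_N\in\h$, Gronwall's inequality over $[0,T]$ — whose exponential factor is bounded for $T$ small, since $\int_0^T\|\bar J_t\|_{L^\infty}\lesssim\sqrt T$ — yields the stated bound on $\sup_t\|\bar X_N/\sqrt{\rho_0}\|_0^2+\|\bar X_N\|_{L^2(0,T;\h)}^2$, uniformly in $N$. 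The bound on $\|\bar X_{N,t}/\rho_0\|_{L^2(0,T;H^{-1})}$ then follows by reading $\bar J^3\rho_0^{-1}\bar X_{N,t}=\bar G+2\kappa[\bar B^{jk}\rho_0^{-1}(\rho_0\bar X_N),_k],_j$ as an identity in $H^{-1}$, bounding $\|\rho_0^{-1}(\rho_0\bar X_N),_k\|_0\le\|D\bar X_N\|_0+C\|\bar X_N/\rho_0\|_0$ and using the classical Hardy inequality (Lemma \ref{Hardy} with $s=1$) $\|\bar X_N/\rho_0\|_0\le C\|\bar X_N\|_1$, together with the $W^{1,\infty}$-bound on $\bar J^{-3}$ (this is where the smoothing of $\bar v$ is used).

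Finally I would pass to the limit: along a subsequence $\bar X_N\rightharpoonup\bar X$ in $L^2(0,T;\h)$, $\bar X_N/\sqrt{\rho_0}$ converges weak-$*$ in $L^\infty(0,T;L^2(\Omega))$ (its limit identified with $\bar X/\sqrt{\rho_0}$ by testing against functions supported away from $\Gamma$, where division by $\sqrt{\rho_0}$ is harmless), and $\bar X_{N,t}/\rho_0\rightharpoonup \bar X_t/\rho_0$ in $L^2(0,T;H^{-1}(\Omega))$; testing the Galerkin equations against $w_l\chi(t)$ and using density of $\bigcup_N\mathrm{span}\{w_1,\dots,w_N\}$ in $\h$ shows that $\bar X$ satisfies (\ref{weakX}), and $\bar X(0)=X_0$ is recovered in the usual way by integrating the weak formulation by parts in time. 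Uniqueness follows by applying the same energy identity to the difference of two weak solutions, which solves (\ref{ssX}) with $\bar G\equiv 0$ and zero initial datum — the regularity $\bar X\in L^2(0,T;\h)$ with $\bar X_t/\rho_0\in L^2(0,T;H^{-1})$ makes $t\mapsto\int_\Omega \bar J^3\rho_0^{-1}|\bar X|^2\,dx$ absolutely continuous, so testing against the difference is legitimate — and Gronwall forces the difference to vanish. The step I expect to be the main obstacle is precisely the weighted energy identity: the integration by parts that exposes the good-sign term $\kappa\int_\Omega \bar B^{jk}\rho_0^{-2}\rho_0,_k\rho_0,_j|\bar X_N|^2\,dx$, combined with Hardy's inequality controlling $\bar X_N/\rho_0$, is what lets the singular weights be absorbed with no loss, and it is the point at which the specific structure of the artificial-viscosity term chosen in (\ref{approx}) is exploited.
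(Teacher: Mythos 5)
Your proposal is correct and takes essentially the same route as the paper's proof: a Galerkin scheme in a smooth basis of $\h$ (with the degenerate weight handled so that the mass matrix is well defined), an energy estimate in which the singular cross term is integrated by parts to expose the nonnegative $\kappa\int_\Omega \bar B^{jk}\rho_0,_j\rho_0,_k\,\rho_0^{-2}|\bar X_n|^2\,dx$ contribution while the remaining terms are absorbed via the coercivity (\ref{pos_def2}), Cauchy--Young on $\langle \bar G,\cdot\rangle$, Poincar\'e, and a short-time Gronwall argument, followed by weak limits and uniqueness by testing with the solution itself. The small deviations (discarding rather than retaining the good-sign term, projecting the initial datum in the $\rho_0^{-1}$-weighted inner product, and spelling out the Hardy-based $H^{-1}$ bound for $\bar X_t/\rho_0$) are cosmetic and do not change the argument.
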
 
\begin{proof}
Let $(e_n)_{n\in\N}$ denote a Hilbert basis of $\h$, with each $e_n$ being smooth. Such a choice of basis is indeed possible as we can take for instance the eigenfunctions of the Laplace operator on $\Omega$ with vanishing Dirichlet boundary conditions on $\Gamma$ and $1$-periodic in $e_1$ and $e_2$. We then define the Galerkin approximation at order $n\ge 1$ of (\ref{weakX}) as being under the form $X _n=\sum_{i=0}^n \lambda_i^n(t) e_i$ such that: $ \forall \ell\in\{0,...,n\}$,
\begin{subequations}
\label{divn}
\begin{align}
\bigl( \bar J^3 \frac{{X _n}_t}{\rho_0},  e_\ell\bigr)_{ L^2(\Omega) }  
+2 \kappa  \bigl(\frac{\bar B^{jk} }{\rho_0} (\rho_0 X _n),_k \,,   e_\ell,_j \bigr)_{L^2(\Omega )} &=(\bG_0 ,\ e_\ell)_{L^2(\Omega )}
- (\bG_i , \frac{ \p e_\ell}{\p x_i})_{L^2(\Omega )}
\ \hbox{in}\ [0,T],\\
\lambda_\ell^n(0)&=(X_0,e_\ell)_{L^2(\Omega )}.
\end{align}
\end{subequations}
Since each $e_\ell$ is in $H^{k+1}(\Omega )\cap\h$ for every $k \ge 1$, we have by our high-order Hardy-type inequality (\ref{Hardy}) that 
$$\frac{e_\ell}{\rho_0}\in H^k( \Omega )\ \text{ for } k\ge 1 \,;$$
therefore, each integral written in (\ref{divn}) is well-defined. 

Furthermore, as the $e_\ell$ are linearly independent, so are the $\frac{e_\ell}{\sqrt\rho_0}$ and therefore the determinant of the matrix 
$$
\Bigl[\bigl( \frac{e_i}{\sqrt\rho_0},\frac{e_j}{\sqrt\rho_0}\bigr)_{L^2(\Omega )}\Bigr]_{(i,j)\in\N_n=\{1,...,n\}}
$$ 
is nonzero. This implies that our finite-dimensional Galerkin approximation (\ref{divn}) is a well-defined first-order differential system of order $n+1$, which  therefore has a solution on a time interval $[0,T_n]$, where $T_n$ a priori depends on the rank $n$ of
the Galerkin approximation. 
In order to prove that $T_n=T$, with $T$ independent of $n$, we notice that since $X_n$ is a linear combination of the $e_\ell$ ($\ell\in \{1,...,n\}$), we have that on $[0,T_n]$,
\begin{equation}
\n
\left( \frac{\bar J^3 {X _n}_t}{\rho_0},  X_n\right)_{ L^2(\Omega) }  + 2 \kappa  \left(\frac{ \bar B^{jk}}{\rho_0} \frac{ \p(\rho_0 X_n)}{\p x_k} , \frac{\p X _n}{\p x_j} \right)_{L^2(\Omega )} = (\bG_0 ,\ X _n)_{L^2(\Omega )}
- (\bG_i , \frac{ \p X_n}{\p x_i})_{L^2(\Omega )} \,.
\end{equation}
Since 
\begin{align*} 
 2 \kappa  \int_ \Omega  \frac{ \bar B^{jk}}{\rho_0}(\rho_0 X_n),_k \,  X _n,_j \, dx
  & =
2 \kappa  \int_ \Omega  \bar B^{jk} X_n,_k \,  X _n,_j \, dx +   2 \kappa  \int_ \Omega \frac{ \bar B^{jk}}{\rho_0}\rho_0,_k X_n \,  X _n,_j\, dx 
 \end{align*} 
 and
 \begin{align*} 
 2 \kappa \int_ \Omega \frac{ \bar B^{jk}}{\rho_0}\rho_0,_k X_n \,  X _n,_j\, dx 
 & =
 - \kappa   \int_ \Omega \frac{ \rho_0,_{jk} }{\rho_0} \bar B^{jk} |X_n|^2 \, \, dx +  \kappa   \int_ \Omega \frac{ \rho_0,_{k} \rho_0,_j }{\rho_0^2} \bar B^{jk} |X_n|^2 \, \, dx  \\
 & \qquad \qquad  - \kappa  \int_ \Omega \frac{ \rho_0,_{k} }{\rho_0} \bar B^{jk},_j |X_n|^2 \, \, dx \,,
 \end{align*} 
 it follows that on $[0,T_n]$
 \begin{align} 
&{\frac{1}{2}} \frac{d}{dt} \int_ \Omega \bar J^3  \frac{ |X_n|^2}{\rho_0}dx+ 2 \kappa  \int_ \Omega  \bar B^{jk} X_n,_k \,  X _n,_j \, dx 
+  \kappa   \int_ \Omega \frac{ \rho_0,_{k} \rho_0,_j }{\rho_0^2} \bar B^{jk} |X_n|^2 \, \, dx \n \\
&  \qquad = {\frac{1}{2}} \int_ \Omega (\bar J^3)_t \frac{ |X_n|^2}{ \rho_0} dx +  \kappa   \int_ \Omega \frac{ \rho_0,_{jk} }{\rho_0} \bar B^{jk} |X_n|^2 \, \, dx + \kappa  \int_ \Omega \frac{ \rho_0,_{k} }{\rho_0} \bar B^{jk},_j |X_n|^2 \, \, dx \n \\
&  \qquad  \qquad + \int_ \Omega \bar G_0 \, X_n\, dx - \int_ \Omega \bar G_i\, X_n,_i \, dx \,. \n
\end{align} 
Using (\ref{pos_def}), we see that
\begin{align} 
&{\frac{1}{2}} \frac{d}{dt} \int_ \Omega \bar J^3  \frac{ |X_n|^2}{\rho_0}dx+ 2 \kappa  \lambda  \int_ \Omega |D X_n|^2 \, dx 
+  \kappa  \lambda   \int_ \Omega \frac {|D \rho_0|^2}{\rho_0^2} |X_n|^2 \, \, dx  \n \\
&  \qquad  \le    \|  {\frac{1}{2}}   (\bar J^3)_t  +  \kappa  \rho_0,_{jk} \bar B^{jk} +  \kappa \rho_0,_{k} \bar B^{jk},_j \|_{ L^ \infty ( \Omega )}
      \int_ \Omega \frac{1 }{\rho_0}  |X_n|^2 \, \, dx 
+C \| \bar G\|_{H^{-1}(\Omega) } \|D X_n\|_0   \,. \label{need100}
\end{align} 
Using the Sobolev embedding theorem and the Cauchy-Young inequality, we see that
\begin{align*} 
&{\frac{1}{2}} \frac{d}{dt} \int_ \Omega \bar J^3  \frac{ |X_n|^2}{\rho_0}dx+ \kappa  \lambda  \int_ \Omega |D X_n|^2 \, dx 
+  \kappa  \lambda   \int_ \Omega \frac {|D\rho_0|^2}{\rho_0^2} |X_n|^2 \, \, dx \\
&  \qquad  \le C   \|  {\frac{1}{2}}   (\bar J^3)_t  +  \kappa  \rho_0,_{jk} \bar B^{jk} +  \kappa \rho_0,_{k} \bar B^{jk},_j \|_2
      \int_ \Omega \frac{1 }{\rho_0}  |X_n|^2 \, \, dx 
+C_{ \kappa  \lambda } \| \bar G\|_{H^{-1}(\Omega) }^2 \,,
\end{align*} 
where the constant $C_{ \kappa \lambda }$ depends inversely on $ \kappa \lambda $.  Since $\bar v \in \mathcal{C} _T(M)$,
we have that on $[0,T]$,
$$
\int_0^t   \|  {\frac{1}{2}}   (\bar J^3)_t  +  \kappa  \rho_0,_{jk} \bar B^{jk} +  \kappa \rho_0,_{k} \bar B^{jk},_j \|_2dt \le C_M \sqrt{t} 
$$
for a constant $C_M$ depending on $M$,
so that  Gronwall's inequality shows  that $T_n=T$ (with $T$ independent of $n \in \mathbb{N}  $),  and  with $ {\frac{1}{2}} \le \bar J$ for all
$\bar v \in \mathcal{C} _T(M)$, we see that
\begin{equation}\nonumber
\sup_{t \in [0,T]} C \left\| \frac{X _n(t)}{\sqrt{\rho_0}} \right\|^2_0  + \kappa \lambda  \int_0^T  \left\|D X _n(t)\right\|^2_0 \le
\left\| \frac{ X(0)}{\sqrt{\rho_0}} \right\|^2_0 + C_{ \kappa \lambda }  \int_0^T  \left\| \bG(t)\right\|^2_{ H^{-1}(\Omega) } \,.
\end{equation}
Setting $C_p  = \frac{ \kappa \lambda }{\text{Poincar\'{e} constant}}$, we see that
\begin{equation}\nonumber
\sup_{t \in [0,T]}C  \left\| \frac{X _n(t)}{\sqrt{\rho_0}} \right\|^2_0  + C_p  \int_0^T  \left\|X _n(t)\right\|^2_1 \le
\left\| \frac{ X(0)}{\sqrt{\rho_0}} \right\|^2_0 + C_{ \kappa \lambda }  \int_0^T  \left\| \bG(t)\right\|^2_{ H^{-1}(\Omega) } \,.
\end{equation}
Thus, there exists a  subsequence $\{ {X _n}_m\} \subset \{X _n\}$ which converges weakly to some $ \bar X $ in $ L^2(0,T; \h )$, which satisfies
\begin{equation}\nonumber
\sup_{t \in [0,T]} C\left\| \frac{\bar X (t)}{\sqrt{\rho_0}} \right\|^2_0  + C_p  \int_0^T  \left\|\bar X (t)\right\|^2_1 \le
\left\| \frac{ X(0)}{\sqrt{\rho_0}} \right\|^2_0 + C_{ \kappa \lambda }  \int_0^T  \left\| \bG(t)\right\|^2_{ H^{-1}(\Omega) } \,.
\end{equation} 
Furthermore, it can also be shown using standard arguments that this $\bar X$ 
verifies the identity (\ref{weakX}). It then immediately follows that
\begin{equation}\nonumber
\frac{\bar X _t}{\rho_0 }\in L^2(0,T; H ^{-1} (\Omega)) \,,
\end{equation} 
and that $\bar X(0)=X _0$.   Uniqueness follows by letting $ \mathcal{W} =\bar X$ in (\ref{weakX}). 
\end{proof}

Since  $\| \bG \|^2_{ L^2(0,T; L^2(\Omega) )} \le P(\|\bar v\|^2_\XT )$,
  it thus follows from Lemma \ref{X_weak} and (\ref{N0}) that
\begin{equation}\label{Xest1}
 \left\| \frac{\bar X_t}{\rho_0}\right\|^2_{L^2(0,T; H^{-1}(\Omega) )} +
\sup_{t \in [0,T]}  \left\| \frac{\bar X(t)}{\sqrt{\rho_0}} \right\|^2_0  +C_p  \left\| \bar X\right\|^2_ {L^2(0,T; \h)}  
\le C\,.
\end{equation}

In order to build regularity for $X$, we construct  weak solutions for the time-differentiated version of (\ref{ssX}).   It is convenient to proceed 
from the first to third time-differentiated problems.
We begin with the first  time-differentiated version of (\ref{ssX}):
\begin{subequations}
\label{ssXt}
\begin{alignat}{2}
\frac{ \bar J^3 \bar X _{tt}}{\rho_0} - 2\kappa  \Bigl[\frac{\bar B^{jk}}{\rho_0} (\rho_0 \bar X_t)  ,_k\Bigr],_j 
&=\bar G_t + \mathcal{G} _1 &&  \text{ in }   \Omega \times (0,T_{ \kappa } ] \,, \label{ssXt.a}\\
 \bar X_t & =0 &&  \text{ on }   \Gamma \times (0,T_{ \kappa  } ] \,, \label{ssXt.b}\\
\bar X_t & = X_1\  &&  \text{ on }   \Omega  \times \{t=0\} \,,\label{ssXt.c}
\end{alignat} 
\end{subequations}
where the initial condition $X_1$ is given as
\begin{equation}\label{X1}
X _1= 2 \kappa \rho_0\Bigl[\frac{(\rho_0 X_0)  ,_i}{\rho_0}\Bigr],_i   + \rho_0 \bG(0) \,,
\end{equation} 
 the additional forcing term  $ \mathcal{G} _1$ is defined by
\begin{equation}\label{G1}
\mathcal{G} _1 =  2\kappa   \Bigl[\bar B_t^{jk} \rhoi ( \rho_0 \bar X) ,_k\Bigr],_j - \frac{ (\bar J^3)_t \bar X _{t}}{\rho_0} \,,
\end{equation} 
$X_0 = \rho_0 \operatorname{div} u_0$, and
$$
\bG(0) = -2 \kappa \operatorname{curl} \operatorname{curl} u_0 \cdot D \rho_0 - 2 \kappa \operatorname{div} u_0\, \Delta \rho_0
+ 2 \kappa u_0^j,_i \rho_0,_{ij} -2 \Delta \rho_0 -2 ( \operatorname{div} u_0)^2 -u_0^i,_j u_0^j,_i \,.
$$
According to the estimate (\ref{Xest1}),  $\| \bG_t  \mathcal{G} _1 \|^2_{ L^2(0,T; H^{-1}(\Omega) )} \le C$;  hence  by Lemma \ref{X_weak} (with $\bar X_t$, $\bG_t+ \mathcal{G} _1$, $X_1$ replacing $\bar X$, $\bG$, $X_0$, respectively),
\begin{equation}\label{Xest2}
 \left\| \frac{\bar X_{tt}}{\rho_0}\right\|^2_{L^2(0,T; H^{-1}(\Omega) )} +
\sup_{t \in [0,T]}  \left\| \frac{\bar X_t(t)}{\sqrt{\rho_0}} \right\|^2_0  +C_p  \left\| \bar X_t\right\|^2_ {L^2(0,T; \h)}  
\le C\,.
\end{equation} 

Next, we consider  the  second time-differentiated version of (\ref{ssX}):
\begin{subequations}
\label{ssXtt}
\begin{alignat}{2}
\frac{ \bar J^3 \bar X _{ttt}}{\rho_0} - 2\kappa  \Bigl[\frac{\bar B^{jk}}{\rho_0} (\rho_0 \bar X_{tt})  ,_k\Bigr],_j 
&= \bG_{tt} +  \mathcal{G} _2 &&  \text{ in }   \Omega \times (0,T_{ \kappa } ] \,, \label{ssXtt.a}\\
 \bar X_{tt} & =0 &&  \text{ on }   \Gamma \times (0,T_{ \kappa  } ] \,, \label{ssXtt.b}\\
\bar X_{tt} & = X_2\  &&  \text{ on }   \Omega  \times \{t=0\} \,,\label{ssXtt.c}
\end{alignat} 
\end{subequations}
where the initial condition $X_2$ is given as
\begin{equation}\label{X2}
X _2= 2 \kappa \rho_0\Bigl[\frac{(\rho_0 X_1)  ,_i}{\rho_0}\Bigr],_i   + \rho_0  \mathcal{G} _1(0) \,,
\end{equation} 
 the forcing function $ \mathcal{G} _2$ is defined by
\begin{equation}\label{G2}
\mathcal{G} _2 =  \p_t \mathcal{G} _1 + 2\kappa   \Bigl[\bar B_t^{jk} \rhoi ( \rho_0 \bar X_t) ,_k\Bigr],_j - \frac{ (\bar J^3)_t \bar X _{tt}}{\rho_0} \,.
\end{equation} 
We do not precise $\mathcal{G} _1(0)$, but note that its highest-order terms scale like either $D^3 u_0$ or $\rho_0 D^4 u_0$ or $D^3 \rho_0$,
so that $\| \sqrt{\rho_0} \mathcal{G} _1(0)\|_0^2  \le \mathcal{N} _0$.
Using the estimate (\ref{Xest2}), we see that $\bG_{tt} + \|\mathcal{G} _2 \|^2_{ L^2(0,T; H^{-1}(\Omega) )} \le C$. 
It thus follows from Lemma \ref{X_weak} (with $\bar X_{tt}$, $\bar G_{tt}+  \mathcal{G} _2$, $X_2$ replacing $X$, $\bG$, $X_0$, respectively),
\begin{align} 
 \left\| \frac{\bar X_{ttt}}{\rho_0}\right\|^2_{L^2(0,T; H^{-1}(\Omega) )} +
\sup_{t \in [0,T]}  \left\| \frac{\bar X_{tt}(t)}{\sqrt{\rho_0}} \right\|^2_0  +C_p  \left\| \bar X_{tt}\right\|^2_ {L^2(0,T; \h)}  \le C \,.
\label{Xest3}
\end{align} 

Finally, we consider  the  third time-differentiated version of (\ref{ssX}):
\begin{subequations}
\label{ssXttt}
\begin{alignat}{2}
\frac{ \bar J^3 \bar X _{tttt}}{\rho_0} - 2\kappa  \Bigl[\frac{\bar B^{jk}}{\rho_0} (\rho_0 \bar X_{ttt})  ,_k\Bigr],_j 
&=\bar G_{ttt}+ \mathcal{G} _3 &&  \text{ in }   \Omega \times (0,T_{ \kappa } ] \,, \label{ssXttt.a}\\
 \bar X_{ttt} & =0 &&  \text{ on }   \Gamma \times (0,T_{ \kappa  } ] \,, \label{ssXttt.b}\\
\bar X_{ttt} & = X_3\  &&  \text{ on }   \Omega  \times \{t=0\} \,,\label{ssXttt.c}
\end{alignat} 
\end{subequations}
where the initial condition $X_3$ is given as
\begin{equation}\label{X3}
X _3= 2 \kappa \rho_0\Bigl[\frac{(\rho_0 X_2)  ,_i}{\rho_0}\Bigr],_i   + \rho_0  \mathcal{G} _2(0) \,,
\end{equation} 
 the forcing function $ \mathcal{G} _3$ is defined by
\begin{equation}\label{G3}
\mathcal{G} _3 =   \p_t \mathcal{G} _2 +  2\kappa   \Bigl[\bar B_t^{jk} \rhoi ( \rho_0 \bar X_{tt}) ,_k\Bigr],_j    -  \frac{ (\bar J^3)_t  \bar X _{ttt} }{\rho_0} \,.
\end{equation} 
Once again, we do not precise $\mathcal{G} _2(0)$, but note that its highest-order terms scale like either $D^4 u_0$ or $\rho_0 D^5 u_0$ or $D^4 \rho_0$,
so that $\| \sqrt{\rho_0} \mathcal{G} _2(0)\|_0^2  \le \mathcal{N} _0$.
Using the estimate (\ref{Xest3}), we see that $\|\bar G_{ttt}+ \mathcal{G} _3 \|^2_{ L^2(0,T; H^{-1}(\Omega) )}$ is
bounded by a constant $C$.  (Note that this constant $C$ crucially depends on $\nu >0$.)
We see that 
Lemma \ref{X_weak} (with $X_{ttt}$, $\bG_{ttt}+ \mathcal{G} _3$, $X_3$ replacing $X$, $\bG$, $X_0$, respectively) yields the following
estimate:
\begin{align} 
\left\| \frac{\bar X_{tttt}}{\rho_0}\right\|^2_{L^2(0,T; H^{-1}(\Omega) )} +
\sup_{t \in [0,T]}  \left\| \frac{\bar X_{ttt}(t)}{\sqrt{\rho_0}} \right\|^2_0  +C_p  \left\|\bar X_{ttt}\right\|^2_ {L^2(0,T; \h)}   \le C
\,.  \label{Xest4}
\end{align}

\subsubsection{$L^2(0,T;H^2(\Omega))$  regularity for $\bar X_{tt}$}\label{sec833} 
We expand the time-derivative in the 
 definition of $ \mathcal{G} _2$ in (\ref{G2}) and write
 \begin{equation}\label{G22}
 \mathcal{G} _2 = 4 \kappa \Bigl[ \bar B^{jk}_t \rhoi \bigl( \rho_0 \bar X_t),_k\Bigr],_j + 2 \kappa \Bigl[ \bar B^{jk}_{tt} \rhoi \bigl( \rho_0 \bar X),_k\Bigr],_j - 2 \frac{(\bar J)^3_t \bar X_{tt}}{\rho_0} -  \frac{(\bar J)^3_{tt} \bar X_{t}}{\rho_0} \,.
 \end{equation} 
 According to the estimates  (\ref{Xest4}),  together with the Hardy inequality and the smoothness of $\bar v$,
 $$
\int_0^T \left(\| \bar G_{tt}\|_0^2+ \left\| \frac{(\bar J)^3_{t} \bar X_{ttt}}{\rho_0}\right\|_0^2  + \left\|2 \frac{(\bar J)^3_t \bar X_{tt}}{\rho_0}\right\|_0^2 + \left\| \frac{(\bar J)^3_{tt} \bar X_{t}}{\rho_0} \right\|_0^2\right)dt  \le C \,;
 $$
 hence, (\ref{ssXtt.a}) and (\ref{G22}) show that

\begin{align} 
 \kappa ^2\Bigl\|  \p_{tt} \Bigl[\frac{\bar B^{jk}}{\rho_0} (\rho_0 \bar X)  ,_k\Bigr],_j  \Bigr\|^2_{L^2(0,T; L^2(\Omega) )}
 \le C\,. \label{fest01}
\end{align} 
The bound (\ref{fest01}) together with the fundamental theorem of calculus then provides the bound on $[0,T]$:
\begin{equation}\label{fest10}
\| [\frac{\bar B^{jk}}{\rho_0} (\rho_0 \bar X)  ,_k],_j \|_0^2 + \| \p_t[\frac{\bar B^{jk}}{\rho_0} (\rho_0 \bar X)  ,_k],_j \|_0^2 \le C \,.
\end{equation} 
From this bound, we will infer that $\|\bar X\|_2^2 \le C$  and that $\|\bar X_t\|_2^2 \le C$, and finally that $\int_0^T \|\bar X_{tt}\|_2^2dt \le C$.
We will begin this analysis by estimating horizontal derivatives of $D\bar X$.

\begin{definition} [Horizontal difference quotients]  For $h>0$, we set 
$$\bar \p_ \alpha ^h u (x) = \frac{ u(x+ h e_ \alpha ) - u(x)}{h} \ \ \ ( \alpha =1,2) \,,
$$
and  $ \bar \p^h = (\bar \p^h_1, \bar \p^h_2)$.
\end{definition} 

The variational form of the statement that 
$$ \|[\frac{\bar B^{jk}}{\rho_0} (\rho_0 \bar X)  ,_k],_j \|_0^2 \text{ is bounded}$$
 takes the following form:  almost everywhere on $[0,T]$ and for $f(t)$ bounded  in $L^2 (\Omega)$,
\begin{equation}\label{sstemp03}
 2 \kappa \int_ \Omega \bar B^{jk} \bar X  ,_k \ \phi,_j \, dx + 2\kappa \int_ \Omega  \bar B^{jk} \frac{\rho_0,_k}{\rho_0} \bar X \, \phi,_j \, dx
= \int_ \Omega f \, \phi \, dx \ \ \forall \phi \in \h \,.
\end{equation} 

We substitute $\phi = - \bar \p^{-h} \bar \p^h \bar X$ into (\ref{sstemp03}), and using the discrete product rule
$$
\bar \p^h_ \alpha ( pq) = p^h \bar \p^h_ \alpha q +  \bar \p^h_ \alpha p \, q \,,  \ \ \  p^h(x) = p( x + h e_ \alpha ) \,,
$$
to find that
\begin{align*} 
& \underbrace{ 2 \kappa \int_ \Omega \bar B^{jk,h}\, \bar \p^h_ \alpha \bar X,_k \ \bar \p^h_ \alpha\bar  X,_j  \, dx }_{ \mathfrak{i} _1}
+ \underbrace{ 2 \kappa \int_ \Omega \bar \p^h_ \alpha \bar B^{jk} \, \bar X,_k \ \bar \p^h_ \alpha \bar X,_j  \, dx }_{ \mathfrak{i} _2} \\
& \qquad + \underbrace{2 \kappa \int_ \Omega \bar B^{jk,h} \rho_0,_k^h\, \bar \p^h_ \alpha\Bigl( \frac{\bar X}{ \rho_0} \Bigr)\ \bar \p^h_ \alpha \bar X,_j  \, dx}_{ \mathfrak{i} _3}  
 + \underbrace{ 2 \kappa \int_ \Omega\bar \p^h_ \alpha[ \bar B^{jk} \rho_0,_k] \,
  \frac{ \bar X}{ \rho_0} \ \bar \p^h_ \alpha \bar X,_j \, dx }_{ \mathfrak{i} _4} \\
 &\qquad\qquad\qquad\qquad = \underbrace{-   \int_ \Omega f \, \bar \p^{-h} \bar \p^h \bar X \, dx}_{ \mathfrak{i} _5} \,.
\end{align*} 
We proceed to the analysis of the integrals $\mathfrak{i} _a$, $a=1,...,5$.  By the uniform ellipticity condition (\ref{pos_def2}) obtained on
our time interval $[0,T]$, we see that
\begin{equation}\label{ell1}
2 \kappa \lambda \| \bar \p^h D \bar X\|^2_0 \le \mathfrak{i}_1  \,.
\end{equation} 
The integral $ \mathfrak{i} _2$ can be estimated using the $L^ \infty ( \Omega )$-$ L^2(\Omega) $- $ L^2(\Omega) $ H\"{o}lder's inequality:
\begin{align} 
\mathfrak{i} _2 &  \le  2 \kappa \| \bar \p^h_ \alpha \bar B^{jk} \|_{ L^ \infty ( \Omega )}  \,  \| \bar X_{tt},_k\|_0  \, \| \bar \p^h_ \alpha \bar X_{tt},_j  \|_0 \n 
\end{align} 
We then see that using
 the Cauchy-Young inequality for $ \epsilon >0$ we obtain
\begin{equation}\label{ell2}
\mathfrak{i} _2 \le  C \|\bar X\|_1^2  + \epsilon \| \bar \p^h_ \alpha \bar X,_j  \|_0^2 \,.
\end{equation}
The integral $ \mathfrak{i} _3$ requires us to form an exact derivative and integrate by parts.   With
\begin{align} 
\bar \p^h_ \alpha\Bigl( \frac{ \bar X}{ \rho_0} \Bigr) & = \bar \p_ \alpha ^h( \rho_0 ^{-1} ) \bar X^h + \rho_0 ^{-1} \bar \p^h _ \alpha \bar X  
 =-\frac{1}{\rho_0} \bar \p^h_ \alpha  \rho_0 \frac{\bar X^h }{\rho_0^h} + \frac{ \bar \p^h_ \alpha  \bar X}{ \rho_0} \,, \label{nice}
\end{align} 
 we write $ \mathfrak{i} _3$ as
\begin{align*} 
\mathfrak{i} _3 =
\underbrace{2 \kappa \int_ \Omega \frac{ \bar B^{jk,h} \rho_0,_k^h}{\rho_0}\,       \bar \p^h_ \alpha  \bar X     \,\bar \p^h_ \alpha \bar X,_j  \, dx}_{ 
 { \mathfrak{i} _3}_a  } 
-
\underbrace{2 \kappa \int_ \Omega \bar B^{jk,h} \rho_0,_k^h\,   (  \frac{1}{\rho_0} \bar \p^h_ \alpha  \rho_0) \, \frac{\bar X^h }{\rho_0^h}         \,\bar \p^h_ \alpha \bar X,_j  \, dx}_{  { \mathfrak{i} _3}_b  }  \,.
\end{align*} 
Integration by parts with respect to $x_j$ shows that
\begin{align*} 
{ \mathfrak{i} _3}_a &= 
\underbrace{ \kappa \int_ \Omega  \bar B^{jk,h} \rho_0,_k^h \rho_0,_j  \,      \left|\frac{\bar \p^h  \bar X}{\rho_0}    \right|^2 \, dx}
_{ {{\mathfrak{i} _3}_a}_i    }
- \underbrace{ \kappa \int_ \Omega ( \bar B^{jk,h} \rho_0,_k^h),_j\,      \frac{\bar \p^h  \bar X}{{\rho_0}}    \bar \p^h \bar X \, dx}
_{ {{\mathfrak{i} _3}_a}_{ii}} \,.
\end{align*} 
Next, we  notice that
\begin{equation*}
{{\mathfrak{i} _3}_a}_i=\kappa \int_ \Omega  \bar B^{jk,h} \rho_0,_k \rho_0,_j  \,      \left|\frac{\bar \p^h  \bar X}{\rho_0}    \right|^2 dx +
\kappa \int_ \Omega  \bar B^{jk,h} (\rho_0,_k^h-\rho_0,_k) \rho_0,_j  \,      \left|\frac{\bar \p^h  \bar X}{\rho_0}    \right|^2
\, dx
\end{equation*}
and therefore, according to (\ref{pos_def2}),
\begin{align}\label{ell3ai}
{{\mathfrak{i} _3}_a}_i&\ge -C\, h\, \kappa \| D^2\rho_0\|_{L^\infty} C_M \int_\Omega \left|\frac{\bar \p^h  \bar X}{\rho_0}    \right|^2\, dx +\kappa \lambda \int_ \Omega   |D \rho_0|^2  \,  \left|\frac{\bar \p^h \bar X}{\rho_0}    \right|^2  \, dx   \,\nonumber\\
&\ge -C\, h\, \kappa  C \left\|{\bar \p^h  \bar X}    \right\|_1^2 +\kappa \lambda \int_ \Omega   |D \rho_0|^2  \,  \left|\frac{\bar \p^h \bar X}{\rho_0}    \right|^2  \, dx
\end{align}
On the other hand, for $ \epsilon >0$, 
\begin{align} 
 {{\mathfrak{i} _3}_a}_{ii}  & \le    \kappa  \| ( \bar B^{jk,h} \rho_0,_k^h ),_j\|_{ L^ \infty} \, \| \frac{\bar \p^h \bar X_{tt}}{{\rho_0}}  \|_0 \, 
 \|  \bar \p^h \bar X_{tt} \|_0  \n \\
& \le C    \|  \bar \p^h \bar X \|_0^2   +\epsilon     \| \frac{\bar \p^h  \bar X}{{\rho_0}}  \|_0^2 \n \\
& \le C   \|  \bar \p^h \bar X \|_0^2   +\epsilon   C  \| \bar \p^h D  \bar X  \|_0^2 \,,
 \label{ell3aii}
\end{align}
where we have used the Hardy and Poincar\'{e} inequalities for the last inequality in (\ref{ell3aii}).      Furthermore,
\begin{align} 
{ \mathfrak{i} _3}_b &\le C
\|\bar B^{jk,h} \rho_0,_k^h\,   (  \frac{1}{\rho_0} \bar \p^h_ \alpha  \rho_0)\|_{ L^ \infty (\Omega )} \,  \| \frac{ \bar X^h }{\rho_0^h}  \|_0       \,
\| \bar \p^h_ \alpha X,_j \|_0 \n \\
& \le C_  \|   \frac{\bar \p^h_ \alpha  \rho_0}{\rho_0} \|_2 \, \| \bar X\|_1 \, \| \bar \p^h_ \alpha \bar X,_j \|_0 \n \\
& \le C  \|     \rho_0\|_4 \, \|\bar X\|_1 \, \| \bar \p^h_ \alpha \bar X,_j \|_0 \n \\
& \le C\| X\|_1^2 + \epsilon   \| \bar \p^h_ \alpha \bar X,_j \|_0^2 \,.
\label{ell3b}
\end{align} 
Similarly, we have that
\begin{align} 
\mathfrak{i} _4 & \le C   \|  \bar \p^h_ \alpha[ \bar B^{jk} \rho_0,_k] \|_{L^ \infty ( \Omega )} \,
\|  \frac{ \bar X}{ \rho_0} \|_0 \, \| \bar \p^h_ \alpha\bar X,_j \|_0  \n \\
& \le C\|\bar X\|_1^2 +\epsilon   \| \bar \p^h_ \alpha \bar X,_j \|_0^2 \,,
\label{ell4}
\end{align} 
and finally
\begin{align} 
\mathfrak{i} _5 & \le C\| f\|_0^2 +  \epsilon   \| \bar \p^h_ \alpha \bar X,_j \|_0^2
\label{ell5}
\end{align} 
Combining the estimates 
(\ref{ell1})--(\ref{ell5}),  and taking $  \epsilon  >0$ sufficiently
small, we find that for $h>0$ small enough
$$
 \| \bar \p^h D\bar X\|^2_0 
\le    C(\| \bar X_{tt}\|_1^2  + \| f\|_0^2) \le C \,,
$$
with $C$ independent of $h$.  It thus follows that
\begin{equation}
 \|  \bar \p  \bar X\|^2_1 \le  C  \,. \label{Xtan_est}
\end{equation}

Since $\rho_0$ is strictly positive on any open interior subdomain of $ \Omega $, standard regularity theory shows that solutions
$\bar X$ (and its time derivatives) are smooth in the interior, and hence the equation (\ref{ssX}) holds in the classical sense in the
interior of $\Omega$.
It remains to estimate $  \|  \bar X,_3\|^2_{ L^2(0,T; H^1(\Omega) )}$ and for this purpose we expand
$ \Bigl(\frac{\bar B^{jk}}{\rho_0} (\rho_0 \bar X)  ,_k\Bigr),_j$ to find that
\begin{align} 
\bar X,_{33} + \rho_0,_3 \Bigl( \frac{\bar X}{\rho_0}\Bigr),_3 & = \frac{1}{\bar B^{33}} \Bigl( \Bigl[\frac{\bar B^{jk}}{\rho_0} (\rho_0 \bar X)  ,_k\Bigr],_j -\bar B^{ \alpha 3} \bar X,_{3 \alpha }
- \bar B^{3j},_j \bar X,_3 - (\bar B^{j \alpha } \bar X,_ \alpha ),_j \n \\
&\qquad \qquad - ( \bar B^{jk} \rho_0,_k),_j \frac{\bar X}{\rho_0} - \bar B^{ j \alpha } \rho_0,_j \Bigl( \frac{\bar X}{\rho_0} \Bigr),_ \alpha 
- \bar B^{  \alpha 3} \rho_0,_\alpha \Bigl( \frac{\bar X}{\rho_0} \Bigr),_ 3 \Bigr) \,. \label{need2}
\end{align} 
Since
$$
 \bar B^{  \alpha 3} \rho_0,_\alpha \Bigl( \frac{\bar X_{tt}}{\rho_0} \Bigr),_ 3 =  \bar B^{  \alpha 3}  \frac{\rho_0,_ \alpha }{\rho_0} \Bigl(
 \bar X,_3 - \frac{\bar X}{\rho_0} \Bigr) \,,
$$
and since 
$\| \rho_0,_ \alpha / \rho_0 \|_{ L^ \infty ( \Omega ) }$ is bounded by a constant thanks to our higher-order Hardy-type inequality
Lemma \ref{Hardy}, we see from (\ref{Xtan_est}) and (\ref{Xest3}) that
\begin{equation}\label{s701}
\| \bar X,_{33} + \rho_0,_3 \Bigl( \frac{\bar X}{\rho_0}\Bigr),_3  \|^2_0  \le C \,.
\end{equation}

We introduce the variable $Y$ defined by
\begin{equation}
Y(t,x_1,x_2,x_3)=\int_0^{x_3} \frac{\bar X(t,x_1,x_2,y_3)}{\rho_0(x_1,x_2,y_3)} dy_3 \,, \label{varY}
\end{equation}
so that $Y$ vanishes at $x_3=0$, and will allow us to employ the Poincar\'{e} inequality with this variable.
It is easy to see that
\begin{equation}
\bar X=\rho_0 Y,_3 \,. \label{XttY}
\end{equation}
Thanks to the standard Hardy inequality, we thus have that
\begin{equation*}
\|Y,_3\|^2_0\le C \|\bar X\|^2_1 \le  C \,.
\end{equation*}
The estimate (\ref{Xtan_est}) then shows that
\begin{equation*}
\|DY\|^2_0  \le     C  \,,
\end{equation*}
and hence by Poincar\'e's inequality,
\begin{equation}
\|Y\|^2_ 0  \le     C   \,, \label{ssg1}
\end{equation}
We notice that
\begin{align*} 
 \bar X,_{33} + \rho_0,_3 \Bigl( \frac{\bar X}{\rho_0}\Bigr),_3 & = (\rho_0 Y,_3),_{33} + \rho_0,_3 Y,_{33} \\
 & =\rho_0 Y,_{333} + 3 \rho_0,_3 Y,_{33} + \rho_0,_{33} Y,_3 \,.
\end{align*} 
so that
\begin{equation*} 
\| \rho_0 Y,_{333} + 3 \rho_0,_3 Y,_{33} \|^2_0   \le     C \,.
\end{equation*}
The product rule then implies that
\begin{equation}\n
\| (\rho_0 Y),_{333} \|^2_0  \le     C   \,.
\end{equation} 
The same $ L^2(\Omega) $ can easily be established for the lower-order terms  $\rho_0Y$,
$(\rho_0 Y),_{3}$, and $(\rho_0 Y),_{33}$; for instance the identity (\ref{XttY}) shows that
$$ \| (\rho_0 Y,_{3}),_3 \|^2_0   \le     
 C \,. $$
By (\ref{ssg1}), we see that $ \| \rho_0 Y,_{33} \|^2_0$ enjoys the same bound.
Since
$$
(\rho_0 Y),_{33} = \rho_0 Y,_{33} + 2 \rho_0,_3 Y,_3 + \rho_0,_{33} Y
$$
 the estimate (\ref{ssg1}) proves that $ \| (\rho_0 Y),_{33} \|^2_0 \le C $.
By definition of the $H^3(0,1)$-norm, we then see that
\begin{equation}\label{ssg3}
\int_{ \mathbb{T}  ^2}  \| \rho_0 Y ( x_1,x_2, \cdot )\|^2_{H^3(0,1)} dx_1 dx_2  \le     
C   \,.
\end{equation} 

Now, thanks to our high-order Hardy-type inequality set on the 1-D domain $(0,1)$, we infer from  (\ref{ssg3}) that
\begin{equation}\label{ssg4}
\int_{ \mathbb{T}  ^2}  \| Y ( x_1,x_2, \cdot )\|^2_{H^2(0,1)} dx_1 dx_2  \le     
C   \,.
\end{equation} 
From (\ref{ssg3}), 
\begin{equation}\n
 \int_{ \mathbb{T}  ^2}  \| \rho_0 Y,_3 ( x_1,x_2, \cdot ) + \rho_0,_3 Y(x_1,x_2, \cdot )\|^2_{H^2(0,1)} dx_1 dx_2  \le     
C   \,,
\end{equation} 
from which it follows, with (\ref{XttY}), that
\begin{equation}\n
 \int_{ \mathbb{T}  ^2}  \| \bar X ( x_1,x_2, \cdot ) \|^2_{H^2(0,1)} dx_1 dx_2  \le     C   \,;
\end{equation}
hence,
\begin{equation}
\|\bar X,_{33}\|^2_ 0  \le     
C   \,. \label{ssg5}
\end{equation}
Combining the inequalities (\ref{ssg5}) and (\ref{Xtan_est}), we see that
\begin{equation}
\|\bar X\|^2_ 2  \le     
C    \,. \label{XttH2}
\end{equation}

The estimate (\ref{XttH2}) together with (\ref{fest10}) then shows that
$$ \|[\frac{\bar B^{jk}}{\rho_0} (\rho_0 \bar X_t)  ,_k],_j \|_0^2 \text{ is bounded.}$$
By identically repeating for $\bar X_t$ the $H^2( \Omega )$-regularity estimates that we just detailed for $\bar X$, we obtain that
\begin{equation}
\|\bar X_t\|^2_ 2  \le     
C    \,. \label{XttH2b}
\end{equation}
The estimates (\ref{XttH2}) and (\ref{XttH2b}) together with (\ref{fest01}) then prove that
$$ \|[\frac{\bar B^{jk}}{\rho_0} (\rho_0 \bar X_{tt})  ,_k],_j \|^2_{ L^2(0,T; L^2(\Omega) )}
\le C \,.
$$
Once again we repeat the estimates for $\bar X_{tt}$ which we just explained for $\bar X$, this time $L^2$-in-time, and we obtain the
desired result; namely, 
\begin{equation}
\|\bar X_{tt}\|^2_ { L^2(0,T; H^2(\Omega) )}  \le     
C    \,. \label{XttH2c}
\end{equation}
It follows that (\ref{ssXt}) holds almost everywhere.

\subsubsection{$L^2(0,T;H^3(\Omega))$ regularity for $\bar X_{t}$}\label{sec834}  Using (\ref{G1}),  we write (\ref{ssXt.a}) as
$$
- 2 \kappa \p_t [\frac{\bar B^{jk}}{\rho_0} (\rho_0 \bar X)  ,_k],_j = \bar G_t - \frac{\bar J^3 \bar X_{tt}}{\rho_0} \,.
$$
The estimate (\ref{XttH2c}) together with our higher-order Hardy inequality shows that
\begin{equation}\label{need3}
 \| \p_t [\frac{\bar B^{jk}}{\rho_0} (\rho_0 \bar X)  ,_k],_j \|^2_{ L^2(0,T; H^1(\Omega) )}  \le C \,,
\end{equation} 
and hence by the fundamental theorem of calculus, 
$$
\| [\frac{\bar B^{jk}}{\rho_0} (\rho_0 \bar X)  ,_k],_j \|^2_1  \le C \,.
$$

We employ the identity (\ref{nice}) to find that
\begin{align*} 
\bar \p^h \bigl[ \frac{\bar B^{jk} }{\rho_0} ( \rho_0 \bar X),_k\Bigl],_j & = 
 \bigl[ \frac{\bar B^{jk,h} }{\rho_0} ( \rho_0 \bar \p^h\bar X),_k\Bigl],_j + \Bigl[ \bar \p^h \bar B^{jk}( \bar X,_k + \rho_0,_k \frac{\bar X}{\rho_0}) \Bigr],_j\\
 & \qquad + \Bigl[  \bar B^{jk,h} \bar \p^h\rho_0,_k \frac{\bar X}{\rho_0} \Bigr],_j -  \Bigl[  \bar B^{jk,h} \rho_0,_k  {\frac{\bar \p^h \rho_0}{\rho_0}} \frac{\bar X^h}{\rho_0^h} \Bigr],_j \,.
\end{align*} 
Since the last three term on the right-hand side are bounded in $ L^2(\Omega) $ thanks to our higher-order Hardy inequality and
(\ref{XttH2}),  we see that
$$
\left\|  \bigl[ \frac{\bar B^{jk,h} }{\rho_0} ( \rho_0 \bar \p^h\bar X),_k\Bigl],_j \right\|^2_0 \le C \,.
$$
Now repeating the argument which lead to (\ref{XttH2}) with $\bar \p^h \bar X$ replacing $\bar X$, we find that
\begin{equation}\label{Xtan2}
\| \bar \p \bar X \|_2^2 \le C \,.
\end{equation} 

By differentiating the relation (\ref{need2}) with respect to $x_3$ and using the estimate (\ref{Xtan2}), we see that
$$
\| \bar X,_{333} + \rho_0,_3 \left( \frac{\bar X}{\rho_0}\right),_{33} \|^2_0 \le C \,.
$$
Now by using the variable $Y$ defined by (\ref{varY}), we can repeat our  argument to find that $\|\bar X,_{333}\|_0^2 \le C$ and hence
that 
\begin{equation}\label{XH3}
\|\bar X\|^2_3 \le C \,.
\end{equation} 
From (\ref{need3}), we then easily infer that
$$
\int_0^T\| [\frac{\bar B^{jk}}{\rho_0} (\rho_0 \bar X_t)  ,_k],_j \|^2_1 dt \le C \,,
$$
so that the argument just given allows us to conclude that
\begin{equation}
\|\bar X_{t}\|^2_ { L^2(0,T; H^3(\Omega) )}   \le     C
   \,. \label{XtH3}
\end{equation}

\subsubsection{$L^2(0,T;H^4(\Omega))$ regularity for $\bar X$} \label{sec835}  By repeating the argument of Section \ref{sec834}, we
find that
\begin{equation}
\|\bar X\|^2_ { L^2(0,T; H^4(\Omega) )}   \le     
C   \,. \label{XH4}
\end{equation}

We have thus established existence and regularity of our solution $\bar X$; however, the bounds and time-interval of existence
depend on $\nu >0$.   We next turn to better Sobolev-type estimates to establish bounds for $\bar X$ and its time-derivatives which
are independent of $\nu$ and are useful for our fixed-point scheme.

\subsubsection{Estimates for $ \| \bar X\|^2_\XT$ independent of $\nu$}\label{smoothparam} \hspace{4 in}

\noindent
{\bf Step 1.}
We begin this section by getting $\nu$-independent energy estimates for the third time-differentiated problem (\ref{ssXttt}).


\begin{lemma} \label{lem_G3}  For  $T>0$ taken sufficiently small and $ \delta >0$, 
\begin{align} 
&
 \left\| \frac{\bar X_{tttt}}{\rho_0}\right\|^2_{L^2(0,T; H^{-1}(\Omega) )} +
\sup_{t \in [0,T]}  \left\| \frac{\bar X_{ttt}(t)}{\sqrt{\rho_0}} \right\|^2_0  +C_p  \left\|\bar X_{ttt}\right\|^2_ {L^2(0,T; \h)}   \n \\
& \qquad\qquad\qquad
\le  \mathcal{N} _0 + TP(\|\bar X\|^2_\XT) +C \delta \|\bar X\|^2_\XT +  \pbv +  \pcurl
 \,.  \label{sXest4}
\end{align} 
\end{lemma}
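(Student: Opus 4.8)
The plan is to run a weak-solution energy estimate for the third time-differentiated equation (\ref{ssXttt}), exactly as in the proof of Lemma \ref{X_weak}, but now tracking the dependence of every error term on $T$ and on a small parameter $\delta$, and replacing the crude $\nu$-dependent bound ``$\le C$'' used in Section \ref{sec832} by a bound in terms of $\mathcal{N}_0$ and polynomials in $\|\bar v\|_{\ZT}^2$, $\|\operatorname{curl}\bar v\|_{\YT}^2$. First I would test (\ref{ssXttt.a}) against $\bar X_{ttt}$ in $L^2(\Omega)$; the left-hand side produces, just as in Lemma \ref{X_weak}, the quantities
\[
{\tfrac{1}{2}}\frac{d}{dt}\int_\Omega \bar J^3 \frac{|\bar X_{ttt}|^2}{\rho_0}\,dx
+ 2\kappa\lambda\int_\Omega |D\bar X_{ttt}|^2\,dx
+ \kappa\lambda\int_\Omega \frac{|D\rho_0|^2}{\rho_0^2}|\bar X_{ttt}|^2\,dx
\]
up to the commutator term $\kappa\int_\Omega(\rho_{0,jk}\bar B^{jk}+\rho_{0,k}\bar B^{jk},_j)\rho_0^{-1}|\bar X_{ttt}|^2$, which is absorbed via the Sobolev embedding $H^2\hookrightarrow L^\infty$ and the term $\int \bar J^3_t\,|\bar X_{ttt}|^2/\rho_0$, both of which carry a factor $\int_0^t\|\bar v\|$-type bound and hence contribute $T\,P(\|\bar v\|_{\ZT}^2)\int_\Omega|\bar X_{ttt}|^2/\rho_0$ after Gronwall. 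The right-hand side contributes $\langle \bar G_{ttt}+\mathcal{G}_3,\bar X_{ttt}\rangle$, and I would integrate by parts in $x$ to land all forcing terms in $H^{-1}(\Omega)$ duality, then apply Cauchy–Young.

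The core of the work is the estimate of $\|\bar G_{ttt}+\mathcal{G}_3\|_{L^2(0,T;H^{-1}(\Omega))}$. I would split the forcing into three classes. (i) Terms depending only on $\bar v$ (through $\bar A,\bar a,\bar J$ and their time derivatives, and on $\rho_0$): by the explicit formula (\ref{Gforce}) and the recursive definitions (\ref{G1})--(\ref{G3}), these scale at worst like derivatives of $\bar v$ up to order matching $\|\bar v\|_{\ZT}$, so after using $\rho_0$-weights and the higher-order Hardy inequality Lemma \ref{Hardy} they are controlled by $T\,P(\|\bar v\|_{\ZT}^2)$, and the curl-type contributions by $\pcurl$. (ii) Terms depending on $\bar X$ and its first three time derivatives, multiplied by a smooth coefficient built from $\bar v$: the key structural point is that every such term appearing in $\mathcal{G}_j$ carries either a factor $\bar B^{jk}_t$ (which is $O(\|\bar v\|_{\ZT})$ and gains a $\sqrt{T}$ from $\int_0^t$, via the polynomial-type inequality of Section \ref{subsec_poly}) or a factor $(\bar J^3)_t$. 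Using the already-established $\nu$-dependent regularity $\bar X\in\XT$, $\bar X_t\in L^2(0,T;H^3)$, $\bar X_{tt}\in L^2(0,T;H^2)$ from Sections \ref{sec833}--\ref{sec835} together with interpolation, these produce the terms $\delta\|\bar X\|_{\XT}^2$ and $T\,P(\|\bar X\|_{\XT}^2)$ in (\ref{sXest4}): the $\delta$ comes from Cauchy–Young applied to the top-order-in-$\bar X$ pieces that we cannot afford a full power of, and the $T$ (or $\sqrt T$) comes from the time-integral of the coefficients. (iii) Initial-data contributions, namely $X_3$ of (\ref{X3}) and the pointwise values $\mathcal{G}_2(0)$, whose top-order terms scale like $D^4 u_0$, $\rho_0 D^5 u_0$, $D^4\rho_0$, and hence satisfy $\|\sqrt{\rho_0}\,X_3/\rho_0\|_0^2=\|X_3/\sqrt{\rho_0}\|_0^2\le \mathcal{N}_0$ after one use of Hardy — these feed the $\mathcal{N}_0$ term.

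With these bounds in hand, Gronwall's inequality (with the $\bar v$-dependent coefficient having small time-integral, exactly as in the $T_n=T$ argument of Lemma \ref{X_weak}) closes the estimate and converts the differential inequality into (\ref{sXest4}), with the Poincaré constant folded into $C_p$ as before. \textbf{The main obstacle} I anticipate is bookkeeping the highest-order-in-$\bar X$ terms in $\mathcal{G}_3=\partial_t\mathcal{G}_2+2\kappa[\bar B^{jk}_t\rhoi(\rho_0\bar X_{tt}),_k],_j-(\bar J^3)_t\bar X_{ttt}/\rho_0$: the term $[\bar B^{jk}_t\rhoi(\rho_0\bar X_{tt}),_k],_j$ is genuinely second order in $\bar X_{tt}$, i.e.\ it is of the same differential order as the principal part, so it cannot be controlled by the dissipation $\kappa\lambda\|D\bar X_{ttt}\|_0^2$ alone; one must integrate it by parts once against $\bar X_{ttt}$ to move a derivative off, producing $\kappa\int \bar B^{jk}_t\rhoi(\rho_0\bar X_{tt}),_k\,\bar X_{ttt},_j$, and then bound it by $\delta\|D\bar X_{ttt}\|_0^2 + C_\delta\|\bar B^{jk}_t\|_{L^\infty}^2\|\bar X_{tt}\|_2^2$, using that $\|\bar B^{jk}_t\|_{L^\infty}$ carries a power of $\|\bar v\|_{\ZT}$ and that $\int_0^T$ of its square is $\le T\,P(\|\bar v\|_{\ZT}^2)$; the $\delta$-term is absorbed on the left, and the remainder feeds $T\,P(\|\bar X\|_{\XT}^2)$. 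Getting the weights on $\rho_0$ to work out in this integration by parts — so that the Hardy inequality applies to $(\rho_0\bar X_{tt}),_k/\rho_0 = \bar X_{tt},_k + \rho_{0,k}\bar X_{tt}/\rho_0$ with both summands controlled — is the delicate point, but it is precisely the manipulation already carried out in (\ref{nice}) and the estimates $\mathfrak{i}_3$--$\mathfrak{i}_4$, so it goes through verbatim.
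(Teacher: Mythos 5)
Your energy framework is the right one, and your treatment of what you call the main obstacle — the term $2\kappa\bigl[\bar B^{jk}_t\rhoi(\rho_0\bar X_{tt}),_k\bigr],_j$, handled by one integration by parts, Cauchy--Young with $\delta$, and the fundamental theorem of calculus — coincides with the paper's estimate of its term $\mathcal{T}_2$ (and similarly for the $(\bar J^3)_t$-terms and the initial-data terms feeding $\mathcal{N}_0$). But that is the routine part of the proof. The genuinely delicate term is the one you dismiss in a clause under ``class (i)'': the forcing $\bar G_{ttt}$ itself, and specifically the third time derivative of the $\kappa$-viscosity contribution in (\ref{Gforce}), $\kappa\,\partial_{ttt}\bigl[\bar a^j_i\,\partial_t\bar a^k_i\,\rhoi(\rho_0^2\bar J^{-2}),_k\bigr],_j$. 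Its highest-order piece is $2\kappa\bigl[(\bar a^j_i\,\partial_t^4\bar a^k_i)\,\rho_0,_k\,\bar J^{-2}\bigr],_j$ with $\partial_t^4\bar a\sim D\bar v_{ttt}$; after moving the outer derivative onto $\bar X_{ttt}$ in the duality pairing you are left with $D\bar v_{ttt}$ against $D\bar X_{ttt}$. Since $D\bar v_{ttt}$ is controlled only in $L^2$ in time, with norm comparable to the full $\|\bar v\|_\XT$, Cauchy--Young gives $\delta\|\bar X\|^2_\XT + C_\delta\,P(\|\bar v\|^2_\XT)$ \emph{with no factor of $T$}, which is not of the form $\pbv$ permitted on the right-hand side of (\ref{sXest4}). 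This is not a cosmetic restriction: in the iteration (\ref{need200})--(\ref{need202}) the only terms tolerated without time-smallness are those involving $\operatorname{curl}\bar v$, because the curl gains regularity through (\ref{Xi}); a bare superlinear $P(\|\bar v\|^2_\ZT)$ would destroy the stability of $\mathcal{C}_T(M)$ and the contraction. Your assertion that ``the curl-type contributions'' are bounded by $\pcurl$ begs the question, since nothing in your argument exhibits a curl in this term.

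The missing ingredient is Lemma \ref{lem_curlcurl}: the paper writes $\partial_t\bar a^k_i,_j\,\bar a^j_i=[\operatorname{curl}\operatorname{curl}\bar v]^k+\bigl(\text{coefficients vanishing at }t=0\bigr)\cdot D^2\bar v+\text{(lower order)}$, so that after $\partial_{ttt}$ the dangerous piece becomes $[\operatorname{curl}\operatorname{curl}\bar v_{ttt}]^k\rho_0,_k\bar J^{-2}$ plus terms whose coefficients are $O(\sqrt t)$ by the fundamental theorem of calculus (these feed $\pbv$), plus lower-order remainders. The curl-curl term is then integrated by parts against $\bar X_{ttt}$ so that only $\operatorname{curl}\bar v_{ttt}$ survives, paired with $D(D\rho_0\,\bar J^{-2}\bar X_{ttt})$, giving a bound by $\pcurl+\pbv+\delta\|\bar X\|^2_\XT$ — this is precisely where the $\pcurl$ term in the statement originates. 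Without this decomposition (or an equivalent mechanism extracting the curl from $\partial_t^4\bar a$), your estimate of $\langle\bar G_{ttt},\bar X_{ttt}\rangle$ cannot close with the stated right-hand side, so the proposal has a genuine gap at its central step.
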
 
\begin{proof}
We write the forcing function $\bar G_{ttt}+ \mathcal{G} _3$ as 
\begin{equation}\label{sG3}
\bar G_{ttt}+ \mathcal{G} _3 = \underbrace{\bar G_{ttt}+ \p_t \mathcal{G} _2}_{ \mathcal{T} _1} + \underbrace{ 2\kappa   \Bigl[\bar B_t^{jk} \rhoi ( \rho_0 \bar X_{tt}) ,_k\Bigr],_j}_{ \mathcal{T} _2} - \underbrace{ \frac{ (\bar J^3)_t  \bar X _{ttt}}{\rho_0}}_{ \mathcal{T} _3} \,.
\end{equation} 
We test (\ref{ssXttt.a}) with $\bar X_{ttt}$.  In the identical fashion that we obtained (\ref{need100}), we see that
\begin{align} 
&{\frac{1}{32}} \frac{d}{dt} \int_ \Omega   \frac{ |\bar X_{ttt}|^2}{\rho_0}dx+ 2 \kappa  \lambda  \int_ \Omega |D \bar X_{ttt}|^2 \, dx 
+  \kappa  \lambda   \int_ \Omega \frac {|D \rho_0|^2}{\rho_0^2} |\bar X_{ttt}|^2 \, \, dx  \n \\
&  \qquad  \le   \|  {\frac{1}{2}}   (\bar J^3)_t  +  \kappa  \rho_0,_{jk} \bar B^{jk} +  \kappa \rho_0,_{k} \bar B^{jk},_j \|_{ L^ \infty ( \Omega )}
      \int_ \Omega \frac{1 }{\rho_0}  | \bar X_{ttt}|^2 \, \, dx 
+ \langle \bar G_{ttt}+ \mathcal{G} _3 \ , \ \bar X_{ttt} \rangle   \,. \n
\end{align} 
Integrating this inequality from $0$ to $t \in (0,T]$, we see that
\begin{align} 
&{\frac{1}{32}} \sup_{t \in [0,T]} \int_ \Omega  \rhoi |\bar X_{ttt}(t)|^2dx+ 2 \kappa  \lambda \int_0^T  \int_ \Omega |D \bar X_{ttt}|^2 \, dx dt  \le \mathcal{N} _0\n \\
&   + T \sup_{t \in [0,T]}   \|  {\frac{1}{2}}   (\bar J^3)_t  +  \kappa  \rho_0,_{jk} \bar B^{jk} +  \kappa \rho_0,_{k} \bar B^{jk},_j \|_{ L^ \infty ( \Omega )}
      \int_ \Omega \frac{1 }{\rho_0}  | \bar X_{ttt}(t)|^2 \, \, dx 
+\int_0^T \langle \bar G_{ttt}+ \mathcal{G} _3 \ , \ \bar X_{ttt} \rangle dt   \,. \n
\end{align} 
By the Soblev embedding theorem 
$$
 \|  {\frac{1}{2}}   (\bar J^3)_t  +  \kappa  \rho_0,_{jk} \bar B^{jk} +  \kappa \rho_0,_{k} \bar B^{jk},_j \|_{ L^ \infty ( \Omega )}
 \le C  \|  {\frac{1}{2}}   (\bar J^3)_t  +  \kappa  \rho_0,_{jk} \bar B^{jk} +  \kappa \rho_0,_{k} \bar B^{jk},_j \|_2 \,.
$$
The highest-order derivative in  the term $ {\frac{1}{2}}   (\bar J^3)_t$ scales like $Dv$, while the highest-order derivative in 
$\bar B^{jk},_j$ scales like $D^2\eta$, which means that we have to be able to bound $ \sup_{t \in [0,T]} \| v(t)\|_3$ as well as
$\sup_{t \in [0,T]} \| \eta(t)\|_4$, and these are clearly bounded by $C \sqrt{t} \| \bar v\|_\XT$.
 Therefore, by choosing $T$ sufficiently small and invoking the
 Poincar\'{e} inequality, we see that
 \begin{align} 
&C \sup_{t \in [0,T]} \int_ \Omega  \rhoi |\bar X_{ttt}(t)|^2dx+  C_{ \kappa \lambda } \int_0^T  \int_ \Omega |D \bar X_{ttt}|^2 \, dx dt  \le \mathcal{N} _0
+\int_0^T \langle \bar G_{ttt}+ \mathcal{G} _3 \ , \ \bar X_{ttt} \rangle dt   \,. \n
\end{align}

We proceed to the analysis of the terms in   $\int_0^T \langle \bar G_{ttt}+ \mathcal{G} _3 \ , \ \bar X_{ttt} \rangle dt$, and 
we begin with the term $ \mathcal{T} _3$ in (\ref{sG3}).  We have that 
$$\int_0^T \langle \mathcal{T} _3, \bar X_{ttt} \rangle  dt
\le  \sup_{t \in [0,T]}  \|   (\bar J^3)_t \|_{ L^ \infty ( \Omega )}  \int_0^T \int_ \Omega \frac{1 }{\rho_0}  | \bar X_{ttt}|^2 \, \, dx  dt
\le C_M T^ {\frac{3}{2}} \sup_{t \in [0,T]}  \int_ \Omega \frac{1 }{\rho_0}  | \bar X_{ttt}|^2 \, \, dx \,,
$$
where we have made use of the Soblev embedding theorem giving the inequality $ \|   (\bar J^3)_t \|_{ L^ \infty ( \Omega )} \le C 
 \|   (\bar J^3)_t \|_2 \le \sqrt{t} C_M$, where $C_M$ depends on $M$.


To estimate the term $ \mathcal{T} _2$ in (\ref{sG3}), notice that
\begin{align*} 
\langle \mathcal{T} _2, \bar X_{ttt} \rangle  & = -2 \kappa \int_\Omega  \bar B_t^{jk}  (\bar X_{tt },_k + \rho_0,_k \frac{ \bar X_{tt}}{\rho_0} ) \bar X_{ttt},_j dx \le C \| \bar B_t\|_2 \| \bar X_{tt}\|_1 \|\bar X_{ttt}\|_1  \\
 &\le \delta \|\bar X_{ttt}\|_1^2 + C \| \bar B_t\|_2^2  \| \bar X_{tt}\|_1^2 \le  \delta \|\bar X_{ttt}\|_1^2 + C \| \bar B_t\|_2^2 ( \|\bar X_{tt}(0)\|_1^2 +
 t  \| \bar X_{ttt}(t)\|_1^2 ) \,,
\end{align*} 
and thus
\begin{align*} 
\int_0^T \langle \mathcal{T} _2, \bar X_{ttt}  \rangle  dt
 & \le  \mathcal{N} _0 + \delta \| \bar X \|_\XT^2 + \pbv + TP( \| \bar X\|^2_\XT)
\,.
\end{align*}

It remains to estimate $\langle \mathcal{T} _1, \bar X_{ttt} \rangle $; we use the identity (\ref{G2}) defining $ \mathcal{G} _2$ to  expand $ \mathcal{T} _1$ as 
$$ \mathcal{T}_1 = \bar G_{ttt} + \p_t \mathcal{G} _2= \bar G_{ttt} + \p_{tt} \mathcal{G} _1 + \p_t \Bigl(2\kappa   \Bigl[\bar B_t^{jk} \rhoi ( \rho_0 \bar X_t) ,_k\Bigr],_j - \frac{ (\bar J^3)_t  \bar X _{tt}}{\rho_0}\Bigl)\,.$$

The terms
$\langle  \p_t \Bigl(2\kappa   \Bigl[\bar B_t^{jk} \rhoi ( \rho_0 \bar X_t) ,_k\Bigr],_j - \frac{ (\bar J^3)_t \bar X _{tt}}{\rho_0}\Bigl), \bar X_{ttt} \rangle $ are estimated in
the same way and have the same bounds  as  $\langle  \mathcal{T} _2, \bar X_{ttt} \rangle  $ and $ \langle \mathcal{T} _3, \bar X_{ttt} \rangle $ above, so we focus on estimating $ \langle \bG_{ttt}+ \p_{tt} \mathcal{G} _1, \bar X_{ttt} \rangle $.  To do so, we use the identity (\ref{G1}) defining $ \mathcal{G} _1$ and
write
$$
 \bG_{ttt}+ \p_{tt} \mathcal{G} _1 = \bG_{ttt} 
+\underbrace{\p_{tt}\Bigl( 2\kappa   \Bigl[\bar B_t^{jk} \rhoi ( \rho_0 \bar X) ,_k\Bigr],_j\Bigr)}_{ \mathcal{S} _1} 
- \underbrace{\p_{tt}\Bigl( \frac{ (\bar J^3)_t \bar X _{t}}{\rho_0} \Bigr)}_{ \mathcal{S} _2} \,. 
$$
Expanding $ \mathcal{S} _1$ as
$$
\mathcal{S} _1 = \underbrace{ 2\kappa   \Bigl[\bar B_{ttt}^{jk} \rhoi ( \rho_0 \bar X) ,_k\Bigr],_j}_{ { \mathcal{S} _1}_a} 
+ \underbrace{4\kappa   \Bigl[\bar B_{tt}^{jk} \rhoi ( \rho_0 \bar X_t) ,_k\Bigr],_j}_{ { \mathcal{S} _1}_b}
+\underbrace{2\kappa   \Bigl[\bar B_{t}^{jk} \rhoi ( \rho_0 \bar X_{tt}) ,_k\Bigr],_j}_{ { \mathcal{S} _1}_c} \,,
$$
we see that for $ \delta >0$,
\begin{align*} 
\langle { { \mathcal{S} _1}_a}, \bar X_{ttt} \rangle 
& =  - 2 \kappa \int_\Omega \bar B_{ttt}^{jk} ( \bar X,_k + \rho_0,_k \frac{\bar X}{\rho_0}) \bar X_{ttt} ,_jdx \\
& \le C \| \bar B_{ttt}^{jk}\|_0 \, (\| \bar X,_k\|_{2}+ \| \frac{\bar X}{\rho_0}\|_{2}) \,\|D \bar X_{ttt}\|_0 
 \le C \| \bar B_{ttt}^{jk}\|_0 \, \| \bar X\|_{3} \,\|D \bar X_{ttt}\|_0  \\
&  \le C \| \bar B_{ttt}^{jk}\|_0^2 \, ( \| \bar X(0)\|_{3}^2 +t \| \bar X_t\|^2_3 )  +   \delta \|\bar X_{ttt}\|_1^2   \,.
\end{align*} 
where we have used the Sobolev embedding theorem for the first inequality,  our higher-order Hardy inequality Lemma \ref{Hardy}
for the second inequality, the Cauchy-Young inequality  together with the
fundamental theorem of calculus for the third inequality.   We see that
\begin{align*} 
\int_0^T \langle { { \mathcal{S} _1}_a}, \bar X_{ttt} \rangle  dt 
& \le  \mathcal{N} _0 + \delta \| \bar X \|_\XT^2 + \pbv + TP( \| \bar X\|^2_\XT) \,.
\end{align*} 

 The duality pairing involving  ${ \mathcal{S} _1}_b$ and ${ \mathcal{S} _1}_c$ can be estimated in the same way to provide the estimate
\begin{align*} 
\int_0^T \langle { { \mathcal{S} _1}}, \bar X_{ttt} \rangle  dt 
& \le  \mathcal{N} _0 + \delta \| \bar X \|_\XT^2 + \pbv + TP( \| \bar X\|^2_\XT) \,.
\end{align*} 
The duality pairing involving
$ \mathcal{S} _2$  is estimated in the same manner as $ \mathcal{T} _3$  and $ \mathcal{S} _1$ to  yield 
$$
\int_0^T \langle { { \mathcal{S} _2}}, \bar X_{ttt} \rangle  dt 
 \le  \mathcal{N} _0 + \delta \| \bar X \|_\XT^2 + \pbv + TP( \| \bar X\|^2_\XT) 
 + C_M T^ {\frac{3}{2}} \sup_{t \in [0,T]}  \int_ \Omega \frac{1 }{\rho_0}  | \bar X_{ttt}|^2 \, \, dx \,.
$$

It thus remains to estimate the duality pairing $\int_0^T \langle \bG_{ttt}, \bar X_{ttt} dt$.  We write
\begin{equation}\n
{\bG}_{ttt} = -\underbrace{ \p_{ttt} \Bigl( 3 \bar J^{-1} (\bar J_t)^2 - \p_t \bar a^j_i \, \bar v^i,_j \Bigr)}_{ \mathcal{L} _1}  +
 - \underbrace{2 \p_{ttt} [\bar a^j_i \bar A^k_i (\rho_0 \bar J^{-1} ),_k],_j  }_{ \mathcal{L} _2}
\underbrace{\kappa \p_{ttt} [ \bar a^j_i \p_t \bar a^k_i  \rhoi (\rho_0 ^2 \bar J^{-2} ),_k],_j}_{ \mathcal{L} _3}
 \,.
\end{equation} 
Notice that   by the Cauchy-Schwarz inequality
\begin{align*} 
\int_0^T \langle  \mathcal{L} _1 \ , \ \bar X_{ttt} \rangle dt
&\le \int_0^T \| \sqrt{\rho_0} \p_{ttt} \Bigl( 3 \bar J^{-1} (\bar J_t)^2 - \p_t \bar a^j_i \, \bar v^i,_j \Bigr) \|_0  \  \| \frac{ \bar X_{ttt}}{\sqrt{\rho_0}}\|_0 dt\\
& \le \pbv + CT  \sup_{t \in [0,T]}  \int_ \Omega \frac{1 }{\rho_0}  | \bar X_{ttt}|^2 \, \, dx \,.
\end{align*} 
Next, we write
\begin{align*} 
\langle \mathcal{L} _2, \bar X_{ttt} \rangle =2 \int_ \Omega  \p_{ttt} [\bar a^j_i \bar A^k_i (\rho_0 \bar J^{-1} ),_k] \ \bar X_{ttt},_j\, dx
\end{align*} 

The higher-order derivatives in $\p_{ttt} [\bar a^j_i \bar A^k_i (\rho_0 \bar J^{-1} ),_k]$
scale like either $D (\rho_0 D\bar v_{tt})$ or  $D\bar v_{tt}$ so  the fundamental theorem of calculus and the Cauchy-Young inequality once again shows that for $ \delta >0$,
\begin{align*} 
\int_0^T \langle \mathcal{L} _2, \bar X_{ttt} \rangle dt  = \pbv + \delta \| \bar X\|^2_\XT \,.
\end{align*} 

A good estimate for $\mathcal{L} _3$   requires the curl structure of Lemma \ref{lem_curlcurl}. We write the highest-order term in 
$$ \kappa  \p_{ttt} [ \bar a^j_i \p_t \bar a^k_i  \rhoi (\rho_0 ^2 \bar J^{-2} ),_k],_j
= \kappa  \p_{ttt} [ 2\bar a^j_i \p_t \bar a^k_i  \rho_0,_k \bar J^{-2} + \rho_0 \bar a^j_i \p_t \bar a^k_i \bar J^{-2} ,_k)],_j
$$
as
\begin{equation}\label{sstemp02}
2\kappa \p_{ttt} [ (\bar a^j_i \p_t \bar a^k_i) \rho_0,_k \bar J^{-2}],_j  \,;
\end{equation} 
all of the other terms arising from the distribution of $\p_{ttt}$ are lower-order and can be estimated in the same way as $ \mathcal{L} _2$.
Now, using Lemma \ref{lem_curlcurl},  the highest-order term in (\ref{sstemp02}) is written as
\begin{align*} 
\p_{ttt} [ (\bar a^j_i \p_t \bar a^k_i) \rho_0,_k \bar J^{-2} ],_j  & 
= \underbrace{ [\operatorname{curl} \operatorname{curl}\bar v_{ttt}]^k \rho_0,_k \bar J^{-2} }_{ \mathfrak{t}  _1} \\
&\qquad
+\underbrace{ \bar v^r_{ttt},_{sj} \Bigl(\bar J^{-1} [\bar a^s_r \bar a^k_i - \bar a^s_i \bar a^k_r] \bar a^j_i - [\delta^s_r \delta ^k_i - \delta ^s_i \delta ^k_r] \delta ^j_i \Bigr) \rho_0,_k \bar J^{-2} }_{ \mathfrak{t}  _2}\\
& \qquad\qquad\qquad\qquad +  \underbrace{\bar v_{ttt}^r,_{s} \Bigl( \bar J^{-1} [\bar a^s_r \bar a^k_i - \bar a^s_i \bar a^k_r] \Bigr),_j \bar a^j_i \rho_0,_k \bar J^{-2} }_
{ \mathfrak{t}  _3}+\mathcal{R}\,,
\end{align*} 
with $\mathcal R$ being lower-order and once again estimated as  $ \mathcal{L} _2$.   Integration by parts with respect to the curl operator in
the term $ \mathfrak{t}  _1$, we see that
\begin{align*} 
\langle \mathfrak{t}  _1,  \bar X_{ttt} \rangle & = \int_ \Omega \operatorname{curl} \bar v_{ttt} \cdot  D \times ( D\rho_0 \bar  J^{-2} \bar X_{ttt}) dx \\
& \le   \| \operatorname{curl} \bar v_{ttt}(t) \|_0 \ \Bigl( \| D\rho_0 \bar J^{-2}\|_{L^ \infty (\Omega )   } \ \| D\bar X_{ttt}\|_0 +
   \| \operatorname{curl} (D\rho_0\bar J^{-2} )\|_{L^ 3 (\Omega )   } \ \| \bar X_{ttt}\|_{L^ 6 (\Omega )   }\Bigr) \\
& \le   \| \operatorname{curl} \bar v_{ttt}(t) \|_0 \ \Bigl( \| D\rho_0 \bar J^{-2}\|_2 +   \| \operatorname{curl} (D\rho_0\bar J^{-2} )\|_1\Bigr)\| \bar X_{ttt}\|_1 \\
& \le C    \| \operatorname{curl} \bar v_{ttt}(t) \|^2_0 \ \Bigl( \| D\rho_0 \bar J^{-2}\|^2_2 +   \| \operatorname{curl} (D\rho_0\bar J^{-2} )\|^2_1\Bigr)
+ \delta \| \bar X_{ttt}\|_1^2 \,.
\end{align*} 
It follows that 
\begin{align*} 
\int_0^T \langle \mathfrak{t}  _1,  \bar X_{ttt} \rangle dt
& \le \pcurl + \pbv  + \delta \| \bar X\|^2_\XT \,.
\end{align*} 

For $ \mathfrak{t}  _2$, 
\begin{align*} 
\langle \mathfrak{t}  _2, \bar X_{ttt} \rangle & = - \int_ \Omega  \bar v^r_{ttt},_{s} \left[ \Bigl(\bar J^{-1} [\bar a^s_r \bar a^k_i - \bar a^s_i \bar a^k_r] \bar a^j_i - [\delta^s_r \delta ^k_i - \delta ^s_i \delta ^k_r] \delta ^j_i \Bigr) \rho_0,_k \bar J^{-2} \bar X_{ttt} \right],_j dx \,.
\end{align*} 
Given that $\| a(t) - \operatorname{Id}\|_3 = \| \int_0^t a_t (t')dt' \|_3 \le \sqrt{t} P(\| \bar v \|_\ZT)$, we see that
\begin{align*} 
\int_0^T \langle \mathfrak{t}  _2, \bar X_{ttt} \rangle  dt \le \pbv  + \delta \| \bar X\|^2_\XT  \,.
\end{align*} 
The duality pairing involving $ \mathfrak{t} _3$ can be estimated in the same way.  

Summing together the above inequalities and taking $T>0$ sufficiently small concludes the proof.
\end{proof}

It is easy to see that we have the same estimates for the  weak solutions $\bar X$, $\bar X_t$, and $\bar X_{tt}$ solving  (\ref{ssX}), (\ref{ssXt}), and (\ref{ssXtt}), respectively:
\begin{align} 
&
 \sum_{a=0}^3\left\| \p_t^a \frac{\bar X_t}{\rho_0}\right\|^2_{L^2(0,T; H^{-1}(\Omega) )} +
\sup_{t \in [0,T]}  \left\| \frac{ \p_t^a\bar X(t)}{\sqrt{\rho_0}} \right\|^2_0  +C_p  \left\|  \p_t^a\bar X\right\|^2_ {L^2(0,T; \h)}   \n \\
& \qquad\qquad\qquad
\le \mathcal{N} _0 + TP(\|\bar X\|^2_\XT) +C \delta \|\bar X\|^2_\XT +  \pbv +  \pcurl \,.  \label{allbound}
\end{align}

\noindent
{\bf Step 2.} Returning to the definition of $ \mathcal{G} _2$ in (\ref{G2}), by using the estimate (\ref{allbound}) together with the Hardy inequality, we see that 
$$
\| \bG_{tt} + \mathcal{G} _2 \|^2_{L^2(0,T; L^2(\Omega) )} \le \mathcal{N} _0 + TP(\|\bar X\|^2_\XT) +C \delta \|\bar X\|^2_\XT +  \pbv +  \pcurl \,.
$$
Combining this with the estimate (\ref{sXest4}), the equation  (\ref{ssXtt.a}) shows that
\begin{align} 
&4 \kappa ^2\Bigl\|   \Bigl[\frac{\bar B^{jk}}{\rho_0} (\rho_0 \bar X_{tt})  ,_k\Bigr],_j  \Bigr\|^2_{L^2(0,T; L^2(\Omega) )}
=\Bigl\|- \frac{ \bar J^3\bar  X _{ttt}}{\rho_0} + \mathcal{G} _2  \Bigr\|^2_{L^2(0,T; L^2(\Omega) )}    \n \\
& \qquad\qquad\qquad \qquad \le \mathcal{N} _0 + TP(\|\bar X\|^2_\XT) +C \delta \|\bar X\|^2_\XT +  \pbv +  \pcurl\,.\nonumber
\end{align} 
By  repeating our argument of Section \ref{sec833} but this time using Soblev-type estimates for the horizontal-derivative
estimates (replacing the difference quotient estimates as we already have regularity), we obtain the desired bound:
\begin{equation}\label{sXttH2}
\| \bar X_{tt}\|^2_{L^2(0,T; L^2(\Omega) )} \le \mathcal{N} _0 + TP(\|\bar X\|^2_\XT) +C \delta \|\bar X\|^2_\XT +  \pbv +  \pcurl \,.
\end{equation} 

\noindent
{\bf Step 3.} From the definition of $ \mathcal{G} _1$ in (\ref{G1}), we similarly see that 
$$
\|\bG_t +  \mathcal{G} _1 \|^2_{L^2(0,T; H^1(\Omega) )} \le\mathcal{N} _0 + TP(\|\bar X\|^2_\XT) +C \delta \|\bar X\|^2_\XT +  \pbv +  \pcurl  \,.
$$
Following our argument for the regularity of $\bar X_{tt}$, we obtain the estimate
\begin{equation}
\|\bar X_{t}\|^2_ { L^2(0,T; H^3(\Omega) )}   \le     
\mathcal{N} _0 + TP(\|\bar X\|^2_\XT) +C \delta \|\bar X\|^2_\XT +  \pbv +  \pcurl     \,. \label{sXtH3}
\end{equation}

\noindent
{\bf Step 4.} Finally, 
$$
\| \bG \|^2_{L^2(0,T; H^2(\Omega) )} \le \mathcal{N} _0 + TP(\|\bar X\|^2_\XT) +C \delta \|\bar X\|^2_\XT +  \pbv +  \pcurl \,.
$$
Following the argument of Step 3 and using Sobolev-type estimates for the horizontal-derivative bounds (which replace
the horizontal difference-quotient estimates), we finally conclude that
\begin{equation}
\|\bar X\|^2_ { L^2(0,T; H^4(\Omega) )}   \le     
 \mathcal{N} _0 + TP(\|\bar X\|^2_\XT) +C \delta \|\bar X\|^2_\XT +  \pbv +  \pcurl  \,. \label{sXH4}
\end{equation}

\subsubsection{The proof of Proposition \ref{prop_X}} \label{sec836} Summing the inequalities
(\ref{sXest4}), (\ref{sXttH2}), (\ref{sXtH3}), and (\ref{sXH4}), we obtain the estimate
$$
 \|\bar X\|^2_\XT   \le     
\mathcal{N} _0 + TP(\|\bar X\|^2_\XT) +C \delta \|\bar X\|^2_\XT +  \pbv +  \pcurl \,.
$$
Choosing $ \delta >0$ and $T>0$  sufficiently small (and readjusting the constants), we see
that  
$$
 \|\bar X\|^2_\XT   \le   \mathcal{N} _0 +\pbv +  \pcurl \,.
$$
 As the right-hand
side does not depend on $\nu >0$, we can pass to the limit as $\nu \to 0$ in (\ref{ssX}).  This completes
the proof of Proposition \ref{prop_X}.

\begin{remark}
Suppose that $\bar v_1$ and $\bar v_2 $ are both elements of $ \mathcal{C} _T(M)$. 
For $a=1,2$, let $\bar X_a$ denote the solution of (\ref{ssX}) with coefficient matrix $\bar B_a$ and forcing function $\bar G_a$ formed
from $\bar v_a$ rather than $\bar v$.      Our proof of  Proposition \ref{prop_X} then shows that
\begin{equation} \label{rem_diff}
 \|\bar X_1 - \bar X_2\|^2_\XT   \le     T P( \| \bar v_1 - \bar v_2\|_\ZT^2)  +  P( \| \operatorname{curl} \bar v_1 - \operatorname{curl}  \bar v_2\|_\YT^2)  \,.
\end{equation} 
We will make use of this inequality in our iteration scheme below.
\end{remark}

\subsection{Existence of the fixed-point and the proof of Theorem \ref{thm_ksoln}}

\subsubsection{The boundary convolution operator $ \Lambda _ \epsilon $ on $\Gamma$} \label{subsec_hor}

For $\epsilon >0$, let $ 0 \le \rho_ {\epsilon }  \in C^\infty_0( \mathbb{R}^2)$ with $\spt (\rho_{\epsilon }) \subset \overline{B(0, \epsilon )} $ denote a standard family of mollifiers on $\mathbb{R}^2$ . With $x_h =(x_1,x_2)$, we define the operation of {\it  convolution on the
boundary} as follows:
$$
\Lambda_\epsilon  f ( x_h) =  \int_{\mathbb{R}^2} \rho_ \epsilon  (x_h -y_h) f(y_h)  dx_h \  \text{ for } f \in L^1_{loc}( \mathbb{R}^2  ) \,.
$$

By standard properties of convolution, there exists a constant $C$ which is independent of $\epsilon $, such that for
$s \ge 0$,
$$
| \Lambda _ \epsilon  F|_s \le C |F|_s \ \ \ \forall \ F \in H^s(\Gamma) \,.
$$
Furthermore,
\begin{equation}\label{conv_est}
\epsilon  | \bar \p \Lambda _ \epsilon  F|_0 
\le C |F|_0 \ \ \ \forall \ F \in L^2(\Omega) \,.
\end{equation} 

\subsubsection{Construction of solutions to (\ref{defv}) via intermediate $ \epsilon $-regularization}
We shall establish the existence of a solution $v$ to (\ref{defv}) by first considering for any $\epsilon>0$ the $ \epsilon $-regularized system, where the higher-in-space order term in (\ref{defvc}) is smoothed via two boundary convolution operators on $\Gamma$:
\begin{subequations}\label{defvepsilon}
\begin{alignat}{2}
\operatorname{div} v_t^\epsilon  & = \operatorname{div} \bar v_t  - \operatorname{div} _{ \bar \eta} \bar v_t + \frac{[\bar X \bar J^2]_t}{\rho_0} -
\p_t \bar A^j_i \bar v^i,_j
\  && \text{in}\ \Omega \,,        \label{defvepsilona}\\
\operatorname{curl} v_t^\epsilon   &=  \operatorname{curl} \bar v_t - \operatorname{curl} _{\bar \eta} \bar v_t
+ 2\kappa \varepsilon_{ \cdot ji} 
\bar v,_s^r   \bar A^s_i\  \bar\Xi,_r^j(\bee) + \bar{\mathfrak C}  \ && \text{in}\ \Omega \,,\label{defvepsilonb}\\
(v^ \epsilon )_t^3&+ 2 \kappa \rho_0,_3 \Lambda _ \epsilon ^2 \operatorname{div} _\Gamma v^ \epsilon  
=2 \kappa \rho_0,_3 \Lambda _ \epsilon \operatorname{div} _\Gamma \bar v  - 2 \rho_0,_3 \Lambda _ \epsilon [ \bar J^{-2} \bar a^3_3] \n \\
 & \qquad   -2 \kappa  \rho_0,_3\Lambda _ \epsilon [ \bar J^{-2} \p_t \bar a^3_3] -2 \kappa \rho_0,_3  \Lambda _ \epsilon [\bar a^3_3 \p_t \bar J^{-2}] + \bar c_ \epsilon (t) N^3 
\qquad  &&   \text{on}\ \Gamma\,, \label{defvepsilonc} \\
\int_ \Omega (v^ \epsilon )_t^ \alpha dx& = -2 \int_ \Omega \bar A^k_ \alpha (\rho_0 \bar J^{-1} ),_k dx 
-2 \kappa  \int_ \Omega \p_t[ \bar A^k_ \alpha (\rho_0 \bar J^{-1} ),_k] dx  \,,  \label{defvepsilond} \\
(x_1,x_2)& \mapsto v^ \epsilon _t(x_1,x_2,x_3,t)  \text{ is $1$-periodic }    \,,   \label{defvepsilone} 
\end{alignat}
\end{subequations}
where the vector $\bar\Xi(\eta)$ is defined in (\ref{ODEXi}) and the function $\bar c_ \epsilon (t)$  (a constant in $x$) on the right-hand side of (\ref{defvepsilonc}) is defined by
\begin{align}
\bar c_ \epsilon (t)=& {\frac{1}{2}} \int_ \Omega ( \operatorname{div} \bar v_t - \operatorname{div} _{ \bar \eta} \bar v_t) dx  + {\frac{1}{2}} 
\int_ \Omega  \frac{[\bar X \bar J^2]_t}{\rho_0} dx - {\frac{1}{2}} \int_ \Omega \p_t \bar A^j_i \bar v^i,_j dx  + \int_ \Gamma \Lambda _ \epsilon [ \bar J^{-2} \bar a^3_3 ] \rho_0,_3 N^3 dS \n \\
& \ + \kappa \int_\Gamma \Lambda _ \epsilon [  \bar J^{-2} \p_t \bar a^3_3] \rho_0,_3 N^3 dS + \kappa \int_\Gamma \Lambda _ \epsilon [ \p_t \bar J^{-2}  \bar a^3_3 ] \rho_0,_3 N^3 dS \n \\
& \qquad \qquad + \kappa \int_\Gamma \operatorname{div}_\Gamma  ( \Lambda _ \epsilon ^2 v^\epsilon - \Lambda _ \epsilon \bar v) \rho_0,_3 N^3 dS \,.   \label{ceps}
\end{align}

We now outline the steps remaining in this section.
We shall first prove, by a fixed-point approach, that for a small time $T_\epsilon>0$ depending {\it a priori} on $\epsilon$, we have the existence of a solution to this problem. We shall then prove, via $ \epsilon $-independent energy estimates on the solutions of (\ref{defvepsilon}),  that $T_\epsilon=T$, with $T$ independent of $ \epsilon $, and that the sequence $\ve$ converges in an appropriate space to a solution $v$ of (\ref{defv}), which also satisfies the same energy estimates. These estimates will allow us to conclude the existence of a fixed-point $v=\bar v$.

\noindent{\bf Step 1: Solutions to (\ref{defvepsilon}) via the contraction mapping  principle.}
For 
\begin{align}
w\in \mathcal{X} ^3_T&=\{w \in  L^2(0,T;H^3(\Omega))  \ : \  w_t\in L^2(0,T;H^3(\Omega)),\  w_{tt}\in L^2(0,T;H^2(\Omega)), \n \\
&\qquad  w_{ttt}\in L^2(0,T;H^1(\Omega)), \ (x_1,x_2) \mapsto w \text{ is $1$-periodic}\} \label{csX3} \,,
\end{align}
with norm
$$
\|w\|_{ \mathcal{X} ^3_T}^2 = \| w\|^2_{L^2(0,T;H^3(\Omega))} + \| w_t\|^2_{L^2(0,T;H^3(\Omega))} + \| w_{tt}\|^2_{L^2(0,T;H^2(\Omega))}
+ \| w_{ttt}\|^2_{L^2(0,T;H^1(\Omega))}\,,
$$
we set $\Phi(w)=u_0+\int_0^t \p_t \Phi(w)$ where $\p_t \Phi(w)$ is defined by the elliptic system which specifies the divergence, curl, and normal
trace of the vector field $\p_t \Phi(w)$:
\begin{subequations}
\label{defw}
\begin{alignat}{2}
\operatorname{div} \p_t \Phi(w)&=\operatorname{div}  \bar v_t-\operatorname{div}_{\bee}  \bar v_t+\frac{[\bar X\bje^2]_t}{\rho_0}- \p_t\bAe_i^j\bar v,_j^i \qquad  && \text{in}\ \Omega\,,\label{defwa}\\
\operatorname{curl} \p_t \Phi(w)&=\operatorname{curl} \p_t \bar v-\operatorname{curl}_{\bee} \p_t \bar v  + 2\kappa \varepsilon_{\cdot ji}   \bve,_s^r   \bAe^s_i\  \bxi,_r^j(\bee) + \bar{\mathfrak C} \qquad &&\text{in}\ \Omega\,,\label{defwb}\\
\p_t \Phi(w) \cdot e_3&=-2\kappa \rho_0,_3 \Lambda _ \epsilon ^2 \operatorname{div} _\Gamma w
+2\kappa \rho_0,_3 \Lambda _ \epsilon \operatorname{div} _\Gamma \bar v
 \n\\
&\qquad -2\kappa  {\rho_0,_3} \Lambda _ \epsilon [ \bar J^{-2} \p_t \bar a^3_3]-2\rho_0,_3 \Lambda _ \epsilon [ \bar J^{-2} \bar a^3_3] \n \\
&\qquad  -2\kappa \rho_0,_3 \Lambda _ \epsilon [\bar a^3_3 \p_t \bar J^{-2} ]  +\bar c(w) N^3 && \text{on}\ \Gamma\,, \label{defwc}\\
\int_ \Omega \p_t\Phi(w)^\alpha dx& = -2 \int_ \Omega \bar A^k_ \alpha (\rho_0 \bar J^{-1} ),_k dx 
-2 \kappa  \int_ \Omega \p_t[ \bar A^k_ \alpha (\rho_0 \bar J^{-1} ),_k] dx  \,,  \label{defwd} \\
\forall t\in [0,T],\ \ \p_t \Phi(w)(t)&\ \text{is 1-periodic in the directions $e_1$ and $e_2$ }.
\end{alignat}
\end{subequations}
The function $\bar c (w)(t)$ in (\ref{defwc})  is defined by
\begin{align}
[\bar c(w)] (t)=& {\frac{1}{2}} \int_ \Omega ( \operatorname{div} \bar v_t - \operatorname{div} _{ \bar \eta} \bar v_t) dx  + {\frac{1}{2}} 
\int_ \Omega  \frac{[\bar X \bar J^2]_t}{\rho_0} dx - {\frac{1}{2}} \int_ \Omega \p_t \bar A^j_i \bar v^i,_j dx  \n \\
& \  + \int_ \Gamma \Lambda _ \epsilon [ \bar J^{-2} \bar a^3_3 ] \rho_0,_3 N^3 dS+ \kappa \int_\Gamma \Lambda _ \epsilon [  \bar J^{-2} \p_t \bar a^3_3] \rho_0,_3 N^3 dS  \n \\
& \qquad + \kappa \int_\Gamma \Lambda _ \epsilon [ \p_t \bar J^{-2}  \bar a^3_3 ] \rho_0,_3 N^3 dS + \kappa \int_\Gamma \operatorname{div}_\Gamma  ( \Lambda _ \epsilon ^2 w - \Lambda _ \epsilon \bar v) \rho_0,_3 N^3 dS \,,  \label{cw}
\end{align}
and is introduced so that the elliptic system (\ref{defw}) satisfies all of the solvability conditions.
Thus,
due to the definition (\ref{cw}) , the problem (\ref{defw}) defining $\p_t \Phi(w)$ is perfectly well-posed.  Applying Proposition \ref{prop1} to (\ref{defw}) and its first, second, and third time-differentiated versions, we find that
\begin{equation}\label{csgg1}
\|\p_t \Phi(w)-\p_t \Phi(\tilde w)\|_{\mathcal{X} ^3_{T_ \epsilon }}\le C(M,\epsilon) \|w-\tilde w\|_{\mathcal{X} ^3_{T_ \epsilon }}+ C_M T_\epsilon \| w-\tilde w\|_{\mathcal{X} ^3_{T_ \epsilon }}\,,
\end{equation}
the $ \epsilon $ dependence in the constant $C(M, \epsilon )$ coming from repeated use of (\ref{conv_est}).    Note that the lack of $w$
on the right-hand sides of (\ref{defwa}) and (\ref{defwb}) implies that both the divergence and curl
of $\p_t \Phi(w)-\p_t \Phi(\tilde w)$ vanish, and that  on $\Gamma$, 
$$[\p_t \Phi(w)-\p_t \Phi(\tilde w)] \cdot e_3 = 2\kappa \rho_0,_3 \Lambda _ \epsilon ^2 \operatorname{div} _\Gamma (\tilde w-w) + [\bar c(w) -\bar c(\tilde w)]N^3\,.$$
It follows from (\ref{csgg1})
 that
\begin{equation*}
\| \Phi(w)- \Phi(\tilde w)\|_{\mathcal{X} ^3_{T_ \epsilon }}\le T_\epsilon C(M,\epsilon) \|w-\tilde w\|_{\mathcal{X} ^3_{T_ \epsilon }}\,,
\end{equation*}
and therefore the mapping $\Phi:{\mathcal{X} ^3_{T_ \epsilon }} \to {\mathcal{X} ^3_{T_ \epsilon }}$ is a contraction if $T_\epsilon$ is taken sufficiently small, leading to the existence and uniqueness of a fixed-point $\ve=\Phi(\ve)$,  which is therefore a solution of (\ref{defvepsilon}) on $[0,T_\epsilon]$.

\noindent {\bf Step 2: $ \epsilon $-independent energy estimates for $\ve$.}  Having obtained a unique solution to (\ref{defvepsilon}),
we now proceed with 
 $\epsilon-$independent estimates on this system.   We integrate the divergence (\ref{defvepsilona}) and curl (\ref{defvepsilonb}) relations in time, and we now view the PDE for the normal trace (\ref{defvepsilonc})  as a parabolic equation for $v^ \epsilon $ on $\Gamma$:
\begin{subequations}
\label{defve}
\begin{align}
\operatorname{div} \ve&=  \operatorname{div} \bar v-\operatorname{div}_{\bee} \bar v+\frac{\bar X\bje^2}{\rho_0}\qquad  \text{in} \ \Omega \ ,\label{defvea}\\
\operatorname{curl}  \ve&=\operatorname{curl} u_0 + \operatorname{curl}  \bar v-\operatorname{curl}_{\bee}\bar v  + 2\kappa \int_0^t\varepsilon_{\cdot ji}   \bve,_s^r   \bAe^s_i\  \bxi,_r^j(\bee)  \n \\
& \qquad  + \int_0^t (\varepsilon_{\cdot ji}   \bve,_s^i   \p_t\bAe^s_j  +\bar{\mathfrak C} )
 \ \   \text{in}\ \Omega ,\label{defveb}\\
(v^ \epsilon )_t^3  +2 \kappa \rho_0,_3 \Lambda _ \epsilon ^2 \operatorname{div} _\Gamma v^ \epsilon&=      +
2 \kappa \rho_0,_3 \Lambda _ \epsilon \operatorname{div} _\Gamma \bar v  - 2 \rho_0,_3 \Lambda _ \epsilon [ \bar J^{-2} \bar a^3_3]
 \n \\
 & \qquad -2 \kappa  \rho_0,_3\Lambda _ \epsilon [ \bar J^{-2} \p_t \bar a^3_3]     -2 \kappa \rho_0,_3  \Lambda _ \epsilon [\bar a^3_3 \p_t \bar J^{-2}]  \n \\
 & \qquad \qquad + \bar c_ \epsilon (t) N^3    
 \qquad   \qquad  \text{on}\ \Gamma\,,  \label{defvec}\\
\ve(0)&=u_0,\\
\int_ \Omega (v^ \epsilon )^ \alpha dx& = \int_\Omega u_0^\alpha dx -2 \int_0^t\int_ \Omega \bar A^k_ \alpha (\rho_0 \bar J^{-1} ),_k dx 
-2 \kappa  \int_ \Omega [ \bar A^k_ \alpha (\rho_0 \bar J^{-1} ),_k] dx  \,,  \label{defvee} \\
\forall t\in [0,T],\ \  \ve(t)&\ \text{is 1-periodic in the directions $e_1$ and $e_2$ }\,,
\end{align}
\end{subequations}
where $\bar c^\epsilon(t)$ is defined in (\ref{ceps}).

We will establish the existence of a fixed-point in the metric space $ \mathcal{C}_{T_ \kappa }(M)$ defined in (\ref{ctm}), but to do
so we will make use of the space (depending on $\epsilon$)
\begin{align*}
 \mathcal{X} _T^4=\{& w\in L^\infty(0,T;H^{3.5}(\Omega)) \cap L^2(0,T; \h) \ : \  w_t\in L^2(0,T;H^3(\Omega)),   \\
 & \ w_{tt}\in L^2(0,T;H^2(\Omega)), \ 
 w_{ttt}\in L^2(0,T;H^1(\Omega)), \ \Lambda _ \epsilon  w \in L^2(0,T;H^4(\Omega)),\\
 &\ w(0)= u_0\} \,,
\end{align*}
with norm
\begin{align*} 
\| w\|^2_{ \mathcal{X} _T^4} &  = \| \Lambda _ \epsilon w\|^2_{ L^2(0,T;H^4(\Omega))} + \| w_t\|^2_{ L^2(0,T;H^3(\Omega))}
+ \| w_{tt}\|^2_{ L^2(0,T;H^2(\Omega))}
+ \| w_{ttt}\|^2_{ L^2(0,T;H^1(\Omega))}   \,.
\end{align*} 

Since $\bar v\in \mathcal{C} _{T_ \kappa }(M)$,   equations (\ref{defvea}) and (\ref{defveb}) show that both $\operatorname{div}\ve$ and $\operatorname{curl}\ve$ are  in  $L^2(0,T_ \epsilon ;H^3(\Omega))$;  additionally,  from (\ref{defvec}) and (\ref{conv_est}), we see that  
$(\ve)^3$ is in $L^\infty(0,T_\epsilon;H^{3.5}(\Gamma))$, and hence according to Proposition \ref{prop1}, 
$\ve\in L^2(0,T_\epsilon;H^4(\Omega))$, with a bound that {\it a priori} depends on $\epsilon$. We next show that, in fact, we can control 
$\Lambda _ \epsilon \ve$ in $\mathcal{C} _T(M)$ independently of $\epsilon$, on a time interval $[0,T]$ with $T>0$ independent of $\epsilon$.

We proceed by letting  $\bar \p^3$ act on each side of (\ref{defvec}), multiplying this equation by $-\frac{N^3}{\rho_0,_3} \bar \p^3 (\ve)^3$,
and then integrating over $\Gamma$.   This yields the following identity:
\begin{align}
&-\frac{1}{2}\frac{d}{dt} \int_{\Gamma}\frac{N^3}{\rho_0,_3} |\bar \p^3 (\ve)^3|^2 dS +2\kappa \int_\Gamma \bar \p^3 \Lambda _ \epsilon ^2\operatorname{div} _\Gamma v^ \epsilon \,  \bar \p^3 (\ve)^3 N^3 dS\n\\
&\qquad \qquad\qquad
= \int_\Gamma G\, \bar \p^3 (\ve)^3 N^3 dS -\int_\Gamma \bar \p^3 \bigl[ \rho_0,_3 \Lambda _ \epsilon F\bigr] \bar \p^3 (\ve)^3 \frac{N^3}{\rho_0,_3}dS \label{Eeps1}
\end{align}
where
\begin{align}
G&= -2 \kappa  \Bigl[ \bar \p^3 \rho_0,_3 \, \Lambda _ \epsilon ^2 \operatorname{div} _\Gamma v^ \epsilon 
+ 3 \bar \p^2 \rho_0,_3 \, \bar \p \Lambda _ \epsilon ^2 \operatorname{div} _\Gamma v^ \epsilon 
+ 3 \bar \p \rho_0,_3 \, \bar \p^2 \Lambda _ \epsilon ^2 \operatorname{div} _\Gamma v^ \epsilon 
\Bigr] \,, \label{csG} \\
F&=2\kappa\operatorname{div} _\Gamma \bar v
-2 \bar J^{-2} \bar a^3_3  -2\kappa \bar J^{-2} \p_t \bar a^3_3  -2\kappa  \bar a^3_3 \p_t \bar J^{-2} \,.  \label{csF}
\end{align}

Since $G$ contains lower-order terms, we immediately see that for any $t\in [0,T_\epsilon]$:
\begin{align}
 \int_\Gamma G \bar \p ^3 (\ve)^3 N^3\, dS \le & C |\bar \p^3 (\ve)|_{0}^2 \,.\label{G0}
\end{align} 
We then write 
\begin{equation}\label{ssneed78}
\bar \p^3 \bigl[ \rho_0,_3 \Lambda _ \epsilon F\bigr] = \rho_0,_3 \bar \p^3  \Lambda _ \epsilon F + \bar \p^3 \rho_0,_3 \Lambda _ \epsilon F
+ 3 \bar \p^2 \rho_0,_3 \bar \p \Lambda _ \epsilon F + 3 \bar \p \rho_0,_3 \bar \p^2 \Lambda _ \epsilon F \,,
\end{equation} 
and notice that since the last three term on the right-hand side are lower-order,
 we easily obtain the estimate
\begin{align} 
&\bigl|\int_\Gamma    \big[( \bar \p^3 \rho_0,_3 \Lambda _ \epsilon F
+ 3 \bar \p^2 \rho_0,_3 \bar \p \Lambda _ \epsilon F + 3 \bar \p \rho_0,_3 \bar \p^2 \Lambda _ \epsilon F \bigr)
\bar \p^3 (\ve)^3 \frac{N^3}{\rho_0,_3}dS\bigr|
 \le
 C  |\bar \p^3 (\ve)^3|_0 |\bar \p^2 D \bar v|_0\,.\label{G}
\end{align}

Next, in order to estimate  the highest-order term $\int_\Gamma \bar \p^3  \Lambda _ \epsilon F\, \bar \p^3 (\ve)^3 N^3dS$, we notice 
that by the standard properties of the boundary convolution operator $ \Lambda _ \epsilon $, we have that
\begin{align}
&\int_\Gamma \bar \p^3 \Lambda _ \epsilon F\,  \bar \p^3(\ve)^3  {N^3} dS=\int_\Gamma \bar \p^3 F \, \bar \p^3 \Lambda _ \epsilon (\ve)^3N^3
dS\n\\
& =
-\underbrace{2 \kappa \int_\Gamma \bar \p^3 ( \bar J^{-2} \p_t \bar a^3_3) \, \bar \p^3 \Lambda _ \epsilon (\ve)^3N^3 dS}_{ \mathcal{J}_1 }
-\underbrace{2 \kappa \int_\Gamma \bar \p^3 ( \bar a^3_3 \p_t \bar J^{-2}) \, \bar \p^3 \Lambda _ \epsilon (\ve)^3N^3 dS}_{ \mathcal{J}_2 } \n\\
& \qquad\qquad\qquad 
+\underbrace{ 2 \kappa \int_\Gamma \bar \p^3  \operatorname{div} _\Gamma \bar v
    \, \bar \p^3 \Lambda _ \epsilon (\ve)^3N^3 dS}_{ \mathcal{J}_3  } 
-\underbrace{ 2 \int_\Gamma \bar \p^3 \bigl[ 
 \bar J^{-2} \bar a^3_3   \bigr] \, \bar \p^3 \Lambda _ \epsilon (\ve)^3N^3 dS}_{ \mathcal{J}_4  }
 \,. \label{csmainJ}
\end{align} 
In order to estimate the integral $ \mathcal{J} _1$, we recall the formula for $ \p_t \bar a^3_3$ given in (\ref{pta33}), and write
\begin{align} 
\mathcal{J} _1 &= -2 \kappa \int_\Gamma \bar \p^3 ( \bar J^{-2}  [ \bar v,_1 \times \bar \eta,_2 + \bar \eta,_1 \times \bar v,_2]^3    ) \, \bar \p^3 \Lambda _ \epsilon (\ve)^3N^3 dS \n \\
& =\underbrace{-2\kappa\int_\Gamma \bigl[\bar \p^3\bar v,_1\times\bigl(\frac{\be,_2}{\bje^2} -\be,_2(0)\bigr)\bigr]^3 \bar \p^3\Lambda _ \epsilon (\ve)^3N^3 dS}_{ {\mathcal{J} _1}_a}
-
\underbrace{2\kappa\int_\Gamma \bar \p^3\bar v^1,_1 \, \bar \p^3\Lambda _ \epsilon (\ve)^3N^3 dS}_{ { \mathcal{J} _1}_b}
\n\\ 
&\ \underbrace{-2\kappa\int_\Gamma\bigl[\bigl(\frac{\be,_1}{\bje^2}- \be,_1(0)\bigr)\times \bar \p^3\bar v,_2\bigr]^3 \bar \p^3 \Lambda _ \epsilon  (\ve)^3 N^3 dS}_{ {\mathcal{J} _1}_c}
-
\underbrace{2\kappa\int_\Gamma \bar \p^3\bar v^2,_2 \, \bar \p^3\Lambda _ \epsilon (\ve)^3N^3 dS}_{{ \mathcal{J} _1}_d}
+\mathcal R_1 \,,
\label{F1}
\end{align}
where $ \mathcal{R} _1$ is a lower-order integral over $\Gamma$ that  contains all of the remaining terms from the action of $\bar \p^3$, so that there
are at most three space derivatives on $\bar v$ on $\Gamma$.     The trace theorem combined with  the Cauchy-Schwarz   inequality
easily show that $| \mathcal{R} _1| \le C |(\ve)^3|_3 \|\bar v\|_4$.     The next crucial observation is that
\begin{equation}\label{csobserve}
{ { \mathcal{J} _1}_b} + { { \mathcal{J} _1}_d} +  \mathcal{J} _3 =0 \,.
\end{equation} 
We use the fundamental theorem of calculus, 
$$
\frac{\be,_2}{\bje^2}(t) -\be,_2(0) = \int_0^t \p_t\frac{\be,_2}{\bje^2} \text{ and }
\frac{\be,_1}{\bje^2}(t)- \be,_1(0) =\int_0^t \p_t \frac{\be,_1}{\bje^2} \,,
$$
to estimate the integrals ${ { \mathcal{J} _1}_a}$ and  ${ { \mathcal{J} _1}_c}$  so that (\ref{F1}) and (\ref{csobserve}) show that
$$
| \mathcal{J} _1 + \mathcal{J} _3| \le C t \| \Lambda _ \epsilon (\ve)^3\|_4 \|\bar v\|_4 +C |(\ve)^3|_3 \|\bar v\|_4\,.
$$

Next, we write the integral $ \mathcal{J}_2$ as
$$
\mathcal{J} _2 = 4 \kappa \int_ \Gamma \bar a^3_3 \bar J^{-3} \bar a^s_r \bar \p^3 \bar v^r,_s \, \bar \p^3 \Lambda _ \epsilon (v^ \epsilon )^3 N^3\, dS + \mathcal{R} _2 \,,
$$
where 
$$ \mathcal{R} _2   \sim \int_\Gamma \bar \p^3 D\bar \eta\, \bar \p^3 \Lambda _ \epsilon (v^ \epsilon )^3 N^3 dS
$$
where the symbol $ \sim$ is used here to mean that $ \mathcal{R} _2$
 is comprised of integrands which have the derivative count of the integrand $\bar \p^3 D\bar \eta\, \bar \p^3 \Lambda _ \epsilon 
(v^ \epsilon )^3$.    It follows that
$$
|\mathcal{R} _2| \le | D\bar \eta|_{2.5} |\Lambda _ \epsilon (v^ \epsilon )^3|_{3.5} \le C \| \bar \eta\|_4 \| \Lambda _ \epsilon v^ \epsilon \|_4 \,,
$$
the last inequality following from the trace theorem.   Since $\bar \eta(t) = e + \int_0^t \bar v$, we see that for some $ \delta >0$,
$$
|\mathcal{R} _2|  \le \mathcal{N} _0 +( \delta + Ct^2) \| \Lambda _ \epsilon v^ \epsilon \|_4^2 +   Ct^2 \| \bar v\|_4^2  \,,
$$
Returning to the remaining term in $\mathcal{J} _2$, we write
\begin{align*} 
&4 \kappa \int_ \Gamma \bar a^3_3 \bar J^{-3} \bar a^s_r \bar \p^3 \bar v^r,_s \, \bar \p^3 \Lambda _ \epsilon (v^ \epsilon )^3 N^3\, dS
=
\underbrace{4 \kappa \int_ \Gamma  \bar \p^3 \operatorname{div}  \bar v \, \bar \p^3 \Lambda _ \epsilon (v^ \epsilon )^3 N^3\, dS}_{ { \mathcal{J} _2}_a} \\
& \qquad\qquad\qquad
+
\underbrace{4 \kappa \int_ \Gamma \bar (a^3_3 \bar J^{-3} \bar a^s_r  - \delta ^s_r)  \bar \p^3 \bar v^r,_s \, \bar \p^3 \Lambda _ \epsilon (v^ \epsilon )^3 N^3\, dS}_{ { \mathcal{J} _2}_b} \,.
\end{align*} 

We estimate the integral $ {\mathcal{J} _2}_a$ with the Cauchy-Schwarz inequality.
The integral $ {\mathcal{J} _2}_b$ can be estimated using the fundamental theorem of calculus:
$$
 |{\mathcal{J} _2}_b| \le C t \| \Lambda _ \epsilon (\ve)^3\|_4 \|\bar v\|_4  \,,
 $$
so that we have established the following estimate:
\begin{align*} 
&| \mathcal{J} _2| + | \mathcal{J} _1 + \mathcal{J} _3|
\le \mathcal{N} _0+ T C_M +( \delta + Ct^2) \| \Lambda _ \epsilon v^ \epsilon \|_4^2 +   Ct^2 \| \bar v\|_4^2   \\
& \qquad \qquad + C t \| \Lambda _ \epsilon (\ve)^3\|_4 \|\bar v\|_4 +C |(\ve)^3|_3 \|\bar v\|_4+  C \|\bar X\|_4^2 + C \|\operatorname{div} \bar v \|_3 \| \Lambda _ \epsilon v^ \epsilon \|_4 \,,
\end{align*} 
where the bound on the integral $ {\mathcal{J} _2}_a$ in contained in the right-hand side.
Finally, the integral $ \mathcal{J} _4$ can be estimated in the same way as $ \mathcal{R} _2$ above, so that with the
identities (\ref{csmainJ}), (\ref{G}), and (\ref{ssneed78}) we have shown that
\begin{align}
& \bigl|\int_\Gamma \bar \p^3 (\rho_0,_3\Lambda _ \epsilon  F) \bar \p^3 (\ve)^3 \frac{N^3}{\rho_0,_3}dS\bigr|
\le \mathcal{N} _0 + TC_M +( \delta + Ct^2) \| \Lambda _ \epsilon v^ \epsilon \|_4^2 +   Ct^2 \| \bar v\|_4^2 \n  \\
& \qquad \qquad + C t \| \Lambda _ \epsilon (\ve)^3\|_4 \|\bar v\|_4 +C |(\ve)^3|_3 \|\bar v\|_4+  C\| \bar X \|_4^2 + C \|\operatorname{div} \bar v \|_3 \| \Lambda _ \epsilon v^ \epsilon \|_4\,.
\label{Eeps2}
\end{align}
We now turn our attention to the second term on the left-hand side of (\ref{Eeps1}), which will  give us as a sign-definite energy term plus a small perturbation.
We first notice that by the properties of the boundary convolution operator $ \Lambda _ \epsilon $,
\begin{equation}
\label{Eeps3}
\int_\Gamma \bar \p^3\bigl[ \Lambda _ \epsilon ^2(\ve,_1^1+\ve,_2^2)\bigr] \bar \p^3 (\ve)^3 N^3\,dS=
\underbrace{\int_\Gamma \bar \p^3 \Lambda _ \epsilon (\ve,_1^1+\ve,_2^2)\,  \bar \p^3  \Lambda _ \epsilon \ve \cdot  N \ dS}_{ \mathcal{I} }\,.
\end{equation}

The divergence theorem applied to the integral $\mathcal{I} $ (as our domain $ \Omega = \mathbb{T}  ^2 \times (0,1)$) implies that
\begin{equation}
\n
 \mathcal{I} =
 \underbrace{\int_\Omega \bar \p^3 \Lambda _ \epsilon (\ve,_1^1+\ve,_2^2)\, \bar \p^3 \Lambda _ \epsilon (\ve)^3,_3 dx }_{ \mathcal{I} _1}
 +\underbrace{\int_\Omega \bar \p^3\bigl[\Lambda _ \epsilon (\ve,_{13}^1+\ve,_{23}^2)\bigr] \bar \p^3 \Lambda _ \epsilon (\ve)^3dx}_{ \mathcal{I} _2}\,.
\end{equation}
Now,
\begin{align*}
\mathcal{I} _1=&-\int_\Omega \bigl| \bar \p^3  \Lambda _ \epsilon (\ve)^3,_3\bigr|^2 dx +\int_\Omega \Lambda _ \epsilon  \bar \p^3 \operatorname{div}\ve\,  \bar \p^3  \Lambda _ \epsilon \ve,_3^3 dx \,,
\end{align*} 
and
\begin{align*} 
\mathcal{I} _2= &-\int_\Omega \bar \p^3 \Lambda _ \epsilon (\ve)^1,_{3}\, \bar \p^3  \Lambda _ \epsilon (\ve)^3,_1dx
-\int_\Omega \bar \p^3 \Lambda (\ve)^2,_{3}\,  \bar \p^3  \Lambda _ \epsilon (\ve)^3,_2 dx\,,
\end{align*} 
from which it follows that
\begin{align} 
\mathcal{I} 
=&-\int_\Omega \bigl| \bar \p^3 \Lambda_ \epsilon D(\ve)^3\bigr|^2 dx +\int_\Omega  \Lambda _ \epsilon  \bar \p^3 \operatorname{div}\ve\,
 \bar \p^3  \Lambda _ \epsilon (\ve)^3,_3 dx \n\\
&+\int_\Omega  \Lambda _ \epsilon \bar \p^3[\operatorname{curl}\ve \cdot e_2] \bar \p^3  \Lambda _ \epsilon (\ve)^3,_1dx
 -\int_\Omega \Lambda _ \epsilon  \bar \p^3 [\operatorname{curl}\ve  \cdot e_1] \bar \p^3 \Lambda _ \epsilon (\ve)^3,_2 dx \label{Eeps6} \,.
\end{align}

Now, thanks to (\ref{defvea}), we have for all $t\in [0,T_\epsilon]$
\begin{align}
\|\operatorname{div} \ve\|_3^2\le & C t^2 \|\bar v\|_4^2 +C\|\bar v\|_3^2 +C \bigl\|\frac{\bar X}{\rho_0}\bigr\|_3^2+Ct\|\bar v\|^2_{L^2(0,t;H^4(\Omega))}\n\\
\le &  C t^2 \|\bar v\|_4^2 +C\|\bar v\|_3^2 +Ct\|\bar v\|^2_{L^2(0,t;H^4(\Omega))} + C\| \bar X \|_4^2  \,,\label{Eeps7}
\end{align} 
where we have used our higher-order Hardy inequality Lemma \ref{Hardy}  for the second
inequality.

Next, with (\ref{defveb}), we see that for all $t\in [0,T_\epsilon]$
\begin{align} 
\|\operatorname{curl} \ve\|_3\le & C t \|\bar v\|_4 +C\|\bar v\|_3 +C\|u_0\|_4+C\sqrt{t}\|D(\bar\Xi(\bee))\|_{L^2(0,t;H^3(\Omega))} +C\sqrt{t}\|\bar v\|_{L^2(0,t;H^4(\Omega))}\n\\
\le & C t \|\bar v\|_4 + C_\kappa \|u_0\|_4+C\sqrt{t}\|\bar v\|_{L^2(0,t;H^4(\Omega))}\ ,\label{Eeps8}
\end{align}
where we have used (\ref{may11.5}) and  the identity (\ref{Xi}), relating $\Xi(\bar \eta)$ to $\bar v$
and where we have relied crucially on the chain-rule which shows that
$$\Xi^j,_r(\be)=\bAe^l,_r [\Xi(\be)]^j,_l\,.$$
Note that (\ref{Xi}) provides us with a bound which is $ \epsilon $-independent,
but which  indeed depends on $ \kappa $.

The  action of the boundary convolution operator $ \Lambda _ \epsilon $  does not affect these  estimates; thus, 
using Proposition \ref{prop_X}, we see that
\begin{align}
\int_0^T \|\operatorname{div}  \Lambda _ \epsilon \ve\|_3^2dt & \le  \mathcal{N} _0 +   \pbv +  \pcurl \,,\label{Eeps9}\\
\int_0^T \|\operatorname{curl}  \Lambda _ \epsilon \ve\|_3^2dt & \le   \mathcal{N} _0 +   \pbv \,.\label{Eeps10}
\end{align}

We integrate the inequality (\ref{Eeps1}) from $0$ to $t$, and we use the estimates
 (\ref{Eeps2}), (\ref{G0}), (\ref{Eeps6}), (\ref{Eeps9}), (\ref{Eeps10}) together with
 the fact that $\frac{N_3}{\rho_0,_3}\ge C>0$ on $\Gamma$,
to obtain that for any $t\in [0,T]$,
\begin{align*}
& |\bar \p^3 (\ve)^3(t)|_0^2+\int_0^t \|\bar \p^3D \Lambda _ \epsilon (\ve)^3\|_0^2
\le   \mathcal{N} _0 
+ C t |(\ve)^3|^2_3   +C t\|\bar v\|^2_\ZT +C t \int_0^t \| \Lambda _ \epsilon (\ve)^3\|^2_4\\
&   +C\sqrt{t}\int_0^t \|\bar v\|^2_4 + \frac{C}{\sqrt{t}}\int_0^t|(\ve)^3|^2_3
+ \delta  \| \Lambda _ \epsilon v^ \epsilon \|_4^2 +  \pbv +  \pcurl + C \|  \operatorname{div} \bar v \|^2_\YT\,,
\end{align*}
 where we have used the Cauchy-Young
inequality $ab \le \frac{C}{\sqrt{t}}a^2 + \sqrt{t} b^2$ for $a,b\ge 0$.
By taking $ \delta >0$ sufficiently small, 
and using the relations (\ref{Eeps9}) and (\ref{Eeps10}), this implies the following inequality:
\begin{align*}
& |\ve(t)|_{3}^2+\int_0^t \| \Lambda _ \epsilon \ve\|_4^2   
 \le   \mathcal{N} _0 
+ C t |(\ve)^3|^2_3   +C t\|\bar v\|^2_\ZT +C t \int_0^t \| \Lambda _ \epsilon (\ve)^3\|^2_4  +C\sqrt{t}\int_0^t \|\bar v\|^2_4 \\
&   \qquad  \ \   +C\sqrt{t}\int_0^t \|\bar v\|^2_4 + \frac{C}{\sqrt{t}}\int_0^t|(\ve)^3|^2_3
 +  \pbv +  \pcurl + C \|  \operatorname{div} \bar v \|^2_\YT \\
& \qquad\le  \mathcal{N} _0 +C (t+\sqrt{t}) \sup_{t \in [0,T]}  |\ve|^2_3+C (t+\sqrt{t})\|\bar v\|^2_\ZT
+C t \int_0^t \| \Lambda _ \epsilon \ve\|^2_4  \\
&   \qquad  \ \   +\pbv +  \pcurl + C \|  \operatorname{div} \bar v \|^2_\YT \,.
\end{align*}
(Note that due to the presence of the convolution operators $ \Lambda _ \epsilon $  in the definition of  $(v_t^ \epsilon )^3$ on $\Gamma$ in formula (\ref{defvec}), it is clear that  $\sup_{[0,t]}|\ve|^2_3$  is bounded by some finite number, which  a priori depends on $\epsilon$.)

Since we are considering a bounded time interval, $0 \le t \le C \sqrt{t}$, and we will henceforth make use of this fact.
 The previous estimate then implies that 
\begin{align}
\sup_{t \in [0,T]} |\ve(t)|_{3}^2+\int_0^T \|\Lambda _ \epsilon  \ve\|_4^2dt & \le   \mathcal{N} _0 + \sqrt{T}P(\|\ve\|^2_{\mathcal{X}_T^4}) +  \pbvs \n \\
& \qquad
 +  \pcurl + C \|  \operatorname{div} \bar v \|^2_\YT \,.\label{Eeps11}
\end{align}

Estimates for the time-differentiated quantities are, in fact, very straightforward at this stage. By using the expression (\ref{defvec}), we see that thanks to the estimate (\ref{Eeps11}),
\begin{align}
\int_0^T |(\ve_t)^3|_{2.5}^2 dt &\le \mathcal{N} _0 + C \int_0^T | \Lambda _ \epsilon \ve|_{3.5}^2dt   + \pbvs    \n\\
&\le \mathcal{N} _0 + \sqrt{T}P(\|\ve\|^2_{\mathcal{X}_T^4}) +  \pbvs +  \pcurl + C \|  \operatorname{div} \bar v \|^2_\YT  \,.
\label{Eeps12}
\end{align}

Now, just as we estimated  the divergence and curl  of $\ve$ in  (\ref{Eeps7}) and (\ref{Eeps8}), we can repeat this procedure to estimate
the divergence and curl of $v_t^ \epsilon $.   By using (\ref{defvepsilona}) and (\ref{defvepsilonb}), we also have that
\begin{align*}
\int_0^T\|\operatorname{div} \ve_t\|_2^2dt &\le  \mathcal{N} _0 +  \pbvs+  \pcurl + C \|  \operatorname{div} \bar v \|^2_\YT\,,\\
\int_0^T \|\operatorname{curl} \ve_t\|_2^2 dt &\le\mathcal{N} _0 +  \pbvs+  \pcurl + C \|  \operatorname{div} \bar v \|^2_\YT\,,
\end{align*}
which in addition to the normal trace estimate (\ref{Eeps12}) provides the estimate:
\begin{align}
\int_0^T \|\ve_t\|_{3}^2 dt \le \mathcal{N} _0 + \sqrt{T}P(\|\ve\|^2_{\mathcal{X}_T^4}) +  \pbvs +  \pcurl + C \|  \operatorname{div} \bar v \|^2_\YT\,
\label{Eeps13}
\end{align}

We proceed in a similar fashion to estimate  $v^ \epsilon _{tt}$, by considering the time-differentiated version of (\ref{defvepsilon}), and using (\ref{Eeps11}) and (\ref{Eeps13}). This yields the following inequality:
\begin{align}
\int_0^T \|\ve_{tt}\|_{2}^2dt \le  \mathcal{N} _0 + \sqrt{T}P(\|\ve\|^2_{\mathcal{X}_T^4}) +  \pbvs +  \pcurl + C \|  \operatorname{div} \bar v \|^2_\YT \,.
\label{Eeps14}
\end{align}
Finally, by using the second time-differentiated version of (\ref{defvepsilon}) and using (\ref{Eeps11}), (\ref{Eeps13}) and (\ref{Eeps14}), we also have that 
\begin{align}
\int_0^T \|\ve_{ttt}\|_{1}^2 dt \le   \mathcal{N} _0 + \sqrt{T}P(\|\ve\|^2_{\mathcal{X}_T^4}) +  \pbvs +  \pcurl + C \|  \operatorname{div} \bar v \|^2_\YT \,.
\label{Eeps15}
\end{align}
The estimate (\ref{Eeps11}) together with (\ref{Eeps13}), (\ref{Eeps14}), and  (\ref{Eeps15}) provide us with
\begin{align}
\|\ve\|_{\mathcal{X}_T^4}^2 \le \mathcal{N} _0 + \sqrt{T}P(\|\ve\|^2_{\mathcal{X}_T^4}) +  \pbvs +  \pcurl + C \|  \operatorname{div} \bar v \|^2_\YT  \,,
\label{Eeps16}
\end{align}
where the polynomial functions $P$ on the right-hand side are  independent of $\epsilon$. 

Thanks to our  polynomial estimates  in Section \ref{subsec_poly}, we  infer from (\ref{Eeps16}) the existence of $T>0$ (which is independent of $\epsilon$) such that $\ve\in \mathcal{X}_T^4$ and satisfies the estimate:
\begin{align}
\|\ve\|_{\mathcal{X}_T^4}^2 \le  2 \mathcal{N} _0 +2\pbvs +  \pcurl + 2C \|  \operatorname{div} \bar v \|^2_\YT \,.
\label{Eeps17}
\end{align}
(We will readjust the constant $C$ and the polynomial functions $P$ to absorb the multiplication by $2$.)

\noindent {\bf Step 3: The limit as $\epsilon \to 0$ and the fixed-point  of the map $\bar v \mapsto v$}

We set  $\epsilon=\frac{1}{n}$,  $n\in \mathbb{N}=\{1,2,3,...\} $.  From (\ref{Eeps17}), there exists a subsequence (still denoted by $\epsilon$) and
a vector field $v\in \mathcal{X} ^3_T$, $V\in L^2(0,T;H^4(\Omega))\cap L^\infty (0,T;H^3(\Omega))$ such that 
\begin{subequations}
\label{Eeps18}
\begin{align}
\ve&\rightharpoonup v\ \ \text{in}\  \mathcal{X} ^3_T\ ,\\
\ve&\rightarrow v\ \ \text{in}\ \mathcal{X} ^2_T\ ,\label{Eeps18b}\\
\Lambda _ \epsilon  \ve &\rightharpoonup V\ \text{in}\ L^2(0,T;H^4(\Omega))\ .
\end{align}
\end{subequations}
where the space $ \mathcal{X} ^3_T$ is defined in (\ref{csX3}) and
\begin{align}
 \mathcal{X} ^2_T&=\{w \in  L^2(0,T;H^2(\Omega))  \ : \  w_t\in L^2(0,T;H^2(\Omega)),\  w_{tt}\in L^2(0,T;H^1(\Omega)), \n \\
&\qquad  (x_1,x_2) \mapsto w \text{ is $1$-periodic}\} \n \,.
\end{align}

Next, we notice that for any $\varphi\in  \mathcal{C} ^ \infty _0(\Omega)$, the space of smooth functions with compact support in $\Omega$, we have for each $i=1, 2, 3$ and $t\in [0,T]$, $T$ still depending
on $ \kappa >0$, that
\begin{equation}
\label{Eeps19}
\lim_{\epsilon\rightarrow 0} \int_{\Omega} \Lambda _ \epsilon (\ve)^i\cdot \varphi dx = \lim_{\epsilon\rightarrow 0} \int_{\Omega} (\ve)^i\cdot \Lambda _ \epsilon  \varphi dx =\int_\Omega v^i \varphi dx\,,
\end{equation}
where we used the fact that $ \Lambda _ \epsilon  \varphi \rightarrow \varphi$ in $L^2(\Omega)$. This shows us that $v=V$, and that
\begin{align}
\|v\|_\XT^2 \le   \mathcal{N} _0 + \pbvs +  \pcurl + C \|  \operatorname{div} \bar v \|^2_\YT \,.
\n
\end{align}
The estimates weighted by $\rho_0$ in the definition of the  $\ZT$-norm  follow immediately from 
multiplication by $\rho_0$ of the equations (\ref{defva}) and (\ref{defvb}); because $\rho_0$ vanishes on $\Gamma$ and using the
(unweighted) 
estimates already obtained, there is no need to consider the parabolic equation (\ref{defvc}), so that
\begin{align}
\|v\|_\ZT^2 \le   \mathcal{N} _0 + \pbvs +  \pcurl + C \|  \operatorname{div} \bar v \|^2_\YT \,.
\label{Eeps20}
\end{align}
Moreover, the convergence in  (\ref{Eeps18}) and the definition of the sequence of problems (\ref{defve}) easily show us that $v$ is a solution of the problem (\ref{defv});  furthermore, we see that we can obtain for the system (\ref{defv}) the same type of energy estimates as in Step 2 
above. This shows the uniqueness of the solution $v$ of (\ref{defv}), and hence allows us to define a mapping $\Theta: \bar v \in \ZT \rightarrow v\in \ZT$.

We next launch an iteration scheme.  We choose any $v^{(1)} \in \mathcal{C} _T(M)$ and define for $n \in \mathbb{N}  $,
$$
 v^{(n+1)} = \Theta ( v^{(n)}) \,, \ \ \ \ v^{(n)}|_{t=0} = u_0 \,.
$$
For each $n \in \mathbb{N}  $ we set $\eta^{(n)} (x,t) = x + \int_0^t v^{(n)}(x,t') dt'$, $A^{(n)} = [D \eta^{(n)}] ^{-1} $, $J^{(n)} = \det
D \eta^{(n)}$,  $a^{(n)} = J^{(n)} A^{(n)}$, $X^{(n)}$ is the solution to (\ref{ssX}) with $v^{(n)}$, $a^{(n)}$, $J^{(n)}$, and $A^{(n)}$
replacing $ \bar v$, $\bar a$, $\bar J$, and $\bar A$, respectively.  Similarly, we define $\Xi^{(n)}(\eta^{(n)})$ via equation (\ref{Xi})
with $v^{(n)}$ replacing $\bar v$; we define ${\mathfrak C}^{(n)}$ via equations (\ref{may11.1}) and (\ref{may11.7}) with $v^{(n)}$ replacing $\bar v$.

According to (\ref{Eeps20}), for 
\begin{equation}\label{need200}
\|v^{(n+1)}\|_\ZT^2 \le   \mathcal{N} _0+ \sqrt{T}P(\|v^{(n)}\|_\ZT^2 ) + P( \| \operatorname{curl} v^{(n)}\|^2_\YT) + C \|  \operatorname{div}  v^{(n)} \|^2_\YT \,.
\end{equation} 
From (\ref{defva}), 
$$
\operatorname{div} v^{(n)} = \operatorname{div}v^{(n-1)} -\operatorname{div}_{\eta^{(n-1)}}v^{(n-1)} + \frac{{J^{(n-1)}}^2 X^{(n-1)}}{\rho_0}
$$
so that
\begin{align} 
\| \operatorname{div} v^{(n)} \|^2_\YT  
& \le  \| \operatorname{div}v^{(n-1)} -\operatorname{div}_{\eta^{(n-1)}}v^{(n-1)} \|^2_\YT  + \| \frac{{J^{(n-1)}}^2 X^{(n-1)}}{\rho_0}\|^2_\YT \n\\
&  \le  
\mathcal{N} _0 +  \sqrt{T} P(\| v^{(n-1)}\|^2_\ZT) + P(\| \operatorname{curl}  v^{(n-1)}\|^2_\YT) \,, \label{need201}
\end{align} 
where we have used our higher-order Hardy inequality Lemma \ref{Hardy} and Proposition \ref{prop_X} for the second inequality.  Next,
we use (\ref{defvb}) and write
\begin{align*} 
&\operatorname{curl}  v^{(n)}  =\operatorname{curl} u_0 + \operatorname{curl}v^{(n-1)} -\operatorname{curl}_{\eta^{(n-1)}}v^{(n-1)} \\
&   \ \ \ \ + 2\kappa \int_0^t\varepsilon_{\cdot ji}   v^{(n-1)},_s^r  { A^{(n-1)}}^s_i\  \Xi^{(n-1)},_r^j(\eta^{(n-1)})  
 + \int_0^t \varepsilon_{\cdot ji}  v^{(n-1)},_s^i   \p_t{ A^{(n-1)}}^s_j  + \int_0^t {\mathfrak C}^{(n-1)}\,.
\end{align*} 
It then follows, using (\ref{Xi}) and (\ref{may11.5}), that
\begin{align} 
\| \operatorname{curl} v^{(n)} \|^2_\YT  \le  
\mathcal{N} _0 +  \sqrt{T} P(\| v^{(n-1)}\|^2_\ZT) \,. \label{need202}
\end{align} 
Combining the estimates (\ref{need200}), (\ref{need201}), and (\ref{need202}), we obtain the inequality
$$
\|v^{(n+1)}\|_\ZT^2 \le   \mathcal{N} _0 + \sqrt{T}P(\|v^{(n)}\|_\ZT^2 )  + \sqrt{T} P(\| v^{(n-1)}\|^2_\ZT) + \sqrt{T} P(\| v^{(n-2)}\|^2_\ZT)
$$
This shows that by choosing $T>0$ sufficiently small and $M >> \mathcal{N} _0$ sufficiently large,  the convex set $ \mathcal{C} _{T }(M)$ is stable under the action of $\Theta$.

 In order to see that $\Theta$ has a fixed-point, we simply notice that by proceeding in a similar fashion as in {\bf Step 2} above (for the
 $ \epsilon $-independent energy estimates), and by using the inequality (\ref{rem_diff})
\begin{align}
\| v^{(n+1)} - v^{(n)} \|^2_\ZT  & \le \sqrt{T}\ P(\| v^{(n)} - v^{(n-1)}\|^2_\ZT)+  \sqrt{T}\ P(\| v^{(n-1)} - v^{(n-2)}\|^2_\ZT)   \n \\
& \qquad + \sqrt{T}\ P(\| v^{(n-2)} - v^{(n-3)}\|^2_\ZT) \,,
\label{contraction}
\end{align}
where  the  polynomial function $P$ can be chosen under the form $P(z) =  \sum_{j=1}^m a_j z^j$ for some integer $m \ge 1$.

Although, the inequality (\ref{contraction}) is not exactly the usual hypothesis of the contraction mapping theorem, the identical
argument shows that for $T=T_ \kappa $ taken sufficiently small,  the map $\Theta$ is a contraction, and possesses a unique fixed-point
$v$ satisfying $v = \Theta(v)$.
 We will next
prove that this unique fixed-point $v$ is the  unique solution of the $ \kappa $-problem (\ref{approx}).

\subsubsection{The fixed-point of the map $\bar v\mapsto v$ is a
solution of the $\kappa$-problem} In a straightforward manner, we deduce
from (\ref{defv}) the following relations for our fixed-point $v=\bar v$:
\begin{subequations}
\label{defvf}
\begin{alignat}{2}
\operatorname{div}_{\eta} v_t&=\frac{[X  J^2]_t}{\rho_0}-\p_t A_i^j
v^i,_j\ \  && \text{ in }  \Omega\ ,\label{defvfa}\\
 \operatorname{curl}_{\eta}  v_t &=   2 \kappa \varepsilon_{\cdot ji}
v,_s^r   A^s_i\ \Xi,_r^j(\eta) + {\mathfrak C} \ && \text{ in } \Omega \ ,\label{defvfb}\\
v_t^3 &=   -2 J^{-2}  a^3_3  \rho_0,_3 - 2 \kappa [ J^{-2}  a^3_3]_t \rho_0,_3+ c(t) N^3  \qquad && \text{ on } \Gamma \ ,\label{defvfc}\\
\int_ \Omega v_t^ \alpha dx& = -2 \int_ \Omega  A^k_ \alpha (\rho_0  J^{-1} ),_k dx 
-2 \kappa  \int_ \Omega \p_t[  A^k_ \alpha (\rho_0  J^{-1} ),_k] dx  \,,  \label{defvfd} \\
(x_1,x_2)& \mapsto v_t(x_1,x_2,x_3,t)  \text{ is 1-periodic  }    \,,   \label{defvfe} 
\end{alignat}
\end{subequations}
where $X$ is a solution of (\ref{heatX}) and  where the function $c(t)$ in (\ref{defvfc})  is defined by
\begin{align*}
-2 c(t)=&\int_\Omega (-\operatorname{div} v_t  +\operatorname{div}_{\eta}
v_t) dx-\int_\Omega \frac{[X J^2]_t}{\rho_0}dx+ \int_\Omega \p_t A_i^j
v,_j^i  dx -2\int_\Gamma J^{-2} a_3^3 N^3\rho_0,_3dS\\
&\qquad -2\kappa\int_\Gamma  J^{-2} \p_t a^3_3 N_3 {\rho_0,_3}dS
 -2\kappa\int_\Gamma a_3^3 \p_t J^{-2}  N_3  \rho_0,_3 dS 
\\
 =&\int_\Omega (-\operatorname{div}   v_t +\operatorname{div}_{\eta}
 v_t) dx -\int_\Omega \frac{[X  J^2]_t}{\rho_0} dx + \int_\Omega \p_t A_i^j
v,_j^idx  \\
& \qquad  -2\int_\Gamma D\rho(\eta)\cdot N\,dS -2\kappa\int_\Gamma
[D\rho(\eta)]_t\cdot N\, dS\,,
\end{align*}
where $D \rho(\eta) = A^k_ \cdot (\rho_0 J^{-1} ),_k$, and $dS = dx_1dx_2$.
By using (\ref{defvfa}) and the divergence theorem, we therefore obtain the identity
\begin{equation}
\label{cf}
c(t) =  {\frac{1}{2}} \int_\Gamma [v_t+2D\rho(\eta)+2\kappa
[D\rho(\eta)]_t]\cdot N\, dS \,.
\end{equation}
The fixed-point of the map $\bar v \mapsto v$  (which we are labeling $v$ as well) also satisfies the equation
\begin{subequations}
\label{odef}
\begin{align}
v_t+2 \Xi(\eta)+2 \kappa [\Xi(\eta)]_t&=0,\label{odea}\\
\Xi(0)=D\rho_0.\label{odeb}
\end{align}
\end{subequations}
It is  thus clear from (\ref{defvfc}) and  (\ref{odef}) that  the fixed-point is a solution to the
$\kappa$-problem (\ref{approx}), {\bf if} we can prove that 
\begin{equation}\label{ss2eqs}
c(t)=0 \text{ and } \Xi=D\rho \,,
\end{equation} 
where
we remind the reader that
\begin{equation}
\rho(\eta)=\rho_0 J^{-1} \,.
\end{equation}
This is in fact the case, and we now  explain why (\ref{ss2eqs}) holds.

\vspace{.1in}
\noindent{\bf Step 1.}
We first apply $\operatorname{curl}_\eta$ to each term of (\ref{odea})
and compare the resulting equation with (\ref{defvfb}). This implies that
\begin{equation}
\operatorname{curl}_\eta [\Xi(\eta)] +\kappa [\operatorname{curl}_\eta
[\Xi_t(\eta)]+u_i(\eta) \operatorname{curl}_\eta [(\p_i\Xi)(\eta)]]=- {\frac{1}{2}} {\mathfrak C},
\end{equation}
which thanks to the fact that by definition of
$\operatorname{curl}_\eta$,
\begin{equation}
\operatorname{curl}_\eta
[\Theta(\eta)]=[\operatorname{curl}\Theta](\eta),
\end{equation}
for any vector field $\Theta$,
provides us with
\begin{equation}
[\operatorname{curl} \Xi](\eta) +\kappa
[[\operatorname{curl}\Xi_t](\eta)+u_i(\eta)
[\operatorname{curl}(\p_i\Xi)](\eta)]=- {\frac{1}{2}} {\mathfrak C},
\end{equation}
which shows that
\begin{equation}
\label{gronwallcurl}
[\operatorname{curl} \Xi](\eta) +\kappa
[\operatorname{curl}\Xi(\eta)]_t=-D\psi_e(\eta)-\kappa \bigl[D\psi_e(\eta)\bigr]_t,
\end{equation}
where $\psi_e$ denote the Eulerian version of $\psi$, given by
\begin{equation}
\label{may11.6}
\psi_e\circ\eta=\psi\,.
\end{equation}
According to  (\ref{odeb}), $\Xi(0)=D\rho_0$; thus, we have that $[\operatorname{curl}
\Xi](\eta)(0)=0$. Furthermore, by our definition (\ref{may11.2}), we have $D\psi_e(\eta)|_{t=0}=0$ in $\Omega$, which with (\ref{gronwallcurl}) allows us to
conclude that for $t\in [0,T]$,
\begin{equation*}
[\operatorname{curl} \Xi+D\psi_e](\eta)(t)=0.
\end{equation*}
We may therefore consider the following elliptic problem:
\begin{align*}
\Delta \psi_e=-\operatorname{div}(\operatorname{curl}\Xi)=0\ \text{ in }\  \eta(t)(\Omega)\,,\\
\psi_e=0 \ \text{ on }\ \eta(t)(\Gamma)\,, \\
(x_1,x_2) \mapsto \psi_e(x_1,x_2,x_3,t) \text{ is $1$-periodic} \,,
\end{align*}
which shows that $\psi_e=0$ and hence ${\mathfrak C}=0$.
Therefore, $\operatorname{curl} \Xi=0$ in $\eta(t,\Omega)$ and there exists a scalar function $Y(t,\cdot)$ defined on
$\eta(t,\Omega)$ such that
\begin{equation}
\label{Y}
\Xi=DY.
\end{equation}

It remains to establish that $D Y=D\rho$. We will first
prove that a Neumann-type boundary condition plus a small tangential
perturbation holds for $Y-\rho$; namely, we will show  that $(Y-\rho),_3=0$ on
$\eta(t,\Gamma)$.

\vspace{.1 in}
\noindent{\bf Step 2.} We  take the scalar product of (\ref{odea}) with
$e_3$ to find that
\begin{equation*}
v_t ^3+2DY(\eta)\cdot e_3 +2\kappa [DY)(\eta)]_t  \cdot e_3=0,
\end{equation*}
which, by comparison with
(\ref{defvfc}), yields the following identity on $\Gamma$:
\begin{align}
&2 \Bigl[D(Y-\rho) (\eta) +2\kappa [D(Y-\rho)(\eta)]_t\Bigr] \cdot e_3 =-c(t) N^3 \n \\
& \qquad\qquad\qquad = \int_\Gamma \Bigl[D(-\rho+Y)(\eta)+\kappa
[D(-\rho+Y)(\eta)]_t\Bigr]\cdot N \,dS \ N^3\,,\label{bc1}
\end{align}
where we have used the expression (\ref{cf}) for $c(t)$.

Therefore, since $N=(0,0,N^3)$ on $\Gamma$, this implies:
\begin{equation*}
\int_\Gamma [D(-\rho+Y)(\eta)+\kappa
[D(-\rho+Y)(\eta)]_t]\cdot N\, dS =\frac{1}{2}\int_\Gamma
[D(-\rho+Y)(\eta)+\kappa [D(-\rho+Y)(\eta)]_t]\cdot N\,dS \,,
\end{equation*}
and thus that
\begin{equation}
\label{czero}
-c(t)=\int_\Gamma [D(-\rho+Y)(\eta)+\kappa
[D(-\rho+Y)(\eta)]_t]\cdot N \,dS=0\,.
\end{equation}
The identity (\ref{czero}) together with (\ref{bc1}) then implies that
\begin{equation*}
\Bigl[ D (Y-\rho) (\eta) +\kappa [D(Y-\rho)(\eta)]_t\Bigr] \cdot e_3 =0\ \text{ on
} \Gamma\,.
\end{equation*}
Therefore,
\begin{equation*}
\bigl[(Y-\rho),_3 (\eta)\bigr](t,\cdot)=e^{-\frac{t}{\kappa}} (Y-\rho_0),_3(0)\ \text{ on }
\Gamma,
\end{equation*}
which with (\ref{Y}) and $\Xi(0)=D\rho_0$ shows that
\begin{equation}
\label{Ygamma}
(Y-\rho),_3=0,\ \ \text{on}\ \eta(t,\Gamma)\ .
\end{equation}

\vspace{.1 in}
\noindent{\bf Step 3.} We now apply $\operatorname{div}_\eta$ to
(\ref{odea}), and compare the resulting equation  with (\ref{ssX.a}). Using
(\ref{defvfa}) and the fact that  $X(0)= \rho_0 \operatorname{div} u_0$, we have that
\begin{equation}
X=\rho_0 J^{-2} \operatorname{div}_\eta v \,.
\end{equation}
By denoting
\begin{equation}
q=\rho-Y,
\end{equation}
this leads us to:
\begin{equation}
\operatorname{div}_\eta [D q(\eta)+\kappa D q_t(\eta)+\kappa
u_i(\eta) (D q),_i(\eta)]=0\ \ \text{in}\ \Omega.
\end{equation}
This is equivalent in $[0,T]\times \Omega$ to:
\begin{equation*}
\Delta  q(\eta)+\kappa \Delta  q_t (\eta)+\kappa u_i(\eta)\Delta 
q,_i (\eta)+
A_l^j v,_j^i q,_{li} (\eta)=0\,,
\end{equation*}
or equivalently,
\begin{equation}
\label{grlap}
\Delta  q(\eta)+\kappa[\Delta  q (\eta)]_t +
A_l^j v,_j^i q,_{li} (\eta)=0 \,.
\end{equation}
Now since $\rho_0=Y(0)$, we have that
\begin{equation}
\label{lap0}
\Delta q(0)=0\ \ \text{in}\ \Omega\,.
\end{equation}
Also, from (\ref{Ygamma}), we have the perturbed Neumann boundary
condition
\begin{equation}
\label{qgamma}
 q,_3=0,\ \ \text{on}\ \eta(\Gamma).
\end{equation}
By (\ref{defvfd}), (\ref{odef}), we obtain that for $\alpha=1,2$,
\begin{equation*}
\int_{\Omega} q,_\alpha(\eta)  dx +\kappa\int_\Omega [q,_\alpha(\eta)]_t  dx=0\ ,
\end{equation*}
or equivalently,
\begin{equation*}
\int_{\Omega} q,_\alpha(\eta) dx+\kappa\p_t\int_\Omega q,_\alpha(\eta) dx=0\,,
\end{equation*}
which together with the initial condition $\displaystyle\int_\Omega
q,_\alpha(0)dx=0$  implies that
\begin{equation}
\label{qaverage}
\int_{\Omega} q,_\alpha(\eta)dx=0\,.
\end{equation}
Therefore, by setting $f= \Delta q$, we have for all $t\in [0,T]$ the
system:
\begin{subequations}
\begin{align}
\Delta  q&=f\ \text{in}\ \eta(\Omega)\ ,\label{whya}\\
\int_{\eta(\Omega)}  J^{-1} q,_\alpha dx &=0\ ,\label{whyb}\\
q,_3&=0\ \text{on}\ \eta(\Gamma)\ ,\label{whyc}\\
D q &\text{ is 1-periodic in the directions $e_1$ and $e_2$}\
.\label{whyd}
\end{align}
\end{subequations}
Note that because of the periodicity of $v$, the domain $\eta(\Omega)$
is such that $\eta(1,x_2,x_3)=\eta(0,x_2,x_3)+(1,0,0)$, and
$\eta(x_1,1,x_3)=\eta(x_1,0,x_3)+(0,1,0)$, which explains why condition
(\ref{whyd}) holds.

We now take the vertical derivative $\p_3$ of (\ref{whya}), multiply the
resulting equation by $q,_{3}$ and integrate by parts in $\eta(\Omega)$,
using the conditions (\ref{whyc}) and (\ref{whyd}). This yields:
\begin{equation*}
\int_{\eta(\Omega)} |D q,_3|^2 dx=-\int_{\eta(\Omega)} f\ q,_{33} dx\,,
\end{equation*}
which provides us with the estimate
\begin{equation}
\label{why1}
\|D q,_3\|_{0,\eta(\Omega)}\le C \|f\|_{0,\eta(\Omega)}\,,
\end{equation}
where we are using the notation $\| \cdot \|_{0,\eta(\Omega)} = \| \cdot \|_{L^2(\eta(\Omega))}$.

We now write (\ref{whya}) under the form
\begin{equation}
\label{why1bis}
q,_{11}+q,_{22}=g \text{ where } g:= -q,_{33}+f \,.
\end{equation}
It follows from (\ref{why1bis}) that
$$
\int_{ \eta ( \Omega )} (q,_{11}+q,_{22})\, (q,_{11}+q,_{22})  dx = \int_{ \eta( \Omega )} |g|^2 dx \,.
$$
Integration by parts on the left-hand side of this equation, together with the periodicity of $Dq$ and
its  derivatives, shows that
\begin{equation}
\label{why2}
\int_{\eta(\Omega)} q,_{\alpha\beta}q,_{\alpha\beta} dx+\int_{\eta(\Gamma)}
q,_{11}q,_2 n_2(t) dS(t) -\int_{\eta(\Gamma)} q,_{12}q,_2
n_1(t)dS(t)=\int_{\eta(\Omega)} |g|^2 dx \,,
\end{equation}
where the notation $dS(t)$ denotes the naturally induced surface measure on $\eta(\Gamma)$.   Note that
the presence of these surface integrals is due to the fact that $\eta(\Gamma)$ is no longer necessarily horizontal
for $t>0$, so that integration by parts in purely horizontal directions produces boundary contributions.

We now notice that
\begin{equation*}
\int_{\eta(\Gamma)} q,_{11}q,_2 n_2(t) dS(t)=\int_{\eta(\Gamma)} q,_{11}q,_2
\frac{n_2(t)}{n_3(t)} n_3(t) dS(t)\,,
\end{equation*}
where division by $n_3(t)$ is bounded since for $t$ taken sufficiently small,
$|n_3(t)|$ is very close to $1$ on $\eta(\Gamma)$.

Next, for $i=1,2,3$, we smoothly extend $n_i(t)$ into $\eta(\Omega)$, and note the
integration-by-parts (with respect to $x_3$) identity
\begin{equation} 
\label{why3}
\int_{\eta(\Gamma)} q,_{11}q,_2 n_2(t) dS(t)=\int_{\eta(\Omega)} q,_{113}q,_2
\frac{n_2(t)}{n_3(t)} dx+\int_{\eta(\Omega)} q,_{11}\p_3\bigl[q,_2
\frac{n_2(t)}{n_3(t)}\bigr] dx\,.
\end{equation}

An integration by parts with respect to the variable $x_1$ for the first
integral on the right-hand side of (\ref{why3}) then yields:

\begin{align} 
\int_{\eta(\Gamma)} q,_{11}q,_2 n_2(t) dS(t) & =-\int_{\eta(\Omega)}
q,_{13}\p_1\bigl[q,_2 \frac{n_2(t)}{n_3(t)}\bigr] dx +\int_{\eta(\Gamma)}
q,_{13}q,_2 \frac{n_2(t)}{n_3(t)}n_1(t) dS(t) \n \\
& \qquad \qquad +\int_{\eta(\Omega)}
q,_{11}\p_3\bigl[q,_2 \frac{n_2(t)}{n_3(t)}\bigr] dx \,.
\label{why4}
\end{align} 
Similarly,  by integrating by parts  first with respect to $x_3$ and then with respect to $x_2$, we see 
that the third integral on the 
left-hand side of  (\ref{why2})  can be written as
\begin{align} 
\int_{\eta(\Gamma)} q,_{12}q,_2 n_1(t) dS(t) & =-\int_{\eta(\Omega)}
q,_{13}\p_2\bigl[q,_2 \frac{n_1(t)}{n_3(t)}\bigr]dx 
+\int_{\eta(\Gamma)} q,_{13}q,_2 \frac{n_1(t)}{n_3(t)}n_2(t) dS(t) \n \\
& \qquad \qquad +\int_{\eta(\Omega)}q,_{12}\p_3\bigl[q,_2 \frac{n_1(t)}{n_3(t)}\bigr] dx \,,
\label{why5}
\end{align} 
which shows that the boundary integrals over $\eta(\Gamma)$  cancel each
other when we substitute (\ref{why5}) and (\ref{why4}) into (\ref{why2});  thus, (\ref{why2})
takes the following form:
\begin{align*}
\int_{\eta(\Omega)} q,_{\alpha\beta}q,_{\alpha\beta} dx &=\int_{\eta(\Omega)} |g|^2 dx +
\int_{\eta(\Omega)} q,_{13}\p_1\bigl[q,_2
\frac{n_2(t)}{n_3(t)}\bigr] dx -\int_{\eta(\Omega)} q,_{11}\p_3\bigl[q,_2
\frac{n_2(t)}{n_3(t)}\bigr] dx \\
&\ \ \ \  -\int_{\eta(\Omega)} q,_{13}\p_2\bigl[q,_2
\frac{n_1(t)}{n_3(t)}\bigr] dx+\int_{\eta(\Omega)} q,_{12}\p_3\bigl[q,_2
\frac{n_1(t)}{n_3(t)}\bigr] dx \,,
\end{align*}
which thanks to the estimate (\ref{why1}) and the relations
$|n_\alpha(t)|_{W^{1,\infty}(\Omega)}\le Ct$, implies that
\begin{equation}
\label{why6}
\int_{\eta(\Omega)} q,_{\alpha\beta}q,_{\alpha\beta} dx  \le C
\|f\|^2_{0,\eta(\Omega)}+C t \|D^2 q\|^2_{0,\eta(\Omega)}+C t
\|D^2 q\|_{0,\eta(\Omega)}\|D q\|_{0,\eta(\Omega)}\ ,
\end{equation}
Combining this estimate with (\ref{why1}), we obtain that
\begin{equation}
\label{why7}
\|D^2 q\|^2_{0,\eta(\Omega)}\le C \|f\|^2_{0,\eta(\Omega)}+C t
\|D^2 q\|^2_{0,\eta(\Omega)}+C t \|D^2
q\|_{0,\eta(\Omega)}\|D q\|_{0,\eta(\Omega)}\,.
\end{equation}
Now, we notice that the conditions (\ref{whyc}),(\ref{whyd}), and (\ref{qaverage}) yield
Poincar\'e  inequalities for $q,_\alpha$ and $q,_3$,  so that
\begin{subequations}
\label{why8}
\begin{align}
\|q,_3\|_{0,\eta(\Omega)}&\le C \| D q,_3\|_{0,\eta(\Omega)}\,,  \\
\|q,_\alpha\|_{0,\eta(\Omega)}&\le C \| D q,_\alpha\|_{0,\eta(\Omega)}\,, \ \alpha =1,2 \,.
\end{align}
\end{subequations}
Therefore, (\ref{why7}) and (\ref{why8}) provide us with
\begin{equation*}
\|D q\|^2_{1,\eta(\Omega)}\le C \|f\|^2_{0,\eta(\Omega)}+C t
\|D q\|^2_{1,\eta(\Omega)}\,,
\end{equation*}
which, by taking $T>0$ small enough, yields:
\begin{equation}
\label{why9}
\|D q\|^2_{1,\eta(\Omega)}\le C \| \Delta  q\|^2_{0,\eta(\Omega)}\,.
\end{equation}

We thus have proved that
\begin{equation}
q,_{li} (\eta)=F^{li}(t, \Delta  q(\eta)),
\end{equation}
where $F^{li}(t,\cdot)$ denotes a linear and continuous operator from
$L^2(\Omega)$ into itself, whose norm depends in a smooth manner on
$v$ in $L^2(0,T;H^4(\Omega))$.

Therefore,  the ODE
\begin{equation}
\label{grlap2}
\Delta  q(\eta)+\kappa[\Delta q (\eta)]_t +
A_l^j v,_j^i F^{li}(t, \Delta  q(\eta))=0,
\end{equation}
 with the initial condition (\ref{lap0}) allow us to conclude by the
Gronwall inequality that on $[0,T]\times\Omega$,
 \begin{equation}
 \label{lap}
 \Delta  q (\eta)=0\ .
 \end{equation}
 From (\ref{lap}) and (\ref{why9}), we infer that
 $$D q=0\ \ \text{in}\ [0,T]\times\Omega,$$
 which finally proves that $D Y=D \rho$, and therefore that
$\Xi=D\rho$. Together with (\ref{czero}), this establishes that $v$
is a solution of the $\kappa$-problem (\ref{approx}) on a time interval $[0,T_ \kappa ]$.
This concludes the proof of Theorem \ref{thm_ksoln4}.

By considering more time-derivatives in our analysis, it is easy to see that as long as the initial data is
smooth, we can construct solutions which are smooth in both space and time.   We state this as the
following 
\begin{theorem}[Smooth solutions to the $\kappa$-problem]\label{thm_ksoln}
Given  smooth initial data with $\rho_0$ satisfying $\rho_0(x)>0$ for $x\in \Omega $ and verifying the physical vacuum condition
(\ref{degen}) near $\Gamma$,  for $T_ \kappa>0 $ sufficiently small,  there exists a unique  smooth solutions  to the  degenerate
parabolic $ \kappa $-problem (\ref{approx}).
\end{theorem}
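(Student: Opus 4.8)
The plan is to bootstrap the regularity of the unique solution $v\in\boldsymbol{Z}_{T_\kappa}$ produced by Theorem \ref{thm_ksoln4} by re-running the \emph{entire} construction of Section \ref{sec_kproblem} in function spaces carrying an arbitrary number of space and time derivatives. For an integer $m\ge 4$ one introduces the higher-order analogues
\[
\boldsymbol{X}_T^{(m)}=\{\, v : \partial_t^a v\in L^2(0,T;H^{m-a}(\Omega)),\ 0\le a\le m-1\,\},\qquad \boldsymbol{Z}_T^{(m)}=\{\, v\in\boldsymbol{X}_T^{(m)} : \rho_0 Dv\in\boldsymbol{Y}_T^{(m)}\,\},
\]
(with $\boldsymbol{Y}_T^{(m)}$ defined analogously), the corresponding convex set $\mathcal{C}_T^{(m)}(M)$, and the quantity $\mathcal{N}_0^{(m)}=P(\|u_0\|_{m+C},\|\rho_0\|_{m+C})$ obtained from $\mathcal{N}_0$ by raising the Sobolev index. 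Since the data is smooth, each $\mathcal{N}_0^{(m)}$ is finite; crucially, the formulae of Section \ref{subsec_X} show that every $\partial_t^k v|_{t=0}$ is a smooth function of the spatial derivatives of $u_0$ and $\rho_0$, so that the initial conditions $X_0,X_1,X_2,\dots$ for the time-differentiated problems are all smooth and all weighted quantities $\|\sqrt{\rho_0}\,\partial_t^j\mathcal{G}\|_0$ are finite at every order. Because smoothness (as opposed to $\kappa\to 0$ passage) is all that is at stake here, the estimates below may freely depend on $\kappa$ and on the smoothing parameter $\nu$ that is eventually removed.

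Then I would carry out, at order $m$: (1) the weak-solution theory of Lemma \ref{X_weak} for the linear degenerate parabolic equation (\ref{ssX}) and its first $m$ time-differentiated versions, producing $\partial_t^a\bar X\in L^2(0,T;\h)$ for $0\le a\le m$; (2) the elliptic bootstrap of Sections \ref{sec833}--\ref{sec835}, \emph{iterated}, using the change of variables $Y=\int_0^{x_3}\bar X/\rho_0$ and the higher-order Hardy inequality Lemma \ref{Hardy} (applied both on $\Omega$ and on the one-dimensional fibers $x_3\in(0,1)$), to upgrade to $\bar X\in L^2(0,T;H^m(\Omega))$ with matching control of $\partial_t^a\bar X$; (3) the definition of $v$ through the elliptic system (\ref{defv}), where the Hodge estimates of Proposition \ref{prop1} now deliver $v\in L^2(0,T;H^m(\Omega))$ from $H^{m-1}(\Omega)$-control of $\operatorname{div}v$ and $\operatorname{curl}v$ and $H^{m-1/2}(\Gamma)$-control of $v^3$, the latter obtained by running the parabolic boundary estimate of Step 2 of the $\epsilon$-regularization argument with $\bar\partial^{m-1}$ in place of $\bar\partial^3$ (the $\kappa$-elliptic Lemma \ref{kelliptic} again supplying the needed normal-derivative bounds); (4) the $\epsilon$-regularization, the $\epsilon$-independent estimates, the limit $\epsilon\to 0$, and the contraction-mapping fixed point, all of which are structurally insensitive to the order $m$. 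The output is a solution in $\boldsymbol{Z}_{T_\kappa^{(m)}}^{(m)}$ on an interval $[0,T_\kappa^{(m)}]$ whose length is controlled from below by a function of $\mathcal{N}_0^{(m)}$ alone via the polynomial-type inequality of Section \ref{subsec_poly}.

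To see that a single interval works for all $m$, I would invoke the uniqueness part of Theorem \ref{thm_ksoln4}: the order-$m$ solution coincides with the $\boldsymbol{Z}_{T_\kappa}$-solution on $[0,\min(T_\kappa,T_\kappa^{(m)})]$, so the one fixed solution $v$ in fact possesses $m$ derivatives in the mixed $L^2$ sense on a positive interval, and, since each step of the bootstrap is driven by the \emph{already bounded} lower-order quantities, the degenerate parabolic structure propagates this regularity forward as long as $v$ exists in $\boldsymbol{Z}$; after shrinking $T_\kappa$ once more if necessary one gets $v,\eta\in\bigcap_{m}\boldsymbol{Z}_{T_\kappa}^{(m)}$. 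The Sobolev embeddings $H^m(\Omega)\hookrightarrow C^{m-2}(\overline\Omega)$ and $H^m(0,T_\kappa)\hookrightarrow C^{m-1}([0,T_\kappa])$, together with the mixed space-time control, then give $v,\eta\in C^\infty(\overline\Omega\times[0,T_\kappa])$, whence $f=\rho_0 J^{-1}$ is smooth as well, completing the proof. The main obstacle I anticipate is step (2): extracting arbitrarily many \emph{normal} derivatives of the degenerate quantity $X=\rho_0 J^{-2}\operatorname{div}_\eta v$ near the vacuum boundary, since each additional $\partial_3$ must be obtained through the $Y=\int_0^{x_3}X/\rho_0$ substitution and a fresh application of the higher-order Hardy inequality — carefully accounting for the accumulating $\rho_0^{-1}$ weights and checking that the commutators generated by pushing $\partial_3$ through the degenerate elliptic operator $[\bar B^{jk}\rho_0^{-1}(\rho_0\,\cdot\,),_k],_j$ remain controlled is exactly the delicate bookkeeping flagged in the introduction.
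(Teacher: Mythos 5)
Your proposal is essentially the paper's own argument: the paper proves this theorem with a single remark — "by considering more time-derivatives in our analysis, it is easy to see that as long as the initial data is smooth, we can construct solutions which are smooth in both space and time" — i.e.\ rerunning the construction of Theorem \ref{thm_ksoln4} with additional time (and, via the degenerate elliptic bootstrap and Lemma \ref{kelliptic}, space) derivatives, exactly as you outline, with $\kappa$-dependent constants permitted. Your extra care about the time interval not shrinking with the order $m$ and about the accumulating $\rho_0^{-1}$ weights in the normal-derivative bootstrap goes beyond what the paper records, but it is consistent with, and a legitimate elaboration of, the paper's intended proof.
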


\section{$ \kappa $-independent  estimates for (\ref{approx}) and solutions to the compressible Euler equations (\ref{ce0})}   \label{section_mainestimates}
In this section, we obtain estimates for the smooth solutions to (\ref{approx}), provided by Theorem \ref{thm_ksoln}, whose bounds and
time interval of existence are independent of the artificial viscosity parameter $\kappa$.   This permits
us to consider the limit of this sequence of solutions as $ \kappa \rightarrow 0$.  We prove that this limit exists, and that
it is the unique solution of (\ref{ce0}).  

\vspace{.1 in}
\noindent
{\bf Notation.}  For the remainder of Section \ref{section_mainestimates}, $\eta(t)$ denotes the solution of the $ \kappa $-problem
(\ref{approx}) on the time interval $[0,T_ \kappa ]$.  In particular, $\eta(t)$ is an element of  a sequence of solutions parameterized
by $ \kappa >0$, but in order to reduce the number of subscripts and superscripts that appear, we will not make this sequential dependence
explicit.   The reader should bear in mind that $\eta$ is really $\eta( \kappa )$.
 
\subsection{A continuous-in-time energy function appropriate for the asymptotic process $ \kappa \to 0$.} 
\begin{definition} \label{defn_E} We set on $[0,T_ \kappa ]$
\begin{align} 
\tilde E (t)  = &1
+\sum_{a=0}^4\bigl[\|\partial_t^{2a} \eta(t)\|_{4-a}^2 
+ \| \rho_0\,  \partial_t^{2a} D\eta(t)\|_{4-a}^2\bigr] 
+ \sum_{a=0}^4
\|\sqrt {\rho_0 }\, \bar\partial ^{4-a} \p_t^{2a}v(t)\|_0 ^2 
\nonumber \\
&  + \sum_{a=0}^4 \int_0^t \left[
 \| \sqrt{ \kappa }\rho_0\,  \partial_t^{2a} \bar \p^{4-a} Dv(s)\|_{0}^2 \right]\, ds
+ \| \operatorname{curl} _\eta v(t)\|^2_3 
 +  \| {\rho_0}\,  \bar\p^4 \operatorname{curl} _\eta v(t)\|_0^2     
             \,. \label{Energy}
\end{align} 
The function $\tilde E(t)$ is the higher-order energy function appropriate for obtaining $ \kappa $-independent estimates for the
degenerate parabolic approximation (\ref{approx}).
\end{definition}
According to Theorem \ref{thm_ksoln},  solutions to our approximate $ \kappa $-problem (\ref{approx}) are smooth, and hence
$ T\mapsto \sup_{t \in [0,T]} \tilde E(t)$ is a continuous function on $[0,T_ \kappa ]$ to which the polynomial-type inequality of
Section \ref{subsec_poly} can be applied. 

\begin{definition} For the remainder of the paper, we will use
 the constant $\tilde M_0$ to be a polynomial function of $\tilde E (0)$ so that
\begin{equation}
\tilde M_0 = P ( \t E (0)) \label{M0} \,.
\end{equation} 
\end{definition}

\begin{remark} Note the presence of $ \kappa $-dependent coefficients in $\t E (t)$ that indeed arise
as a necessity for our asymptotic study. The corresponding terms, without the $ \kappa $,
would of course not be asymptotically controlled.
\end{remark}

\begin{remark}
The $1$ is added to the norm to ensure that $\t E (t) \ge 1$, which will sometimes
be convenient, whereas not necessary.
\end{remark}

\begin{remark}\label{rem_interp}  Of all the terms on the right-hand side of (\ref{Energy}),  the sum
$\sum_{a=0}^4\bigl[\|\partial_t^{2a} \eta(t)\|_{4-a}^2 $ is the fundamental contribution, providing the basic
regularity for the solution.  Notice that only the even time derivatives of $\eta(t)$ appear in this norm.   While it
is possible to also obtain estimates for the odd time derivative of $\eta(t)$, we will instead rely on the following interpolation
estimate:
For $k \ge 1$,  given a vector field $r \in L^ \infty  ([0,T]; H^k(\Omega))$ with $r_{tt} \in L^ \infty ([0,T]; H^{k-1}(\Omega))$, it follows that 
 $r_t \in L^2(0,T; H^{k-{\frac{1}{2}} }(\Omega))$ and
\begin{align*} 
&\|r_t\|^2_{L^2(0,T; H^{k- {\frac{1}{2}} }(\Omega))}  \le C \bigl(  \|r_{tt}(t)\|_{k-1} \|r(t)\|_k \bigr)\Bigr|^T_0 +C \|r_{tt}\|_{L^2(0,T; H^{k-1}(\Omega))}   \|r\|_{L^2(0,T; H^k(\Omega))} \\
&\ \le P( \|r(0)\|_k, \|r_{t}(0)\|_{k-1}) + \delta \sup_{t \in [0,T]} \|r(t)\|_k^2+  C \, T\, \sup_{t \in [0,T]}  \Bigl(\|r(t)\|^2_k+ \|r_{tt}(t)\|^2_{k-1}\Bigr) \,.
\end{align*} 
Thus, with $L^2$-in-time control, we see that the odd time derivatives of $\eta$ verify the estimate
\begin{align*} 
&\sum_{a=0}^3
 \| \p_t^{2a} v\|^2_{L^2(0,T; H^{3.5-a} (\Omega))}
\\
& \qquad\qquad \qquad \qquad \le M_0( \delta ) + \delta \sup_{t \in [0,T]} E(t) + C\, T\, P( \sup_{t \in [0,T]} E(t)) \,.
\end{align*} 
See the interpolation inequality (\ref{interp1}) below for further details.
\end{remark}

\subsection{Assumptions on a priori bounds on $[0,T_ \kappa ]$}\label{subsec_assumptions}
For the remainder of this section, we assume that we have solutions $(\eta_ \kappa  ) \in \boldsymbol{X} _{T_ \kappa }$
on a time interval $[0,T_ \kappa ]$, and that for all such solutions, the time $T_ \kappa >0$ is taken sufficiently small so that for
$t \in [0,T_ \kappa ]$ and $ \xi \in \mathbb{R}^3  $,

\begin{equation}\label{assump1}
\left.
\begin{array}{ll}
 {\frac{1}{2}} \le J(t) \le {\frac{3}{2}}\,,  &  \lambda |\xi|^2  \le  a^j_l a^k_l \xi_j \xi_k\,, \\
\det  g(\eta(t))^{-1/2}  \le 2\det  g(\eta_0)^{-1/2} =2\,,  &  \| J^{-1}A^r_k A^s_k - \delta ^r_k \delta ^s_k \|_{L^ \infty ( \Omega )} < {\frac{1}{2}}  \,.
\end{array} \right\}
\end{equation} 
We further assume that our solutions satisfy the bounds
\begin{equation}\label{assump2}
\left.
\begin{array}{l}
\|  \eta(t)\|^2_ {H^{3.5 }(\Omega)}   \le 2| e|_{3.5}^2 + 1 \,, \\
 \| \p_t^a v(t)\|^2_ {H^{3-a/2}(\Omega)}   \le 2 \|\p_t^a v(0)\|^2_ {H^{3-a/2}(\Omega)}  + 1\, \ \  \  \text{ for } \ a=0,1,...,6\,, \\
 \|\rho_0 \p_t^a \eta (t)\|^2_ {H^{4.5-a/2}(\Omega)}   \le 2 \| \rho_0 \p_t^a \eta(0)\|^2_ {H^{4.5-a/2}(\Omega)}  + 1\, \ \  \  \text{ for } \ a=0,1,...,7 \,, \\ 
  \|\sqrt{ \kappa } \p_t^{2a+1} v(t)\|^2_ {H^{3-a}(\Omega)}   \le 2\|  \p_t^a v(0)\|^2_ {H^{3-a}(\Omega)}  + 1\, \ \  \  \text{ for } \ a=0,1,2,3 \,.
\end{array} \right\}
\end{equation} 
  
The right-hand sides appearing in the  inequalities (\ref{assump2}) shall be denoted by a
generic constant $C$ in the estimates appearing below. In what follows, we will
prove that this can be achieved in a time interval independent of  $\kappa $.

We continue to assume that $\rho_0$ is smooth coming from our approximation (\ref{rhozero}).

\subsection{Curl Estimates}\label{subsec_curlestimates}   We take $T \in (0,T_ \kappa )$.

\begin{proposition} \label{curl_est}
For all $t \in (0,T)$,
\begin{align}
&\sum_{a=0}^3 \|{\operatorname{curl}} \, \p_t^{2a}\eta(t)\|_{3-a}^2 +
\sum_{l=0}^4 \|\rho_0 \, \bar \p^{4-l}  {\operatorname{curl}}\, \p_t^{2l}\eta(t)\|_{0}^2  
+\sum_{l=0}^4 \int_0^t  \|\sqrt{\kappa} \rho_0  {\operatorname{curl}}_ \eta \,  \bar \p^{4-l}\p_t^{2l}v(s)\|_{0}^2ds  \nonumber  \\
& \qquad\qquad \qquad\qquad
\le \t M_0 + C\, T\, P({\sup_{t\in[0,T]}} \t E(t))\,.
\label{curl_estimate}
\end{align}
\end{proposition}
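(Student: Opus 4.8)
The estimate rests on the nonlinear Lagrangian vorticity equation (\ref{lcurlvt}) of the $\kappa$-problem,
\[
\operatorname{curl}_\eta v_t \;=\; 2\kappa\,\varepsilon_{\cdot ji}\,v^r,_s\,A^s_i\,\rho,_{rj}(\eta),
\qquad \rho(\eta)=\rho_0 J^{-1},
\]
which should be compared with the Euler transport law $\operatorname{curl}_\eta v_t=0$ of (\ref{vorticity}): the degenerate parabolic regularization destroys the exact transport structure of the vorticity, but only by a term of order $\kappa$ that is polynomially controlled by $v$, $D\eta$, $D^2\eta$, $D\rho_0$, $D^2\rho_0$. Two bookkeeping devices are used throughout. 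First, Eulerian and Lagrangian curls are interchanged via $(\operatorname{curl}-\operatorname{curl}_\eta)W=\varepsilon_{\cdot jk}(\delta^s_j-A^s_j)W^k,_s$ together with $\|A(t)-\operatorname{Id}\|_s=\bigl\|\int_0^t\partial_t A\bigr\|_s\le\sqrt T\,P(\sup_{[0,T]}\tilde E)$ and (\ref{assump1}), so every such interchange costs only a factor $\sqrt T$. Second, the odd time-derivatives of $\eta$ (absent from $\tilde E$) are controlled in $L^2$-in-time by the interpolation inequality of Remark \ref{rem_interp}, and the discrepancy between $\rho_0$-weights and derivative counts is absorbed by the higher-order Hardy inequality, Lemma \ref{Hardy}. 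The three groups of terms in (\ref{curl_estimate}) are then treated separately.

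\emph{The unweighted terms} $\sum_{a=0}^3\|\operatorname{curl}\partial_t^{2a}\eta(t)\|_{3-a}^2$ require no energy argument. Applying $\partial_t^{2a}$ to (\ref{lcurlvt}) and commuting time-derivatives through $\operatorname{curl}_\eta$ by $\partial_t(\operatorname{curl}_\eta W)=\operatorname{curl}_\eta W_t+\varepsilon_{\cdot jk}\partial_t A^s_j W^k,_s$ produces, on the right, only the $O(\kappa)$ forcing and commutators of the schematic form $\partial_t A\cdot D(\text{a lower time-derivative of }v)$; although these sit at the top order, the fundamental theorem of calculus, $\operatorname{curl}_\eta\partial_t^{2a}v(t)=\operatorname{curl}_\eta\partial_t^{2a}v(0)+\int_0^t(\cdots)$, turns them into $\tilde M_0$ (the initial term, bounded by the data) plus $T\,P(\sup_{[0,T]}\tilde E)$. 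Finally $\operatorname{curl}\partial_t^{2a}\eta(t)=\operatorname{curl}\partial_t^{2a}\eta(0)+\int_0^t\operatorname{curl}\partial_t^{2a}v$, and after converting $\operatorname{curl}\partial_t^{2a}v$ to $\operatorname{curl}_\eta(\cdots)$ modulo an $A-\operatorname{Id}$ error and using $2ab\le a^2+b^2$, we obtain the asserted bound $\tilde M_0+CT\,P(\sup_{[0,T]}\tilde E)$ for this group.

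\emph{The $\rho_0$-weighted top-order terms and the $\sqrt\kappa$-dissipation} call for a genuine energy estimate. For $l=0,\dots,4$ one applies $\bar\partial^{4-l}\partial_t^{2l}$ to (\ref{lcurlvt}) (equivalently, $\operatorname{curl}_\eta$ of (\ref{approx.a}')), multiplies by $\rho_0^2\,\bar\partial^{4-l}\partial_t^{2l}\operatorname{curl}_\eta v$, and integrates over $\Omega$. The $\rho_0^2$-weight is essential so that the top-order commutators generated when derivatives fall on $A$ come weighted by $\rho_0$, matching the weighted norms in $\tilde E$ (where needed, Lemma \ref{Hardy} trades an extra derivative for a factor $\rho_0$). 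The decisive point --- the ``additional structural observation'' behind the vorticity estimates for the $\kappa$-problem, in the spirit of Lemma \ref{lem_curlcurl} --- is that the parabolic term $2\kappa\,\partial_t\!\bigl[A^k_\cdot(\rho_0 J^{-1}),_k\bigr]$ in (\ref{approx.a}'), once curled and the $\operatorname{curl}_\eta$ commuted past $\partial_t$, yields a $\kappa\,\partial_t$-exact contribution whose integration by parts in time produces the sign-definite dissipation $\kappa\,\|\rho_0\,\bar\partial^{4-l}\partial_t^{2l}\operatorname{curl}_\eta v\|_0^2$, together with a boundary-in-time term $\le\tilde M_0$ and commutators carrying a factor $T$ or $\sqrt T$; the remaining genuinely $O(\kappa)$ forcing is absorbed by Cauchy--Young. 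Integrating the resulting differential inequality on $[0,t]$ gives
\[
\sum_{l=0}^4\Bigl[\sup_{[0,t]}\|\rho_0\,\bar\partial^{4-l}\partial_t^{2l}\operatorname{curl}_\eta v\|_0^2
+\int_0^t\|\sqrt\kappa\,\rho_0\,\operatorname{curl}_\eta\bar\partial^{4-l}\partial_t^{2l}v\|_0^2\,ds\Bigr]
\le\tilde M_0+CT\,P(\sup_{[0,T]}\tilde E),
\]
and the $\rho_0$-weighted terms of (\ref{curl_estimate}), which feature $\operatorname{curl}\partial_t^{2l}\eta$ rather than $\operatorname{curl}_\eta v$, then follow by one more time integration ($\operatorname{curl}\partial_t^{2l}\eta(t)=\operatorname{curl}\partial_t^{2l}\eta(0)+\int_0^t\operatorname{curl}\partial_t^{2l}v$) and the curl--$\operatorname{curl}_\eta$ conversion.

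\emph{The main obstacle} is precisely this last point: extracting the $\sqrt\kappa$-dissipation. Unlike the divergence and normal-trace estimates, the vorticity equation of the $\kappa$-problem carries no manifestly dissipative structure --- its right-hand side is merely $O(\kappa)$ --- so one must uncover the hidden $\kappa\,\partial_t$-exact piece of the curled parabolic term and then control the accompanying commutators, which live exactly at the regularity threshold provided by $\tilde E$; there the interpolation of Remark \ref{rem_interp} (for odd time-derivatives of $\eta$) and Lemma \ref{Hardy} (to convert derivatives into $\rho_0$-weights) are unavoidable, and the small factors $\|A-\operatorname{Id}\|$ or $\kappa$ multiplying any $\delta\sup_{[0,T]}\tilde E$ produced by the interpolation are what permit those contributions to be folded into $CT\,P(\sup_{[0,T]}\tilde E)$.
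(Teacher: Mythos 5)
There is a genuine gap, and it sits exactly at the point you identify as the ``decisive point.'' Your claimed mechanism for the $\sqrt{\kappa}$-weighted terms does not exist: since $\operatorname{curl}_\eta$ annihilates the Lagrangian gradient $A^k_\cdot(\rho_0J^{-1}),_k$, curling the artificial viscosity term $2\kappa\,\p_t\bigl[A^k_i(\rho_0J^{-1}),_k\bigr]$ leaves \emph{only} the commutator $-2\kappa\,\varepsilon_{\cdot ji}\,\p_tA^s_j\,[D\rho(\eta)]^i,_s$, which is precisely the right-hand side of (\ref{lcurlvt}); there is no residual ``$\kappa\,\p_t$-exact piece,'' and even if there were, integrating such a term by parts in time against $\rho_0^2\bar\p^{4-l}\p_t^{2l}\operatorname{curl}_\eta v$ would produce $\kappa$-weighted endpoint-in-time terms, not the time-integrated quantity $\int_0^t\kappa\|\rho_0\operatorname{curl}_\eta\bar\p^{4-l}\p_t^{2l}v\|_0^2\,ds$ with a definite sign. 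In the paper these terms are not obtained from any dissipative structure of the vorticity equation: they are estimated \emph{directly} from the time-integrated identity $\operatorname{curl}_\eta v(t)=\operatorname{curl}u_0+\int_0^t[B(A,Dv)+F]$, after multiplying by $\sqrt{\kappa}\rho_0$ and applying $\bar\p^{4-l}\p_t^{2l}$, using Jensen's inequality and the fact that $\int_0^t\|\sqrt{\kappa}\rho_0\bar\p^{4-a}\p_t^{2a}Dv\|_0^2$ is already part of $\t E$ (their role in the scheme is to absorb the negative curl contribution that the viscosity produces in the $\bar\p^4$ and $\p_t^8$ energy estimates, not to be generated there).

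The deeper omission is that your proposal never confronts the actual derivative loss carried by the $\kappa$-forcing $F\sim\kappa\,Dv\,A\,D[D\rho(\eta)]\,A$: to estimate $\|\operatorname{curl}\eta\|_3$ one meets $\kappa\,D^4(D\rho\circ\eta)$, and for the weighted and $\sqrt{\kappa}$-terms one meets $\kappa^{3/2}\rho_0\bar\p^4D[D\rho(\eta)]$ --- effectively five derivatives of $\eta$, which $\t E$ does not control. This is the heart of the paper's proof, and it is resolved only by exploiting the $\kappa$-Euler equation itself: integrating (\ref{approx.a}') twice in time to obtain (\ref{5rho}) and (\ref{6rho}) and invoking Lemma \ref{kelliptic} to bound $\kappa\int_0^tD^4(D\rho\circ\eta)$ and $\kappa\int_0^t\rho_0\bar\p^4D(D\rho\circ\eta)$, and, in the estimate of $\sqrt{\kappa}\rho_0\operatorname{curl}_\eta\bar\p^4v$, by pairing the differentiated equation in $L^2$ with $\kappa^{3/2}\rho_0\bar\p^4D[D\rho(\eta)]$ to control that term. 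Your substitute tools cannot do this work: Lemma \ref{Hardy} removes a weight at the cost of a derivative (it bounds $\|u/d\|_{s-1}$ by $\|u\|_s$ for $u$ vanishing on $\Gamma$), so it cannot ``trade an extra derivative for a factor $\rho_0$'' in the direction you need, and Remark \ref{rem_interp} addresses odd time-derivatives, not the missing fifth space derivative. Your treatment of the unweighted terms is in the right spirit (time integration, integration by parts in time, smallness of $A-\operatorname{Id}$), matching the paper's Steps 1--3, but it too silently assumes the $\kappa$-forcing is harmless at top order, which is exactly where the machinery above is indispensable; without it the estimate does not close.
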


\begin{proof} Using the definition of the Lagrangian curl operator $\operatorname{curl} _\eta$ given by (\ref{lagcurl}), we let
$ \operatorname{curl} _ \eta $ act on
 (\ref{approx.a}') to obtain the identity
\begin{equation}\label{rst1}
(\operatorname{curl} _ \eta v_t)^k = - \kappa \varepsilon_{kji}  v^r,_s   A^s_j \bigl[ (\rho_0 J^{-1} ),_l A^l_r\bigr],_m A^m_i \,.
\end{equation} 
As described above, in the absence of the artificial viscosity term, the right-hand side is identically zero; we will have to
make additional estimates to control {\it error terms} arising from $\kappa$-right-hand side forcing.

It follows  from (\ref{rst1}) that
$$
\p_t( \operatorname{curl} _\eta v)^k= \varepsilon_{kji}{ A_t}^s_j v^i,_s  - \kappa \varepsilon_{kji}  v^r,_s   A^s_j \bigl[ (\rho_0 J^{-1} ),_l A^l_r\bigr],_m A^m_i \,.
$$
Defining the $k$th-component of the vector field $B(A,Dv)$ by
$$
B^k(A,Dv) = - \varepsilon_{kji} A^s_r v^r,_l A^l_j v^i,_s
$$
and defining the $k$-component of the vector field $F$ by
$$
F^k = - \kappa \varepsilon_{kji}  v^r,_s   A^s_j \bigl[ (\rho_0 J^{-1} ),_l A^l_r\bigr],_m A^m_i \,,
$$
we may write
\begin{equation}\label{curlv_3d}
\operatorname{curl} _\eta v(t) = \operatorname{curl} u_0 + \int_0^t [B(A(t'),Dv(t')) + F(t')] dt'\,.
\end{equation} 
Computing the gradient of this relation yields
\begin{equation}\nonumber
\operatorname{curl} _\eta D v(t) = D\operatorname{curl} u_0 - \varepsilon_{ \cdot ji}DA^s_j v^i,_s + \int_0^t [ DB(A(t'),Dv(t')) +  
DF(t')] dt'\,.
\end{equation} 
Applying the fundamental theorem of calculus once again, shows that
\begin{align*} 
\operatorname{curl} _ \eta  D\eta(t) & = t \operatorname{curl} Du_0 +  \varepsilon_{ \cdot ji}\int_0^t [{A_t}^s_j D\eta^i,_s -DA^s_j v^i,_s] dt' \\
& \qquad\qquad \qquad + \int_0^t  \int_0^{t'} [D B(A(t''),Dv(t'')) +  DF(t'')] dt'' dt' \,,
\end{align*} 
and finally that
\begin{align} 
D\operatorname{curl} \eta(t) &= t D\operatorname{curl} u_0 - \varepsilon_{ \cdot ji}\int_0^t {A_t}^s_j(t') dt' \,  D\eta^i,_s \nonumber  \\
& +  \varepsilon_{ \cdot ji}\int_0^t [{A_t}^s_j D\eta^i,_s -DA^s_j v^i,_s] dt'
+ \int_0^t  \int_0^{t'}[ DB(A(t''),Dv(t'')) +  DF(t'')] dt'' dt'  \,. \label{ssscurl1}
\end{align}

\noindent
{\bf Step 1. Estimate for $\boldsymbol{ \operatorname{curl} \eta }$.}
To obtain an estimate for $\| \operatorname{curl} \eta(t)\|^2_3$, we let
$D^2$ act on (\ref{ssscurl1}).
With $ \p_t A^s_j = - A^s_l v^l,_p A^p_j$ and $ D A^s_j = - A^s_l
D\eta^l,_p A^p_j$, we see that the first three terms on the
right-hand side of (\ref{ssscurl1}) are bounded by $\t M_0 + C\, T\, P(
\sup_{t \in [0,T]} \t E(t))$, where we remind the reader that
$\t M_0 = P(\t E(0))$ is a polynomial function of  $\t E$ at time $t=0$.
Since
$$
DB_k(A, Dv) = - \varepsilon_{kji} [Dv^i,_s A^s_l v^l,_p A^p_j + v^i,_s
A^s_l Dv^l,_p A^p_j+ v^i,_sv^l,_pD( A^s_l  A^p_j)],
$$
the highest-order term arising from the action of $D^2$ on $DB(A,Dv)$ is
written as
$$
-\varepsilon_{kji}\int_0^t \int_0^{t'} [D^3v^i,_s A^s_l v^l,_p A^p_j +
v^i,_s A^s_l D^3v^l,_p A^p_j] dt''dt'  \,.
$$
Both summands in the integrand scale like $D^4v\, Dv\, A\, A$.  The
precise structure of this summand is not very important; rather,
the derivative count is the focus.
Integrating by parts in time,
\begin{align*}
\int_0^t\int_0^{t'} D^4v\, Dv\, A\, A \,dt'' dt' = - \int_0^t\int_0^{t'}
D^4\eta\, (Dv\, A\, A )_t dt'' dt'  + \int_0^t D^4\eta\, Dv\, A\, A\,
dt'
\end{align*}
from which it follows that
$$
\bigl\|  \int_0^t  \int_0^{t'} D^3 B(A(t''),Dv(t'')) dt'' dt'\bigr\|^2_0
\le C\, T\, P( \sup_{t \in [0,T]} \t E(t)) \, .
$$

We next estimate the term associated to $ F$. Since
\begin{align*}
DF^k = - \kappa\ \varepsilon_{kji} \bigl[ & Dv^r,_s   A^s_j \bigl[
(\rho_0 J^{-1} ),_l A^l_r\bigr],_m A^m_i +v^r,_s   A^s_j D\bigl[ (\rho_0
J^{-1} ),_l A^l_r\bigr],_m A^m_i\\
& + v^r,_s  \bigl[ (\rho_0 J^{-1} ),_l A^l_r\bigr],_m D\bigl( A^s_j
A^m_i\bigr)\bigr] \,,
\end{align*}
the highest-order term arising from the action of $D^2$ on $DF$ is
written as
$$
\kappa \varepsilon_{kji} \int_0^t \int_0^{t'} [ D^3 v^r,_s   A^s_j
\bigl[ (\rho_0 J^{-1} ),_l A^l_r\bigr],_m A^m_i +v^r,_s   A^s_j
D^3\bigl[ (\rho_0 J^{-1} ),_l A^l_r\bigr],_m A^m_i] dt''dt'  \,.
$$
The first summand in the integrand scales like $D^4v\, D^2(\rho_0 J^{-1}
)\, A\, A$, and can be estimated by integrating by parts in time in a
similar way as for the terms associated to $D^3B(A,Dv)$.

The {\it a priori} more problematic term is the second one as it seems
to call for five space derivatives on $\int_0^t\eta$ that we do not have
at our disposal. We first notice that since 
$$(\rho_0 J^{-1} ),_l
A^l_r=\rho,_r\circ\eta=[D\rho]_r\circ\eta,$$
 this integral is under the
form 
$$\kappa
\int_0^t\int_0^{t'} D^4(D\rho\circ\eta)\, Dv\, A\, A \,dt'' dt'\ .$$
Integrating by parts in time (in the integral from $0$ to $t'$),
\begin{align*}
\kappa\int_0^t\int_0^{t'} D^4 (D\rho\circ\eta)\, Dv\, A\, A \,dt'' dt' =
&-\kappa \int_0^t\int_0^{t'} (Dv\, A\, A )_t
D^4\int_0^{t''}D\rho(\eta)\,  dt'''dt'' dt' \\
& + \kappa\int_0^t Dv\, A\, A\, D^4\int_0^{t'}D\rho(\eta)\, dt''\, dt'.
\end{align*}
We now explain why we have control of four space derivatives of the
antiderivative (with respect to time) of $D\rho(\eta)$. By definition of
the $\kappa$-problem (\ref{approx.a}'), we have
 \begin{equation}\label{rst5}
 v_t+2D\rho\circ\eta+2\kappa
[D\rho\circ\eta]_t=0,
\end{equation}  
which implies by integrating (\ref{rst5})  in time twice that
\begin{equation}
\label{5rho}
2\int_0^t\int_0^{t'} D^4(D\rho\circ\eta) dt''dt' +2\kappa \int_0^t
D^4(D\rho\circ\eta)dt'=-D^4\eta(t)+tD^4u_0 \,,
\end{equation}
where we have used the fact that $D^4 \eta(0) =0$ since $\eta(0) =e$.
We can now use our Lemma \ref{kelliptic} which first yields,
independently of $\kappa$,
$$
\|\int_0^t\int_0^{t'} D^4(D\rho\circ\eta)dt''dt'\|^2_0\le \t M_0 + C \t E(t) \,,
$$
and then by using (\ref{5rho}),
\begin{equation}
\label{5rhobis}
\|\int_0^t  \kappa D^4(D\rho\circ\eta)dt'\|^2_0\le \t M_0 + C \t E(t)\ .
\end{equation}
Thanks to (\ref{5rhobis}), we get the estimate
$$
\bigl\|  \int_0^t  \int_0^{t'} D^3 F dt'' dt'\bigr\|^2_0 \le C\, T\, P(
\sup_{t \in [0,T]} \t E(t)) \,,
$$
and hence
$$
\sup_{t \in [0,T]} \| \operatorname{curl} \eta(t)\|^2_3 \le \t M_0  + C\,
T\, P( \sup_{t \in [0,T]} \t E(t)) \,.
$$
\vspace{.1 in}
\noindent
{\bf Step 2. Estimate for $\boldsymbol{ \operatorname{curl} v_t }$.}  From (\ref{rst1}),
\begin{equation}\label{rst2}
\operatorname{curl} v_t = - \varepsilon_{ \cdot ji}\int_0^t {A_t}^s_j(t') dt' \,  v_t^i,_s + F \,.
\end{equation} 
Since 
$$
2\kappa \p_t [ D\rho \circ \eta]  + 2 D\rho \circ \eta  = - v_t \,,
$$
by Lemma \ref{kelliptic}, we see that 
\begin{equation}\label{need7}
\| [D\rho \circ \eta](t)\|_3^2 \le M_0 +  \|v_t(t)\|_3^2 \,,
\end{equation} 
from which it immediately follows that
$\| F \|^2_2 \le \t M_0 + C\, T\, P({\sup_{t\in[0,T]}} \t E(t))$.
For later use, we note from equation (\ref{approx.a}) that together with (\ref{need7}), we have that
\begin{equation}\label{need8}
\| \kappa \p_t [D\rho \circ \eta](t)\|_3^2 \le M_0 +  \|v_t(t)\|_3^2 \,.
\end{equation}

Since the highest-order term in $D^2 B(A,Dv)$ is $D^3v$, we see that
$ \| B(A,Dv) \|^2_2 \le \t M_0 +
 C\, T\,  P({\sup_{t\in[0,T]}} \t E(t))$
 so that 
\begin{equation} \label{sscurl2_2d}
\| \operatorname{curl} v_t(t) \|^2_2 \le  \t M_0 + C\, T\, P({\sup_{t\in[0,T]}} \t E(t)) \,.
\end{equation} 

\vspace{.1 in}
\noindent
{\bf Step 3. Estimates for $\boldsymbol{ \operatorname{curl} v_{ttt} }$ and $ \boldsymbol{\operatorname{curl} \p_t^5 v}$.}
By time-differentiating (\ref{rst2}), estimating in the same way as {\bf Step 2}, we find that
$$
\| \operatorname{curl}  v_{ttt}(t) \|^2_1 \le \t M_0 + C\, T\, P({\sup_{t\in[0,T]}} \t E(t)) \,,
$$
and
$$
\|\operatorname{curl} \p_t^5 v(t) \|^2_1 \le \t M_0 + C\, T\, P({\sup_{t\in[0,T]}} \t E(t)) \,.
$$

\vspace{.1 in}
\noindent
{\bf Step 4. Estimate for $\boldsymbol{ \rho_0 \bar \p^4 \operatorname{curl} \eta } $.}
To prove this weighted estimate, we write (\ref{curlv_3d}) as
\begin{equation}\nonumber
\operatorname{curl} v(t) =  \varepsilon_{jki} v^i,_s \int_0^t {A_t}^s_j(t')dt' + \operatorname{curl} u_0 + \int_0^t [B(A,Dv)+F] (t')dt'\,,
\end{equation} 
and integrate in time to find that
\begin{align} 
 \operatorname{curl} \eta(t)  & = t \operatorname{curl} u_0+ \underbrace{\int_0^t \varepsilon_{jki} v^i,_s \int_0^{t'} {A_t}^s_j(t'')dt'' dt'}_{ \mathcal{I} _1}  +\underbrace{ \int_0^t\int_0^{t'}  B(A,Dv)(t'') dt''dt' }_{ \mathcal{I} _2} \nonumber \\
& \qquad\qquad \qquad +   \underbrace{\int_0^t\int_0^{t'}  F(t'') dt''dt' }_{ \mathcal{I} _3} \,. \label{rst3}
\end{align} 

It follows that
\begin{equation}\label{rst9}
\rho_0 \bar \p^4 \operatorname{curl} \eta(t) =  t\rho_0 \bar \p^4 \operatorname{curl} u_0 + \rho_0 \bar \p^4 \mathcal{I} _1
+ \rho_0 \bar \p^4 \mathcal{I} _2 + \rho_0 \bar \p^4 \mathcal{I} _3 \,.
\end{equation} 
Notice that by definition,
$$
\| t\rho_0 \bar \p^4 \operatorname{curl} u_0\|_0^2 \le  \t M_0 \,,
$$
so we must estimate the $L^2( \Omega )$-norm of  $ \rho_0 \bar \p^4 \mathcal{I} _1
+ \rho_0 \bar \p^4 \mathcal{I} _2 + \rho_0 \bar \p^4 \mathcal{I} _3$.  We first estimate $ \rho_0 \bar \p^4 \mathcal{I} _2$.
We write
\begin{align} 
& \rho_0 \bar \p^4 \mathcal{I} _2(t)= 
 \underbrace{\int_0^t\int_0^{t'} \varepsilon_{kji}{A_t}^s_j \rho_0 \bar \p^4 v^i,_s dt''dt'}_{ \mathcal{K} _1}
+ \underbrace{ \int_0^t\int_0^{t'} \varepsilon_{kji} \rho_0 \bar \p^4{A_t}^s_j  v^i,_s dt''dt'}_{ \mathcal{K} _2}
+ \mathcal{R}  \,, \nonumber
\end{align} 
where $ \mathcal{R} $ denotes {\it remainder terms} which are  lower-order  in the derivative count; in particular  the terms with the highest
derivative count in $ \mathcal{R} $ scale like $\rho \bar \p^3 Dv$ or $\rho \bar\p^4\eta$,
and hence satisfy  the inequality $ \| \mathcal{R} (t)\|^2_0 \le \t M_0 + C\, T\, P( \sup_{t \in [0,T]} \t E(t))$.  We focus on 
the integral $ \mathcal{K} _1$; integrating by parts in time, we find that
$$
\mathcal{K} _1 (t)
= 
- \int_0^t\int_0^{t'} \varepsilon_{kji} \p_t^2{A}^s_j \rho_0 \bar \p^4 \eta^i,_s dt''dt'
+ \int_0^t  \varepsilon_{kji}{A_{t}}^s_j \rho_0 \bar \p^4 \eta^i,_s   dt'  
$$
and hence
$$
\left\| \mathcal{K} _1(t) \right\|^2_0 \le \t M_0+ C\,T\, P( \sup_{t \in [0,T]} \t E(t)) \,.
$$
Using the identity $\p_t A^s_j = - A^s_ p v^p_b A^b_j$, we see that $ \mathcal{K} _2(t)$ can be estimated in the same fashion to
yield the inequality
\begin{equation}\label{rst7}
\left\|\rho_0 \bar \p^4 \mathcal{I} _2(t) \right\|^2_0 \le \t M_0+ C\,T\, P( \sup_{t \in [0,T]}  \t E(t)) \,.
\end{equation} 
Using the same integration-by-parts argument, we have similarly that
\begin{equation}\label{rst8}
\left\|\rho_0 \bar \p^4 \mathcal{I} _1(t) \right\|^2_0 \le \t M_0+ C\,T\, P( \sup_{t \in [0,T]} \t E(t)) \,.
\end{equation} 
It thus remains to estimate $ \rho_0 \bar \p^4 \mathcal{I} _3$ in (\ref{rst3}).   Now
$$
\rho_0 \bar \p^4 \mathcal{I} _3 = \int_0^t \int_0^{t'} \rho_0 \bar \p^4 F(t'') dt'' dt' \,,
$$
which can be written under the form
$$
\underbrace{\kappa  \int_0^t \int_0^{t'} \rho_0 \bar \p^4 Dv \ D [D\rho(\eta) ] \ A \ dt'' dt' }_{ \mathcal{T} _1} 
+
\underbrace{ \kappa  \int_0^t \int_0^{t'} \rho_0 \bar \p^4 D [D\rho(\eta) ] \ A \  Dv \ dt'' dt' }_{ \mathcal{T} _2}+ \mathcal{R} \,,
$$
where $ \mathcal{R} $ once again denotes a lower-order remainder term which satisfies $ \| \mathcal{R} (t)\|_0^2 \le \t M_0 +
C\, T\, P( \sup_{t \in [0,T]} \t E(t))$.

Since 
$$
2\kappa \p_t D [ D\rho \circ \eta]  + 2D[ D\rho \circ \eta]  = - Dv_t \,,
$$
by Lemma \ref{kelliptic}, we see that independently of $ \kappa $,
$$
\| D[D\rho \circ \eta](t)\|_2^2 \le \t M_0 +  \|v_t(t)\|_3^2 \le\t M_0+ C \t E(t) \  \,,
$$
and that
$$
\| \kappa \p_t  D[D\rho \circ \eta](t)\|_2^2 \le\t M_0+ C \t E(t) \  \,.
$$
Thus, the Sobolev embedding theorem shows that
$$
\| \kappa \p_t  D[D\rho \circ \eta](t)\|_{L^ \infty ( \Omega )}^2 \le\t M_0+ C \t E(t) \  \,.
$$
Hence, by using the same integration-by-parts in-time argument that we used to estimate the integral $ \mathcal{K} _1$ above,
we immediately obtain the inequality
$$
 \| \mathcal{T} _1 (t)\|_0^2 \le \t M_0 +
C\, T\, P( \sup_{t \in [0,T]} \t E(t)) \,.
$$

In order to estimate the integral $ \mathcal{T} _2$, we must rely on the structure of the Euler equations
(\ref{rst5})  once again.   Integrating in time twice, we see that
\begin{equation}
\label{6rho}
2\int_0^t\int_0^{t'} \rho_0 \bar \p^4 D(D\rho\circ\eta) +2\kappa \int_0^t
\rho_0 \bar \p^4D(D\rho\circ\eta)=-\rho_0 \bar \p^4D \eta(t) + t \rho_0 \bar \p^4 Du_0 \,.
\end{equation}
According to Lemma \ref{kelliptic},
independently of $\kappa$,
$$
\|\int_0^t\int_0^{t'} \rho_0 \bar \p^4D(D\rho\circ\eta)\|^2_{L^\infty(0,T;L^2)}\le \t M_0 + C \| \rho_0 \bar \p^4 D\eta(t)\|_0^2  \le \t M_0 + C \t E(t) \,,
$$
and then by using (\ref{6rho}),
\begin{equation}
\label{6rhobis}
\bigl\| \kappa \int_0^t  \rho_0  \bar \p^4D (D\rho\circ\eta)\bigr\|^2_{L^\infty(0,T;L^2)}\le \t M_0 + C \t E(t)\ .
\end{equation}
Returning to the estimate of $ \mathcal{T} _2$, we integrate-by-parts in time (with respect to the integral from $0$ to $t'$) to find that
\begin{align*} 
\mathcal{T} _2  & = 
 \int_0^t \int_0^{t'}  \kappa  \int_0^{t''} \rho_0 \bar \p^4 D [D\rho(\eta) ](s) ds \ [A \  Dv]_t(t'') \ dt'' dt'   \\
 & \qquad\qquad \qquad +
  \int_0^t  \kappa  \int_0^{t'} \rho_0 \bar \p^4 D [D\rho(\eta) ](t'') dt'' \ A \  Dv(t') \ dt'  \,.
\end{align*} 
The inequality (\ref{6rhobis}) then shows that 
$$
 \| \mathcal{T} _2 (t)\|_0^2 \le \t M_0 +
C\, T\, P( \sup_{t \in [0,T]} \t E(t)) \,,
$$
so that
$$
 \| \rho_0 \bar \p^4 \mathcal{I} _3  (t)\|_0^2 \le \t M_0 +
C\, T\, P( \sup_{t \in [0,T]} \t E(t)) \,,
$$
and with (\ref{rst7}) and (\ref{rst8}),  we see that (\ref{rst9}) shows that
\begin{equation}\label{sscurl13_2d}
 \| \rho_0 \bar \p^4  \operatorname{curl} \eta (t)\|_0^2 \le \t M_0 +
C\, T\, P( \sup_{t \in [0,T]} \t E(t)) \,.
\end{equation} 

\vspace{.1 in}
\noindent
{\bf Step 5. Estimate for $\boldsymbol{ \rho_0 \bar \p^3 \operatorname{curl} v_t } $.}  From (\ref{rst2}),
\begin{align*} 
\| \rho_0 \bar \p^3 \operatorname{curl} v_t (t)\|_0^2 & \le   \bigl\| \varepsilon_{ \cdot ji}  \rho_0 \bar \p^3\left(\int_0^t {A_t}^s_j(t') dt' \,  v_t^i,_s(t)\right) \bigr\|_0^2 + \| \rho_0 \bar \p^3 F (t)\|_0^2  \\
& \le C\, T\, P( \sup_{t \in [0,T]} \t E(t))  + \| \rho_0 \bar \p^3 F (t)\|_0^2 \,,
\end{align*}  
and using (\ref{assump2}), 
\begin{align*} 
\| \rho_0 \bar \p^3 F (t)\|_0^2 \le C \| \rho_0 \bar \p^3 Dv(t)\|_0^2 \| \kappa  D[D\rho(\eta)] \|_{L^ \infty (\Omega )}^2
+ C \| \kappa  \rho_0 \bar \p^3 D[D\rho(\eta)] \|_0^2 + C\, T\, P( \sup_{t \in [0,T]} \t E(t)) \,.
\end{align*} 
Since 
$$\| \kappa   D[D\rho(\eta(t))] \|_{L^ \infty (\Omega )}^2 \le C\| \kappa D\rho(\eta(t))\|_3^2 \le  C  \t M_0 + C\, T\, \sup_{t \in [0,T]} \t E(t) \,,$$
where we have used (\ref{need8}) and the fundamental theorem of calculus.  Once again
employing the fundamental theorem of calculus, 
$$
 \| \rho_0 \bar \p^3 Dv(t)\|_0^2 \le \t M_0 + C\, T\, \sup_{t \in [0,T]} \t E(t) \,,
$$
and hence
$$
 \| \rho_0 \bar \p^3 Dv(t)\|_0^2 \|  D[D\rho(\eta)] \|_{L^ \infty (\Omega )}^2 \le \t M_0 + C\, T\, P( \sup_{t \in [0,T]} \t E(t)) \,.
$$
On the other hand, since
$$
2\kappa \rho_0 \p_t  \bar \p^3 D [ D\rho \circ \eta]  + 2\rho_0 \bar \p^3 D[ D\rho \circ \eta]  = -\rho_0 \bar \p^3 Dv_t \,,
$$
by Lemma \ref{kelliptic}, we see that independently of $ \kappa $,
$$
\| \rho_0 \bar \p^3 D[ D\rho \circ \eta](t)\|_0^2 \le \t M_0 + \| \rho_0 \bar \p^3 Dv_t(t)\|_0^2 \le \t M_0 + C \t E(t) \,,
$$
and, in turn,
$$
\| \kappa \rho_0 \bar \p^3 D\p_t [ D\rho \circ \eta](t)\|_0^2 \le \t M_0 + \| \rho_0 \bar \p^3 Dv_t(t)\|_0^2 \le \t M_0 + C \t E(t) \,.
$$
By the fundamental theorem of calculus, we thus see that
$$
\| \kappa  \rho_0 \bar \p^3 D[D\rho(\eta)] \|_0^2 \le \t M_0 + C\,T\, P( \sup_{t \in [0,T]} \t E(t)) 
$$
which shows that
$$
\| \rho_0 \bar \p^3 \operatorname{curl} v_t (t)\|_0^2 \le \t M_0 + C\,T\, P( \sup_{t \in [0,T]} \t E(t)) \,.
$$

\vspace{.1 in}
\noindent
{\bf Step 6. Estimates for $\boldsymbol{ \rho_0 \bar \p^2 \operatorname{curl} v_{ttt}, \rho_0 \bar \p \operatorname{curl}  \p_t^5v,  \text{ and }\rho_0 \operatorname{curl} \p_t ^7 v} $.}  By time-differentiating (\ref{rst2}) and estimating as in {\bf Step 5}, we immediately obtain the
inequality
\begin{align*} 
\| \rho_0 \bar \p^2 \operatorname{curl} v_{ttt} (t)\|_0^2
+\| \rho_0 \bar \p \operatorname{curl} \p_t^5v (t)\|_0^2
+\| \rho_0  \operatorname{curl} \p_t^7v (t)\|_0^2
 \le \t M_0 + C\,T\, P( \sup_{t \in [0,T]} \t E(t)) \,.
\end{align*} 

\vspace{.1 in}
\noindent
{\bf Step 7. Estimate for $\boldsymbol{ \sqrt{ \kappa } \rho_0 \operatorname{curl} _\eta \bar \p^4 v} $.}  From
(\ref{curlv_3d})

\begin{align*} 
\sqrt{\kappa} \rho_0\operatorname{curl} _\eta \bar \p^4 v(t) & =  \underbrace{\sqrt{ \kappa }\rho_0 \bar \p^4\operatorname{curl} u_0}_{ \mathcal{S} _1}  +
\underbrace{\sqrt{ \kappa }\rho_0\varepsilon_{ \cdot ij} v^i,_r \bar \p^4 A^r_j (t) }_{ \mathcal{S} _2}\\
& +\underbrace{ \int_0^t  \sqrt{ \kappa }\rho_0 \bar \p^4 B(A,Dv)dt' }_{ \mathcal{S} _3} + \underbrace{\int_0^t  \sqrt{ \kappa }\rho_0 \bar \p^4 F dt' }_{ \mathcal{S} _4}+ \mathcal{R} (t) \,,
\end{align*} 
where $ \mathcal{R} (t)$ is a lower-order remainder term satisfying $\int_0^T | \mathcal{R} (t)|^2 dt \le C\,T\, P( \sup_{t \in [0,T]} \t E(t))$.  We
see that
$$
\int_0^t \| \mathcal{S} _1\|_0^2 dt' \le t \t M_0 \,,
$$
and since $\|\rho_0 \bar \p^4 D \eta(t)\|_0^2$ is  contained in the energy function $\t E(t)$,
$$
\int_0^t \| \mathcal{S} _2\|_0^2 dt' \le C\,T\, P( \sup_{t \in [0,T]} \t E(t)) \,.
$$
Jensen's inequality shows that 
$$
\int_0^t \| \mathcal{S} _3\|_0^2 dt' \le C\,T\, P( \sup_{t \in [0,T]} \t E(t)) \,.
$$
The highest-order terms in $ \mathcal{S} _4$ can be written under the form
$$
\underbrace{\int_0^t \ \kappa ^ {\frac{3}{2}}  \rho_0 \bar \p^4 Dv \ A \ D[D\rho(\eta)] \ A dt'}_{ { \mathcal{S} _4}_a} + 
\underbrace{\int_0^t  \kappa ^ {\frac{3}{2}} \rho_0 \bar \p^4 D[D\rho(\eta)] \ A \ Dv \ A dt'}_{ { \mathcal{S} _4}_b }  \,,
$$
with all other term being lower-order and easily estimated.   By Jensen's inequality and using (\ref{assump2}),
$$
\int_0^t \| {\mathcal{S} _4}_a(t')\|_0^2 dt' \le  C \kappa  \int_0^t t' \int_0^{t'} \|\sqrt{ \kappa } \rho_0 \bar \p^4 Dv(t'') \|_0^2 dt'' dt'
\le C\, T\,  \sup_{t \in [0,T]} \t E(t) \,.
$$
In order to estimate the term ${ \mathcal{S} _4}_b$, we use the identity

\begin{align*} 
&
2 \kappa^ {\frac{3}{2}} \rho_0 \bar \p^4 D[ D\rho(\eta)](t)
+ 2 \sqrt{ \kappa } \int_0^t  \rho_0 \bar \p^4 D[ D\rho(\eta)] dt'   = - \sqrt{ \kappa } \rho_0 \bar \p^4 D v(t)  \\
& \qquad \qquad\qquad \qquad + \sqrt{ \kappa } \rho_0 \bar \p^4 Du_0 + 2 \kappa^ {\frac{3}{2}} \rho_0 \bar \p^4 D^2\rho_0 \,,
\end{align*} 
which follows from differentiating the Euler equations.   Taking the $L^2( \Omega )$-inner-product of this equation with
$ \kappa^ {\frac{3}{2}} \rho_0 \bar \p^4 D[ D\rho(\eta)](t)$ and integrating in time, we deduce that
$$
\int_0^t \| \kappa^ {\frac{3}{2}} \rho_0 \bar \p^4 D[ D\rho(\eta)](t')\|_0^2 dt' \le \t M_0 + \sup_{t \in [0,T]} \t E(t) \,,
$$
from which it follows, using Jensen's inequality, that
$$
\int_0^t \| {\mathcal{S} _4}_b(t)\|_0^2 dt' \le C\,T\, P( \sup_{t \in [0,T]} \t E(t)) \,,
$$
and thus
$$
\int_0^t \| \sqrt{ \kappa } \rho_0 \operatorname{curl} _ \eta \bar \p^4 v(t') \|_0^2 dt' \le C\,T\, P( \sup_{t \in [0,T]} \t E(t)) \,.
$$

\vspace{.1 in}
\noindent
{\bf Step 8. Estimates for $\boldsymbol{ \sqrt{ \kappa } \rho_0 \operatorname{curl} _\eta \bar \p^{4-l} \p_t^{2l} v } $ for $\boldsymbol{l=1,2,3,4}$.} 
Following the identical methodology as we used for {\bf Step 7}, we obtain the desired inequality
$$
\sum_{l=1}^4 \int_0^t \|\sqrt{ \kappa } \rho_0 \operatorname{curl} _\eta \bar \p^{4-l} \p_t^{2l} v(t') \|_0^2 dt'
 \le C\,T\, P( \sup_{t \in [0,T]} \t E(t)) \,.
$$
\end{proof}

\subsection{$ \kappa $-independent energy estimates for horizontal and time derivatives}
We take $T \in (0,T_ \kappa )$.

\subsubsection{The $\bar \p^4$-problem}
\begin{proposition} \label{prop1_2d} For $ \delta >0$ and letting the constant $\t M_0$ depend on $1/ \delta $, 
\begin{align}
&\sup_{t \in [0,T]} \left(\| \sqrt{ \rho_0} \bar \p^4 v(t)\|_0^2 +\|\rho_0 \bar \p^4 D\eta(t)\|_0^2  +
\int_0^t \| \sqrt{ \kappa } \rho_0 \bar \p^4 D v(s) \|_0^2 ds
\right)   \nonumber \\
& \qquad\qquad\qquad\qquad\qquad\qquad\qquad
 \le \t M_0
+ \delta  \sup_{t \in [0,T]}\t E(t) + C\, T\, P( \sup_{t \in [0,T]} \t E(t)) \,. \label{energy1_2d}
\end{align} 
\end{proposition}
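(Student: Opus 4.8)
## Proof plan for Proposition \ref{prop1_2d}

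The plan is to apply $\bar\p^4$ to the approximate equation (\ref{approx.a}) and test against $\bar\p^4 v$, tracking the two good energy contributions --- the kinetic term $\|\sqrt{\rho_0}\,\bar\p^4 v\|_0^2$ and, after the cofactor-matrix differentiation identities are used, the weighted term $\|\rho_0\,\bar\p^4 D\eta\|_0^2$ together with the $\kappa$-parabolic term $\int_0^t\|\sqrt\kappa\,\rho_0\,\bar\p^4 Dv\|_0^2$. Concretely, I would write (\ref{approx.a}) in the form $\rho_0 v^i_t = -a^k_i(\rho_0^2 J^{-2}),_k - \kappa\p_t[a^k_i(\rho_0^2J^{-2}),_k]$, apply $\bar\p^4$, multiply by $\bar\p^4 v^i$, integrate over $\Omega$, and integrate by parts in $x_k$. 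The leading term should produce, via $(\rho_0^2 J^{-2}),_k$ and the Piola identity (\ref{a3}), a time-derivative of $\tfrac12\|\rho_0 J^{-1}\bar\p^4 D\eta\|_0^2$-type quantity (after commuting $\bar\p^4$ through and using $\bar\p J = a^s_r\bar\p D\eta^r,_s$ from (\ref{J1})); the $\kappa$-term should, after the same integration by parts, produce $\kappa\int_0^t\int_\Omega a^j_i a^k_i \rho_0^{-1}(\rho_0 \bar X_{\text{stuff}})\cdots$, i.e. the positive parabolic contribution $\int_0^t\|\sqrt\kappa\,\rho_0\,\bar\p^4 Dv\|_0^2$ up to lower-order commutators, using the positive-definiteness (\ref{pos_def}).

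The key steps, in order, would be: (1) commute $\bar\p^4$ past $a^k_i$ and $J^{-2}$ using (\ref{a1}), (\ref{J1}), isolating the highest-order term $a^k_i \bar\p^4(\rho_0^2 J^{-2}),_k$ and collecting all commutator terms into a remainder $\mathcal{R}$; (2) integrate by parts in $x_k$, using $a^k_i,_k=0$ (Piola) to move the derivative onto $\bar\p^4 v^i$, turning $\bar\p^4 v^i,_k$ into $\bar\p^4 D\eta^i$ via $\p_t$-integration, i.e. $\int_0^t$ of $a^k_i\bar\p^4 v^i,_k = \p_t(\text{something with }\bar\p^4 D\eta) + \text{l.o.t.}$; (3) handle the $\kappa$-viscosity term identically, extracting $\tfrac{d}{dt}$ of a positive weighted energy plus the coercive spacetime integral $\kappa\lambda\int_0^t\|\rho_0\bar\p^4 Dv\|_0^2$ from (\ref{pos_def}); (4) estimate all remainder terms --- these involve at most $\bar\p^4\eta$ (hence controlled by $\|\eta\|_4^2\le 2|e|_{3.5}^2+1$ via (\ref{assump2}) and Sobolev), $\rho_0\bar\p^4 Dv$ or $\rho_0\bar\p^3 D^2\eta$, and factors like $\|\rho_0,_k/\rho_0\|_{L^\infty}$ controlled by the higher-order Hardy inequality Lemma \ref{Hardy} --- using Cauchy--Young with small parameter $\delta$ to absorb into $\sup_{[0,T]}\tilde E$, and the fundamental theorem of calculus to gain a factor of $T$ (or $\sqrt T$) on all terms that are not already at time $0$; (5) integrate in time from $0$ to $t$, use $\tilde M_0 = P(\tilde E(0))$ to absorb the initial-data terms, and conclude.

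The main obstacle I expect is the cross-term in the $\kappa$-viscosity part coming from the factor $\rho_0^2$ differentiated once: terms of the form $\kappa\int_\Omega a^j_i a^k_i \rho_0^{-1}\rho_0,_k \bar\p^4(\rho_0\bar\p^4 Dv)\cdots$, analogous to the $\mathfrak{i}_3$, $\mathfrak{i}_4$ analysis in Section \ref{sec833}, where one must form an exact derivative, integrate by parts, and use the Hardy-type control of $\rho_0,_k/\rho_0$ together with $|D\rho_0|^2/\rho_0^2$ weights; this is where the degeneracy bites and where the precise structure of the artificial viscosity term $\kappa\p_t[a^k_i(\rho_0^2 J^{-2}),_k]$ --- chosen to mirror the Euler nonlinearity --- is essential. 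A secondary subtlety is that one genuinely controls only $\rho_0\bar\p^4 D\eta$ and $\sqrt{\rho_0}\,\bar\p^4 v$ (not their unweighted counterparts), so every integration by parts that would move a $\bar\p$ off a weighted quantity onto an unweighted one must be avoided or compensated, exactly as in the weighted-embedding philosophy (\ref{w-embed}); the remainder bookkeeping must respect this at every stage. Finally, one must be careful that the time-differentiation trick $a^k_i\bar\p^4 v^i,_k = \p_t(\cdots) - (\text{coeff})_t(\cdots)$ produces error terms with $\p_t a$ or $\p_t J$ scaling like $Dv$, which are controlled by $\|v\|_3$ via (\ref{assump2}) and Sobolev embedding, so no derivative loss occurs.
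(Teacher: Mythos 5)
There is a genuine gap in your plan, and it sits exactly at the point you treat as routine: the sign of the leading quadratic forms. After you integrate by parts with the Piola identity and differentiate the cofactor matrix via (\ref{a1})--(\ref{a2}), the highest-order integrand is not $a^j_ia^k_i\,\bar\p^4(\cdot),_j\,\bar\p^4(\cdot),_k$ but rather $\rho_0^2J^{-3}\,\bar\p^4\eta^r,_s\,[a^s_ia^k_r-a^s_ra^k_i]\,\bar\p^4v^i,_k$ (and its analogue with $v$ in place of $\eta$ for the $\kappa$-term), so the positive-definiteness (\ref{pos_def}) you invoke simply does not apply to it. The paper's proof resolves this with an antisymmetric commutation through the permutation symbol (identities (\ref{newss0})--(\ref{newss4})): the form is rewritten as an exact time derivative of $|D_\eta\bar\p^4\eta|^2$ \emph{minus} $|\operatorname{curl}_\eta\bar\p^4\eta|^2$ \emph{minus} $|\operatorname{div}_\eta\bar\p^4\eta|^2$, the divergence deficit is repaired by the separate integral $\mathcal{I}_2$ in which $\bar\p^4$ falls on $J^{-2}$ (and by $\mathcal{I}_4$ for the $\kappa$-dissipation), and the remaining \emph{negative} curl contributions $-\|\rho_0\,\bar\p^4\operatorname{curl}_\eta\eta\|_0^2$ and $-\kappa\int_0^T\|\rho_0\operatorname{curl}_\eta\bar\p^4v\|_0^2$ are absorbed only because they were bounded beforehand in Proposition \ref{curl_est}, whose proof exploits the vorticity structure (\ref{lcurlvt}) of the $\kappa$-problem. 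Your outline never produces this decomposition and never calls on the curl estimates, so the energy identity you would obtain is indefinite and the claimed coercivity of both the $\frac{d}{dt}\|\rho_0\bar\p^4D\eta\|_0^2$ piece and the $\kappa$-parabolic piece is unjustified.

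A secondary misdiagnosis: the obstacle you flag --- Hardy-type control of $\rho_0,_k/\rho_0$ as in the $\mathfrak{i}_3,\mathfrak{i}_4$ analysis of Section \ref{sec833} --- does not in fact arise in this estimate. Because $\bar\p$ is tangential and the weight enters as $\rho_0^2$ inside $(\rho_0^2J^{-2}),_k$, the terms where derivatives fall on $\rho_0^2$ are the remainders $\mathcal{R}_4$, $\mathcal{R}_5$, handled by an integration by parts in time and Cauchy--Schwarz, with no degenerate quotient appearing; the genuinely delicate remainders are instead those like $\mathcal{R}_1$, which require an $L^\infty$--$L^3$--$L^6$ H\"older splitting, the trace of the cofactor structure (\ref{a3i}), and the interpolation bound (\ref{interp1}) for $\|v\|_{L^2(0,T;H^{3.5})}$ to avoid derivative loss. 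So while your general bookkeeping (FTC in time, Cauchy--Young with $\delta$, the assumptions (\ref{assump1})--(\ref{assump2})) is in the right spirit, the proof cannot be closed along the route you describe without the curl/divergence splitting and Proposition \ref{curl_est}.
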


\begin{proof}
  Letting $\bar \p^4$ act on (\ref{approx.a}), and taking the 
$L^2(\Omega)$-inner product of this with $\bar \p^4 v^i$ yields
\begin{align} 
&\underbrace{ {\frac{1}{2}} {\frac{d}{dt}}  \int_\Omega \rho_0 |\bar \p^4 v|^2 dx}_ { \mathcal{I} _0} 
+\underbrace{  \int_\Omega \bar \p^4 a^k_i (\rho_0^2 J^{-2} ),_k \bar \p^4 v^i dx}_{ \mathcal{I} _1} 
+ \underbrace{ \int_\Omega a^k_i (\rho_0^2 \bar \p^4 J^{-2} ),_k \bar \p^4 v^i dx}_{ \mathcal{I} _2}  \nonumber \\
&
+ \underbrace{ \kappa  \int_\Omega \bar \p^4 \p_t a^k_i (\rho_0^2 J^{-2} ),_k \bar \p^4 v^i dx }_{ \mathcal{I} _3}
+ \underbrace{\kappa  \int_\Omega a^k_i (\rho_0^2 \bar \p^4 \p_t J^{-2} ),_k \bar \p^4 v^i dx }_{ \mathcal{I} _4} \nonumber \\
&
+\underbrace{ \kappa  \int_\Omega \bar \p^4  a^k_i (\rho_0^2 \p_t J^{-2} ),_k \bar \p^4 v^i dx }_{ \mathcal{I} _5}
+ \underbrace{ \kappa  \int_\Omega \p_t a^k_i (\rho_0^2 \bar \p^4 J^{-2} ),_k \bar \p^4 v^i dx }_{ \mathcal{I} _6} \nonumber \\
&= \underbrace{ \sum_{l=1}^3 c_l \int_\Omega \bar \p^{4-l} a^k_i \, (\rho_0^2 \bar \p^l J^{-2} ),_k \, \bar \p^4 v^i \, dx }_{ \mathcal{R} _1}
+ \underbrace{ \kappa  \sum_{l=1}^3 c_l \int_\Omega \bar \p^{4-l} \p_t a^k_i \, (\rho_0^2 \bar \p^l J^{-2} ),_k \, \bar \p^4 v^i \, dx }_{ \mathcal{R} _2} \nonumber \\
& \qquad 
+ \underbrace{ \kappa \sum_{l=1}^3 c_l \int_\Omega \bar \p^{4-l} a^k_i \, (\rho_0^2 \bar \p^l \p_t J^{-2} ),_k \, \bar \p^4 v^i \, dx}_{ \mathcal{R} _3} 
+ \underbrace{ \int_\Omega a^k_i ( \bar \p^4\rho_0^2 \ J^{-2} ),_k \bar \p^4 v^i dx}_{ \mathcal{R} _4} + \mathcal{R} _5
 \,. \label{cs1}
\end{align} 

The integrals $ \mathcal{I} _a$, $a=1,...,6$ denote the highest-order terms, while the integrals $ \mathcal{R} _a$, $a=1,...,5$ denote
lower-order remainder terms,
 which throughout the paper will consist of integrals 
which can be shown, via elementary inequalities together with our basic assumptions (\ref{assump2}),   to satisfy the following estimate:
\begin{equation}\label{remainder}
\int_0^T  \mathcal{R}_a (t)dt \le \t M_0  + \delta \sup_{t \in [0,T]} \t E(t) + C\, T\, P ( \sup_{t \in [0,T]}\t  E(t)) \,.
\end{equation} 
The remainder integral $ \mathcal{R} _5$ is comprised of the lower-order terms that are obtained when at most three horizontal
derivatives are distributed onto $\rho_0^2$, and although we do not explicitly write this term, we will explain its bound directly after
the analysis of the remainder term $ \mathcal{R} _4$ below.

We proceed to systematically estimate each of these integrals, and we begin with the lower-order remainder terms.

\vspace{.1 in}
\noindent
{\bf Analysis of  $\int_0^T \mathcal{R}_1(t) dt $.}  
We integrate by parts with respect to $x_k$ and then with respect to the time derivative $ \p_t$, and use (\ref{a3}) to obtain that
\begin{align*} 
\mathcal{R}_1 &  = - \sum_{l=1}^3 c_l 
 \int_0^T \int_\Omega  \bar \p^{4-l} a^k_i \, \rho_0^2 \bar \p^l J^{-2}   \ \bar \p^4 v^i,_k \ dxdt \\
& =  \sum_{l=1}^3 c_l  \int_0^T \int_\Omega \rho_0\left( \bar \p^{4-l} a^k_i  \bar \p^l J^{-2}\right)_t  \rho_0\bar \p^4 \eta^i,_k dxdt
-        \sum_{l=1}^3 c_l  \int_\Omega \rho_0  \bar \p^{4-l} {a}^k_i  \bar \p^l  J^{-2}  \rho_0  \bar \p^4 \eta^i,_k dx \Bigr|_0^T \,.
\end{align*} 

Notice that when  $l=3$,  the integrand in the spacetime integral on the right-hand side scales like
$ \ell \  [  \bar \p D\eta \, \rho_0  \bar \p^3 \p_t J^{-2}  + \bar \p Dv \,  \rho_0 \bar \p^3 J^{-2}  ] \ \rho_0 \bar \p^4 D \eta$ where $\ell$ denotes an $L^ \infty (\Omega)$
function. Since $\| \rho_0 \p_t^2 J^{-2} (t)\|^2_3$ is contained in the energy function $E(t)$ and since $\bar \p D \eta(t) \in L^ \infty (\Omega)$,
the first summand is estimated using an $L^ \infty$-$L^2$-$L^2$ H\"{o}lder's inequality, while for  the second summand, we use that
$\| \rho_0  J^{-2} (t)\|^2_4$ is contained in $E(t)$ together with an $L^4$-$L^4$-$L^2$ H\"{o}lder's inequality.

When $l=1$, the integrand in the spacetime integral on the right-hand side scales like
$ \ell \  [  \bar \p D\eta \, \rho_0  \bar \p^3  {a_t}^k_i  + \bar \p Dv \,  \rho_0 \bar \p^3 a^k_i ] \ \rho_0 \bar \p^4 \eta^i,_k$.
 Since $\| \rho_0 \bar \p^3 Dv_t (t)\|^2_0$ is contained in the energy function $E(t)$ and since $\bar \p D \eta \in L^ \infty (\Omega)$,
the first summand is estimated using an $L^ \infty$-$L^2$-$L^2$ H\"{o}lder's inequality.   We write the second summand as
$$ \bar \p Dv \,  \rho_0 \bar \p^3 a^\beta_i  \ \rho_0 \bar \p^4 \eta^i,_\beta+ \bar \p Dv \,  \rho_0 \bar \p^3 a^3_i  \ \rho_0 \bar \p^4 \eta^i,_3.$$  
We estimate
\begin{align}
&
\int_0^T\int_\Omega   \bar \p Dv \,  \rho_0 \bar \p^3 a^\beta_i  \ \rho_0 \bar \p^4 \eta^i,_\beta \ dxdt \nonumber \\
& \qquad\qquad  
=-\int_0^T\int_\Omega  [ \bar \p Dv \,  \rho_0 \bar \p^3 a^\beta_i,_\beta  \ \rho_0 \bar \p^4 \eta^i + \bar\p Dv,_\beta\,  \rho_0 \bar \p^3 a^\beta_i  \ \rho_0 \bar \p^4 \eta^i]
dxdt \nonumber \\
& \qquad\qquad
\le C \int_0^T \bigl( \| \bar \p  Dv(t)\|_{L^3(\Omega)} \| \rho_0 \bar \p^4 a (t) \|_0 \ \|\rho_0 \bar \p^4 \eta(t) \|_{L^6(\Omega)}\nonumber \\
& \qquad\qquad\qquad\qquad\qquad\qquad\qquad
+ \| \bar \p^2  Dv(t)\|_{L^3(\Omega)} \| \rho_0 \bar \p^4\eta(t) \|_{L^6(\Omega)} \|\bar \p^3 a \|_0 \bigr) dt \nonumber \\
& \qquad\qquad
\le C \int_0^T \bigl( \| \bar \p  Dv(t)\|_{H^{0.5}(\Omega)} \| \rho_0 \bar \p^4 a(t) \|_0 \ \|\rho_0 \bar \p^4 \eta(t) \|_1 \nonumber\\
& \qquad\qquad\qquad\qquad\qquad\qquad\qquad
+ \| \bar \p^2  Dv(t)\|_{H^{0.5}(\Omega)} \| \rho_0 \bar \p^4\eta(t) \|_1 \  \|\bar \p^3 a\|_0 \bigr) dt \nonumber \\
& \qquad\qquad
\le C \int_0^T \bigl( \|v(t)\|_{H^{3.5}(\Omega)} \| \rho_0 \bar \p^4 D\eta (t) \|_0^2 +   \|v(t)\|_{H^{2.5}(\Omega)} \| \rho_0 \bar \p^4 D\eta (t) \|_0 \|\eta(t) \|_4 \nonumber \\
& \qquad\qquad\qquad\qquad\qquad\qquad\qquad
+ \| v(t)\|_{H^{3.5}(\Omega)} \|  \eta(t) \|_4^2 \bigr) dt \,, \label{Rest_2d}
\end{align} 
where we have used H\"{o}lder's inequality, followed by the Sobolev embeddings 
$$
H^{0.5}(\Omega) \hookrightarrow L^3(\Omega) \ \ \text{ and } \ \ H^{1}(\Omega) \hookrightarrow L^6(\Omega) \,.
$$
We also rely on the interpolation estimate 
\begin{align} 
\|v\|^2_{L^2(0,T; H^{3.5}(\Omega))} & \le C\bigl(  \|v(t)\|_3 \|\eta\|_4 \bigr)\Bigr|^0_T +C \|v_t\|_{L^2(0,T; H^3(\Omega))}   \|\eta\|_{L^2(0,T; H^4(\Omega))} \nonumber \\
&\le M_0 + \delta \sup_{t \in [0,T]} \|\eta(t)\|_4^2+  C \, T\, \sup_{t \in [0,T]}  \Bigl(\|\eta(t)\|^2_4+ \|v_t(t)\|^2_3\Bigr) \,, \label{interp1}
\end{align} 
where the last inequality follows from Young's and  Jensen's inequalities.   Using this together with the Cauchy-Schwarz inequality,  (\ref{Rest_2d}) is
bounded by $C \, T\, P( \sup_{t \in [0,T]} E(t))$.   Next, since  (\ref{a3i}) shows that each component of $a^3_i$ is quadratic in $\bar \p \eta$, we see that the same analysis shows the
spacetime integral of $\bar \p Dv \,  \rho_0 \bar \p^3 a^3_i  \ \rho_0 \bar \p^4 \eta^i,_3$ has the same bound, and so we have estimated
the case $l=1$.  

 For the case that $l=2$, the integrand in the spacetime integral on the right-hand side  of the expression for $ \mathcal{R}_1 $
scales like $ \ell \   \bar \p^2 D\eta \,  \bar \p^2 D v   \ \rho_0 \bar \p^4D\eta$, so that an $L^6-L^3-L^2$ H\"{o}lder's inequality, followed
by the same analysis as for the case $l=1$ provides the same bound as for the case $l=1$.

To deal with the space integral on the right-hand side of the expression for $ \mathcal{R} _1$, the integral at time $t=0$ is equal to zero
since $\eta(x,0)=x$, whereas the integral evaluated at $t=T$ is written, using the fundamental theorem of calculus, as
$$
-        \sum_{l=1}^3 c_l  \int_\Omega \rho_0  \bar \p^{4-l} {a}^k_i  \bar \p^l  J^{-2}  \rho_0  \bar \p^4 \eta^i,_k dx\Bigr|_{t=T}=
-        \sum_{l=1}^3 c_l  \int_\Omega \rho_0  \int_0^T (\bar \p^{4-l} {a}^k_i  \bar \p^l  J^{-2})_t  \rho_0  \bar \p^4 \eta^i,_k(T) dx
$$
which can be estimated in the identical fashion as the corresponding spacetime integral.   As such, we have shown that $ \mathcal{R}_1 $
has the claimed bound (\ref{remainder}).

\vspace{.1 in}
\noindent
{\bf Analysis of  $\int_0^T \mathcal{R}_2(t) dt $.}  Using (\ref{a3}), we integrate by parts, to find that
\begin{align*} 
\int_0^T \mathcal{R} _2(t) dt &= - \sum_{l=1}^3 c_l \int_0^T  \int_\Omega \kappa  \bar \p^{4-l} a^k_i \, \rho_0^2 \bar \p^l J^{-2}  \, \bar\p^4 v^i,_k \, dx dt \\
&=  \sum_{l=1}^3 c_l  \sqrt{\kappa}\|  \bar \p^{4-l} a^k_i \, \rho_0 \bar \p^l J^{-2}\|_{L^2(0,T; L^2( \Omega ))}  \, \|\sqrt{ \kappa } \rho_0 \bar\p^4 v^i,_k \|_{L^2(0,T; L^2( \Omega ))}  \\
& \le C\,T\, P( \sup_{t \in [0,T]} \t E(t)) \,,
\end{align*} 
the last inequality following from the fact that $ \|\sqrt{ \kappa } \rho_0 \bar\p^4 Dv \|^2_{L^2(0,T; L^2( \Omega ))} $ is contained in
the energy function and that  for $l=1,2,3$, $\bar \p^{4-l} a^k_i \, \rho_0 \bar \p^l J^{-2}$ contains at most four space derivatives of $\eta(t)$, and is
controlled $L^ \infty$-in-time.

\vspace{.1 in}
\noindent
{\bf Analysis of  $\int_0^T \mathcal{R}_3(t) dt $.}  This remainder integral is estimated in the same way as $\int_0^T \mathcal{R}_2(t) dt $.

\vspace{.1 in}
\noindent
{\bf Analysis of  $\int_0^T \mathcal{R}_4(t) dt $.} Integration by parts using (\ref{a3}) shows that
\begin{align*} 
\int_0^T \mathcal{R}_4(t) dt  & = \int_0^T \int_\Omega a^k_i ( \bar \p^4\rho_0^2 \ J^{-2} ),_k \bar \p^4 v^i dxdt \\
&=  \int_0^T \int_\Omega \bar \p^4 \rho_0^2 \ ( J^{-2} a^k_i)_t \bar \p^4 \eta^i,_k  dx dt
-  \left. \int_\Omega \bar \p^4 \rho_0^2 \  J^{-2} a^k_i \bar \p^4 \eta^i,_k  dx\right|_{t=T}  \\
& =  \int_0^T \int_\Omega \bar \p^4 \rho_0^2 \ ( J^{-2} a^k_i)_t \bar \p^4 \eta^i,_k  dx dt
-  \int_\Omega \bar \p^4 \rho_0^2 \  \bar \p^4 \operatorname{div} \eta(T)  dx  \\
& \qquad\qquad \qquad  -   \int_\Omega \bar \p^4 \rho_0^2 \ \int_0^T  \p_t(J^{-2} a^k_i )dt \  \bar \p^4 \eta^i,_k (T) dx \,,
\end{align*} 
so that by the Cauchy-Schwarz inequality and Young's inequality, 
\begin{align} 
\int_0^T \mathcal{R}_4(t)dt 
 \le M_0 +  \delta \sup_{t \in [0,T]} E(t) + C\,T\, P( \sup_{t \in [0,T]} E(t)) \,. \nonumber
\end{align} 

\vspace{.1 in}
\noindent
{\bf Analysis of  $\int_0^T \mathcal{R}_5(t) dt $.} The highest-order term  is 
\begin{align*} 
 \int_0^T \int_\Omega a^k_i ( \bar \p^3\rho_0^2 \ \bar \p J^{-2} ),_k \bar \p^4 v^i dxdt 
\end{align*} 
which can be estimated directly using the Cauchy-Schwarz inequality to yield
\begin{align} 
\int_0^T \mathcal{R}_5(t)dt 
 \le M_0 +  \delta \sup_{t \in [0,T]} E(t) + C\,T\, P( \sup_{t \in [0,T]} E(t)) \,. \nonumber
\end{align} 

\vspace{.1 in}
\noindent
{\bf Analysis of the integral $\int_0^T\mathcal{I} _0(t)dt$.}  Integrating $ \mathcal{I}_0$ from $0$ to $T$, we see that
$$
\int_0^T \mathcal{I} _0(t)dt = {\frac{1}{2}} \int_ \Omega \rho_0 | \bar \p^4 v(T)|^2 dx - \t M_0 \,.
$$

\vspace{.1 in}
\noindent
{\bf Analysis of the integral $\int_0^T \mathcal{I} _1(t)dt$.}  
To estimate ${{\mathcal I} _1}$, we first integrate by parts using (\ref{a3}), to obtain
$$
\mathcal{I} _1 =
  - \int_\Omega \bar \p^4 a^k_i \rho_0^2 J^{-2} \,  \bar \p^4 v^i,_k dx \,.
$$
We then use the formula (\ref{a1}) for horizontally differentiating the cofactor matrix:
$$
{\mathcal{I} _1} =
\int_\Omega { \rho_0}^2  J  ^{-3} \,
\bar\p ^4 \eta^r,_s \, [a^s_i a^k_r - a^s_r a^k_i] 
\  \bar\p  ^4  v^i,_k \, dx   + \mathcal{R}\,,
$$
where the remainder $\mathcal{R}$ satisfies (\ref{remainder}).
We decompose the highest-order term in ${ \mathcal{I} _1}$ as the sum of the following two integrals:
\begin{align*}
{{\mathcal I}_1}_a & =  \int_\Omega  { \rho_0}^2 J  ^{-3} \
(\bar\p ^4 \eta^r,_s a^s_i)(\bar\p  ^4   v^i,_k  a^k_r) dx,  \\
{{\mathcal I}_1}_b & = -\int_\Omega { \rho_0}^2 J  ^{-3} \
(\bar\p ^4 \eta^r,_s a^s_r) (\bar\p  ^4  v^i,_k  a^k_i) dx  \,.  
\end{align*}
Since $v= \eta _t$, ${\mathcal{I} _1}_a$ is an exact derivative modulo an antisymmetric
commutation with respect to the free indices $i$ and $r$; namely,
\begin{equation}\label{newss0}
\bar\p ^4 \eta ^r,_s a^s_i \bar\p ^4 v^i,_k a^k_r
=\bar\p ^4 \eta ^i,_s a^s_r \bar\p ^4 v^i,_k a^k_r
+(\bar\p ^4 \eta ^r,_s a^s_i - \bar\p ^4 \eta ^i,_s a^s_r)\bar\p ^4 v^i,_k a^k_r \,.
\end{equation} 
and
\begin{equation}\label{newss1}
\bar\p ^4 \eta ^i,_s a^s_r \bar\p ^4 v^i,_k a^k_r = {\frac{1}{2}} \frac{d}{dt} \bigl( \bar\p ^4 \eta^i,_r a^r_k \,  \bar\p ^4 \eta^i,_s a^s_k \bigr)
- {\frac{1}{2}} \bar\p ^4 \eta^r,_s\, \bar\p ^4\eta^i,_k  \ ( a^s_r a^k_i)_t \,,
\end{equation} 
so the first term on the right-hand side of (\ref{newss0}) produces an  exact  time derivative of a positive energy contribution.

For the second term on the right-hand side of (\ref{newss0}), note the identity
\begin{equation}\label{newss2}
(\bar\p ^4 \eta ^r,_s a^s_i - \bar\p ^4 \eta ^i,_s a^s_r)\bar\p ^4 v^i,_k a^k_r
= -J^2 \varepsilon _{ijk} \bar \p ^4 \eta ^k,_r A^r_j \, \varepsilon _{imn}\bar \p ^4 v^n,_s A^s_m\,.
\end{equation} 
We have used the permutation symbol $\varepsilon$ to encode the anti-symmetry in this relation, and the basic
fact that the trace of the product of  symmetric and antisymmetric matrices is equal to zero.

Recalling our notation
$[{\operatorname{curl}} _\eta F]^i = \varepsilon_{ijk} F^k,_r A^r_j$ for a vector-field $F$,
(\ref{newss2}) can be written as
\begin{equation}\label{newss3}
(\bar\p ^4 \eta ^r,_s a^s_i - \bar\p ^4 \eta ^i,_s a^s_r)\bar\p ^4 v^i,_k a^k_r
= -J^2 \operatorname{curl} _\eta \bar\p ^4 \eta  \cdot \operatorname{curl} _\eta \bar\p ^4 v\,,
\end{equation} 
which can also be written as an exact derivative in time:
\begin{equation}\label{newss4}
\operatorname{curl} _\eta \bar\p ^4 \eta  \cdot \operatorname{curl} _\eta \bar\p ^4 v
= {\frac{1}{2}} \frac{d}{dt} | \operatorname{curl} _\eta \bar\p ^4 \eta |^2 - 
\bar\p ^4 \eta ^k,_r \bar\p ^4 \eta ^k,_s (A^r_j A^s_j)_t + 
\bar\p ^4 \eta ^k,_r \bar \p ^4 \eta ^j,_s (A^r_j A^s_k)_t \,.
\end{equation} 
The terms in (\ref{newss1}) and (\ref{newss4}) which are not the exact time derivatives are quadratic
in $\rho_0\bar \p ^4 D \eta $ with coefficients in $L^\infty ( [0,T] \times \Omega )$ and can thus be absorbed into
 remainder integrals $ \mathcal{R} $ satisfying the inequality (\ref{remainder}).
Letting
$$
D_\eta \bar\p^4 \eta = \bar \p^4 D \eta \ A   \ \ \text{ (matrix multiplication of } \bar \p^4 D\eta \text{ with $A$)} \,,
$$
we have that
\begin{equation}\nonumber
{\mathcal{I} _1}_a = {\frac{1}{2}} \frac{d}{dt}  \int_\Omega { \rho_0}^2 J^{-1} | D_\eta \bar \p ^4 \eta|^2 dx
-
{\frac{1}{2}} \frac{d}{dt}  \int_\Omega { \rho_0}^2 \,   J^{-1} | \operatorname{curl} _\eta
\bar \p ^4\eta |^2 dx  + \mathcal{R} \,.
\end{equation} 
where, once again,  the remainder
 $ \mathcal{R} $ satisfies (\ref{remainder}).

With the notation  $\operatorname{div} _\eta F = A^j_i F^i,_j$,
the differentiation formula (\ref{J1}) shows that  $ {\mathcal{I} _1}_b$ can be written as
\begin{equation}\nonumber
{\mathcal{I} _1}_b = 
-{\frac{1}{2}} \frac{d}{dt}  \int_\Omega { \rho_0}^2\,  J^{-1} | \operatorname{div} _\eta
\bar \p ^4\eta |^2 dx + \mathcal{R} \,.
\end{equation} 
It follows that
\begin{align*} 
\mathcal{I} _1& = {\frac{1}{2}} \frac{d}{dt}  \int_\Omega { \rho_0}^2 \left( J^{-1} | D_\eta \bar \p ^4 \eta |^2 -  J^{-1} | \operatorname{curl} _\eta\bar \p ^4\eta |^2 
-J^{-1} | \operatorname{div} _\eta \bar\p ^4\eta |^2 \right) dx + \mathcal{R} \\
&= {\frac{1}{2}} \frac{d}{dt}  \int_\Omega { \rho_0}^2 \left(  | D \bar \p ^4 \eta |^2 -  J^{-1} | \operatorname{curl} _\eta\bar \p ^4\eta |^2 
-J^{-1} | \operatorname{div} _\eta \bar\p ^4\eta |^2 \right) dx + \mathcal{R}\,,
\end{align*} 
where we have used the fundamental theorem of calculus for the second equality on the term $D_\eta \bar\p^4 \eta$ as well
as the fact that $T_ \kappa $ was chosen sufficiently small so that ${\frac{1}{2}} < J(t) < {\frac{3}{2}} $; in particular, we 
write 
$$D_ \eta  \bar\p^4 \eta =  \bar\p^4 D \eta \ A = \bar\p^4 D \eta \ \text{Id} + \bar\p^4 D \eta \ \int_0^t A_t (s) ds \,.
$$
It is thus clear that $\int_\Omega \rho_0^2 J^{-1} |D_ \eta\bar\p^4 \eta|^2 dx $ differs from $\int_ \Omega \rho_0^2 |D \bar \p^4 \eta|^2 dx$ by  $\mathcal{R} $.  Hence,
\begin{align*} 
\int_0^T \mathcal{I} _1(t) ds 
&\ge {\frac{1}{2}} \int_\Omega { \rho_0}^2 \left( {\frac{2}{3}} | D \bar \p ^4 \eta(T) |^2 -  J^{-1} | \operatorname{curl} _\eta\bar \p ^4\eta(T) |^2 
-J^{-1} | \operatorname{div} _\eta \bar\p ^4\eta(T) |^2 \right) dx \\
& \qquad\qquad - \t M_0  + \int_0^T \mathcal{R}(t)dt\,.
\end{align*}

\vspace{.1 in}
\noindent
{\bf Analysis of the integral $\int_0^T \mathcal{I} _2(t) dt$.}
 $\bar \p^4 J^{-2} = - 2 J^{-3} \bar \p^4 J$ plus lower-order terms, which have  at most three horizontal derivatives acting on
$J$.  For such lower-order terms, we integrate by parts with respect to $\p_t$, and estimate the resulting integrals in
the same manner as we estimated the remainder term $ \mathcal{R}_1 $, and obtain the same bound.

Thus, 
\begin{align*} 
\mathcal{I} _2 &= 2 \int_\Omega \rho_0^2 J^{-3} a^r_s \bar \p^4 \eta^s,_r\ a^k_i   \bar \p^4 v^i,_k dx  + \mathcal{R}  \\
&=  {\frac{d}{dt}} \int_\Omega \rho_0^2 J^{-3} a^r_s \bar \p^4 \eta^s,_r\ a^k_i   \bar \p^4 \eta^i,_k dx  
-  \int_\Omega \rho_0^2 (J^{-3} a^r_sa^k_i )_t  \ \bar \p^4 \eta^s,_r \bar \p^4 \eta^i,_k dx + \mathcal{R}  \\
&=  {\frac{d}{dt}} \int_\Omega \rho_0^2 J^{-1} | \operatorname{div} _\eta \bar \p^4 \eta|^2 dx  
 + \mathcal{R} 
\end{align*} 
so that
\begin{align*} 
\int_0^T\mathcal{I} _2(t)dt 
&=   \int_\Omega \rho_0^2 J^{-1} | \operatorname{div} _\eta \bar \p^4 \eta(T)|^2 dx   - \t M_0
 + \int_0^T \mathcal{R} (t)dt \,.
\end{align*} 

\vspace{.1 in}
\noindent
{\bf Analysis of the integral $\int_0^T \mathcal{I} _3(t)dt$.}    This follows closely our analysis of the integral $ \mathcal{I} _1$.
We first integrate by parts, using (\ref{a3}), to obtain
$$
\mathcal{I} _3 =
  -  \kappa \int_\Omega  \bar \p^4 \p_t a^k_i \rho_0^2 J^{-2} \,  \bar \p^4 v^i,_k dx \,.
$$
We then use the formula (\ref{a2}) for horizontally differentiating the cofactor matrix:
$$
{\mathcal{I} _3} = \kappa 
\int_\Omega { \rho_0}^2  J  ^{-3} \,
\bar\p ^4 v^r,_s \, [a^s_i a^k_r - a^s_r a^k_i] 
\  \bar\p  ^4  v^i,_k \, dx   + \mathcal{R}\,,
$$
where the remainder $\mathcal{R}$ satisfies (\ref{remainder}).
We decompose the highest-order term in ${ \mathcal{I} _3}$ as the sum of the following two integrals:
\begin{align*}
{{\mathcal I}_3}_a & =   \kappa \int_\Omega  \rho_0^2 J  ^{-3} \
(\bar\p ^4 v^r,_s a^s_i)(\bar\p  ^4   v^i,_k  a^k_r) dx,  \\
{{\mathcal I}_3}_b & = - \kappa \int_\Omega { \rho_0}^2 J  ^{-1} \
| \operatorname{div} _ \eta \bar\p ^4 v|^2 dx  \,.  
\end{align*}
Since 
\begin{align*} 
{ \mathcal{I} _3}_a 
& = \kappa \int_\Omega  \rho_0^2  J^{-3}\left[ \bar\p ^4 v ^i,_s a^s_r \bar\p ^4 v^i,_k a^k_r
+(\bar\p ^4 v ^r,_s a^s_i - \bar\p ^4 v^i,_s a^s_r)\bar\p ^4 v^i,_k a^k_r \right] dx \\
& = \kappa \int_ \Omega \rho_0 J^{-1} \bar\p ^4 v ^i,_s A^s_r \bar\p ^4 v^i,_k A^k_r dx - \kappa \int_ \Omega 
\rho_0^2 J^{-1} | \operatorname{curl} _\eta \bar\p ^4 v |^2 dx \,,
\end{align*} 
and letting
$$
D_\eta \bar\p^4 v = \bar \p^4 Dv \ A   \ \ \text{ (matrix multiplication of } \bar \p^4 Dv \text{ with $A$)} \,,
$$
we thus have that
\begin{align*} 
\int_0^T\mathcal{I} _3(t) dt & =  \kappa \int_0^T \int_ \Omega \rho_0^2 J^{-1}  | D_ \eta  \bar\p^4 v  |^2 \, dx dt - \kappa \int_0^T\int_ \Omega 
\rho_0^2 J^{-1} | \operatorname{curl} _\eta \bar\p ^4 v |^2 dx dt
 \\
& \qquad \qquad
 - \kappa\int_0^T \int_\Omega { \rho_0}^2 J  ^{-1} \
| \operatorname{div} _ \eta \bar\p ^4 v|^2 dxdt  + \int_0^T\mathcal{R}dt  \,. 
\end{align*}

\vspace{.1 in}
\noindent
{\bf Analysis of the integral $\int_0^T \mathcal{I} _4(t)dt$.}  Integrating by parts, and 
using (\ref{J2}), 
\begin{align*} 
\int_0^T\mathcal{I} _4(t)dt &= 2  \kappa\int_0^T \int_\Omega \rho_0^2 J^{-3} a^r_s \bar \p^4 v^s,_r\ a^k_i   \bar \p^4 v^i,_k dxdt  + \int_0^T\mathcal{R}dt  \\
&=  2 \kappa \int_0^T \int_\Omega { \rho_0}^2 J  ^{-1} \
| \operatorname{div} _ \eta \bar\p ^4 v|^2 dx dt +\int_0^T \mathcal{R}dt \,.
\end{align*}

\vspace{.1 in}
\noindent
{\bf Analysis of the integral $\int_0^T \mathcal{I} _5(t)dt$.} Integrating by parts, and 
using the Cauchy-Schwarz inequality, we see that
\begin{align*} 
\int_0^T \mathcal{I} _5(t)dt  & =  - \kappa \int_0^T  \int_\Omega\rho_0^2 \p_t J^{-2}\,  \bar \p^4  a^k_i \,  \bar \p^4 v^i,_k dx dt \\
& \le C \sqrt{ \kappa } \sup_{t \in [0,T]} \| \p_t J^{-2} \|_{L^ \infty ( \Omega )} \| \rho_0 \bar \p^4 a^k_i \|_{L^2(0,T; L^2( \Omega ))}
\|\sqrt\kappa\rho_0 \bar \p^4 v^i,_k \|_{L^2(0,T; L^2( \Omega ))} \\
& \le C\, T\, P( \sup_{t \in [0,T]} \t E(t)) \,,
\end{align*}  
the last inequality following from the Sobolev embedding theorem and the $L^ \infty(0,T)$ control of  $ \| \rho_0 \bar \p^4 a^k_i (t)\|_0$.

\vspace{.1 in}
\noindent
{\bf Analysis of the integral $\int_0^T \mathcal{I} _6(t) dt$.} Estimating in the same fashion as for $ \mathcal{I} _5$ shows that
$$
\int_0^T \mathcal{I} _6(t)dt \le C\, T\, P( \sup_{t \in [0,T]} \t E(t)) \,.
$$

\vspace{.1 in}
\noindent
{\bf The sum $\sum_{a=0}^6 \int_0^T \mathcal{I} _a(t)dt$.}  
By considering the sum of all the integrals $\int_0^T \mathcal{I} _a(t)dt$ for $a=0,...,6$, we obtain the inequality
\begin{align*} 
&\sup_{t \in [0,T]} {\frac{1}{2}} \left(
 \int_\Omega \rho_0 | \bar \p^4 v(t)|^2dx +
 \int_\Omega \rho_0^2J^{-1}  | \bar \p^4 D\eta(t)|^2dx 
\right)
 + \kappa \int_0^T \int_\Omega \rho_0^2J^{-1}  | D_\eta\bar \p^4 v(t)|^2dx dt
 \\
& \qquad\qquad  \le \t M_0 +  \delta  \sup_{t \in [0,T]} \t E(t) + C\,T\, P( \sup_{t \in [0,T]}\t E(t))  \\
& \qquad\qquad\qquad +   \sup_{t \in [0,T]} \int_\Omega \rho _0^2J^{-1}  | \bar \p^4 \operatorname{curl} \eta(t)|^2  dx
+ \kappa   \int_0^T \int_\Omega \rho _0^2J^{-1}  | \operatorname{curl} _\eta \bar \p^4v(t)|^2  dxdt \,.
\end{align*} 
Using the fundamental theorem of calculus, 
\begin{align*} 
&
\kappa \int_0^T \int_\Omega \rho_0^2 J^{-1}  | D_\eta\bar \p^4 v(t)|^2dx dt 
=\kappa \int_0^T \int_\Omega \rho_0^2J^{-1}    \bar \p^4 v^i,_r A^r_k  \bar \p^4 v^i,_s A^s_k dx dt \\
& \qquad = \kappa \int_0^T \int_\Omega \rho_0^2    \bar \p^4 v^i,_k  \bar \p^4 v^i,_k \, dx dt  + 
\kappa \int_0^T \int_\Omega \rho_0^2 [J^{-1}A^r_k A^s_k - \delta ^r_k \delta ^s_k]    \bar \p^4 v^i,_r  \bar \p^4 v^i,_s dx dt \,,
\end{align*} 
where we are using the Kronecker Delta symbol $\delta ^r_k$ to denote the components of the identity matrix.   It thus follows
from (\ref{assump1}) that

\begin{align*} 
&\sup_{t \in [0,T]} {\frac{1}{2}}  \left(
 \int_\Omega \rho_0 | \bar \p^4 v(t)|^2dx +
 \int_\Omega \rho_0^2J^{-1}  | \bar \p^4 D\eta(t)|^2dx 
\right) 
+ \frac{\kappa}{2} \int_0^T \int_\Omega \rho_0^2 | D\bar \p^4 v(t)|^2dx dt
 \\
& \qquad\qquad  \le \t M_0 +  \delta  \sup_{t \in [0,T]} \t E(t) + C\,T\, P( \sup_{t \in [0,T]}\t E(t))  \\
& \qquad\qquad\qquad +   \sup_{t \in [0,T]} \int_\Omega \rho _0^2J^{-1}  | \bar \p^4 \operatorname{curl} \eta(t)|^2  dx
+ \kappa   \int_0^T \int_\Omega \rho _0^2J^{-1}  | \operatorname{curl} _\eta \bar \p^4v(t)|^2  dxdt \,.
\end{align*} 

The curl estimates (\ref{curl_estimate}) provide the bound for the last two integrals from which the desired result is obtained  and
 the proof of the proposition is completed.
\end{proof}

\begin{corollary}[Estimates for the trace of the tangential components of $\eta(t)$] \label{cor1_2d} For $ \alpha =1,2$, and
$\delta >0$,
$$ \sup_{t \in [0,T]}  |\eta^\alpha(t) |^2_{3.5}   \le\t  M_0 
+\delta  \sup_{t \in [0,T]} \t E+ C\, T\, P
( \sup_{t \in [0,T]} \t E(t)) \,.$$
\end{corollary}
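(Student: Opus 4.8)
The plan is to recover the tangential boundary regularity of $\eta(t)$ from the interior estimate $\|\rho_0 \bar\p^4 D\eta(t)\|_0^2 \le C$ established in Proposition \ref{prop1_2d}, together with the weighted embedding (\ref{w-embed}). First I would recall that $\rho_0$ satisfies the physical vacuum condition (\ref{degen}), so $\rho_0$ is comparable to $\dist(\cdot,\Gamma)$ near $\Gamma$; hence $\|\rho_0 F\|_0^2 \ge c \int_\Omega d(x)^2 |F(x)|^2\, dx$ near the boundary, and the interior regions are harmless since $\rho_0$ is bounded below there. Applying (\ref{w-embed}) with $p=2$ to $F = \bar\p^4 D\eta(t)$ gives
\begin{equation}\nonumber
\|\bar\p^4 D\eta(t)\|_0^2 \le C\int_\Omega d(x)^2\bigl(|\bar\p^4 D\eta(x,t)|^2 + |D\bar\p^4 D\eta(x,t)|^2\bigr)\,dx \,.
\end{equation}
However, this requires control of $\rho_0 D\bar\p^4 D\eta = \rho_0\bar\p^4 D^2\eta$, i.e.\ one more full spatial derivative than Proposition \ref{prop1_2d} directly provides. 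The cleaner route is to use $p=1$ instead: (\ref{w-embed}) with $p=1$ applied to $F=\bar\p^4 D\eta(t)$ yields $\|\bar\p^4 D\eta(t)\|_{1/2}^2 \le C\|\sqrt{\rho_0}\,\bar\p^4 D\eta(t)\|_{H^1_{d}}^2$, which still asks for $\rho_0^{1/2}\bar\p^4 D^2\eta$; so in fact I should interpolate. The intended argument is: $\eta \in H^{3.5}$ (from the $\bar\p$-independent assumption (\ref{assump2}) on $\|\eta\|_{3.5}$, or bootstrapped), while $\rho_0\bar\p^4 D\eta \in L^2$ is an $H^4$-type statement localized with the weight; interpolating the weighted $H^4$ bound against the unweighted $H^{3.5}$ bound, and using that the weight $\rho_0$ is lost in the trace, produces an $H^{3.5}(\Gamma)$ bound for the tangential components.

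More precisely, here is the step sequence I would carry out. \emph{Step 1:} By Proposition \ref{prop1_2d},
$\sup_{t\in[0,T]}\|\rho_0 \bar\p^4 D\eta(t)\|_0^2 \le \t M_0 + \delta\sup_{t\in[0,T]}\t E(t) + C\,T\,P(\sup_{t\in[0,T]}\t E(t))$.
\emph{Step 2:} Combine this with the bound $\sup_{t\in[0,T]}\|\eta(t)\|_4^2 \le \t E(t)$ (the fundamental term of (\ref{formal})/(\ref{Energy})) — more precisely, with the assumed a priori bound $\|\eta(t)\|_{3.5}^2 \le 2|e|_{3.5}^2+1$ from (\ref{assump2}) together with the $\bar\p^4$ interior estimates — to get, via the weighted embedding (\ref{w-embed}) with $p=1$ applied near $\Gamma$ and standard interior elliptic/embedding estimates away from $\Gamma$, a bound of the form
$\sup_{t\in[0,T]}\|\bar\p^{7/2}\eta(t)\|_0^2 \le \t M_0 + \delta\sup_{t\in[0,T]}\t E(t) + C\,T\,P(\sup_{t\in[0,T]}\t E(t))$,
where $\bar\p^{7/2}$ denotes horizontal derivatives of fractional order $3.5$. \emph{Step 3:} Apply the trace theorem for the top and bottom faces $\{x_3=0\},\{x_3=1\}$ of $\Omega = \mathbb{T}^2\times(0,1)$: since only horizontal (tangential) derivatives appear and $\eta$ restricted to $\Gamma$ loses half a derivative, one gets $|\eta^\alpha(t)|_{3.5}^2 \le C\|\eta^\alpha(t)\|_{H^4(\Omega)}^2$... but this again overshoots. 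Instead \emph{Step 3$'$:} observe that $|\eta^\alpha(t)|_{3.5}$ is controlled by $\|\bar\p^{7/2}\eta^\alpha(t)\|_0 + \|\eta^\alpha(t)\|_{H^{3.5}(\Omega)}$ using the trace/embedding $H^{3.5}(\Omega)\hookrightarrow H^3(\Gamma)$ for the full norm and the sharper tangential gain for the purely horizontal top-order piece; i.e.\ the top-order tangential trace $|\bar\p^{3}\eta^\alpha|_{1/2}$ comes exactly from the weighted bound in Step 2 by the trace inequality $|\bar\p^3 F|_{1/2} \le C\|\bar\p^3 F\|_{H^1(\Omega)}$ combined with the weighted-to-unweighted conversion.

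The main obstacle — and the point to be careful about — is the bookkeeping of exactly how many derivatives and how much weight one has: Proposition \ref{prop1_2d} controls $\rho_0\bar\p^4 D\eta$ (weighted, top order) and $\sqrt{\rho_0}\bar\p^4 v$, but \emph{not} $\bar\p^4 D\eta$ unweighted, so the $H^{3.5}(\Gamma)$ claim for $\eta^\alpha$ must genuinely use the $\tfrac12$-derivative gain from the weight $d^1$ in (\ref{w-embed}) — the weight $\rho_0\sim d$ converts the top-order weighted interior bound into an unweighted $H^{3.5}(\Omega)$ interior bound, and then the ordinary trace theorem (losing $\tfrac12$) would give only $H^3(\Gamma)$. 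The resolution is that for the \emph{tangential} trace one does not lose the full half-derivative on the purely horizontal top-order term: writing $\bar\p^4 D\eta$ and noting the derivative transverse to $\Gamma$ (the $D$) is the one weighted, the horizontal part $\bar\p^4\eta$ restricted to $\Gamma$ retains $H^{3.5}$ because one trades the weighted transverse derivative for the half-derivative trace loss via (\ref{w-embed}) exactly as in the statement $H^1_{d}(\Omega)\hookrightarrow H^{1/2}(\Omega)$. I would make this precise by localizing near $\Gamma$, flattening is unnecessary since $\Gamma$ is already flat, and directly estimating $\int_{\mathbb{T}^2}\|\bar\p^3\eta^\alpha(x_1,x_2,\cdot)\|_{H^{1/2}(0,1)}^2\,dx_1dx_2$ type quantities. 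All remaining pieces — the interior regions where $\rho_0$ is bounded below, and the lower-order horizontal derivatives — are immediate from (\ref{assump2}) and the Sobolev embedding, and contribute only to $\t M_0$ and $C\,T\,P(\sup\t E)$.
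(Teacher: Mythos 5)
There is a genuine gap: your argument never produces the half derivative at the boundary, because it is missing the two ingredients the proof actually turns on. First, the weighted embedding (\ref{w-embed}) should be applied with $p=2$ to $F=\bar\partial^4\eta$ itself, \emph{not} to $\bar\partial^4 D\eta$: this gives
$\|\bar\partial^4\eta(t)\|_0^2\le C\int_\Omega\rho_0^2\bigl(|\bar\partial^4\eta|^2+|\bar\partial^4 D\eta|^2\bigr)dx$,
and both terms on the right are already controlled — the second directly by Proposition \ref{prop1_2d}, and the first by writing $\bar\partial^4\eta=\int_0^t\bar\partial^4 v$ and using the $\|\sqrt{\rho_0}\,\bar\partial^4 v\|_0$ energy bound, which produces the crucial factor $T^2$. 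This completely avoids the ``one extra derivative'' problem you run into when you feed $\bar\partial^4 D\eta$ (or a $p=1$ variant) into (\ref{w-embed}), and it is the point at which your proposal stalls and retreats to an unspecified interpolation. Second, an unweighted $L^2(\Omega)$ bound on $\bar\partial^4\eta$ has, by itself, no trace on $\Gamma$; the half-derivative gain for the \emph{tangential} components comes from the tangential trace inequality (\ref{tangentialtrace}) applied to $w=\bar\partial^3\eta$, which requires $\operatorname{curl}\bar\partial^3\eta\in L^2(\Omega)$, i.e.\ the curl estimates of Proposition \ref{curl_est} (giving $\|\operatorname{curl}\eta\|_3^2\le\t M_0+C\,T\,P(\sup\t E)$). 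This yields $|\bar\partial^4\eta^\alpha|_{-0.5}$, hence $|\eta^\alpha|_{3.5}$, with the absorbable structure $\t M_0+\delta\sup\t E+C\,T\,P(\sup\t E)$, and it is also the reason the corollary is stated only for $\alpha=1,2$ (the normal component would require divergence control that is not available unweighted at top order). Your proposal never invokes the curl bounds or Lemma \ref{tangential_trace}, and nothing in your sketch distinguishes the tangential components, which is a sign the mechanism is missing.

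The concrete failures in your fallback steps are: (i) the claim that the trace ``retains $H^{3.5}$'' by trading the weighted transverse derivative for the trace loss is not an argument — $H^{1/2}(0,1)$ regularity in $x_3$ (which is all $H^1_{d}\hookrightarrow H^{1/2}$ could give) does not furnish boundary values at $x_3=0,1$; and (ii) the alternative estimate $|\bar\partial^3\eta^\alpha|_{1/2}\le C\|\bar\partial^3\eta^\alpha\|_{H^1(\Omega)}$ costs the full unweighted $\|\eta\|_4$, which is bounded only by $\sup_{t}\t E(t)$ with an $O(1)$ constant; that destroys the $\t M_0+\delta\sup\t E+C\,T\,P(\sup\t E)$ structure that makes the corollary useful, since the whole point is that the boundary norm must be absorbable in the subsequent a priori estimates.
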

\begin{proof} 
The weighted embedding estimate (\ref{w-embed}) shows that 
$$
\| \bar \p^4 \eta(t)\|^2_0 \le C  \int_\Omega \rho_0^2 \bigl( |\bar \p^4 \eta|^2 + |\bar\p^4D\eta|^2 \bigr) dx \,.
$$

Now 
$$\sup_{t \in [0,T]} \int_\Omega \rho_0^2 |\bar \p^4 \eta|^2 dx = \sup_{t \in [0,T]} \int_\Omega \rho_0^2 \left| \int_0^t \bar \p^4 v dt'\right|^2 dx \le T^2 \sup_{t \in [0,T]} \| \sqrt{\rho_0} \bar \p^4 v \|^2_0 \,.$$
It follows from Proposition \ref{prop1_2d} that
$$
\sup_{t \in [0,T]} \| \bar \p^4 \eta(t)\|^2_0 \le \t M_0 + C\,T\, P( \sup_{t \in [0,T]} \t E(t)) \,.
$$
According to our curl estimates (\ref{curl_estimate}), $ \sup_{t \in [0,T]} \| \operatorname{curl} \eta\|^2_3 \le\t  M_0 + C\,T\, P( \sup_{t \in [0,T]} 
\t E(t))$, from which it follows that
$$
\sup_{t \in [0,T]} \| \bar \p^4 \operatorname{curl} \eta(t)\|^2_{H^1(\Omega)'} \le \t M_0 + C\,T\, P( \sup_{t \in [0,T]} \t E(t)) \,,
$$
since $\bar \p$ is a horizontal derivative, and integration by parts with respect to $\bar \p$ does not produce any boundary contributions.
From the tangential trace inequality (\ref{tangentialtrace}), we find that
$$
\sup_{t \in [0,T]} | \bar \p^4 \eta^\alpha(t) |^2_{-0.5} \le \t M_0 + C\,T\,P( \sup_{t \in [0,T]} \t E(t)) \,,
$$
from which it follows that
$$
\sup_{t \in [0,T]} |  \eta^\alpha(t) |^2_{3.5} \le \t M_0 + C\,T\,P( \sup_{t \in [0,T]} \t E(t)) \,,
$$
\end{proof}

\subsection{The $\p_t^8$-problem}
\begin{proposition} \label{prop5_2d} For $ \delta >0$ and letting the constant $\t M_0$ depend on $1/ \delta $, 
\begin{align}
&\sup_{t \in [0,T]} \left(\| \sqrt{\rho_0} \p_t^8v(t)\|_0^2+ \| \rho_0 \p_t^7 Dv(t)\|_0^2 
+ \int_0^t \| \sqrt{ \kappa } \rho_0  D \p_t^8v(s) \|_0^2 ds\right) \nonumber \\
& \qquad\qquad\qquad\qquad\qquad\qquad\qquad
 \le \t M_0
+\delta  \sup_{t \in [0,T]} \t E(t) + C\, T\, P( \sup_{t \in [0,T]} \t E(t)) \,. \label{energy5_2d}
\end{align} 
\end{proposition}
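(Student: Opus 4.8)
The plan is to carry out, in time derivatives, exactly the energy argument of Proposition~\ref{prop1_2d}: apply $\partial_t^8$ to (\ref{approx.a}), take the $L^2(\Omega)$-inner product with $\partial_t^8 v^i$, and integrate over $[0,T]$. The highest-order terms organize themselves as in (\ref{cs1}): a kinetic term $\mathcal{I}_0=\tfrac12\tfrac{d}{dt}\int_\Omega\rho_0|\partial_t^8 v|^2\,dx$, two ``Euler'' terms $\mathcal{I}_1,\mathcal{I}_2$ coming from $\partial_t^8$ hitting the cofactor matrix $a^k_i$ and the Jacobian factor $J^{-2}$ respectively, four $\kappa$-terms $\mathcal{I}_3,\dots,\mathcal{I}_6$, and remainders $\mathcal{R}_a$ required to satisfy (\ref{remainder}). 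Two structural simplifications relative to Proposition~\ref{prop1_2d} are worth noting: since $\rho_0$ is time-independent there are no analogues of the delicate integrals in which derivatives land on $\rho_0^2$, and integration by parts in $\partial_t$ produces no boundary contributions on $\Gamma$.

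For $\mathcal{I}_1$ I would integrate by parts in $x_k$ using the Piola identity (\ref{a3}) (legitimate since $\partial_t^8 a^k_i,_k=\partial_t^8(a^k_i,_k)=0$), then iterate the time-differentiation formula (\ref{a2}) to extract the leading term $\partial_t^8 a^k_i = J^{-1}[a^s_r a^k_i-a^s_i a^k_r]\,\partial_t^8\eta^r,_s+(\text{lower-order in time})$. Writing $\partial_t^8 v^i,_k=\partial_t(\partial_t^8\eta^i,_k)$ and performing the symmetric/antisymmetric (curl) decomposition of (\ref{newss0})--(\ref{newss4}), $\mathcal{I}_1$ collapses, modulo remainders, to $\tfrac12\tfrac{d}{dt}\int_\Omega\rho_0^2\bigl(|D\partial_t^8\eta|^2-J^{-1}|\operatorname{curl}_\eta\partial_t^8\eta|^2-J^{-1}|\operatorname{div}_\eta\partial_t^8\eta|^2\bigr)\,dx$, using the fundamental theorem of calculus and (\ref{assump1}) to pass from $D_\eta\partial_t^8\eta$ to $D\partial_t^8\eta$. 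The term $\mathcal{I}_2$ yields $\tfrac{d}{dt}\int_\Omega\rho_0^2 J^{-1}|\operatorname{div}_\eta\partial_t^8\eta|^2\,dx$ modulo remainders, cancelling the divergence contribution with the correct sign, while the curl contribution $\int_\Omega\rho_0^2 J^{-1}|\operatorname{curl}_\eta\partial_t^8\eta|^2\,dx$ is absorbed via the $l=4$ case of the curl estimate (\ref{curl_estimate}) in Proposition~\ref{curl_est}. Note that $\|\rho_0 D\partial_t^8\eta\|_0^2=\|\rho_0\partial_t^7 Dv\|_0^2$ is precisely the quantity on the left of (\ref{energy5_2d}).

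The parabolic terms $\mathcal{I}_3,\mathcal{I}_4$ are treated as in Proposition~\ref{prop1_2d}: integrating by parts in $x_k$ and using (\ref{a2}) for $\partial_t^9 a^k_i=J^{-1}[a^s_r a^k_i-a^s_i a^k_r]\,\partial_t^8 v^r,_s+\dots$, together with the same curl/div split, $\mathcal{I}_3$ produces the good dissipation $\kappa\int_0^T\!\!\int_\Omega\rho_0^2 J^{-1}|D_\eta\partial_t^8 v|^2$ minus a curl piece and a divergence piece, and $\mathcal{I}_4$ supplies the matching $+\,\kappa\int_0^T\!\!\int_\Omega\rho_0^2 J^{-1}|\operatorname{div}_\eta\partial_t^8 v|^2$; the lower-derivative $\kappa$-terms $\mathcal{I}_5,\mathcal{I}_6$ are bounded directly by Cauchy--Schwarz against $\|\sqrt\kappa\,\rho_0 D\partial_t^8 v\|_{L^2(0,T;L^2)}$ using the $L^\infty$-in-time bounds (\ref{assump2}). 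Expanding $\kappa\rho_0^2 J^{-1}|D_\eta\partial_t^8 v|^2=\kappa\rho_0^2|D\partial_t^8 v|^2+\kappa\rho_0^2[J^{-1}A^r_k A^s_k-\delta^r_k\delta^s_k]\partial_t^8 v^i,_r\,\partial_t^8 v^i,_s$ and invoking the smallness bound in (\ref{assump1}) absorbs the perturbation and leaves the asserted $\tfrac{\kappa}{2}\int_0^t\|\rho_0 D\partial_t^8 v\|_0^2$; the term $\kappa\int_0^T\!\!\int_\Omega\rho_0^2 J^{-1}|\operatorname{curl}_\eta\partial_t^8 v|^2$ is again controlled by the $l=4$ case of (\ref{curl_estimate}).

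What remains is the verification that the remainder integrals $\mathcal{R}_1,\mathcal{R}_2,\mathcal{R}_3$ obey (\ref{remainder}); this is the lengthy but essentially mechanical part, handled by integrating by parts in time to move a time derivative off the highest-order factor onto an $L^\infty$-controlled coefficient, followed by H\"older's inequality, the Sobolev embeddings $H^{0.5}\hookrightarrow L^3$ and $H^1\hookrightarrow L^6$, and the a priori assumptions (\ref{assump2}). The one genuinely delicate point — and the step I expect to be the main obstacle — is that distributing $\partial_t^8$ over the cofactor/Jacobian structure and then integrating by parts in time creates odd-order time derivatives of $\eta$ (such as $\partial_t^7 v=\partial_t^8\eta$ paired against a quantity forcing an extra time-antiderivative, or $\partial_t^9\eta$) which do not appear directly in $\tilde E(t)$; these must be recovered through the interpolation inequality of Remark~\ref{rem_interp} — the bound (\ref{interp1}) and its higher-order analogues — which costs exactly a term $\delta\sup_{[0,T]}\tilde E(t)+C\,T\,P(\sup_{[0,T]}\tilde E(t))$ and is hence compatible with (\ref{energy5_2d}), provided one keeps careful track of the $\rho_0$-weights so that no term demands more regularity than $\tilde E$ supplies. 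Summing the identities for $\mathcal{I}_0,\dots,\mathcal{I}_6$ over $[0,T]$, absorbing the curl terms via Proposition~\ref{curl_est}, absorbing the $\delta\sup\tilde E$ contributions, and using that $T_\kappa$ is small then yields (\ref{energy5_2d}).
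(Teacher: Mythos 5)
Your proposal is correct and follows essentially the same route as the paper: the same decomposition into $\mathcal{I}_0,\dots,\mathcal{I}_6$ plus remainders, the Piola identity and (\ref{a2}) with the antisymmetric curl/div splitting for $\mathcal{I}_1$ and $\mathcal{I}_3$, cancellation of the divergence pieces against $\mathcal{I}_2$ and $\mathcal{I}_4$, absorption of the curl terms via the $l=4$ case of Proposition \ref{curl_est}, and remainder estimates by time integration by parts, H\"older, Sobolev embedding, (\ref{assump2}), and the interpolation of Remark \ref{rem_interp} (note $\partial_t^8\eta=\partial_t^7 v$, so your energy terms coincide with the paper's). No gaps.
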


\begin{proof}
   Letting $\p_t^8 $ act on (\ref{approx.a}), and taking the 
$L^2(\Omega)$-inner product of this with $\p_t^8 v^i$ yields
\begin{align} 
&\underbrace{ {\frac{1}{2}} {\frac{d}{dt}}  \int_\Omega \rho_0 |\p_t^8 v|^2 dx}_ { \mathcal{I} _0} 
+\underbrace{  \int_\Omega \p_t^8a^k_i (\rho_0^2 J^{-2} ),_k \p_t^8 v^i dx}_{ \mathcal{I} _1} 
+ \underbrace{ \int_\Omega a^k_i (\rho_0^2 \p_t^8 J^{-2} ),_k \p_t^8 v^i dx}_{ \mathcal{I} _2}  \nonumber \\
&
+ \underbrace{ \kappa  \int_\Omega \p_t^8 \p_t a^k_i (\rho_0^2 J^{-2} ),_k \p_t^8 v^i dx }_{ \mathcal{I} _3}
+ \underbrace{\kappa  \int_\Omega a^k_i (\rho_0^2 \p_t^8 \p_t J^{-2} ),_k \p_t^8 v^i dx }_{ \mathcal{I} _4} \nonumber \\
&
+\underbrace{ \kappa  \int_\Omega \p_t^8  a^k_i (\rho_0^2 \p_t J^{-2} ),_k \p_t^8 v^i dx }_{ \mathcal{I} _5}
+ \underbrace{ \kappa  \int_\Omega \p_t a^k_i (\rho_0^2 \p_t^8 J^{-2} ),_k \p_t^8 v^i dx }_{ \mathcal{I} _6} \nonumber \\
&= \underbrace{ \sum_{l=1}^7 c_l \int_\Omega  \p_t^{8-l} a^k_i \, (\rho_0^2 \p_t^l J^{-2} ),_k \, \p_t^8 v^i \, dx }_{ \mathcal{R} _1}
+ \underbrace{ \kappa  \sum_{l=1}^7 c_l \int_\Omega  \p_t^{8-l} \p_t a^k_i \, (\rho_0^2  \p_t^l J^{-2} ),_k \, \p_t^8 v^i \, dx }_{ \mathcal{R} _2} \nonumber \\
& \qquad 
+ \underbrace{ \kappa \sum_{l=1}^7 c_l \int_\Omega \p_t^{8-l} a^k_i \, (\rho_0^2  \p_t^l \p_t J^{-2} ),_k \, \p_t^8 v^i \, dx}_{ \mathcal{R} _3}
 \,. \label{cs101}
\end{align} 
The integrals $ \mathcal{I} _a$, $a=1,...,6$ denote the highest-order terms, while the integrals $ \mathcal{R} _a$, $a=1,2,3$ denote
lower-order remainder terms, which we will once again prove satisfy 
 \begin{equation}\label{remainder2}
\int_0^T  \mathcal{R}_a (t)dt \le \t M_0  + \delta \sup_{t \in [0,T]} \t E(t) + C\, T\, P ( \sup_{t \in [0,T]}\t  E(t)) \,.
\end{equation} 

The analysis of the lower-order remainder term $\mathcal{R}_1(t)$ differs slightly from the corresponding remainder term in
the $\bar \p^4$ energy estimates, so we proceed with the details of this analysis.

\vspace{.1 in}
\noindent
{\bf Analysis of  $\int_0^T \mathcal{R}_1(t) dt $.}  Using (\ref{a3}),
we integrate by parts with respect to $x_k$ and then with respect to the time derivative $ \p_t$ to obtain that
\begin{align*} 
\mathcal{R} _1&  = - \sum_{l=1}^7 c_l 
 \int_0^T \int_\Omega  \p_t^{8-l} a^k_i \, \rho_0^2 \p_t^l J^{-2}   \ \p_t^8 v^i,_k \ dxdt \\
& =  \sum_{l=1}^7 c_l  \int_0^T \int_\Omega \rho_0\left( \p_t^{8-l} a^k_i  \p_t^l J^{-2}\right)_t  \rho_0\p_t^7 v^i,_k dxdt
-        \sum_{l=1}^7 c_l  \int_\Omega \rho_0  \p_t^{8-l} {a}^k_i  \bar \p^l  J^{-2}  \rho_0  \p_t^7 v^i,_k dx \Bigr|_0^T \,.
\end{align*} 

Notice that when  $l=7$,  the integrand in the spacetime integral on the right-hand side scales like
$ \ell \   [ Dv_t \, \rho_0 \p_t^6Dv + Dv \, \rho_0 \p_t^7Dv ]  \, \rho_0 \p_t^7D v$ where $\ell$ denotes an $L^ \infty (\Omega)$
function. 
Since $\| \rho_0 \p_t^7 Dv (t)\|^2_0$ is contained in the energy function $\t E(t)$, $ D v_t(t)$ is bounded in $L^ \infty (\Omega)$,
and since we can write
$ \rho_0 \p_t^6 Dv(t) = \rho_0 \p_t^6 Dv(0) + \int_0^t \rho_0 \p_t^7 Dv(t')dt'$,
the first  and second summands are both estimated using an $L^ \infty$-$L^2$-$L^2$ H\"{o}lder's inequality.

The case $l=6$ is estimated exactly the same way as the case $l=3$ in the proof of Proposition \ref{prop1_2d}.   For the case
$l=5$, the integrand in the spacetime integral scales like
$\ell [ Dv_{tt} \rho_0 \p_t^6 J^{-2}  + Dv_{ttt}  \rho_0 Dv_{tttt}] \rho_0 \p_t^7 Dv$.  Both summands can be estimated using 
an $L^ 3$-$L^6$-$L^2$ H\"{o}lder's inequality.  The case $l=4$ is treated as the case $l=5$.  The case $l=3$ is also treated in
the same way as $l=5$.  The case $l=2$ is estimated exactly the same way as the case $l=1$ in the proof of Proposition \ref{prop1_2d}.
The case $l=1$ is treated in the same way as the case $l=7$.

To deal with the space integral on the right-hand side of the expression for $ \mathcal{R}_1 $, the integral at time $t=0$ is bounded by $\t M_0$, whereas the integral evaluated at $t=T$ is written, using the fundamental theorem of calculus, as
\begin{align*} 
\sum_{l=1}^7 c_l  \int_\Omega \rho_0  \p_t^{8-l} {a}^k_i  \p_t^l  J^{-2}  \rho_0 \p_t^7 v^i,_k dx\Bigr|_{t=T} & =
\sum_{l=1}^7 c_l \int_\Omega \rho_0  \p_t^{8-l} {a}^k_i(0)  \p_t^l  J^{-2} (0) \rho_0 \p_t^7 v^i,_k(T) dx\\
& +  \sum_{l=1}^7 c_l  \int_\Omega \rho_0  \int_0^T (\p_t^{8-l} {a}^k_i  \p_t^l  J^{-2})_t dt'\,  \rho_0  \p_t^7v^i,_k(T) dx.
\end{align*} 
The first integral on the right-hand side is estimated using Young's inequality, and is bounded by $\t M_0 + \delta \sup_{t \in [0,T]} \t E(t)$,
while the second integral 
can be estimated in the identical fashion as the corresponding spacetime integral.   As such, we have shown that $ \mathcal{R}_1 $
has the claimed bound (\ref{remainder2}).

\vspace{.1 in}
\noindent
{\bf Analysis of  $\int_0^T \mathcal{R}_2(t) dt $.}  Using (\ref{a3}), we integrate by parts, to find that
\begin{align*} 
\int_0^T \mathcal{R} _2(t) dt &= - \sum_{l=1}^7 c_l \int_0^T  \int_\Omega \kappa   \p_t^{8-l} a^k_i \, \rho_0^2  \p_t^l J^{-2}  \, \p_t^8 v^i,_k \, dx dt \\
&=  \sum_{l=1}^7 c_l  \sqrt{\kappa}\|  \p_t^{8-l} a^k_i \, \rho_0  \p_t^l J^{-2}\|_{L^2(0,T; L^2( \Omega ))}  \, \| \sqrt{ \kappa } \rho_0 \p_t^8 v^i,_k \|_{L^2(0,T; L^2( \Omega ))}  \\
& \le C\,T\, P( \sup_{t \in [0,T]} \t E(t)) \,,
\end{align*} 
the last inequality following from the fact that $\| \sqrt{ \kappa } \rho_0 \p_t^8 Dv \|^2_{L^2(0,T; L^2( \Omega ))}$ and
$\| \rho_0 \p_t^7 Dv \|^2_{L^\infty (0,T; L^2( \Omega ))}$ are both contained in the energy function.

\vspace{.1 in}
\noindent
{\bf Analysis of  $\int_0^T \mathcal{R}_3(t) dt $.}  This remainder integral is estimated in the same way as $\int_0^T \mathcal{R}_2(t) dt $.

\vspace{.1 in}
\noindent
{\bf Analysis of the integral $\int_0^T\mathcal{I} _0(t)dt$.}  Integrating $ \mathcal{I}_0$ from $0$ to $T$, we see that
$$
\int_0^T \mathcal{I} _0(t)dt = {\frac{1}{2}} \int_ \Omega \rho_0 | \p_t^8 v(T)|^2 dx - \t M_0 \,.
$$

\vspace{.1 in}
\noindent
{\bf Analysis of the integral $\int_0^T\mathcal{I} _1(t)dt$.} To estimate ${{\mathcal I} _1}$, we again integrate by parts using (\ref{a3}), to obtain
$$
\mathcal{I} _1 =
  - \int_\Omega \p_t^8 a^k_i \rho_0^2 J^{-2} \,  \p_t^8 v^i,_k dx \,.
$$
Using  the differentiation identity (\ref{a2}),
the same anti-symmetric commutation that we used for the $\bar \p^4$-differentiated problem can be employed once again to yield
\begin{align*}
\rho_0^{2 }( \p_t^7 v^r,_s A^s_i)
\  (\p_t^8  v^i,_k A^k_r)
& = 
\frac{1}{2}\frac{d}{dt} |{\rho_0 } D_\eta \p_t^7 v(t)|^2
-
\frac{1}{2}\frac{d}{dt} |{\rho_0 }  {\operatorname{curl}} _\eta \p_t^7 v(t)|^2 \\
&  \qquad \qquad + {\frac{1}{2}}  
{ \rho_0}^2\p_t^7 v^k,_r \p_t^7 v^b,_s
(A^r_j A^s_m)_t [ \delta^j_m\delta^k_b - \delta^j_b \delta ^k_m] \,,
\end{align*}
and
\begin{align*}
-\rho_0^2(\p_t^7  v^r,_s\, A^s_r) \ (\p_t^8  v^i,_k  A^k_i) 
& = -{\frac{1}{2}} {\frac{d}{dt}} 
|{\rho_0} {\operatorname{div}} _\eta \p_t ^7 v|^2  
 + {\frac{1}{2}} 
 \rho_0^{2 } \p_t^7  v^r,_s \p_t^7  v^i,_k
(A^s_r A^k_i)_t  \,.
\end{align*}
Hence,
\begin{align*} 
\mathcal{I} _1 = {\frac{1}{2}} \frac{d}{dt}  \int_\Omega { \rho_0}^2 \left(  |D \p_t^7
 v  |^2 -  J^{-1} | \operatorname{curl} _\eta \p_t^7 v|^2 
-J^{-1} | \operatorname{div} _\eta \p_t^7 v|^2 \right) dx + \mathcal{R} \,,
\end{align*} 
and
\begin{align} 
\int_0^T \mathcal{I} _1(t)dt & = {\frac{1}{2}}  \int_\Omega { \rho_0}^2 \left(  |D \p_t^7
 v(T)  |^2 -  J^{-1} | \operatorname{curl} _\eta \p_t^7 v(T)|^2 
-J^{-1} | \operatorname{div} _\eta \p_t^7 v(T)|^2 \right) dx  \nonumber \\
& \qquad\qquad - \t M_0 + \int_0^T \mathcal{R}(t) dt \,,   \label{ssadd101}
\end{align} 
where the remainder integral $ \mathcal{R} $ satisfies (\ref{remainder2}).

\vspace{.1 in}
\noindent
{\bf Analysis of the integral $\int_0^T \mathcal{I} _2(t) dt$.}
 $\p_t^8 J^{-2} = - 2 J^{-3} \p_t^8 J$ plus lower-order terms, which have  at most  seven time derivatives acting on
$J$.  For such lower-order terms, we integrate by parts with respect to $\p_t$, and estimate the resulting integrals in
the same manner as we estimated the remainder term $ \mathcal{R}_1 $, and obtain the same bound.

Thus, 
\begin{align*} 
\mathcal{I} _2 &= 2 \int_\Omega \rho_0^2 J^{-3} a^r_s \p_t^8  \eta^s,_r\ a^k_i  \p_t^8  v^i,_k dx  + \mathcal{R}  \\
&=  {\frac{d}{dt}} \int_\Omega \rho_0^2 J^{-3} a^r_s \p_t^7 v^s,_r\ a^k_i  \p_t^7 v^i,_k dx  
-  \int_\Omega \rho_0^2 (J^{-3} a^r_sa^k_i )_t  \ \p_t^7v^s,_r \p_t^7v^i,_k dx + \mathcal{R}  \\
&=  {\frac{d}{dt}} \int_\Omega \rho_0^2 J^{-1} | \operatorname{div} _\eta \p_t^7 v|^2 dx  
 + \mathcal{R} 
\end{align*} 
so that
\begin{align*} 
\int_0^T\mathcal{I} _2(t)dt 
&=   \int_\Omega \rho_0^2 J^{-1} | \operatorname{div} _\eta \p_t^7v(T)|^2 dx   - \t M_0
 + \int_0^T \mathcal{R} (t)dt \,.
\end{align*} 

\vspace{.1 in}
\noindent
{\bf Analysis of the integral $\int_0^T \mathcal{I} _3(t)dt$.}    This follows closely our analysis of the integral $ \mathcal{I} _1$.
We first integrate by parts, using (\ref{a3}), to obtain
$$
\mathcal{I} _3 =
  -  \kappa \int_\Omega \p_t^9  a^k_i \rho_0^2 J^{-2} \,  \p_t^8  v^i,_k dx \,.
$$
We then use the formula (\ref{a2}) for horizontally differentiating the cofactor matrix:
$$
{\mathcal{I} _3} = \kappa 
\int_\Omega { \rho_0}^2  J  ^{-3} \,
\p_t^8  v^r,_s \, [a^s_i a^k_r - a^s_r a^k_i] 
\ \p_t^8   v^i,_k \, dx   + \mathcal{R}\,,
$$
where the remainder $\mathcal{R}$ satisfies (\ref{remainder}).
We decompose the highest-order term in ${ \mathcal{I} _3}$ as the sum of the following two integrals:
\begin{align*}
{{\mathcal I}_3}_a & =   \kappa \int_\Omega  \rho_0^2 J  ^{-3} \
(\p_t^8  v^r,_s a^s_i)(\p_t^8   v^i,_k  a^k_r) dx,  \\
{{\mathcal I}_3}_b & = - \kappa \int_\Omega { \rho_0}^2 J  ^{-1} \
| \operatorname{div} _ \eta \p_t^8  v|^2 dx  \,.  
\end{align*}
Since 
\begin{align*} 
{ \mathcal{I} _3}_a 
& = \kappa \int_\Omega  \rho_0^2  J^{-3}\left[\p_t^8  v ^i,_s a^s_r \p_t^8  v^i,_k a^k_r
+(\p_t^8  v ^r,_s a^s_i - \p_t^8  v^i,_s a^s_r)\p_t^8  v^i,_k a^k_r \right] dx \\
& = \kappa \int_ \Omega \rho_0 J^{-1} \p_t^8  v ^i,_s A^s_r \p_t^8  v^i,_k A^k_r dx - \kappa \int_ \Omega 
\rho_0^2 J^{-1} | \operatorname{curl} _\eta \p_t^8  v |^2 dx \,,
\end{align*} 
and letting
$$
D_\eta \p_t^8  v = \p_t^8  Dv \ A   \ \ \text{ (matrix multiplication of }\p_t^8  Dv \text{ with $A$)} \,,
$$
we thus have that
\begin{align*} 
\int_0^T\mathcal{I} _3(t) dt & =  \kappa \int_0^T \int_ \Omega \rho_0 J^{-1}  | D_ \eta \p_t^8  v  |^2 \, dx dt - \kappa \int_0^T\int_ \Omega 
\rho_0^2 J^{-1} | \operatorname{curl} _\eta \p_t^8  v |^2 dx dt
 \\
& \qquad \qquad
 - \kappa\int_0^T \int_\Omega { \rho_0}^2 J  ^{-1} \
| \operatorname{div} _ \eta \p_t^8  v|^2 dxdt  + \int_0^T\mathcal{R}(t)dt  \,. 
\end{align*}

\vspace{.1 in}
\noindent
{\bf Analysis of the integral $\int_0^T \mathcal{I} _4(t)dt$.}  Integrating by parts, and 
using (\ref{J2}), 
\begin{align*} 
\int_0^T\mathcal{I} _4(t)dt &= 2  \kappa\int_0^T \int_\Omega \rho_0^2 J^{-3} a^r_s \p_t^8  v^s,_r\ a^k_i   \p_t^8  v^i,_k dxdt  + \int_0^T\mathcal{R}(t)dt  \\
&=  2 \kappa \int_0^T \int_\Omega { \rho_0}^2 J  ^{-1} \
| \operatorname{div} _ \eta \p_t^8  v|^2 dx dt +\int_0^T \mathcal{R}(t)dt \,.
\end{align*}

\vspace{.1 in}
\noindent
{\bf Analysis of the integral $\int_0^T \mathcal{I} _5(t)dt$.} Integrating by parts, and 
using the Cauchy-Schwarz inequality, we see that
\begin{align*} 
\int_0^T \mathcal{I} _5(t)dt  & =  - \kappa \int_0^T  \int_\Omega\rho_0^2 \p_t J^{-2}\,  \p_t^8   a^k_i \,  \p_t^8  v^i,_k dx dt \\
& \le C \sqrt{ \kappa } \sup_{t \in [0,T]} \| \p_t J^{-2} \|_{L^ \infty ( \Omega )} \| \rho_0 \p_t^8  a^k_i \|_{L^2(0,T; L^2( \Omega ))}
\|\sqrt\kappa\rho_0 \p_t^8  v^i,_k \|_{L^2(0,T; L^2( \Omega ))} \\
& \le C\, T\, P( \sup_{t \in [0,T]} \t E(t)) \,,
\end{align*}  
the last inequality following from the Sobolev embedding theorem and the $L^ \infty(0,T)$ control of  $ \| \rho_0\p_t^8 a^k_i (t)\|_0$.

\vspace{.1 in}
\noindent
{\bf Analysis of the integral $\int_0^T \mathcal{I} _6(t) dt$.} Estimating in the same fashion as for $ \mathcal{I} _5$ shows that
$$
\int_0^T \mathcal{I} _6(t)dt \le C\, T\, P( \sup_{t \in [0,T]} \t E(t)) \,.
$$

\vspace{.1 in}
\noindent
{\bf The sum $\sum_{a=0}^6 \int_0^T \mathcal{I} _a(t)dt$.}  
By considering the sum of all the integrals $\int_0^T \mathcal{I} _a(t)dt$ for $a=0,...,6$, we obtain the inequality
\begin{align*} 
&\sup_{t \in [0,T]} {\frac{1}{2}}  \left(
\int_\Omega \rho_0 | \p_t^8 v(t)|^2dx +
 \int_\Omega \rho_0^2J^{-1}  | \p_t^7 Dv(t)|^2dx\right)  
 + \kappa \int_0^T \int_\Omega \rho_0^2J^{-1}  | D_\eta \p_t^8 v(t)|^2dx dt
 \\
& \qquad\qquad  \le \t M_0 +  \delta  \sup_{t \in [0,T]} \t E(t) + C\,T\, P( \sup_{t \in [0,T]}\t E(t))  \\
& \qquad\qquad\qquad +   \sup_{t \in [0,T]} \int_\Omega \rho _0^2J^{-1}  | \p_t^7 \operatorname{curl} v(t)|^2  dx
+ \kappa   \int_0^T \int_\Omega \rho _0^2J^{-1}  | \operatorname{curl} _\eta \p_t^8 v(t)|^2  dxdt  \,.
\end{align*} 
Using the fundamental theorem of calculus, 
\begin{align*} 
&
\kappa \int_0^T \int_\Omega \rho_0^2 J^{-1}  | D_\eta\p_t^8 v(t)|^2dx dt 
=\kappa \int_0^T \int_\Omega \rho_0^2J^{-1}    \p_t^8 v^i,_r A^r_k  \p_t^8 v^i,_s A^s_k dx dt \\
& \qquad = \kappa \int_0^T \int_\Omega \rho_0^2    \p_t^8 v^i,_k  \p_t^8 v^i,_k \, dx dt  + 
\kappa \int_0^T \int_\Omega \rho_0^2 [J^{-1}A^r_k A^s_k - \delta ^r_k \delta ^s_k]    \p_t^8 v^i,_r  \p_t^8  v^i,_s dx dt \,,
\end{align*} 
where we are using the Kronecker Delta symbol $\delta ^r_k$ to denote the components of the identity matrix.   It thus follows
from (\ref{assump1}) that

\begin{align*} 
&\sup_{t \in [0,T]} {\frac{1}{2}}  \left(
\int_\Omega \rho_0 | \p_t^8 v(t)|^2dx +
 \int_\Omega \rho_0^2J^{-1}  | \p_t^7 Dv(t)|^2dx\right)  + {\frac{\kappa }{2}} \int_0^T \int_\Omega \rho_0^2 | D\p_t^8 v(t)|^2dx dt
 \\
& \qquad\qquad  \le \t M_0 +  \delta  \sup_{t \in [0,T]} \t E(t) + C\,T\, P( \sup_{t \in [0,T]}\t E(t))  \\
& \qquad\qquad\qquad +   \sup_{t \in [0,T]} \int_\Omega \rho _0^2J^{-1}  |\p_t^7 \operatorname{curl} v(t)|^2  dx
+  \kappa  \int_0^T \int_\Omega \rho _0^2J^{-1}  | \operatorname{curl} _\eta \p_t^8 v(t)|^2  dxdt \,.
\end{align*} 

The curl estimates (\ref{curl_estimate}) provide the bound for the last two integrals from which the desired result is obtained  and
 the proof of the proposition is completed.
\end{proof}

\begin{corollary}[Estimates for $\p_t^7 v(t)$] \label{cor2} 
$$ \sup_{t \in [0,T]}   \|\p_t^7 v(t) \|^2_0 
   \le\t  M_0 +  \delta  \sup_{t \in [0,T]} \t E +C\, T\, P
( \sup_{t \in [0,T]} \t E(t)) \,.$$
\end{corollary}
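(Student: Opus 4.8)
The goal is to bound $\sup_{t\in[0,T]}\|\p_t^7 v(t)\|_0^2$ purely in terms of the quantities already controlled by Proposition~\ref{prop5_2d}. The natural route mirrors the proof of Corollary~\ref{cor1_2d}: use the weighted embedding (\ref{w-embed}) with $p=2$, which gives, for the vector field $\p_t^7 v(t)$,
\begin{equation}\nonumber
\|\p_t^7 v(t)\|_0^2 \le C\int_\Omega \rho_0^2\bigl(|\p_t^7 v(t)|^2+|\p_t^7 Dv(t)|^2\bigr)\,dx\,.
\end{equation}
The second term on the right is $\|\rho_0\p_t^7 Dv(t)\|_0^2$, which is exactly one of the quantities estimated in (\ref{energy5_2d}). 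For the first term, I would write $\p_t^7 v(t)=\p_t^7 v(0)+\int_0^t \p_t^8 v(t')\,dt'$ and hence, by Jensen's (or Cauchy--Schwarz) inequality,
\begin{equation}\nonumber
\int_\Omega \rho_0^2|\p_t^7 v(t)|^2\,dx \le C\,\|\rho_0\p_t^7 v(0)\|_0^2 + C\,T^2\sup_{t\in[0,T]}\|\sqrt{\rho_0}\,\p_t^8 v(t)\|_0^2\,,
\end{equation}
using $\rho_0^2\le\rho_0\|\rho_0\|_{L^\infty(\Omega)}\le C\rho_0$ to convert the $\rho_0^2$-weight into the $\rho_0$-weight that appears in (\ref{energy5_2d}). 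The first term is $\le \t M_0$ since $\p_t^7 v(0)$ is a function of space-derivatives of the (smooth) initial data, and the second is bounded by $C\,T\,P(\sup_{t\in[0,T]}\t E(t))$ after invoking (\ref{energy5_2d}).

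Assembling these pieces, I get
\begin{equation}\nonumber
\sup_{t\in[0,T]}\|\p_t^7 v(t)\|_0^2 \le C\Bigl(\sup_{t\in[0,T]}\|\rho_0\p_t^7 Dv(t)\|_0^2 + \t M_0 + T\,P(\sup_{t\in[0,T]}\t E(t))\Bigr)\,,
\end{equation}
and then substituting the bound (\ref{energy5_2d}) for $\sup_{t\in[0,T]}\|\rho_0\p_t^7 Dv(t)\|_0^2$ yields
\begin{equation}\nonumber
\sup_{t\in[0,T]}\|\p_t^7 v(t)\|_0^2 \le \t M_0 + \delta\sup_{t\in[0,T]}\t E(t) + C\,T\,P(\sup_{t\in[0,T]}\t E(t))\,,
\end{equation}
which is the claimed estimate (after relabeling the $\delta$-dependent constant inside $\t M_0$, exactly as in the statement).

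\textbf{Main obstacle.} There is no deep obstacle here — the corollary is a routine consequence of Proposition~\ref{prop5_2d} via the weighted embedding, just as Corollary~\ref{cor1_2d} follows from Proposition~\ref{prop1_2d}. The only point requiring a little care is the bookkeeping with the weights: the embedding (\ref{w-embed}) with $p=2$ produces a $d^2$-weight (comparable to $\rho_0^2$ near $\Gamma$), while the energy function $\t E(t)$ controls $\|\rho_0\p_t^7 Dv\|_0^2$ with a single power of $\rho_0$ and $\|\sqrt{\rho_0}\,\p_t^8 v\|_0^2$ with only a half power; one must use boundedness of $\rho_0$ to downgrade $\rho_0^2$ to $\rho_0$ where needed, and absorb the extra factors of $T$ coming from the fundamental theorem of calculus into the $T\,P(\cdots)$ term. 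None of this is more than elementary, and the proof is short.
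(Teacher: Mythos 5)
Your proposal is correct and follows essentially the same route as the paper: the weighted embedding (\ref{w-embed}) with $p=2$, the fundamental theorem of calculus writing $\p_t^7 v(t)=\p_t^7 v(0)+\int_0^t\p_t^8 v$, the downgrade of the $\rho_0^2$-weight so that the terms $\|\sqrt{\rho_0}\,\p_t^8 v\|_0^2$ and $\|\rho_0\,\p_t^7 Dv\|_0^2$ from Proposition \ref{prop5_2d} apply, and absorption of the resulting $T^2$ factor into $C\,T\,P(\sup_{t\in[0,T]}\t E(t))$. No gaps.
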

\begin{proof} 
The weighted embedding estimate (\ref{w-embed}) shows that 
$$
\| \p_t^7 v(t)\|^2_0 \le C  \int_\Omega \rho_0^2 \bigl( | \p^7 v|^2 + |D \p_t^7 v|^2 \bigr) dx \,.
$$

Now 
$$\int_\Omega \rho_0^2 | \p_t^7 v|^2 dx \le M_0 + \int_\Omega \rho_0^2 \left| \int_0^t \p_t^8 v dt'\right|^2 dx \le M_0 +  T^2 \sup_{t \in [0,T]} \| \sqrt{\rho_0} \p_t^8 v \|^2_0 \,.$$
Thus,   Proposition \ref{prop5_2d} shows that 
$$ \sup_{t \in [0,T]}  \|\p_t^7 v(t) \|^2_0
  \le\t  M_0 +  \delta  \sup_{t \in [0,T]} \t E +C\, T\, P
( \sup_{t \in [0,T]} \t E(t)) \,.$$
\end{proof}

\subsection{The $\p_t^2 \bar \p^3$, $\p_t^4 \bar \p^2$, and $\p_t^6\bar \p$ problems}  Since we have provided detailed proofs
of the energy estimates for the two end-point cases of all space derivatives, the $\bar \p^4$ problem, and all time derivatives, the
$\p_t^8$ problem, we have covered all of the estimation strategies for all possible error terms in the three remaining
intermediate problems; meanwhile, the energy contributions for the three intermediate are found in the identical fashion as
for the $\bar \p^4$ and $\p_t^8$ problems.  As such we have the additional estimate
\begin{proposition} \label{additional_2d} For $ \delta >0$ and letting the constant $M_0$ depend on $1/ \delta $,  for $ \alpha =1,2$,
\begin{align*} 
& \sup_{t \in [0,T]} \sum_{a=1}^3 \Bigl[ | \p_t^{2a} \eta ^ \alpha (t)|_{3.5-a}^2 + \| \sqrt{\rho_0} \bar \p^{4-a} \ \p_t^{2a} v(t)\|_0^2
 + \| \rho_0 \bar \p^{4-a} \ \p_t^{2a} D\eta(t)\|^2_0  
  \\
 &\qquad
 + \kappa \int_0^t  \| \rho_0 \bar \p^{4-a} \ \p_t^{2a} Dv(t)\|^2_0 ds
 \Bigr] 
 \le \t M_0 + \delta  \sup_{t \in [0,T]} \t E(t) + C\,T\, P( \sup_{t \in [0,T]} \t E(t))  \,.
\end{align*} 
\end{proposition}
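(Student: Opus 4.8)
The plan is to treat the three intermediate problems $\partial_t^2 \bar\partial^3$, $\partial_t^4 \bar\partial^2$, and $\partial_t^6 \bar\partial$ in exactly the same way as the two endpoint problems handled in Propositions \ref{prop1_2d} and \ref{prop5_2d}. For each $a = 1,2,3$, I would apply the operator $\partial_t^{2a} \bar\partial^{4-a}$ to equation (\ref{approx.a}) and take the $L^2(\Omega)$-inner product with $\partial_t^{2a} \bar\partial^{4-a} v^i$. This produces the same seven principal integrals $\mathcal{I}_0, \dots, \mathcal{I}_6$ as in (\ref{cs1}) and (\ref{cs101}), together with a collection of lower-order remainder integrals $\mathcal{R}_\ell$ arising from the Leibniz-rule distribution of $\partial_t^{2a}\bar\partial^{4-a}$ onto $a^k_i$, onto $\rho_0^2 J^{-2}$ (and its derivatives), and from the $\bar\partial$-derivatives falling on $\rho_0^2$.

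The key observation is that no genuinely new estimation is required. The energy-producing terms in $\mathcal{I}_1$, $\mathcal{I}_2$, $\mathcal{I}_3$, $\mathcal{I}_4$ are extracted via the \emph{same} anti-symmetric commutation trick (using the curl identity (\ref{newss0})--(\ref{newss4}) for $\mathcal{I}_1$ and the determinant/divergence structure for $\mathcal{I}_2$), which yields exact time-derivatives of $\int_\Omega \rho_0^2\,|D\,\partial_t^{2a}\bar\partial^{4-a}\eta|^2\,dx$ (minus the curl and divergence contributions), plus $\kappa$-weighted positive terms $\kappa\int_0^t\int_\Omega \rho_0^2\,|D\,\partial_t^{2a}\bar\partial^{4-a}v|^2$; these are exactly the quantities appearing in $\t E(t)$. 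The curl and divergence contributions with the wrong sign are controlled by the curl estimates of Proposition \ref{curl_est} (the terms $\|\rho_0\,\bar\partial^{4-l}\operatorname{curl}\partial_t^{2l}\eta\|_0^2$ and $\int_0^t \|\sqrt{\kappa}\rho_0\operatorname{curl}_\eta\bar\partial^{4-l}\partial_t^{2l}v\|_0^2$ are precisely present in (\ref{curl_estimate})), together with the ellipticity assumption (\ref{assump1}) to pass from $D_\eta$ to $D$. The remainder integrals $\mathcal{I}_5$, $\mathcal{I}_6$ are handled by the Cauchy--Schwarz argument of the $\mathcal{I}_5$-analysis. For the lower-order remainders $\mathcal{R}_\ell$, each term — after integration by parts in $x_k$ using the Piola identity (\ref{a3}) and in $\partial_t$ — falls into one of the cases already worked out: the "top index" case ($l$ largest), estimated via $L^\infty$-$L^2$-$L^2$ Hölder with $\rho_0\partial_t^{2a-1}\bar\partial^{4-a}Dv$ controlled in $L^\infty_t L^2_x$ by the energy; the "middle index" cases, estimated via $L^3$-$L^6$-$L^2$ Hölder plus the Sobolev embeddings $H^{0.5}\hookrightarrow L^3$, $H^1\hookrightarrow L^6$ and the interpolation inequality (\ref{interp1}); and the "bottom index" case ($l=1$ or $l=2$), estimated exactly as in the proof of Proposition \ref{prop1_2d} including the extra integration by parts in $\bar\partial^\beta$ that produces (\ref{Rest_2d}). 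The $\kappa$-weighted remainders $\mathcal{R}_2$, $\mathcal{R}_3$ are bounded by $\sqrt\kappa$ times a product of two $L^2(0,T;L^2)$-norms, one of which is the $\kappa$-weighted dissipation term in $\t E$.

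The final step is to sum over $a = 1,2,3$ the resulting inequalities, invoke Proposition \ref{curl_est} to absorb the curl and divergence terms on the right-hand side, use the fundamental theorem of calculus ($\|a(t) - \operatorname{Id}\|_s \le \sqrt{T}\,P(\sup \t E)$) to convert $D_\eta$-quantities to $D$-quantities modulo terms of the form $C\,T\,P(\sup\t E)$, and absorb the $\kappa\int_0^t\int_\Omega \rho_0^2\,|\operatorname{curl}_\eta\partial_t^{2a}\bar\partial^{4-a}v|^2$ terms, again by Proposition \ref{curl_est}. Finally, passing from the trace $|\partial_t^{2a}\eta^\alpha|_{3.5-a}$ to the stated bound uses the tangential trace inequality (\ref{tangentialtrace}) and the weighted embedding (\ref{w-embed}), exactly as in Corollary \ref{cor1_2d}: one writes $\partial_t^{2a}\eta^\alpha = \int_0^t \partial_t^{2a}v^\alpha$ to gain a factor of $T$ on the $L^2$-part, controls the curl via Proposition \ref{curl_est}, and applies the trace theorem. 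I do not expect a genuine obstacle here — the statement is explicitly flagged in the paper as following "in the identical fashion" from the endpoint cases, and the only real content is the bookkeeping of which Hölder exponents and which integration-by-parts pattern applies to each of the $O(1)$ many remainder terms for each mixed-derivative count; the mildly delicate point is simply ensuring that for every intermediate value of $l$ one has the right quantity ($\rho_0\partial_t^{\le 7}Dv$ or $\rho_0\bar\partial^{\le 4}D\eta$ or an interpolated norm) sitting inside $\t E(t)$, which is the case by construction of the energy function in Definition \ref{defn_E}.
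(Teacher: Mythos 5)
Your proposal is correct and follows exactly the route the paper intends: the paper gives no separate proof of Proposition \ref{additional_2d}, stating only that the three intermediate problems are handled in the identical fashion as the $\bar\p^4$ and $\p_t^8$ endpoint cases, with the trace bounds obtained as in Corollary \ref{cor1_2d}. Your reconstruction of the energy identities, the anti-symmetric commutation, the use of Proposition \ref{curl_est}, and the weighted-embedding/tangential-trace argument matches that outline.
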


\subsection{Additional elliptic-type estimates for normal derivatives}
Our energy estimates provide  a priori control of horizontal and time derivatives of $\eta$; it remains to gain a priori control of the
normal (or vertical) derivatives of $\eta$.   This is accomplished via a bootstrapping procedure relying on having $\p_t^7 v(t)$ bounded in
$L^2(\Omega)$.

\begin{proposition} \label{pt5v_2d} For $t \in [0,T]$, $\p_t^5 v(t) \in H^1(\Omega)$, $ \rho_0 \p_t^6 J^{-2} (t) \in H^1(\Omega)$ and
$$
\sup_{t \in [0,T]} \left( \| \p_t^5 v(t)\|^2_1 +  \| \rho_0 \p_t^6 J^{-2} (t)\|^2_1 \right) \le 
\t M_0 + \delta   {\sup_{t\in[0,T]}} \t E(t) + C \, T\, P({\sup_{t\in[0,T]}} \t E(t)) \,.
$$
\end{proposition}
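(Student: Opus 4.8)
\medskip
The plan is to run the bootstrap indicated just before the statement, whose only new external input is Corollary~\ref{cor2}: it gives $\sup_{[0,T]}\|\p_t^7 v(t)\|_0^2\le\t M_0+\delta\sup_{[0,T]}\t E+C\,T\,P(\sup_{[0,T]}\t E)$. Writing $\p_t^6 v(t)=\p_t^6 v(0)+\int_0^t\p_t^7 v$ transfers this bound to $\|\p_t^6 v\|_{L^\infty(0,T;L^2(\Omega))}^2$, and since $\|\rho_0\p_t^8 D\eta\|_0^2$ is one of the terms of $\t E(t)$ while $\rho_0\p_t^7 D\eta(t)=\rho_0\p_t^7 D\eta(0)+\int_0^t\rho_0\p_t^8 D\eta$, one also controls $\|\rho_0\p_t^6 v\|_{L^\infty(0,T;H^1(\Omega))}^2$ by the same expression once $T$ is small. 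I would also record from the outset the exact identity $\rho_0\p_t J^{-2}=-2X$ with $X=\rho_0 J^{-2}\operatorname{div}_\eta v$ (Definition~\ref{defn_X} and~(\ref{ldivv})), so that $\rho_0\p_t^6 J^{-2}=-2\p_t^5 X$ vanishes on $\Gamma$; this is what makes the higher-order Hardy inequality (Lemma~\ref{Hardy}) available for this quantity.

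The two assertions are coupled and I would prove them jointly. For the weighted quantity: by the Piola identity~(\ref{a3}) the pressure term of~(\ref{approx.a}) equals $(\rho_0^2 J^{-2}a^k_i),_k$; differentiating~(\ref{approx.a}) the appropriate number of times in $t$ and reading the outcome in the form $f+\kappa f_t=g$, with $f$ the correspondingly time-differentiated pressure term and $g$ a constant multiple of $\rho_0$ times a time-derivative of $v$, Lemma~\ref{kelliptic} applies; using the forcing bounds of the first paragraph and the fact that the time-derivatives of $v$ at $t=0$ are explicit polynomial expressions in $u_0,\rho_0$ (cf.\ \S\ref{subsec_X}, so $\|f(0)\|$ is dominated by $\t M_0$), this bounds the time-differentiated pressure term in $L^2(\Omega)$ uniformly on $[0,T]$ by the stated right-hand side. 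Expanding $(\rho_0^2 J^{-2}a^k_i),_k=2\rho_0\rho_0,_k a^k_i J^{-2}+\rho_0^2 a^k_i(J^{-2}),_k$ with $(J^{-2}),_k=-2J^{-3}a^s_r\eta^r,_{sk}$, discarding all summands carrying fewer than the maximal number of time-derivatives on $v$ (handled directly by the a priori bounds~(\ref{assump2}) and the fundamental theorem of calculus in $t$, producing remainders of the desired form), and using that $a^3_i a^3_r$ is uniformly elliptic by~(\ref{assump1}), one solves the genuinely second-order piece for the $\rho_0^2$-weighted normal second derivatives of the relevant time-derivative of $\eta$, equivalently for $\rho_0^2\p_t^6 J^{-2}$ modulo controlled terms; Hardy's inequality (equivalently~(\ref{w-embed})) then removes one power of $\rho_0$. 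Simultaneously, $\p_t^5 v$ is recovered from the Hodge-type estimate of Proposition~\ref{prop1} applied to $F=\p_t^5 v$ with $s=1$: $\|\p_t^5 v\|_1$ is controlled by $\|\p_t^5 v\|_0$ (via $\int_0^t\p_t^6 v$), by $\|\operatorname{curl}\p_t^5 v\|_0$ (exactly Step~3 of the proof of Proposition~\ref{curl_est}), by the normal trace $|\bar\p\p_t^5 v\cdot N|_{-1/2}$ (obtained from the boundary identity~(\ref{bidentity}), which after enough time-differentiations expresses $\p_t^5 v^3$ on $\Gamma$ through $\rho_0,_3$ --- bounded away from $0$ on $\Gamma$ by the physical vacuum condition --- and time-derivatives of $a^3_3=(\eta,_1\times\eta,_2)\cdot e_3$, whose traces are governed by Corollary~\ref{cor1_2d}, Proposition~\ref{additional_2d} and the interpolation inequality of Remark~\ref{rem_interp}), and by $\|\operatorname{div}\p_t^5 v\|_0$, where one writes $\operatorname{div} v=\operatorname{div}_\eta v+(\delta^j_i-A^j_i)v^i,_j$ and $\operatorname{div}_\eta v=\rho_0^{-1}XJ^2$: the top-order summand of $\p_t^5[(\delta^j_i-A^j_i)v^i,_j]$ carries the $O(t)$-smallness of $\delta^j_i-A^j_i$ and the rest is lower order, while $\rho_0^{-1}\p_t^5(XJ^2)$ lies in $L^2(\Omega)$ precisely because $\p_t^5 X=-\frac12\rho_0\p_t^6 J^{-2}$ has been placed in $H^1(\Omega)\cap\h$, so Lemma~\ref{Hardy} absorbs the factor $\rho_0^{-1}$. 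Assembling the pieces one obtains a single inequality for $\|\p_t^5 v\|_1^2+\|\rho_0\p_t^6 J^{-2}\|_1^2$ whose right-hand side is $\t M_0+\delta\sup\t E+C\,T\,P(\sup\t E)$ plus a term proportional to $(\delta+C\,T)$ times that same combination, which is absorbed for $\delta$ and $T$ small.

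The main obstacle is exactly this degenerate divergence: unlike the curl and the normal trace, $\operatorname{div}\p_t^5 v$ inherits the singular factor $\rho_0^{-1}$ through $\operatorname{div}_\eta v=\rho_0^{-1}XJ^2$, so the $\p_t^5 v$-estimate cannot close in isolation --- it requires the $H^1$-bound for $\rho_0\p_t^6 J^{-2}=-2\p_t^5 X$, which in turn is extracted from the momentum equation via Lemma~\ref{kelliptic} and the degenerate elliptic structure --- which is why the two quantities must be handled together and why the Hardy inequality is indispensable here. The remaining work is bookkeeping: making every lower-order and commutator term land in the sharp form $\t M_0+\delta\sup\t E+C\,T\,P(\sup\t E)$ rather than merely $O(1)$ forces the systematic trading of a time-derivative via the fundamental theorem of calculus in $t$ together with Remark~\ref{rem_interp}, which is routine but lengthy.
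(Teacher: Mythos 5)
Your skeleton (Corollary~\ref{cor2} $\Rightarrow$ Lemma~\ref{kelliptic} on the time-differentiated momentum equation, then recover divergence, curl and a boundary trace and conclude with Proposition~\ref{prop1}) is the paper's skeleton, but the step that actually carries the proposition is missing. Because you differentiate the form (\ref{approx.a}) with pressure term $(\rho_0^2 J^{-2}a^k_i),_k$, Lemma~\ref{kelliptic} only controls $\rho_0^2 a^3_i\,\p_t^6 J^{-2},_3+2\rho_0\rho_0,_3a^3_i\,\p_t^6 J^{-2}$ in $L^2$; since $\rho_0\p_t^6 J^{-2}\in L^2$ is already known (FTC from the energy), "solving for the second-order piece'' yields only $\|\rho_0^2\,\p_t^6 J^{-2},_3\|_0$, which is one power of $\rho_0$ too weak. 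Neither Lemma~\ref{Hardy} nor (\ref{w-embed}) can remove that power: (\ref{w-embed}) with $p=2$ requires $\int\rho_0^2(|F|^2+|DF|^2)\,dx$, i.e.\ \emph{one} power of $\rho_0$ on $DF$, and Lemma~\ref{Hardy} would need $H^2$-regularity of a function vanishing on $\Gamma$. The paper instead differentiates (\ref{approx.a}') (the equation divided by $\rho_0$), so the controlled combination is $\rho_0a^3_i\,\p_t^6J^{-2},_3+2\rho_0,_3a^3_i\,\p_t^6J^{-2}$ — but there one cannot "solve'' for either summand, since the zeroth-order one is essentially the unweighted $\p_t^6J^{-2}$ near $\Gamma$ and is not separately known. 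The crux of the paper's proof, absent from your proposal, is to expand the square of this combination, integrate the cross term by parts in $x_3$ to produce a good sign, and then invoke the physical vacuum condition ($|\rho_0,_3|\ge\theta_1$ near $\Gamma$, $\rho_0\ge\theta_2$ in the interior) to convert $\|\rho_0,_3\,\p_t^6J^{-2}\|_0$ into the \emph{unweighted} $\|\p_t^6J^{-2}\|_0$, simultaneously with $\|\rho_0\,\p_t^6J^{-2},_3\|_0$. Without that unweighted $L^2$ bound you can close neither $\|\rho_0\p_t^6J^{-2}\|_1$ nor $\|\operatorname{div}\p_t^5v\|_0$; your Hardy route for the divergence via $\operatorname{div}_\eta v=\rho_0^{-1}XJ^2$ presupposes exactly the $H^1$ bound on $\rho_0\p_t^6J^{-2}=-2\p_t^5X$ that is not yet established, so the coupling you acknowledge is not resolved by the tools you list.

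A secondary weak point is the trace input for Proposition~\ref{prop1}: you propose the normal trace $|\bar\p\,\p_t^5v\cdot N|_{-1/2}$ via (\ref{bidentity}), but four or five time derivatives of that identity bring in $\kappa\,\p_t^5[J^{-2}a^3_3]$ on $\Gamma$, a top-order boundary term whose $\kappa$-uniform control is not supplied by Corollary~\ref{cor1_2d}, Proposition~\ref{additional_2d} or Remark~\ref{rem_interp}. The paper avoids this by using the tangential-component version of (\ref{hodge}), with $|\p_t^5v^\alpha|_{0.5}$ coming directly from Proposition~\ref{additional_2d} (the case $a=3$); also, once the unweighted bound on $\p_t^6J^{-2}$ is in hand, $\operatorname{div}\p_t^5v$ follows from $J_t=a^j_iv^i,_j$ with no Hardy inequality at all. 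I recommend reorganizing the proof around the divided equation and the expand-and-integrate-by-parts argument with the physical vacuum condition.
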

\begin{proof}
We begin by taking six time-derivatives of (\ref{approx.a}') to obtain
\begin{align} 
2 \kappa  \p_t^7 \bigl[A^k_i (\rho_0 J^{-1} ),_k\bigr] + 2 \p_t^6 \bigl[A^k_i (\rho_0 J^{-1} ),_k \bigr]= - \p_t^7 v^i \,. \nonumber
\end{align} 
According to Lemma \ref{kelliptic},  and the bound on $\| \p_t^7v(t)\|_0^2 $ given by Corollary \ref{cor2}, 
\begin{equation}\label{tt1}
\sup_{t \in [0,T]} \left\| \p_t^6 \bigl[2A^k_i (\rho_0 J^{-1} ),_k \bigr]\right\|_0^2 \le \t M_0
+ \delta   {\sup_{t\in[0,T]}} \t E(t) + C \, T\, P({\sup_{t\in[0,T]}} \t E(t)) \,.
\end{equation}

For $ \beta =1,2$,
\begin{align} 
2A^k_i (\rho_0 J^{-1} ),_k &= \rho_0 a^k_i J ^{-2},_k + 2 \rho_0,_k a^k_i J^{-2} \nonumber \\
&= \rho_0 a^3_i J ^{-2},_3 + 2 \rho_0,_3 a^3_i J^{-2}  + \rho_0 a^\beta _i J ^{-2},_\beta + 2 \rho_0,_\beta a^\beta _i J^{-2}
\label{eq400_2d}
\end{align} 
  Letting $\p_t^6$ act on equation (\ref{eq400_2d}), we have that
\begin{align*} 
\rho_0a^3_i \p_t^6 J^{-2} ,_3 & + 2 \rho_0,_3 a^3_i \p_t^6 J^{-2}  
= \underbrace{2 \p_t^6 \bigl[A^k_i (\rho_0 J^{-1} ),_k \bigr]}_{ \mathcal{J} _1}- \underbrace{\rho_0\p_t^6 (a^\beta_iJ^{-2} ,_\beta  )}_{ \mathcal{J} _2} -\underbrace{2 \rho_0,_\beta\p_t^6( a^\beta _i J^{-2})}_{ \mathcal{J} _3} \\
& - \underbrace{(\p_t^6 a^3_i)[ \rho_0 J^{-2} ,_3 + 2\rho_0,_3J^{-2} ] }_{ \mathcal{J} _4}
  + \underbrace{ \sum_{a=1}^5 c_a \p_t^a a^3_i \p_t^{6-a} [ \rho_0 J^{-2} ,_3 + 2\rho_0,_3J^{-2} ]  }_{ \mathcal{J} _5} \,.
\end{align*} 

\vspace{.05 in}
\noindent
{\it Bounds for $ \mathcal{J} _1(t)$.}
The inequality (\ref{tt1}) establishes the $L^2( \Omega )$ bound for $ \mathcal{J} _1(t)$. 

\vspace{.05 in}
\noindent
{\it Bounds for $ \mathcal{J} _2(t)$.}
According to Proposition \ref{additional_2d}, 
\begin{equation}\label{tt2}
 \sup_{t \in [0,T]} \left( \| \sqrt{\rho_0} \p_t^6 v(t)\|_0^2 +  \|\rho_0 \bar \p D \p_t^5 v(t)\|^2_0\right) \le \t M_0 + \delta  \sup_{t \in [0,T]} \t E(t) + C\,T\, P( \sup_{t \in [0,T]} 
\t E(t)) \,,
\end{equation} 
so that with (\ref{assump2}), we see that the differentiation identity (\ref{J2}) shows that
$$
 \sup_{t \in [0,T]}  \| \mathcal{J} _2(t)\|^2_0 \le \t M_0 + \delta  \sup_{t \in [0,T]} \t E(t) + C\,T\, P( \sup_{t \in [0,T]} 
\t E(t)) \,.
$$
 
\vspace{.05 in}
\noindent
{\it Bounds for $ \mathcal{J} _3(t)$.}
The  estimate for $\mathcal{J} _3(t)$ follows from the inequality for $\beta=1,2$
\begin{align*} 
\left\| \frac{ \rho_0,_ \beta }{\rho_0}\right\|_{L^\infty ( \Omega )} \le C\left\| \frac{ \rho_0,_ \beta }{\rho_0}\right\|_2 \le C \|\rho_0,_ \beta\|_3 \,,
\end{align*} 
where the first inequality follows from the Sobolev embedding theorem, and the second from our higher-order Hardy
inequality Lemma \ref{Hardy} since $\rho_0,_ \beta \in H^3( \Omega ) \cap \h$ for $\beta=1,2$.

Thus, 
\begin{align} 
\| 2 \rho_0,_\beta\p_t^6( a^\beta _i J^{-2})\|_0^2 & = \| 2 \rho_0\p_t^6( a^\beta _i J^{-2}) \   \frac{ \rho_0,_ \beta }{\rho_0} \|_0^2 \nonumber\\
 & \le \| 2 \rho_0\p_t^6( a^\beta _i J^{-2})\|_0^2  \  \left\| \frac{ \rho_0,_ \beta }{\rho_0}\right\|_{L^\infty ( \Omega )}^2 \nonumber \\
 & \le   \t M_0 + \delta  \sup_{t \in [0,T]} \t E(t) + C\,T\, P( \sup_{t \in [0,T]} 
\t E(t)) \label{ttt1000}
\end{align} 
thanks to (\ref{tt2}) and (\ref{assump2}), and the fact that $\|\rho_0\|_4$ is bounded by assumption,
from which it follows that
$$
 \sup_{t \in [0,T]}  \| \mathcal{J} _3(t)\|^2_0 \le \t M_0 + \delta  \sup_{t \in [0,T]} \t E(t) + C\,T\, P( \sup_{t \in [0,T]} 
\t E(t)) \,.
$$

\vspace{.05 in}
\noindent
{\it Bounds for $ \mathcal{J} _4(t)$.}
The identity (\ref{a3i}) shows that $a^3_i $ is quadratic in $\bar \p\eta$, and in particular, only depends on horizontal
derivatives.    From the estimate (\ref{tt2}) and the weighted embedding (\ref{w-embed}), we may infer that 
$$
 \sup_{t \in [0,T]}  \| \bar \p \p_t^5 v(t) \|^2_0 \le \t M_0 + \delta  \sup_{t \in [0,T]} \t E(t) + C\,T\, P( \sup_{t \in [0,T]} 
\t E(t)) \,.
$$
Thus
$$
 \sup_{t \in [0,T]}  \| \mathcal{J} _4(t)\|^2_0 \le \t M_0 + \delta  \sup_{t \in [0,T]} \t E(t) + C\,T\, P( \sup_{t \in [0,T]} 
\t E(t)) \,.
$$

\vspace{.05 in}
\noindent
{\it Bounds for $ \mathcal{J} _5(t)$.}  Each summand in $ \mathcal{J} _5(t)$ is a lower-order term, such that the time-derivative
of each summand is controlled by the energy function $\t E(t)$; as such, the fundamental theorem of calculus shows that
$$
 \sup_{t \in [0,T]}  \| \mathcal{J} _5(t)\|^2_0 \le \t M_0 + \delta  \sup_{t \in [0,T]} \t E(t) + C\,T\, P( \sup_{t \in [0,T]} 
\t E(t)) \,.
$$

We have therefore shown that
for all $t \in [0,T]$,
$$
 \left\|  \rho_0 a^3_i \p_t^6 J^{-2} ,_3 +2\rho_0,_3 a^3_i \p_t^6J^{-2}  
\right\|^2_0 \le  \t M_0 + \delta  \sup_{t \in [0,T]} \t E(t) + C\,T\, P( \sup_{t \in [0,T]} 
\t E(t)) \,,
$$
and our objective is to infer that  the $L^2( \Omega )$-norm of each summand on the right-hand side is uniformly bounded on $[0,T]$.

To this end, we expand the $L^2( \Omega )$-norm to obtain the inequality
\begin{align} 
& \|  \rho_0 |a^3_ \cdot| \p_t^6 J^{-2} ,_3 (t) \|^2_0
 + 4\| |a^3_ \cdot\rho_0,_3| \, \p_t^6 J^{-2}  (t)\|^2_0  + 4 \int_ \Omega  \rho_0 \rho_0,_3 | a^3_ \cdot |^2 \p_t^6 J^{-2}  \p_t^6J^{-2} ,_3 \, dx 
\nonumber \\
& \qquad \qquad \qquad
\le
\t M_0 + \delta  \sup_{t \in [0,T]} \t E(t) + C\,T\, P( \sup_{t \in [0,T]} \t E(t)) \,. \label{tt3}
\end{align} 
For each $ \kappa >0$,  solutions to our degenerate parabolic approximation (\ref{approx}) have sufficient regularity to ensure that  $\rho_0 |\p_t^6 J^{-2}|^2,_3$ is  integrable.  As such,
we integrate-by-parts with respect to  $x_3$ to find that
\begin{align} 
&4 \int_\Omega \rho_0 \rho_0,_3 | a^3_ \cdot  |^2 \p_t^6J^{-2}  \p_t^6 J^{-2} ,_3 \, dx \nonumber \\
& \qquad\qquad\qquad
=
-2  \left\|  |a^3_ \cdot \rho_0,_3 |\, \p_t^6 J^{-2} (t)\right\|^2_0
-2 \int_\Omega \rho_0 (\rho_0,_3 | a^3_ \cdot |^2 ),_3 (\p_t^6 J^{-2}  )^2 \, dx  \,. \label{tt4}
\end{align} 
Substitution of (\ref{tt4}) into (\ref{tt3}) yields
\begin{align} 
& \| \rho_0|a^3_ \cdot | \p_t^6 J^{-2},_3 (t)\|^2_0 + 2 \| \, |a^3_ \cdot \rho_0,_3 |\,\p_t^6 J^{-2} (t)\|^2_0 \nonumber
\\
& \qquad \qquad \qquad
\le
\t M_0 + \delta   {\sup_{t\in[0,T]}}\t E(t) + C \, T\, P({\sup_{t\in[0,T]}} \t E(t)) 
+C \int_{\Omega} \rho_0 |\p_t^6 J^{-2} |^2 \, dx \,. \label{tt5}
\end{align} 
Using (\ref{assump1}), we see that  $|a^3_ \cdot |^2 $ has a strictly positive lower-bound.  By the physical vacuum condition (\ref{degen}), for $ \epsilon >0$ taken sufficiently small, there are  constants $\theta_1,\theta_2 >0$ such that  $|\rho_0,_3(x)| \ge \theta_1$ whenever $1- \epsilon  \le x_3 \le 1$, and $\rho_0 (x) > \theta_2$ whenever $0\le x\le 1- \epsilon $; hence, by
readjusting the constants  
on the right-hand side of (\ref{tt5}), we
find that
\begin{align} 
& \| \rho_0 \p_t^6 J^{-2},_3 (t)\|^2_0 + 2 \| \p_t^6 J^{-2} (t)\|^2_0 \nonumber
\\
& \qquad \qquad \qquad
\le
\t M_0 + \delta   {\sup_{t\in[0,T]}} \t E(t) + C \, T\, P({\sup_{t\in[0,T]}} \t E(t)) 
+C \int_{\Omega} \rho_0 |\p_t^6 J^{-2} |^2 \, dx \,. \label{tt6}
\end{align} 

By Proposition \ref{additional_2d}, for $ \beta =1,2,$ 
$$ \sup_{t \in [0,T]} \|\rho_0  \p_t^6 J^{-2},_ \beta  (t)\| \le
\t M_0 + \delta   {\sup_{t\in[0,T]}} \t E(t) + C \, T\, P({\sup_{t\in[0,T]}} \t E(t)) \,,
$$
and by the fundamental theorem of calculus and Proposition \ref{prop5_2d},
$$ \sup_{t \in [0,T]} \|\rho_0  \p_t^6 J^{-2} (t)\| \le
\t M_0 + \delta   {\sup_{t\in[0,T]}} \t E(t) + C \, T\, P({\sup_{t\in[0,T]}} \t E(t)) \,.
$$
These two inequality, combined with 
(\ref{tt6}), show that 
\begin{align*} 
& \| \rho_0 \p_t^6 J^{-2}(t)\|^2_1 + \| \p_t^6 J^{-2} (t)\|^2_0
\\
& \qquad \qquad \qquad
\le
\t M_0 + \delta   {\sup_{t\in[0,T]}} \t E(t) + C \, T\, P({\sup_{t\in[0,T]}} \t E(t)) 
+C \int_{\Omega} \rho_0 |\p_t^6 J^{-2} |^2 \, dx \,.
\end{align*} 

We use Young's inequality and the fundamental theorem of calculus (with respect to $t$) for the last integral to find that for $ \theta >0$
\begin{align*} 
C\int_{\Omega^+} \rho_0 \p_t^6 J^{-2} \, \p_t^6 J^{-2} \, dx 
& \le \theta \left\| \p_t^6 J^{-2} (t)\right\|^2_0 + C_ \theta  \left\|  \rho_0 \p_t^6  J^{-2}  (t)\right\|^2_0  \\
& \le \theta \left\| \p_t^6 J^{-2} (t)\right\|^2_0 + C_ \theta  \left\|  \rho_0 \p_t^5 D v  (t)\right\|^2_0  \\
& \le \theta \left\| \p_t^6 J^{-2} (t)\right\|^2_0 + \t M_0 + \delta  \sup_{t \in [0,T]} \t E(t)+  C\,T\, P( \sup_{t \in [0,T]} \t E(t)) \,,
\end{align*} 
where we have used the fact that $ \| \rho_0 \p_t^7 Dv(t)\|^2_0$ is contained in the energy function $\t E(t)$.  We choose
$\theta \ll 1$ and once again
readjust the constants; as a result, we see that on $[0,T]$
\begin{align} 
& \| \rho_0 \p_t^6 J^{-2} (t)\|^2_1
 + \left\| \p_t^6 J^{-2} (t)\right\|^2_0
\le
\t M_0 + \delta   {\sup_{t\in[0,T]}} \t E(t) + C \, T\, P({\sup_{t\in[0,T]}} \t E(t))  \,. \label{eq402_2d}
\end{align} 
With $J_t= a^j_i v^i,_j$, we see that
\begin{equation}\label{tt7}
a^j_i \p_t^5 v^i,_j= \p_t^6 J - v^i,_j \p_t^5 a^j_i - \sum_{a=1}^4 c_a \p_t^a a^j_i \p_t^{5-a} v^i,_j
\end{equation} 
so that using (\ref{eq402_2d}) together with  the fundamental theorem of calculus  for the last two terms on the right-hand side
of (\ref{tt7}), 
we see that
\begin{align} 
\left\| a^j_i \p_t^5v^i,_j (t)\right\|^2_0
\le
\t M_0+ \delta   {\sup_{t\in[0,T]}} \t E(t) + C \, T\, P({\sup_{t\in[0,T]}} \t E(t))  \,, \nonumber
\end{align}
from which it follows that 
\begin{align} 
\left\| \operatorname{div} \p_t^5v (t)\right\|^2_0
\le
\t M_0 + \delta  {\sup_{t\in[0,T]}} \t E(t) + C \, T\, P({\sup_{t\in[0,T]}} \t E(t))  \,. \nonumber
\end{align}
Proposition \ref{curl_est} provides the estimate
$$\| \operatorname{curl} \p_t^5v (t)\|^2_0\le \t M_0   + \delta  {\sup_{t\in[0,T]}} \t E(t) +  C \, T\, P({\sup_{t\in[0,T]}} \t E(t))$$
and Proposition \ref{additional_2d} shows that for $ \alpha =1,2$,
$$|\p_t^5v^ \alpha (t)|^2_{0.5}\le \t M_0   + \delta  {\sup_{t\in[0,T]}} \t E(t) +  C \, T\, P({\sup_{t\in[0,T]}} \t E(t)) \,.$$

We thus conclude from  Proposition \ref{prop1} that 
\begin{align*} 
 \sup_{t \in [0,T]} \left\|  \p_t^5v (t)\right\|^2_1
\le  \t M_0   + \delta  {\sup_{t\in[0,T]}} \t E(t) +  C \, T\, P({\sup_{t\in[0,T]}} \t E(t))  \,.
\end{align*}
\end{proof}

Having a good bound for $\p_t^5v(t)$ in $H^1(\Omega)$ we proceed with our bootstrapping.   
\begin{proposition} \label{pt3v_2d} For $t \in [0,T]$, $ v_{ttt}(t) \in H^2(\Omega)$, $ \rho_0 \p_t^4 J^{-2} (t) \in H^2(\Omega)$ and
$$
\sup_{t \in [0,T]} \left( \|  v_{ttt}(t)\|^2_2 +  \| \rho_0 \p_t^4 J^{-2} (t)\|^2_2 \right) \le 
M_0 + \delta  {\sup_{t\in[0,T]}} E(t) + C \, T\, P({\sup_{t\in[0,T]}} E(t)) \,.
$$
\end{proposition}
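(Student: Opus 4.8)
The plan is to iterate the bootstrapping argument of Proposition~\ref{pt5v_2d}, this time ``trading'' one order of time differentiation for one order of spatial (normal) regularity. First I would take five time-derivatives of the equation~(\ref{approx.a}') to obtain
\begin{equation}\nonumber
2\kappa \p_t^6\bigl[A^k_i(\rho_0 J^{-1}),_k\bigr] + 2\p_t^5\bigl[A^k_i(\rho_0 J^{-1}),_k\bigr] = -\p_t^6 v^i \,,
\end{equation}
and apply Lemma~\ref{kelliptic}. Here the key input is that $\|\p_t^6 v(t)\|_1^2$ is controlled: this follows from Proposition~\ref{pt5v_2d} (which gives $\|\p_t^5 v(t)\|_1^2$) combined with the interpolation-type estimate of Remark~\ref{rem_interp} applied to $r = \p_t^5 v$ with $k=1$, or alternatively directly from Proposition~\ref{additional_2d} which contains a bound on $\|\rho_0 \bar\p D\p_t^5 v\|_0^2$ and on $\|\sqrt{\rho_0}\,\p_t^6 v\|_0^2$ together with the weighted embedding~(\ref{w-embed}). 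Lemma~\ref{kelliptic} then yields, \emph{independently of $\kappa$},
$$
\sup_{t\in[0,T]} \bigl\| \p_t^5\bigl[2A^k_i(\rho_0 J^{-1}),_k\bigr]\bigr\|_1^2 \le \t M_0 + \delta\sup_{t\in[0,T]}\t E(t) + C\,T\,P(\sup_{t\in[0,T]}\t E(t))\,.
$$

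Next I would repeat, at one higher level of spatial regularity, the vertical-derivative extraction of Proposition~\ref{pt5v_2d}. Letting $\p_t^5$ act on the expansion~(\ref{eq400_2d}) of $2A^k_i(\rho_0 J^{-1}),_k$ decomposes the equation into $\rho_0 a^3_i\p_t^5 J^{-2},_3 + 2\rho_0,_3 a^3_i \p_t^5 J^{-2}$ on the left and, on the right, the term $\p_t^5[2A^k_i(\rho_0 J^{-1}),_k]$ (now bounded in $H^1$ by the step above) together with tangential-derivative terms $\rho_0\p_t^5(a^\beta_i J^{-2},_\beta)$, $2\rho_0,_\beta\p_t^5(a^\beta_i J^{-2})$, the term $(\p_t^5 a^3_i)[\rho_0 J^{-2},_3 + 2\rho_0,_3 J^{-2}]$, and lower-order commutator terms. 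The tangential terms are controlled by Proposition~\ref{additional_2d} (using the $a=2$ entry, $\|\rho_0\bar\p^2 D\p_t^4\eta\|_0^2$ and $\|\sqrt{\rho_0}\bar\p^2\p_t^4 v\|_0^2$, together with the weighted embedding and Lemma~\ref{Hardy} to absorb the weight $\rho_0,_\beta/\rho_0 \in L^\infty$ exactly as in the bound~(\ref{ttt1000})), and the remaining terms are lower-order and estimated by the fundamental theorem of calculus. This would give the $H^1(\Omega)$-bound on $\rho_0 a^3_i\p_t^5 J^{-2},_3 + 2\rho_0,_3 a^3_i\p_t^5 J^{-2}$; expanding the $H^1$-norm and integrating by parts in $x_3$ on the cross-term, then using that $|a^3_\cdot|^2$ is bounded below by~(\ref{assump1}) and that $|\rho_0,_3|$ is bounded below near $\Gamma$ while $\rho_0$ is bounded below in the interior via the physical vacuum condition~(\ref{degen}), yields after readjusting constants
$$
\sup_{t\in[0,T]}\|\rho_0\p_t^5 J^{-2}(t)\|_2^2 + \sup_{t\in[0,T]}\|\p_t^5 J^{-2}(t)\|_1^2 \le \t M_0 + \delta\sup_{t\in[0,T]}\t E(t) + C\,T\,P(\sup_{t\in[0,T]}\t E(t))\,,
$$
where the last troublesome integral $\int_\Omega \rho_0|\p_t^5 J^{-2}|^2\,dx$ is absorbed by Young's inequality and the fundamental theorem of calculus using that $\|\rho_0\p_t^7 Dv\|_0^2 \subset \t E(t)$, just as in Proposition~\ref{pt5v_2d}.

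Wait --- I should align the time-derivative count with the statement as written, which concerns $v_{ttt}$ and $\rho_0\p_t^4 J^{-2}$, so the argument above should be run with $\p_t^4$ in place of $\p_t^5$ throughout (five time-derivatives of~(\ref{approx.a}') replaced by three, the input being $\|\p_t^5 v(t)\|_1^2$ from Proposition~\ref{pt5v_2d}, and the tangential terms controlled by the $a=3$ entries of Propositions~\ref{additional_2d} and~\ref{prop1_2d}). Finally, with $\|\rho_0\p_t^4 J^{-2}(t)\|_2^2$ and $\|\p_t^4 J^{-2}(t)\|_1^2$ in hand, I would use the identity $a^j_i\p_t^3 v^i,_j = \p_t^4 J - v^i,_j\p_t^3 a^j_i - \sum_{a=1}^2 c_a\p_t^a a^j_i\p_t^{3-a}v^i,_j$, analogous to~(\ref{tt7}), to bound $\|\operatorname{div}\p_t^3 v(t)\|_1^2$ (the last two terms handled by the fundamental theorem of calculus using the bounds on lower time-derivatives of $v$ already established). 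Combining this with the curl bound $\|\operatorname{curl} v_{ttt}(t)\|_1^2$ from Proposition~\ref{curl_est} and the tangential trace bound $|\p_t^3 v^\alpha(t)|_{1.5}^2$ from Proposition~\ref{additional_2d}, Proposition~\ref{prop1} (the Hodge estimate, with $s=2$) then yields $\sup_{t\in[0,T]}\|v_{ttt}(t)\|_2^2 \le \t M_0 + \delta\sup_{t\in[0,T]}\t E(t) + C\,T\,P(\sup_{t\in[0,T]}\t E(t))$, which is the claim. The main obstacle is the $x_3$ integration-by-parts step extracting the normal derivative: one must verify that the cross-term $\int_\Omega \rho_0\rho_0,_3|a^3_\cdot|^2\p_t^4 J^{-2}\p_t^4 J^{-2},_3\,dx$ is integrable (which holds because solutions to the $\kappa$-problem are smooth by Theorem~\ref{thm_ksoln}), and then carefully split the domain into a neighborhood of $\Gamma$ where $|\rho_0,_3|$ is bounded below and the interior where $\rho_0$ is bounded below, so that both $\|\rho_0\p_t^4 J^{-2},_3\|_0^2$ and $\|\p_t^4 J^{-2}\|_0^2$ are recovered with $\kappa$-independent constants --- exactly the mechanism of~(\ref{tt3})--(\ref{tt6}).
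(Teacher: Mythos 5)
Your plan is, in outline, the paper's own: take four time derivatives of (\ref{approx.a}') (four, not ``three''---your corrected count must produce $-\p_t^5 v$ on the right so that Lemma \ref{kelliptic} can be fed with the bound on $\|\p_t^5 v\|_1^2$ from Proposition \ref{pt5v_2d}, exactly the input you name), expand $2A^k_i(\rho_0 J^{-1}),_k$ as in (\ref{eq600_2d}), extract normal regularity of $\p_t^4 J^{-2}$ by the weighted integration-by-parts and vacuum-splitting mechanism of (\ref{tt3})--(\ref{tt6}), and finish with div--curl--trace--Hodge. The genuine gap is at the step ``This would give the $H^1(\Omega)$-bound on $\rho_0 a^3_i\p_t^4 J^{-2},_3 + 2\rho_0,_3 a^3_i\p_t^4 J^{-2}$.'' To bound that combination in $H^1$ you must bound the $x_3$-derivative of the right-hand side in $L^2$, and the tangential term $\rho_0\p_t^4(a^\beta_i J^{-2},_\beta)$ then contributes $\rho_0 a^\beta_i\p_t^4 J^{-2},_{\beta 3}$, a $\rho_0$-weighted \emph{mixed} horizontal--normal second derivative (essentially $\rho_0\,\bar\p\,\p_3 Dv_{ttt}$). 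This is not controlled by the $a=2$ entry of Proposition \ref{additional_2d}, which only provides the purely tangential quantity $\|\rho_0\bar\p^2 Dv_{ttt}\|_0$; a crude bound by the energy gives $C\sup\t E$ rather than $\delta\sup\t E + C\,T\,P(\sup\t E)$, and the fundamental theorem of calculus sends you to $\rho_0\bar\p\p_3 D\p_t^4 v$, an odd time derivative that $\t E$ does not contain at this spatial level. So the one-shot $H^1$ bound does not follow from the results you cite.

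The repair is precisely the two-stage structure of the paper's proof. First apply $\p_\alpha$, $\alpha=1,2$, to the identity (\ref{ttadd12}): its right-hand side involves only purely tangential weighted quantities (plus lower-order terms handled by the fundamental theorem of calculus and Lemma \ref{Hardy}), hence is bounded in $L^2$ by Proposition \ref{additional_2d}; the integration-by-parts argument then yields $\|\p_t^4 J^{-2},_\alpha\|_0^2 + \|\rho_0\p_t^4 J^{-2},_\alpha\|_1^2$, and, via (\ref{hodge}) together with the curl and trace bounds, $\|v_{ttt},_\alpha\|_1^2$. Only with this newly acquired mixed-derivative control in hand does one differentiate (\ref{ttadd12}) in $x_3$: the problematic term $\rho_0 a^\beta_i\p_t^4 J^{-2},_{\beta 3}$ on the right-hand side is now bounded, and the same splitting argument (note the coefficient of $\rho_0,_3 a^3_i\p_t^4 J^{-2},_3$ becomes $3$ rather than $2$, which is harmless) gives $\|\rho_0\p_t^4 J^{-2}\|_2^2 + \|\p_t^4 J^{-2}\|_1^2$; your concluding Hodge step is then correct. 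Insert this tangential-first stage and the argument closes.
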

\begin{proof} 
We take four time-derivatives of (\ref{approx.a}') to obtain
\begin{align} 
 \kappa  \p_t^5 \bigl[A^k_i (\rho_0 J^{-1} ),_k\bigr] + 2 \p_t^4 \bigl[A^k_i (\rho_0 J^{-1} ),_k \bigr]= - \p_t^5 v^i \,. \nonumber
\end{align} 
According to Lemma \ref{kelliptic},  and the bound on $\| \p_t^5v(t)\|_1^2 $ given by Proposition \ref{pt5v_2d}, 
\begin{equation}\label{ttt1}
\sup_{t \in [0,T]} \left\| \p_t^4 \bigl[2A^k_i (\rho_0 J^{-1} ),_k \bigr]\right\|_1^2 \le \t M_0
+ \delta   {\sup_{t\in[0,T]}} \t E(t) + C \, T\, P({\sup_{t\in[0,T]}} \t E(t)) \,.
\end{equation}

For $ \beta =1,2$,
\begin{align} 
2A^k_i (\rho_0 J^{-1} ),_k &= \rho_0 a^k_i J ^{-2},_k + 2 \rho_0,_k a^k_i J^{-2} \nonumber \\
&= \rho_0 a^3_i J ^{-2},_3 + 2 \rho_0,_3 a^3_i J^{-2}  + \rho_0 a^\beta _i J ^{-2},_\beta + 2 \rho_0,_\beta a^\beta _i J^{-2}
\label{eq600_2d}
\end{align} 
 Letting $\p_t^4$ act on equation (\ref{eq600_2d}), we have that
\begin{align} 
\rho_0a^3_i \p_t^4 J^{-2} ,_3 & + 2 \rho_0,_3 a^3_i \p_t^4 J^{-2}  
= \underbrace{2 \p_t^4 \bigl[A^k_i (\rho_0 J^{-1} ),_k \bigr]}_{ \mathcal{J} _1}- \underbrace{\rho_0\p_t^4 (a^\beta_iJ^{-2} ,_\beta  )}_{ \mathcal{J} _2} -\underbrace{2 \rho_0,_\beta\p_t^4( a^\beta _i J^{-2})}_{ \mathcal{J} _3}\nonumber \\
& - \underbrace{(\p_t^4 a^3_i)[ \rho_0 J^{-2} ,_3 + 2\rho_0,_3J^{-2} ] }_{ \mathcal{J} _4}
  + \underbrace{ \sum_{a=1}^3 c_a \p_t^a a^3_i \p_t^{4-a} [ \rho_0 J^{-2} ,_3 + 2\rho_0,_3J^{-2} ]  }_{ \mathcal{J} _5} \,. \label{ttadd12}
\end{align}

In order to estimate $ \p_t^4 J^{-2} (t)$ in $H^1(\Omega)$, we first estimate horizontal derivatives of  $\p_t^4 J^{-2} (t)$ in $L^2( \Omega )$.   As
such, we consider for $ \alpha =1,2$,
\begin{align} 
\rho_0a^3_i \p_t^4 J^{-2} ,_{3\alpha } +2 \rho_0,_3 a^3_i \p_t^4 J^{-2}  ,_ \alpha 
&= \sum_{l=1}^5 \mathcal{J} _l,_ \alpha 
- (\rho_0 a^3_i),_ \alpha \p_t^4 J^{-2} ,_3 - 2 (\rho_0,_3 a^3_i),_ \alpha \p_t^4 J^{-2} 
\,. \label{ttadd13}
\end{align} 

\vspace{.1 in}
\noindent
{\it Bounds for $ \mathcal{J} _1,_ \alpha$.}  
The estimate (\ref{ttt1}) shows that
$$
\| \mathcal{J} _1,_ \alpha (t)\|_0^2 \le \t M_0 + \delta  {\sup_{t\in[0,T]}} \t E(t) + C \, T\, P({\sup_{t\in[0,T]}}\t E(t)) \,.
$$

\vspace{.1 in}
\noindent
{\it Bounds for $ \mathcal{J} _2,_ \alpha$.} 
 Proposition \ref{additional_2d} provides the estimate
\begin{equation}\label{stz1}
 \sup_{t \in [0,T]}  \left( \| \bar \p^2 v_{ttt}(t)\|_0^2+
 \| \rho_0 \bar \p^2 Dv_{ttt}(t)\|_0^2\right) \le \t M_0 + \delta  {\sup_{t\in[0,T]}} \t E(t) + C \, T\, P({\sup_{t\in[0,T]}}\t E(t)) \,.
\end{equation} 
We write
\begin{align*} 
\mathcal{J} _ 2,_ \alpha & = \rho_0 \p_t^4( a^ \beta _i,_ \alpha J^{-2} ,_ \beta + a^ \beta _i J^{-2} ,_{ \beta \alpha })  + 
\rho_0 , _ \alpha    \p_t^4( a^ \beta _i J^{-2} ,_{ \beta })   \,.
\end{align*} 
Using (\ref{assump2}), for $ \alpha =1,2$, the highest-order term  in $\rho_0 \p_t^4(  a^ \beta _i J^{-2} ,_{ \beta \alpha }) $ satisfies the inequality
\begin{align*} 
\| \rho_0  a^ \beta _i \p_t^4 J^{-2} ,_{ \beta \alpha } \|_0^2  \le C \| \rho_0 \bar \p^2 D v_{ttt}\|_0^2 
\end{align*} 
which has the bound (\ref{stz1}), and the lower-order terms have the same bound using the fundamental theorem of calculus; for
example 
\begin{align*} 
\| \rho_0 J^{-2} ,_{ \beta \alpha } \p_t^4 a^ \beta _i J^{-2} ,_{ \beta \alpha }\|_0^2 & \le  \| \rho_0   J^{-2},_{ \alpha \beta } \|_{L^6(\Omega)} \| \p_t^4 a^ \beta _ i\|_{L^3( \Omega )}  \\
& \le C \| \rho_0   J^{-2},_{ \alpha \beta } \|_1 \| \p_t^4 a^ \beta _ i\|_{0.5} \le \t M_0 \,,
\end{align*} 
where we have used H\"{o}lder's inequality, the Sobolev embedding theorem, and (\ref{assump2}) for the final inequality.   On the
other hand, the term $\rho_0 , _ \alpha    \p_t^4( a^ \beta _i J^{-2} ,_{ \beta })  $ is estimated in the same manner as (\ref{ttt1000}), which shows
that 
$$
\| \mathcal{J} _2,_ \alpha (t)\|_0^2 \le \t M_0 + \delta  {\sup_{t\in[0,T]}} \t E(t) + C \, T\, P({\sup_{t\in[0,T]}}\t E(t)) \,.
$$

 \vspace{.1 in}
\noindent
{\it Bounds for $ \mathcal{J} _3,_ \alpha$.}  Using the fact that $\|\p_t \mathcal{J} _3, \alpha\|_0^2 $ can be bounded by the
energy function, the fundamental theorem of calculus shows that
 $$
\| \mathcal{J} _3,_ \alpha (t)\|_0^2 \le \t M_0 + \delta  {\sup_{t\in[0,T]}} \t E(t) + C \, T\, P({\sup_{t\in[0,T]}}\t E(t)) \,.
$$
 
  \vspace{.1 in}
\noindent
{\it Bounds for $ \mathcal{J} _4,_ \alpha$.}   Again, using the fact that  the vector $a^3_i$ only contains horizontal derivatives of $\eta^i$,
(\ref{assump2}) shows that for $ \alpha =1,2$,
\begin{align*} 
\|\bigl(\p_t^4 a^3_i \ [ \rho_0 J^{-2} ,_3 + 2\rho_0,_3J^{-2} ] \bigr), _\alpha \|_0^2 & \le C \| \bar \p^2 v_{ttt} \|_0^2 + \t M_0 \\
&  \le \t M_0 + \delta  {\sup_{t\in[0,T]}} \t E(t) + C \, T\, P({\sup_{t\in[0,T]}}\t E(t)) \,,
\end{align*} 
 the last inequality following from (\ref{stz1}), and thus
  $$
\| \mathcal{J} _4,_ \alpha (t)\|_0^2 \le \t M_0 + \delta  {\sup_{t\in[0,T]}} \t E(t) + C \, T\, P({\sup_{t\in[0,T]}}\t E(t)) \,.
$$
 
   \vspace{.1 in}
\noindent
{\it Bounds for $ \mathcal{J} _5,_ \alpha$.}  These are lower-order terms, estimated with  the fundamental theorem of calculus and
(\ref{assump2}), yielding
 $$
\| \mathcal{J} _5,_ \alpha (t)\|_0^2 \le \t M_0 + \delta  {\sup_{t\in[0,T]}} \t E(t) + C \, T\, P({\sup_{t\in[0,T]}}\t E(t)) \,.
$$

  \vspace{.1 in}
\noindent
{\it Bounds for $- (\rho_0 a^3_i),_ \alpha \p_t^4 J^{-2} ,_3 - 2 (\rho_0,_3 a^3_i),_ \alpha \p_t^4 J^{-2} $.}   The bounds for
these terms follows in the same fashion as for $ \mathcal{J} _2, _ \alpha $ and show that
 $$
\| (\rho_0 a^3_i),_ \alpha \p_t^4 J^{-2} ,_3 + 2 (\rho_0,_3 a^3_i),_ \alpha \p_t^4 J^{-2}\|_0^2 \le \t M_0 + \delta  {\sup_{t\in[0,T]}} \t E(t) + C \, T\, P({\sup_{t\in[0,T]}}\t E(t)) \,.
$$
We have hence bounded the $L^2( \Omega )$-norm of the
right-hand side of (\ref{ttadd13}) by
 $\t M_0 + \delta  {\sup_{t\in[0,T]}} \t E(t) + C \, T\, P({\sup_{t\in[0,T]}} \t E(t))$.
Using the same integration-by-parts argument   just given above in the proof of Proposition \ref{pt5v_2d}, we  conclude that for $ \alpha =1,2$,
\begin{equation}\label{ssttadd14}
\sup_{t \in [0,T]} \bigl( \| \p_t^4 J^{-2} ,_ \alpha (t) \|^2_0 +  \| \rho_0 \p_t^4 J^{-2},_ \alpha  (t)\|^2_1\bigr) \le 
\t M_0 + \delta  {\sup_{t\in[0,T]}} \t E(t) + C \, T\, P({\sup_{t\in[0,T]}}\t E(t)) \,.
\end{equation}
From the inequality (\ref{ssttadd14}), we may infer that for $ \alpha =1,2$,
\begin{equation}\label{need700}
\sup_{t \in [0,T]}  \| \operatorname{div} v_{ttt} ,_ \alpha (t) \|^2_0  \le 
\t M_0 + \delta  {\sup_{t\in[0,T]}} \t E(t) + C \, T\, P({\sup_{t\in[0,T]}}\t E(t)) \,,
\end{equation} 
and according to Proposition \ref{curl_est}, for $ \alpha =1,2$,
\begin{equation}\label{need701}
\sup_{t \in [0,T]}  \| \operatorname{curl} v_{ttt} ,_ \alpha (t) \|^2_0  \le 
\t M_0 + \delta  {\sup_{t\in[0,T]}} \t E(t) + C \, T\, P({\sup_{t\in[0,T]}}\t E(t)) \,.
\end{equation} 
The boundary regularity of $v_{ttt},_ \alpha $, $ \alpha =1,2$,  follows from Proposition \ref{additional_2d}:
\begin{equation}\label{need702}
\sup_{t \in [0,T]}  | v_{ttt} ,_ \alpha (t) |^2_{0.5}  \le 
\t M_0 + \delta  {\sup_{t\in[0,T]}} \t E(t) + C \, T\, P({\sup_{t\in[0,T]}}\t E(t)) \,.
\end{equation} 
Thus, the inequalities (\ref{need700}), (\ref{need701}), and (\ref{need702}) together with (\ref{hodge}) and (\ref{ssttadd14}) show that
\begin{equation}\label{ttadd14}
\sup_{t \in [0,T]} \bigl( \| v_{ttt},_ \alpha (t) \|^2_1 +  \| \rho_0 \p_t^4 J^{-2},_ \alpha  (t)\|^2_1\bigr) \le 
\t M_0 + \delta  {\sup_{t\in[0,T]}} \t E(t) + C \, T\, P({\sup_{t\in[0,T]}}\t E(t)) \,.
\end{equation}

In order to estimate $\| \p_t ^4 J^{-2} ,_3(t)\|_0^2$, we
next differentiate (\ref{ttadd12}) in the vertical direction $x_3$ to obtain
\begin{align} 
\rho_0a^3_i \p_t^4 J^{-2} ,_{33 } +3 \rho_0,_3 a^3_i \p_t^4 J^{-2}  ,_ 3
&= \sum_{l=1}^5 \mathcal{J} _l,_ 3
- \rho_0 a^3_i,_ 3 \p_t^4 J^{-2} ,_3 - 2 (\rho_0,_3 a^3_i),_ 3 \p_t^4 J^{-2} 
\,. \label{ttadd15}
\end{align} 
Following our estimates for the horizontal derivatives, 
 the newly acquired  inequality (\ref{ttadd14}) together with Propositions \ref{additional_2d} and \ref{pt5v_2d} show that the 
 right-hand side of (\ref{ttadd15}) is 
 bounded  in $L^2(\Omega)$ by $\t M_0 + \delta  {\sup_{t\in[0,T]}} \t E(t) + C \, T\, P({\sup_{t\in[0,T]}} \t E(t))$.
 
 It follows that for $k=1,2,3$,
$$ \| \rho_0a^3_i \p_t^4 J^{-2} ,_{k3} +3 \rho_0,_3 a^3_i \p_t^4 J^{-2}  ,_k\|_0^2 \le \t M_0 + \delta  {\sup_{t\in[0,T]}} \t E(t) + C \, T\, P({\sup_{t\in[0,T]}} \t E(t))\,.$$
Note that the coefficient in front of $\rho_0,_3 a^3_i \p_t^4 J^{-2},_k $ has changed from $2$ to $3$, but the identical integration-by-parts argument 
 that we used in the proof of Proposition \ref{pt5v_2d} is once again employed and  shows that
$$
\| \rho_0 \p_t^4 J^{-2} (t)\|_2^2 + \|\p_t^4 J^{-2} (t)\|_1^2
\le \t M_0 + \delta  {\sup_{t\in[0,T]}} \t E(t) + C \, T\, P({\sup_{t\in[0,T]}} \t E(t))\,.
$$
We can thus infer that
$$
 \|\operatorname{div} v_{ttt} (t)\|_1^2
\le \t M_0 + \delta  {\sup_{t\in[0,T]}} \t E(t) + C \, T\, P({\sup_{t\in[0,T]}} \t E(t))\,.
$$
According to Proposition \ref{curl_est}, 
$\| \operatorname{curl} v_{ttt} (t)\|^2_1\le \t M_0 + \delta  {\sup_{t\in[0,T]}} \t E(t) + C \, T\, P({\sup_{t\in[0,T]}} \t E(t))$ and with the bound on $v_{ttt} ^\alpha  $ given
by  Proposition \ref{additional_2d}, Proposition \ref{prop1}  provides the estimate
\begin{align*} 
\left\|  v_{ttt} (t)\right\|^2_2
\le\t M_0 + \delta  {\sup_{t\in[0,T]}} \t E(t) + C \, T\, P({\sup_{t\in[0,T]}} \t E(t)) \,.
\end{align*}
\end{proof} 

\begin{proposition} \label{pt1v_2d} For $t \in [0,T]$, $ v_{t}(t) \in H^3(\Omega)$, $ \rho_0 \p_t^2 J^{-2} (t) \in H^3(\Omega)$ and
$$
\sup_{t \in [0,T]} \left( \|  v_{t}(t)\|^2_3 +  \| \rho_0 \p_t^2 J^{-2} (t)\|^2_3 \right) \le 
\t M_0 + \delta  {\sup_{t\in[0,T]}} \t E(t) + C \, T\, P({\sup_{t\in[0,T]}} \t E(t)) \,.
$$
\end{proposition}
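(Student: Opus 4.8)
The plan is to continue the bootstrapping procedure used in Propositions \ref{pt5v_2d} and \ref{pt3v_2d}, descending one more level in the number of time derivatives at the cost of one more space derivative. The starting point is to apply $\p_t^2$ to the equation (\ref{approx.a}'), written in the form $2\kappa\p_t^3[A^k_i(\rho_0 J^{-1}),_k] + 2\p_t^2[A^k_i(\rho_0 J^{-1}),_k] = -\p_t^3 v^i$. By Lemma \ref{kelliptic} and the $H^2(\Omega)$-bound on $v_{ttt}(t)$ furnished by Proposition \ref{pt3v_2d}, this yields
\begin{equation}\nonumber
\sup_{t\in[0,T]}\bigl\| \p_t^2[2A^k_i(\rho_0 J^{-1}),_k]\bigr\|_2^2 \le \t M_0 + \delta\,{\sup_{t\in[0,T]}}\t E(t) + C\,T\,P({\sup_{t\in[0,T]}}\t E(t)) \,.
\end{equation}
As in the previous two propositions, I would then expand $2A^k_i(\rho_0 J^{-1}),_k = \rho_0 a^3_i J^{-2},_3 + 2\rho_0,_3 a^3_i J^{-2} + \rho_0 a^\beta_i J^{-2},_\beta + 2\rho_0,_\beta a^\beta_i J^{-2}$, apply $\p_t^2$, and isolate the vertical-derivative term $\rho_0 a^3_i \p_t^2 J^{-2},_3 + 2\rho_0,_3 a^3_i \p_t^2 J^{-2}$ on the left, collecting the remaining terms as $\mathcal{J}_1,\dots,\mathcal{J}_5$.

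Next I would establish the estimates on the $\mathcal{J}_l$ and their horizontal derivatives up to second order, in order to first control $\bar\p^2 \p_t^2 J^{-2}$ in $L^2(\Omega)$. The term $\mathcal{J}_1$ and its horizontal derivatives are controlled directly by the displayed Lemma \ref{kelliptic} bound above; $\mathcal{J}_2$ involves $\rho_0\p_t^2(a^\beta_i J^{-2}),_\beta$ whose highest-order piece scales like $\rho_0\bar\p^3 D v_t$, controlled via Proposition \ref{additional_2d} (the $\p_t^2\bar\p^3$-problem), with the Hardy-type Lemma \ref{Hardy} used exactly as in (\ref{ttt1000}) to handle the $\rho_0,_\beta/\rho_0$ factors in $\mathcal{J}_3$; $\mathcal{J}_4$ uses that $a^3_i$ is quadratic in $\bar\p\eta$ together with the weighted embedding (\ref{w-embed}) and Proposition \ref{additional_2d}; and $\mathcal{J}_5$ is lower-order, handled by the fundamental theorem of calculus together with (\ref{assump2}). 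Having bounded the right-hand side of the appropriately $\bar\p^\alpha$- and then $\bar\p^\alpha\bar\p^\beta$-differentiated identity, the same integration-by-parts-in-$x_3$ argument from the proof of Proposition \ref{pt5v_2d} — exploiting the physical vacuum condition (\ref{degen}) to split $\Omega$ into a region where $|\rho_0,_3|$ is bounded below and a region where $\rho_0$ is bounded below, and absorbing the cross term — yields
\begin{equation}\nonumber
\sup_{t\in[0,T]}\bigl( \| \rho_0 \p_t^2 J^{-2},_{\alpha\beta}(t)\|_1^2 + \|\p_t^2 J^{-2},_{\alpha\beta}(t)\|_0^2 \bigr)\le \t M_0 + \delta\,{\sup_{t\in[0,T]}}\t E(t) + C\,T\,P({\sup_{t\in[0,T]}}\t E(t))\,.
\end{equation}
Then, as in Proposition \ref{pt3v_2d}, I would differentiate the isolated identity once and twice in the vertical direction $x_3$, each time re-running the same integration-by-parts argument (noting the harmless change of the coefficient of $\rho_0,_3 a^3_i \p_t^2 J^{-2},_k$), to successively upgrade to $\|\rho_0\p_t^2 J^{-2},_{3\alpha}\|_1$, and finally $\|\rho_0\p_t^2 J^{-2}\|_3^2 + \|\p_t^2 J^{-2}\|_2^2$ bounded by the same right-hand side.

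Finally, from $J_t = a^j_i v^i,_j$ one gets $a^j_i v_t^i,_j = \p_t^2 J - v^i,_j\p_t a^j_i - (\text{lower order})$, so that the bound on $\p_t^2 J^{-2}$ (hence on $\p_t^2 J$, since $J$ is bounded away from zero by (\ref{assump1})) together with the fundamental theorem of calculus on the lower-order terms gives $\|\operatorname{div} v_t(t)\|_2^2$ bounded by the desired right-hand side; Proposition \ref{curl_est} supplies $\|\operatorname{curl} v_t(t)\|_2^2$, and Proposition \ref{additional_2d} (or Corollary \ref{cor1_2d}, via the $\p_t^2$-analogue) supplies the boundary term $|v_t^\alpha(t)|_{2.5}^2$; then Proposition \ref{prop1} yields $\|v_t(t)\|_3^2$ with the claimed bound. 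The main obstacle I anticipate is bookkeeping the highest-order terms in $\mathcal{J}_2$ and the extra horizontal-derivative terms $(\rho_0 a^3_i),_{\alpha\beta}\p_t^2 J^{-2},_3$ and $(\rho_0,_3 a^3_i),_{\alpha\beta}\p_t^2 J^{-2}$ that appear when one differentiates the identity twice horizontally: one must verify, using the precise structure of (\ref{a3i}) (so that $a^3_i$ carries only horizontal derivatives) together with Proposition \ref{additional_2d} and the Hardy inequality Lemma \ref{Hardy}, that none of these demand more than the available regularity — i.e. $v_t \in H^3$, $\rho_0 D v_t \in H^3$-type control already encoded in the energy $\t E(t)$ and the previously proved propositions — and that all genuinely top-order pieces come multiplied either by $\rho_0$ or by a small factor $t$ or $\delta$, so they can be absorbed.
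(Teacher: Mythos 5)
Your proposal is correct and follows essentially the same route as the paper: the paper's own proof of Proposition \ref{pt1v_2d} consists precisely of applying $\p_t^2$ to (\ref{approx.a}'), invoking Lemma \ref{kelliptic} with the $H^2(\Omega)$-bound on $v_{ttt}$ from Proposition \ref{pt3v_2d}, isolating $\rho_0 a^3_i \p_t^2 J^{-2},_3 + 2\rho_0,_3 a^3_i \p_t^2 J^{-2}$ in the $\p_t^2$-differentiated version of (\ref{eq600_2d}), and then repeating the argument of Proposition \ref{pt3v_2d}. Your write-up simply carries out in detail the horizontal/vertical differentiation, integration-by-parts, and Hodge-decomposition steps that the paper leaves to the reader by reference to the previous proposition.
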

\begin{proof} 
We take two time-derivatives of (\ref{approx.a}') to obtain
\begin{align} 
 \kappa  \p_t^3 \bigl[A^k_i (\rho_0 J^{-1} ),_k\bigr] + 2 \p_t^2 \bigl[A^k_i (\rho_0 J^{-1} ),_k \bigr]= - \p_t^3 v^i \,. \nonumber
\end{align} 
According to Lemma \ref{kelliptic},  and the bound on $\| \p_t^3v(t)\|_2^2 $ given by Proposition \ref{pt3v_2d}, 
\begin{equation}\label{ssttt1}
\sup_{t \in [0,T]} \left\| \p_t^2 \bigl[2A^k_i (\rho_0 J^{-1} ),_k \bigr]\right\|_2^2 \le \t M_0
+ \delta   {\sup_{t\in[0,T]}} \t E(t) + C \, T\, P({\sup_{t\in[0,T]}} \t E(t)) \,.
\end{equation} 
 Letting $\p_t^2$ act on equation (\ref{eq600_2d}), we have that
\begin{align} 
\rho_0a^3_i \p_t^2 J^{-2} ,_3 & + 2 \rho_0,_3 a^3_i \p_t^2 J^{-2}  
= 2 \p_t^2 \bigl[A^k_i (\rho_0 J^{-1} ),_k \bigr] - \rho_0\p_t^2 (a^\beta_iJ^{-2} ,_\beta  ) - 2 \rho_0,_\beta\p_t^2( a^\beta _i J^{-2}) \nonumber \\
& - (\p_t^2 a^3_i)[ \rho_0 J^{-2} ,_3 + 2\rho_0,_3J^{-2} ] 
  +  c_a \p_t a^3_i \p_t^ [ \rho_0 J^{-2} ,_3 + 2\rho_0,_3J^{-2} ]   \,. \n
\end{align} 
The bound (\ref{ssttt1}) allows us to proceed by using the same
argument that we used in the proof of Proposition \ref{pt3v_2d}, and this leads to   the desired inequality.
\end{proof} 

\begin{proposition} \label{eta_2d} For $t \in [0,T]$, $ \eta (t) \in H^4(\Omega)$, $ \rho_0  J^{-2} (t) \in H^4(\Omega)$ and
$$
\sup_{t \in [0,T]} \left( \|  \eta(t)\|^2_4 +  \| \rho_0  J^{-2} (t)\|^2_4 \right) \le 
M_0 + \delta  {\sup_{t\in[0,T]}} E(t) + C \, T\, P({\sup_{t\in[0,T]}} E(t)) \,.
$$
\end{proposition}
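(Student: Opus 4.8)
The plan is to establish Proposition~\ref{eta_2d} by the same bootstrapping philosophy used for Propositions~\ref{pt5v_2d}, \ref{pt3v_2d}, and \ref{pt1v_2d}, but now pushing the elliptic estimate all the way down to zero time derivatives and gaining the full $H^4(\Omega)$-regularity for $\eta(t)$ and $H^4(\Omega)$-regularity for $\rho_0 J^{-2}(t)$. First I would use the equation (\ref{approx.a}') in the form
\begin{equation*}
\kappa \p_t\bigl[A^k_i(\rho_0 J^{-1}),_k\bigr] + 2 A^k_i(\rho_0 J^{-1}),_k = -v_t^i \,,
\end{equation*}
and apply Lemma~\ref{kelliptic} together with the bound on $\|v_t(t)\|_3^2$ provided by Proposition~\ref{pt1v_2d}, to conclude that
$\sup_{t\in[0,T]} \| 2A^k_i(\rho_0 J^{-1}),_k(t)\|_3^2 \le \t M_0 + \delta \sup_{t\in[0,T]}\t E(t) + C\,T\,P(\sup_{t\in[0,T]}\t E(t))$.

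Next I would split $2A^k_i(\rho_0 J^{-1}),_k = \rho_0 a^3_i J^{-2},_3 + 2\rho_0,_3 a^3_i J^{-2} + \rho_0 a^\beta_i J^{-2},_\beta + 2\rho_0,_\beta a^\beta_i J^{-2}$ exactly as in (\ref{eq600_2d}) (no time derivative acting now), which yields
\begin{equation*}
\rho_0 a^3_i J^{-2},_3 + 2\rho_0,_3 a^3_i J^{-2} = \underbrace{2A^k_i(\rho_0 J^{-1}),_k}_{\mathcal{J}_1} - \underbrace{\rho_0(a^\beta_i J^{-2},_\beta)}_{\mathcal{J}_2} - \underbrace{2\rho_0,_\beta(a^\beta_i J^{-2})}_{\mathcal{J}_3} \,.
\end{equation*}
The term $\mathcal{J}_1$ is controlled in $H^3(\Omega)$ by the step above; $\mathcal{J}_2$ and $\mathcal{J}_3$ involve only horizontal derivatives in the factor $J^{-2},_\beta$, so they are controlled by $\|\rho_0 \bar\p J^{-2}\|_3$-type quantities coming from the $\bar\p^4$-energy estimate (Proposition~\ref{prop1_2d}) and Corollary~\ref{cor1_2d}, plus the Hardy-inequality bound $\|\rho_0,_\beta/\rho_0\|_{L^\infty}\le C\|\rho_0,_\beta\|_3$ used exactly as in (\ref{ttt1000}). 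One then differentiates horizontally (for $\bar\p^\alpha$ with $|\alpha|\le 3$) and then vertically (up to three $x_3$-derivatives, successively as in (\ref{ttadd13}) and (\ref{ttadd15})), at each stage performing the integration-by-parts trick in $x_3$ pioneered in the proof of Proposition~\ref{pt5v_2d}: expanding the $L^2$-norm of the left side, integrating by parts the cross term $\int \rho_0 \rho_0,_3 |a^3_\cdot|^2 \p^m J^{-2}\,\p^m J^{-2},_3\,dx$, and invoking the physical vacuum condition (\ref{degen}) so that $|\rho_0,_3|$ is bounded below near $\Gamma$ while $\rho_0$ is bounded below in the interior, to separate $\|\rho_0 \p^m J^{-2},_3\|_0^2$ from $\| \p^m J^{-2}\|_0^2$. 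This produces $\|\rho_0 J^{-2}(t)\|_4^2 + \|J^{-2}(t)\|_3^2 \le \t M_0 + \delta\sup \t E + C\,T\,P(\sup\t E)$.

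From there, using $J_t = a^j_i v^i,_j$ rearranged as $a^j_i v^i,_j = J_t$ together with the fundamental theorem of calculus on $J$ (since $J(0)=1$), one gets $\|\operatorname{div} v(t)\|_3^2$ controlled; Proposition~\ref{curl_est} gives $\|\operatorname{curl}\eta(t)\|_3^2$ and after $\p_t$, control of $\operatorname{curl} v$, while Corollary~\ref{cor1_2d} handles the tangential trace $|\eta^\alpha(t)|_{3.5}$. Then Proposition~\ref{prop1} (the Hodge decomposition elliptic estimate with $s=4$) applied to $\eta(t)$—or rather, to reach $\|v(t)\|$ first and then integrate—gives $\|\eta(t)\|_4^2 \le \t M_0 + \delta\sup\t E + C\,T\,P(\sup\t E)$. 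I expect the main obstacle to be the same one that recurs throughout this bootstrapping chain: carefully tracking the weighted quantities so that each appearance of a bad weight $1/\rho_0$ or $\rho_0,_\beta/\rho_0$ is absorbed by the higher-order Hardy inequality (Lemma~\ref{Hardy}), and more delicately, ensuring that the $\rho_0$-weighted integration-by-parts in $x_3$ is justified—this needs the sufficient regularity of the $\kappa$-solutions (Theorem~\ref{thm_ksoln}) so that $\rho_0|\p^m J^{-2}|^2,_3$ is genuinely integrable, and needs the strictly-positive-below-separation of $|a^3_\cdot|^2$ and the split of $\Omega$ into a near-boundary region where $|\rho_0,_3|\ge\theta_1$ and an interior region where $\rho_0\ge\theta_2$. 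Since we are at the bottom of the bootstrap (no time derivatives to trade), there is no room to absorb errors via Lemma~\ref{kelliptic} on the quantity itself, so the elliptic-gain must come entirely from the $x_3$-integration-by-parts plus the vacuum condition; this is the crux and must be executed exactly as in Proposition~\ref{pt5v_2d} but with $m=0$.
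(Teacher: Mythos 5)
Your treatment of the elliptic core coincides with the paper's (very terse) proof: the paper's argument for Proposition \ref{eta_2d} is precisely ``apply Lemma \ref{kelliptic} with the $\|v_t(t)\|_3^2$ bound of Proposition \ref{pt1v_2d}, then repeat the argument of Proposition \ref{pt3v_2d}'', and your steps --- the Lemma \ref{kelliptic} application, the decomposition (\ref{eq600_2d}) with the horizontal terms moved to the right-hand side, the Hardy-inequality treatment of $\rho_0,_\beta/\rho_0$, and the horizontal-then-vertical differentiation with the weighted $x_3$ integration by parts and the physical vacuum condition --- are exactly that argument, carried out at the zero-time-derivative level. Your claim of an $H^3$ bound for $2A^k_i(\rho_0 J^{-1}),_k$ is also the correct strength for reaching $H^4$ (the paper's display (\ref{sssttt1}) records only the $H^2$ norm, evidently a typo carried over from the preceding proposition).

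The genuine gap is in the divergence input to the final Hodge estimate. You assert that $\|\operatorname{div} v(t)\|_3^2$ is controlled ``using $J_t=a^j_iv^i,_j$ together with the fundamental theorem of calculus on $J$'', and you also suggest reaching $\|v(t)\|_4$ first and then integrating in time. Neither is available: $J_t$ contains $Dv$, so extracting $\operatorname{div} v$ in $H^3$ from $J_t$ presupposes the very $H^4$ control of $v$ you are trying to produce, and nothing in the scheme bounds $v(t)$ in $H^4$ (or even $\operatorname{div} v(t)$ in $H^3$) uniformly in time --- odd time derivatives are only recovered with a half-derivative loss via interpolation (Remark \ref{rem_interp}), and Proposition \ref{curl_est} gives $\operatorname{curl}\eta\in H^3$ but not $\operatorname{curl} v\in H^3$. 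What the conclusion actually requires is an $H^3$ bound for $\operatorname{div}\eta$, and it comes from the unweighted bound $\|J^{-2}(t)\|_3^2$ that your elliptic step has just produced: since $\tfrac12\le J\le\tfrac32$, this controls $\|J(t)\|_3$, and writing $J=\det D\eta=1+\operatorname{div}(\eta-e)+Q(D\eta-\operatorname{Id})$ with $Q$ at least quadratic in the entries of $D\eta-\operatorname{Id}$, one has $\|Q(D\eta-\operatorname{Id})\|_3\le C\,\|D\eta-\operatorname{Id}\|_{L^\infty(\Omega)}\bigl(1+\|D\eta-\operatorname{Id}\|_{L^\infty(\Omega)}\bigr)\|D\eta-\operatorname{Id}\|_3\le C\,T\,P(\sup_{t\in[0,T]}\t E(t))$, because $\|D\eta(t)-\operatorname{Id}\|_{L^\infty(\Omega)}\le\int_0^t\|Dv\|_{L^\infty(\Omega)}\le C\,T$ by (\ref{assump2}) while $\|D\eta-\operatorname{Id}\|_3\le\|\eta\|_4+C$ is absorbed into $P(\sup\t E)$. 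Hence $\|\operatorname{div}\eta(t)\|_3\le C+\|J(t)\|_3+C\,T\,P(\sup_{t\in[0,T]}\t E(t))$, and Proposition \ref{prop1} with $s=4$ applied to $\eta$ itself (not to $v$), together with $\|\operatorname{curl}\eta\|_3$ from Proposition \ref{curl_est} and the tangential traces $|\eta^\alpha|_{3.5}$ from Corollary \ref{cor1_2d}, yields the stated bound for $\|\eta(t)\|_4^2$.
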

\begin{proof} 
From  (\ref{approx.a}'), we see that
\begin{align} 
 \kappa  \p_t \bigl[A^k_i (\rho_0 J^{-1} ),_k\bigr] + 2  \bigl[A^k_i (\rho_0 J^{-1} ),_k \bigr]= -  v_t^i \,. \nonumber
\end{align} 
According to Lemma \ref{kelliptic},  and the bound on $\| v_t(t)\|_3^2 $ given by Proposition \ref{pt1v_2d}, 
\begin{equation}\label{sssttt1}
\sup_{t \in [0,T]} \left\| 2A^k_i (\rho_0 J^{-1} ),_k \right\|_2^2 \le \t M_0
+ \delta   {\sup_{t\in[0,T]}} \t E(t) + C \, T\, P({\sup_{t\in[0,T]}} \t E(t)) \,.
\end{equation} 
Since
\begin{align} 
\rho_0a^3_i  J^{-2} ,_3 & + 2 \rho_0,_3 a^3_i  J^{-2}  
= 2  \bigl[A^k_i (\rho_0 J^{-1} ),_k \bigr]   \,, \n
\end{align} 
we can use the bound
 (\ref{sssttt1}) and proceed by using the same
argument that we used in the proof of Proposition \ref{pt3v_2d} to conclude the proof.
\end{proof} 

\section{Proof of  Theorem \ref{theorem_main} (The Main Result)}\label{sec10}
 
\subsection{Time of existence and bounds independent of $ \kappa $ and existence of solutions to (\ref{ce0})}\label{subsec_finish1}
 Combining the estimates from Propositions \ref{curl_est}, \ref{prop1_2d}, \ref{prop5_2d}, \ref{additional_2d}, 
 \ref{pt5v_2d}, \ref{pt3v_2d}, \ref{pt1v_2d},  \ref{eta_2d}, and Corollary \ref{cor2} we obtain the following inequality on $(0, T_ \kappa )$:
 $$
 \sup_{t \in [0,T]} \t E(t) \le
 \t M_0+ \delta  {\sup_{t\in[0,T]}} \t E(t) + C \, T\, P({\sup_{t\in[0,T]}} \t E(t)) \,.
 $$
By choosing $ \delta $  sufficiently small, we have that
$$
 \sup_{t \in [0,T]} \t E(t) \le
\t M_0 +  C \, T\, P({\sup_{t\in[0,T]}} \t E(t)) \,.
 $$

Using our continuation argument, presented in Section 9 of \cite{CoSh2006}, this provides us with
a time of existence $T_1$ independent of $\kappa $ and an
estimate on $(0,T_1)$ independent of $\kappa $ of the type:
\begin{equation*}
\sup_{t \in [0,T_1]}  \t E(t) \le 2 \t M_0 \,,
\end{equation*}
as long as the conditions (\ref{assump1}) and (\ref{assump2}) of Subsection \ref{subsec_assumptions} hold. 
These conditions can now be verified by using the fundamental theorem of calculus and further shrinking the
time-interval if necessary.    For example,
 since
$$\|\eta(t)\|_{3.5}\le2\| e\|_{3.5}+ 2\int_0^t \|v(t')\|_{3.5}dt',$$ we see that for
$t$ taken sufficiently small, $\| \eta(t)\|^2_{3.5}\le 2| \Omega| + 1$.
 The other conditions in Subsection \ref{subsec_assumptions} are
satisfied with similar arguments. This leads us to a time of existence $T_2>0$
independent of $\kappa$ for which we have the estimate on $(0,T_2)$
\begin{equation}
\sup_{t \in [0,T_2]}  \t E(t)\le  2\t M_0 \,. \label{eq420}
\end{equation}
In particular, our sequence of solutions $\{\eta^ \kappa \}_{ \kappa >0}$ to our approximate $ \kappa $-problem (\ref{approx}) satisfy the $ \kappa $-independent bound (\ref{eq420}) on
the $ \kappa $-independent time-interval $(0,T_2)$.   

\subsection{The limit as $ \kappa \rightarrow 0$} 
By the $ \kappa $-independent estimate (\ref{eq420}), standard compactness arguments provide the existence of a strongly convergent subsequences for $\epsilon  >0$
\begin{align*} 
\eta^{ \kappa '}  & \to \eta  \text{ in } L^{ 2 }((0,T_2); H^{3 }(\Omega)) \\
v_t^{ \kappa '}  & \to v_t  \text{ in } L^{ 2 }((0,T_2); H^{2 }(\Omega)) \,.
\end{align*}

Consider the variational form of  (\ref{approx.a}): for all
$ \varphi \in  L^2(0,T_2; H^1(\Omega)) $, 
\begin{align*} 
\int_0^{T_2} \Bigl[ \int_\Omega \rho_0 ( v^ {\kappa'} )_t^i \, \varphi^i \, dx  - \int_\Omega \rho_0^2 (J^ {\kappa '})^{-2} (a^{ \kappa'} )^k_i \, \varphi^i,_k \, dx -
\kappa \int_\Omega \rho_0^2 \p_t[(J^{ \kappa'} )^{-2} (a^{ \kappa'} )^k_i] \, \varphi^i,_k \, dx\Bigr] dt  =0 \,.
\end{align*} 
The strong convergence of the sequences $(\eta^{ \kappa '}, v_t^{ \kappa '})$ show that the limit $(\eta, v_t)$  satisfies
 \begin{align*} 
\int_0^{T_2} \Bigl[ \int_\Omega \rho_0  v_t^i \, \varphi^i \, dx  - \int_\Omega \rho_0^2 J^{-2} a^k_i \, \varphi^i,_k \Bigr] \, dx dt  =0 
\end{align*} 
which shows that $ \eta$ is a solution to (\ref{ce0}) on the $ \kappa $-independent time interval $(0,T_2)$.  A standard argument
shows that $v(0) = u_0$ and $\eta(0)=e$.

\subsection{Uniqueness of solutions to the compressible Euler equations (\ref{ce0})}  
Suppose that $(\eta,v)$ and $(\bar \eta, \bar v)$ are both   solutions of (\ref{ce0}) with the same initial
data that satisfies the estimate (\ref{uniquedata}).

Let 
$$
\delta v = v - \bar v \,, \ \ \ \delta \eta = \eta - \bar \eta \, \ \ \delta a = a - \bar a \,, \ \ \delta J^{-2}  = J^{-2}  - \bar J^{-2} \,, \ \text{ etc.}
$$
Then $ \delta v$ satisfies
\begin{alignat*}{2}
\rho_0 \delta v_t^i + \delta a^k_i (\rho _0 ^2 J^{-2} ),_k + \bar a^k_i ( \rho^2 \delta J^{-2} ),_k &=0  &&\text{ in } (0,T] \times \Omega \,, \\
\delta v &=0  &&\text{ on } \{t=0\} \times \Omega \,.
\end{alignat*} 
Consider  the energy function
\begin{align*} 
\mathcal{E} (t) & = \sum_{a=0}^4 \| \p_t^{2a} \delta  \eta(t)\|^2_{4-a} + \sum_{a=0}^3 \Bigl[\| \rho_0 \bar\p^{4-a}\p_t^{2a} D\delta \eta(t)\|^2_{L^2(\omega)}
+\| \sqrt{\rho_0} \bar\p^{4-a}\p_t^{2a}  \delta v(t)\|^2_{L^2(\omega)} \Bigr] \\
& \qquad\qquad+\sum_{a=0}^3 \| \rho_0 \p_t^{2a} \delta J^{-2} (t)\|^2_{4-a}  + \| \rho_0 \p_t^{7} D \delta v(t)\|^2_0 +  \| \rho_0 \p_t^{8}  \delta v(t)\|^2_0 \,.
\end{align*} 

Given the transport-type structure for the curl of $\delta \eta$ and its space and time derivatives, together with the assumed smoothness
of $\eta$ and $\bar \eta$, we can proceed in the same fashion as our estimates in Section \ref{section_mainestimates}, and using that $ \delta v(0)=0$, we
 obtain
$$
\sup_{t \in [0,T]} \mathcal{E} (t) \le C\,T\, P( \sup_{t \in [0,T]} \mathcal{E} (t)) \,,
$$
which shows that $ \delta v (t)=0$ on $[0,T]$.

\subsection{Additional estimates for  $\boldsymbol{ \operatorname{curl} _\eta v}$}
We have constructed solutions for which
$$
\sum_{a=0}^4\Bigl[ \| \p_t^{2a}  \eta(t)\|^2_{4-a} + \|\rho_0 \p_t^{2a} D \eta(t)\|^2_{4-a}
+\| \sqrt{\rho_0} \bar\p^{4-a}\p_t^{2a}  v(t)\|^2_0
$$
remains bounded on $(0,T)$.

 The curl estimates required us to assume that the
initial velocity field $u_0$ satisfies 
$$\| \operatorname{curl} u_0\|_3 < \infty \text{ and } \| \rho_0 \bar \p^4 \operatorname{curl} u_0\|_0 < \infty \,.
$$
Well-posedness requires that the dynamics maintain this regularity for $t \in (0,T)$, and we show that the Lagrangian curl does indeed
maintain the continuity-in-time.

 \begin{corollary} \label{cor_curlv_3d}
$$
\sup_{t \in [0,T]} \bigl( \| \operatorname{curl} _ \eta v(t)\|^2_3 + \| \rho_0 \bar \p^4 \operatorname{curl} _\eta v(t)\|^2_0 \bigr) \le M_0 + \delta \sup_{t \in [0,T]} E(t) + 
C\,T\ P( \sup_{t \in [0,T]} E(t)) \,.
$$\end{corollary}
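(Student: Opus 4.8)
The plan is to revisit the curl identity \eqref{curlv_3d}, which for the solution of the Euler equations \eqref{ce0} (i.e.\ $\kappa=0$) simplifies since the artificial-viscosity forcing $F$ vanishes identically. Thus $\operatorname{curl}_\eta v(t) = \operatorname{curl} u_0 + \int_0^t B(A(t'),Dv(t'))\,dt'$, where $B^k(A,Dv) = -\varepsilon_{kji} A^s_r v^r,_l A^l_j v^i,_s$ is quadratic in $Dv$ with coefficients that are smooth functions of $D\eta$. First I would estimate $\|\operatorname{curl}_\eta v(t)\|_3$: since $B$ contains two first spatial derivatives of $v$ and since $\|v(t)\|_{H^{3.5}(\Omega)}$ is controlled (via the interpolation inequality \eqref{interp1} together with the bounds on $\|\eta(t)\|_4$ and $\|v_t(t)\|_3$ provided by Propositions \ref{prop1_2d}, \ref{pt1v_2d} and the bound on $\|\eta(t)\|_4$ from Proposition \ref{eta_2d}), the product estimate gives $\|B(A,Dv)(t)\|_3 \le P(\sup_{[0,T]}E(t))$, and integrating in time yields $\sup_{[0,T]}\|\operatorname{curl}_\eta v(t)\|_3^2 \le M_0 + C\,T\,P(\sup_{[0,T]}E(t))$. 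The only subtlety is controlling three derivatives of $B$; the highest-order term scales like $D^4v\,Dv\,A\,A$, which is not directly bounded in $L^2$, but here — since we only integrate once in time and do not differentiate again — we may integrate by parts in time exactly as in Step 1 of the proof of Proposition \ref{curl_est}, converting $\int_0^t D^4 v$ into $D^4\eta$ plus a time-integral of $D^4\eta\,(Dv\,A\,A)_t$, both of which are controlled by $E(t)$.

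Next I would treat the weighted estimate $\|\rho_0 \bar\p^4 \operatorname{curl}_\eta v(t)\|_0$. Following the decomposition in Step 4 of the proof of Proposition \ref{curl_est} (again with $F\equiv 0$), we write $\rho_0\bar\p^4\operatorname{curl}_\eta v$ by letting $\bar\p^4$ act on the curl identity, the highest-order contributions being of the form $\varepsilon_{kji}\rho_0\bar\p^4 Dv\,A\,Dv\,A$ and $\varepsilon_{kji}\rho_0 Dv\,\bar\p^4 A\,Dv\,A$; using $\bar\p^4 A^s_j = -A^s_l\bar\p^4 D\eta^l,_p A^p_j$ plus lower-order terms, both reduce to an $L^\infty$ coefficient times $\rho_0\bar\p^4 Dv$ or $\rho_0\bar\p^4 D\eta$, each of which is contained in the energy function $E(t)$. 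Here there is no time integration to exploit, but also no fifth derivative appears, so the bound $\|\rho_0\bar\p^4\operatorname{curl}_\eta v(t)\|_0^2 \le M_0 + \delta\sup_{[0,T]}E(t) + C\,T\,P(\sup_{[0,T]}E(t))$ follows directly from the product/H\"older estimates together with \eqref{assump2} and the $L^\infty$-$L^2$-$L^2$ splittings used repeatedly in Section \ref{section_mainestimates}, where the $\delta\sup E$ term absorbs any borderline factor of $\bar\p^4\eta$ via Young's inequality exactly as in the proof of Proposition \ref{prop1_2d}.

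The main obstacle is not any single estimate — each is a routine rerun of an argument already established for the $\kappa$-problem — but rather the bookkeeping needed to verify that all the quantities entering the two bounds (namely $\|v\|_{H^{3.5}}$, $\|\eta\|_4$, $\|v_t\|_3$, $\|\rho_0\bar\p^4 Dv\|_0$, $\|\rho_0\bar\p^4 D\eta\|_0$) are genuinely controlled by the energy function $E(t)$ of \eqref{formal} for the \emph{Euler} solution, and not merely by the $\kappa$-dependent energy $\tilde E(t)$. This is where one must invoke the $\kappa\to 0$ limit passage from Subsection \ref{subsec_finish1}: the solution of \eqref{ce0} inherits, by lower semicontinuity of the norms under the weak/strong convergence established there, the bound $\sup_{[0,T]}E(t)\le 2M_0$, and the continuity-in-time of $\operatorname{curl}_\eta v$ in $H^3$ and of $\rho_0\bar\p^4\operatorname{curl}_\eta v$ in $L^2$ follows since the right-hand side of each identity is a time-integral (respectively a fixed expression) of quantities that are continuous in time by the regularity of $\eta$ and $v$. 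Once this is in place the polynomial-type inequality of Section \ref{subsec_poly} closes the argument, establishing that the regularity $\|\operatorname{curl} u_0\|_3<\infty$ and $\|\rho_0\bar\p^4\operatorname{curl} u_0\|_0<\infty$ is propagated by the flow, which completes the proof of the corollary and, with it, the well-posedness claim of Theorem \ref{theorem_main}.
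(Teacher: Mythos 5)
Your treatment of the unweighted bound $\|\operatorname{curl}_\eta v(t)\|_3^2$ is essentially the paper's argument: after the false start with the $H^{3.5}$ interpolation (which cannot handle the top term $D^4v\,Dv\,A\,A$), you correctly fall back on the integration-by-parts-in-time device of Step 1 of Proposition \ref{curl_est}, converting $\int_0^t D^4v\,Dv\,A\,A$ into $-\int_0^t D^4\eta\,[Dv\,A\,A]_t + D^4\eta(t)\,Dv(t)\,A(t)\,A(t)$, which Proposition \ref{eta_2d} controls with the required $M_0+\delta\sup E+CT P(\sup E)$ structure. That half is fine.

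The weighted estimate, however, has a genuine gap. You claim that the top-order contribution reduces to an $L^\infty$ coefficient times $\rho_0\bar\p^4 Dv$ or $\rho_0\bar\p^4 D\eta$, ``each of which is contained in the energy function $E(t)$,'' and you explicitly assert that ``there is no time integration to exploit.'' But $\|\rho_0\bar\p^4 Dv(t)\|_0$ is \emph{not} controlled by $E(t)$ as defined in (\ref{formal}): the energy contains $\|\sqrt{\rho_0}\,\bar\p^4 v\|_0$ (no extra spatial derivative) and $\|\rho_0\,\p_t^{2a}D\eta\|_{4-a}$ (even time derivatives of $\eta$, not $v$); the quantity $\rho_0\bar\p^4 Dv$ appears in the $\kappa$-regularized energy $\tilde E$ only with a $\sqrt{\kappa}$ weight and $L^2$-in-time, and that term disappears in the limit $\kappa\to 0$ — indeed Remark \ref{rem_interp} explains that odd time derivatives of $\eta$ are only recovered by interpolation, $L^2$-in-time, at the $3.5$-derivative level. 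So the term you dismiss is exactly the problematic one, and no Young-inequality absorption of ``a borderline factor of $\bar\p^4\eta$'' repairs it. The correct route — and the one the paper takes — is to keep the time-integrated vorticity identity for $\rho_0\bar\p^4\operatorname{curl}_\eta v$ and integrate by parts in time there as well, so that the offending factor $\rho_0\bar\p^4 Dv$ inside $\int_0^t(\cdot)\,dt'$ is traded for $\rho_0\bar\p^4 D\eta$ (evaluated at time $t$ and inside a time integral against $[Dv\,A\,A]_t$), which is precisely what the energy estimate (\ref{energy1_2d}) of Proposition \ref{prop1_2d} bounds by $M_0+\delta\sup E+CT P(\sup E)$. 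Without this step you obtain at best a bound with no smallness factor, which cannot be fed into the polynomial-type inequality of Section \ref{subsec_poly} to close the estimate for the full energy $E$.
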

\begin{proof}
Letting $D^3$ act on the identity (\ref{curlv_3d}) for $ \operatorname{curl} _\eta v$, we see that the highest-order term scales like
$$
D^3 \operatorname{curl} u_0 + \int_0^t D^4 v \, Dv\, A \, A dt' \,.
$$
We integrate by parts to see that the highest-order contribution to $ D^3 \operatorname{curl} _\eta v(t)$ can be written as
$$
D^3 \operatorname{curl} u_0 - \int_0^t D^4 \eta \, [Dv\, A \, A]_t dt'  + D^4\eta(t)\, Dv(t)\, A(t)\, A(t) \,,
$$
which, according to Proposition \ref{eta_2d}, has $L^2(\Omega)$-norm bounded by $$ M_0 + \delta  {\sup_{t\in[0,T]}} E(t) +
C \, T\, P({\sup_{t\in[0,T]}} E(t))\,,$$ after readjusting the constants; thus, the inequality for the $H^3(\Omega)$-norm of $ \operatorname{curl} _\eta
v(t)$ is proved

The same type of analysis works for the weighted estimate.
After integration by parts in time,  the highest-order term in the expression for $\rho_0 \bar \p^4 \operatorname{curl} _\eta v(t)$ scales like
$$
\rho_0 \bar \p^4 \operatorname{curl} u_0 - \int_0^t \rho_0 \bar \p^4 D \eta \, [Dv\, A \, A]_t dt'  + \rho_0 \bar \p^4D\eta(t)\, Dv(t)\, A(t)\, A(t) \,.
$$
Hence, the inequality (\ref{energy1_2d}) shows that the weighted estimate holds as well.
\end{proof}
By taking $ \delta >0$ and $T>0$ small enough, we see that $\sup_{t \in [0,T]} E(t) < M_0$, where $E(t)$ is defined in (\ref{formal}).

\subsection{Optimal regularity for initial data}\label{subsec_optimal}
For the purposes of constructing solutions to our degenerate parabolic $ \kappa $-problem (\ref{approx}), in Section \ref{subsec::initdata},
we smoothed our initial data so that both
our
initial velocity field $u_0^\vartheta$ is smooth, and and our initial density function $\rho_0^\vartheta$  is smooth, positive in the interior, and vanishing
on the boundary $\Gamma$ with the physical vacuum condition (\ref{degen}).   

Our a priori estimates then allows us to pass to the limit  $\lim_{\vartheta \to 0} u_0^\vartheta= u_0$ and $\lim_{\vartheta \to 0} \rho_0^\vartheta
=\rho_0$.  By construction,
 $\rho_0 \in H^4( \Omega )$, satisfies $\rho_0 >0$ in $\Omega$, and the physical vacuum condition (\ref{degen}) near
the boundary $\Gamma$.  Similarly, the initial velocity field need only satisfy $E(0)< \infty $.

 \section{The case of general $\gamma > 1$}   \label{sec::generalgamma}
 
We denote by $a_0$ the integer satisfying the inequality
$$
1 < 1+ {\frac{1}{\gamma -1}}  -a_0 \le 2 \,.
$$
The general higher-order energy function is given by
\begin{align*} 
E_\gamma(t) & =
\sum_{a=0}^4 \| \p_t^{2a}  \eta(t)\|^2_{4-a} + \sum_{a=0}^4 \Bigl[\| \rho_0 \bar\p^{4-a}\p_t^{2a} D\eta(t)\|^2_0
+\| \sqrt{\rho_0} \bar\p^{4-a}\p_t^{2a}  v(t)\|^2_0 \Bigr] \nonumber\\
& \qquad\qquad  
+ \sum_{a=0}^3 \| \rho_0 \p_t^{2a}  J^{-2} (t)\|^2_{4-a} + \| \operatorname{curl} _\eta v(t)\|^2_ 3 
+ \| \rho_0 \bar \p^4 \operatorname{curl} _\eta v(t)\|^2_0 \\
& \qquad\qquad + \sum_{a=0}^{a_0} \| \sqrt{\rho_0}^{1+ {\frac{1}{\gamma-1}} - a} \partial_t^{7+a_0-a} Dv( t )\|_0^2 \,,
\end{align*} 
and we set $M_0^\gamma = P( E_ \gamma (0))$.

Notice the last sum in $E_ \gamma $ appears whenever $ \gamma < 2$, and the number of time-differentiated problems increases
as $\gamma $ approaches $1$.   
We explain this last summation of norms in $E_\gamma$ with a particular example; namely, consider the case that $\gamma= {\frac{3}{2}} $.
Then, $\rho_0 \sim d^2$ near $\Gamma$, $a_0=1$, and the last summation is written as
$$
\sum_{a=0}^{a_0} \| \sqrt{d}^{1+ {\frac{1}{\gamma-1}} - a} \partial_t^{7+a_0-a} Dv( t, \cdot )\|_0^2 =
\| d^ {\frac{3}{2}}  \partial_t^8 Dv(t)\|_0^2 + \| d^ {\frac{1}{2}}  \partial_t^7 Dv(t)\|_0^2
$$
which is equivalent to
\begin{equation}\label{ssextrag}
\int_\Omega   \rho_0^ {\frac{3}{2}} | \partial_t^8 Dv(t)|^2 dx  + \int_\Omega | \rho_0^ {\frac{1}{2}}  |\partial_t^7 Dv(t)|^2 dx \,.
\end{equation} 
The Euler equations with $\gamma= {\frac{3}{2}} $ are written as 
\begin{equation}\label{ce32}
\rho_0 v_t ^i+  a^k_i (\rho_0^ {\frac{3}{2}}  J^ {-\frac{3}{2}} ),_k =0.
\end{equation} 
Energy estimates on the ninth time-differentiated problem produce the first integral in  (\ref{ssextrag}), while the second
integral is obtained using  our elliptic-type estimates on the seventh time-differentiated version of  (\ref{ce32}). (Notice that
the value of $\gamma$ does not play a role in our elliptic-type estimates.)  Having control
on the two integral in (\ref{ssextrag}) then shows that we are back in the situation for the case that $\gamma\ge 2$; namely,
we see that $\partial _t^7 v(t)$ is even better than $L^2(\Omega )$, which allows us to proceed as before.   In particular, for $\gamma < 2$
the power on $\rho_0$ in the first integral in (\ref{ssextrag}) is greater than one, and by weighted embedding estimates, this
means that the embedding occurs into a less regular Sobolev space; this accounts for the need to have more time-differentiated
problems when $\gamma < 2$.

Using this energy function, the same methodology as we used for the case $\gamma=2$, shows
that $\sup_{t \in [0,T]} E_\gamma(t)$ remains bounded for $T>0$ taken sufficiently small.

 \begin{theorem} [Existence and uniqueness for the case $\gamma> 1$] \label{thm_main2}
Suppose that $\rho_0 \in H^4(\Omega)$, $\rho_0(x) >0$ for $x \in \Omega$, and  $\rho_0$ satisfies (\ref{degen}).  Furthermore,
suppose that $u_0$ is given such that  $M_0< \infty $.  Then there exists a  solution to (\ref{ce0}) (and hence (\ref{ceuler})) on $[0,T]$ for $T>0$ taken
 sufficiently small, such that
 $$
 \sup_{t \in [0,T]} E_\gamma(t) \le 2M_0 \,.
 $$
 Moreover if the initial data satisfies
 \begin{align}
 &\sum_{a=0}^5\Bigl[ \| \p_t^{2a}  \eta(0)\|^2_{5-a} + \|\rho_0 \p_t^{2a}  D\eta(0)\|^2_{5-a}
+\| \sqrt{\rho_0} \bar\p^{5-a}\p_t^{2a}  v(0)\|^2_0 \Bigr] \nonumber\\
& \qquad\qquad  
 + \| \operatorname{curl} _\eta v(0)\|^2_ 4 
+ \| \rho_0 \bar \p^5 \operatorname{curl} _\eta v(0)\|^2_0 
+ \sum_{a=0}^{a_0} \| \sqrt{\rho_0}^{1+ {\frac{1}{\gamma-1}} - a} \partial_t^{9+a_0-a} Dv( t )\|_0^2 
  < \infty \,,  \nonumber
 \end{align} 
 then the solution is unique.
\end{theorem}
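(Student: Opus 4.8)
The plan is to reduce Theorem \ref{thm_main2} to the machinery already developed for the case $\gamma=2$, with the only genuinely new ingredient being the bookkeeping forced by the degeneracy exponent $\frac{1}{\gamma-1}$ in the pressure $p(\rho_0 J^{-1}) = \rho_0^\gamma J^{-\gamma}$. First I would rewrite the Lagrangian equations in the form $\rho_0 v_t^i + a^k_i(\rho_0^\gamma J^{-\gamma}),_k = 0$, and equivalently $v_t^i + \tfrac{\gamma}{\gamma-1} A^k_i (\rho_0^{\gamma-1} J^{1-\gamma}),_k = 0$ (the analogue of (\ref{ce_vor})), so that the vorticity equation $\operatorname{curl}_\eta v_t = 0$ persists verbatim. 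The degenerate parabolic $\kappa$-approximation is defined exactly as in (\ref{approx}) with $\rho_0^2 J^{-2}$ replaced by $\rho_0^\gamma J^{-\gamma}$; Lemma \ref{kelliptic} and the higher-order Hardy inequality Lemma \ref{Hardy} are insensitive to $\gamma$, and the construction of $\kappa$-solutions (Theorem \ref{thm_ksoln4}, \ref{thm_ksoln}) goes through unchanged once one notes that the $X$-variable is now $X = \rho_0 J^{-3} J_t$ as before and the Lagrangian divergence of the equation still yields a degenerate heat-type equation with coefficient $\rho_0^{-1}$. The key structural fact to re-verify is that the determinant identities of Section \ref{sec::properties} — in particular the cancellations in Lemma \ref{lem_curlcurl} and the curl/div/$D_\eta$ decomposition used in the energy estimate for $\mathcal{I}_1$ in Propositions \ref{prop1_2d} and \ref{prop5_2d} — depend only on $a = JA$ and not on the power of $\rho_0$, so the positive energy contributions $\int_\Omega \rho_0^\gamma (|D\bar\p^4\eta|^2 - J^{-1}|\operatorname{curl}_\eta\bar\p^4\eta|^2 - J^{-1}|\operatorname{div}_\eta\bar\p^4\eta|^2)$ emerge in the same way, with $\rho_0^2$ simply replaced by $\rho_0^\gamma$.

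The genuinely new step is the last summation in $E_\gamma$. When $\gamma < 2$ one has $\rho_0^{\gamma-1} \sim d^{\gamma-1}$ with $\gamma-1 < 1$, equivalently $\rho_0 \sim d^{1/(\gamma-1)}$ with exponent $> 1$, and the elliptic-type bootstrap of Section \ref{section_mainestimates} — which recovers normal derivatives of $\eta$ from an $L^2$ bound on a high time-derivative of $v$ via the weighted embedding (\ref{w-embed}) — requires control of $\partial_t^7 v$ in $L^2(\Omega)$. In the case $\gamma=2$ this was supplied by the $\partial_t^8$ energy estimate (Proposition \ref{prop5_2d}) through $\|\sqrt{\rho_0}\,\partial_t^8 v\|_0$ and $\|\rho_0 \partial_t^7 Dv\|_0$; when $\gamma < 2$ the weight $\rho_0$ in the energy estimate appears with power $\gamma$, so the natural energy only gives $\|\rho_0^{\gamma/2}\partial_t^8 v\|_0$, which — since $\gamma/2 > 1/2$ — embeds into a Sobolev space strictly weaker than $L^2$ and is therefore insufficient. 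The remedy, encoded in the extra sum $\sum_{a=0}^{a_0}\|\sqrt{\rho_0}^{1+\frac{1}{\gamma-1}-a}\partial_t^{7+a_0-a}Dv\|_0^2$, is to run energy estimates on $a_0+1$ further time-differentiated problems: energy estimates on the $(9+a_0)$-th time-differentiated problem produce the top term $\|\rho_0^{(1+\frac{1}{\gamma-1})/2}\partial_t^{8+a_0}Dv\|_0$, and then the elliptic-type estimates applied successively to the $(7+j)$-th time-differentiated equations, for $j=a_0, a_0-1,\dots,0$, each lower the $\rho_0$-weight by one power of $\rho_0$ (exactly as Propositions \ref{pt5v_2d}, \ref{pt3v_2d} pass from $\rho_0^2\partial_t^6 J^{-2}$ in $H^1$ down to $\rho_0 J^{-2}$ in $H^4$), until one reaches $\|\rho_0^{1/2}\partial_t^7 Dv\|_0$. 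Since $1+\frac{1}{\gamma-1}-a_0 \le 2$, i.e. the final weight exponent is $\le 1$, a final weighted embedding argument of the type used in Corollary \ref{cor2} then yields $\partial_t^7 v \in L^2(\Omega)$ with the bound $\t M_0^\gamma + \delta\sup E_\gamma + CT P(\sup E_\gamma)$, putting us back in precisely the situation handled for $\gamma=2$.

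From that point the argument is a transcription: one assembles the curl estimates (Proposition \ref{curl_est}, whose proof uses only $\operatorname{curl}_\eta v_t = $ ($\kappa$-error) and Lemma \ref{kelliptic}), the horizontal/time energy estimates (Propositions \ref{prop1_2d}, \ref{prop5_2d}, \ref{additional_2d} and their $\gamma$-analogues, together with the $a_0+1$ extra higher-time-derivative estimates just described), and the elliptic-type normal-derivative estimates (Propositions \ref{pt5v_2d} through \ref{eta_2d}), obtaining $\sup_{[0,T]} E_\gamma(t) \le \t M_0^\gamma + \delta\sup E_\gamma + CT P(\sup E_\gamma)$; choosing $\delta$ small and invoking the polynomial-type inequality of Section \ref{subsec_poly} gives the $\kappa$-independent bound $\sup_{[0,T_2]}E_\gamma(t)\le 2M_0^\gamma$ on a $\kappa$-independent interval, after which the compactness argument of Section \ref{subsec_finish1}–\ref{sec10} produces a solution of (\ref{ce0}) with $p=\rho_0^\gamma J^{-\gamma}$ in the limit $\kappa\to 0$. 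Uniqueness follows exactly as in Section \ref{sec10}: for the difference of two solutions with the stated higher-regularity initial data one forms the analogous difference-energy $\mathcal{E}_\gamma(t)$ (now including the extra $\rho_0$-weighted high-time-derivative terms), uses the transport structure of the curl of $\delta\eta$ and the assumed $H^5$-type smoothness of the two solutions to absorb all error terms, and concludes $\mathcal{E}_\gamma \le CT P(\sup\mathcal{E}_\gamma)$, hence $\delta v \equiv 0$. The main obstacle is thus entirely in the second paragraph: getting the weights and the number of differentiated problems to line up so that the cascade of elliptic-type estimates terminates with a weight exponent $\le 1$ and hence with genuine $L^2$ (or better) control of $\partial_t^7 v$; once that is in place, no new analytic difficulty arises and the $\gamma=2$ proof applies mutatis mutandis, with $C_\gamma$ and all $\gamma$-dependent constants absorbed into the generic polynomial $P$ and constant $C$.
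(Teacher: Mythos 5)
Your proposal follows essentially the same route as the paper: Section \ref{sec::generalgamma} proves the theorem by exactly this reduction --- the $\kappa$-approximation, determinant/curl structure, energy and elliptic-type estimates, $\kappa\to 0$ limit and uniqueness argument are carried over verbatim with $\rho_0^2 J^{-2}$ replaced by $\rho_0^\gamma J^{-\gamma}$, and the only new ingredient is the extra weighted sum in $E_\gamma$, obtained from energy estimates on the additional time-differentiated problems together with elliptic-type estimates on the seventh time-differentiated equation, which yield $\partial_t^7 v\in L^2(\Omega)$ (indeed better) and put one back in the $\gamma\ge 2$ situation. The only discrepancy is a harmless off-by-one in your indices: the existence-level energy tops out at $\partial_t^{7+a_0}Dv$, produced by the $(8+a_0)$-th time-differentiated energy estimate (the ninth when $\gamma=\tfrac32$), whereas your $(9+a_0)$-th problem and $\partial_t^{8+a_0}Dv$ correspond to the uniqueness-level regularity in the theorem's hypothesis.
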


\vspace{.1 in}

\noindent
{\bf Acknowledgments.}
SS was supported by the National Science Foundation under
grant DMS-0701056, and by the United States Department of Energy through the Idaho National Laboratory's LDRD Project NE-156.

\end{document}